\newcommand{\ds}{\displaystyle}
\newcommand{\fr}[1]{\mathfrak{#1}}
\newcommand{\ca}[1]{\mathcal{#1}}
\newcommand{\bk}{{{\boldsymbol{k}}}}
\newcommand{\ac}{\operatorname{ac}}
\newcommand{\Z}{\mathbb{Z}}
\newcommand{\af}{\mathbf{f}}
\newcommand{\ag}{\mathbf{g}}
\newcommand{\cha}{\operatorname{char}}
\newcommand{\val}{\operatorname{v}}
\newcommand{\res}{\operatorname{r}}
\def \bsigma{{\boldsymbol{\sigma}}}
\def \i{{\boldsymbol{i}}}
\def \j{{\boldsymbol{j}}}
\def \l{{\boldsymbol{l}}}
\def \f{\operatorname{f}}
\def \v{\operatorname{v}}
\def \re{\operatorname{r}}
\def \tp{\operatorname{tp}}
\def \Fix{\operatorname{Fix}}
\newtheorem{theorem}{Theorem}[section]
\newtheorem{lemma}[theorem]{Lemma}
\newtheorem{corollary}[theorem]{Corollary}
\newtheorem{definition}[theorem]{Definition}
\newtheorem{remark}[theorem]{Remark}
\theoremstyle{definition}
\title[Valued Fields Equipped with a Value-preserving Automorphism]{ 
Equivalence of valued fields with a valuation preserving automorphism}
\author{Salih Azgin} 
\address{Department of Mathematics and Statistics, McMaster University 1280 Main Street West
Hamilton, Ontario
Canada  L8S 4K1}
\author{Lou van den Dries}
\address{Department of Mathematics, University of Illinois at Urbana-Champaign, 1409 W. Green Street, Urbana, IL 61801, USA }
\begin{document}

\maketitle

\begin{section}{introduction}

\noindent
Our goal in this paper is to contribute to the model theory of valued fields with a valuation preserving automorphism. The key ideas are due to Scanlon (\cite{scanlon1}, \cite{scanlon2}) and to B\'{e}lair, Macintyre, Scanlon \cite{BMS}. We
obtain the main result in \cite{BMS} under weaker assumptions,
and give a simpler proof. 

\medskip\noindent
Throughout we consider valued fields as 
three-sorted structures $$\ca{K}=(K,\Gamma,\bk;\ v, \pi)$$ 
where $K$ is the underlying field, $\Gamma$ is an ordered abelian group\footnote{The ordering of an ordered abelian group is total, by convention.} 
(the {\em value group\/}), $\bk$ is a field, the surjective map 
$v: K^\times \to \Gamma$ is the valuation, 
with valuation ring
$$  \ca{O}=\ca{O}_v:=\{a \in K:\ v(a) \geq 0\}$$ and maximal ideal 
$\fr{m}_v:=\{a \in K: v(a)>0\}$ of $\ca{O}$, and 
$\pi: \ca{O}\to \bk$ is a surjective ring morphism. Note that then 
$\pi$ induces an isomorphism of fields,
$$  a+\fr{m} \mapsto \pi(a):\ \ca{O}/\fr{m} \to  \bk \qquad (\fr{m}:=\fr{m}_v) $$
and there will be no harm in identifying the residue field 
$\ca{O}/\fr{m}$ with $\bk$ via this isomorphism. Accordingly, we refer 
to $\bk$ as 
the {\em residue field}. To simplify notation we often write $\bar{a}$ 
instead of $\pi(a)$. We call $\ca{K}$ as above
{\em unramified} if either \begin{enumerate}
\item[(i)] $\cha{K}=\cha{\bk}=0$, or
\item[(ii)] $\cha{K}=0$, $\cha{\bk}=p>0$ and $v(p)$ is the smallest 
positive element of $\Gamma$.
\end{enumerate}

\medskip\noindent
Ax \& Kochen and Ershov 
proved the following classical 
result, which we shall refer to as the {\em {\rm AKE}-principle}. (See Kochen \cite{koch}
for a complete account.) 

\medskip\noindent
{\em Let $\ca{K}$ and $\ca{K}'$ be unramified henselian valued fields with
residue fields $\bk$ and $\bk'$ and value groups $\Gamma$ and $\Gamma'$ 
respectively.
Then $\ca{K} \equiv \ca{K}'$ if and only if $\bk \equiv \bk'$, as fields,
and $\Gamma \equiv \Gamma'$, as ordered abelian groups.} 

\medskip\noindent
Thus the elementary theory of an unramified henselian valued
field is determined by the elementary theories of its residue field
and value group. Theorems~\ref{embed11} and ~\ref{embed13}
below are strong analogues of the AKE-principle in the presence of a 
valuation preserving automorphism.

Compared to the standard way of proving the AKE-theorems as in
Kochen~\cite{koch}, the two new tools we need are: replacing a pseudo-Cauchy 
sequence by an equivalent one, in order to rescue pseudocontinuity, and, for 
positive residue characteristic, the use of a certain polynomial 
transformation, the $D$-transform. These two devices were introduced 
in ~\cite{BMS}, but we use them in combination with a simpler 
notion of {\em pc-sequence of $\sigma$-algebraic type\/}.
The main improvement comes from a better understanding of purely residual 
extensions via Lemma~\ref{extresa} below. This allows us to drop a strong 
assumption, 
the {\em Genericity Axiom} of \cite{BMS}, about the 
automorphism induced on the residue field. Other differences 
with \cite{BMS} will
be indicated at various places, but to make our treatment 
selfcontained we include expositions of relevant parts of \cite{BMS}. 
We assume familiarity with valuation theory, including henselization and 
pseudo-convergence. 

\medskip\noindent
This paper is partly based on a chapter of the first author's 
thesis \cite{salih1} with advice from the second author. 

\end{section}

\begin{section}{Preliminaries}

\noindent
Throughout, $\mathbb{N}=\{0,1,2,\dots\}$, and $m,n$ range over $\mathbb{N}$. We let $K^\times=K\setminus \{0\}$ be the multiplicative group of a field $K$. 

\medskip\noindent
{\bf Difference fields.}
A {\em difference field\/} is a field equipped with a distinguished 
automorphism of the field, the {\em difference operator}. A difference field is considered in the obvious way as a structure for the 
language $\{0,1, -, +, \cdot, \sigma\}$ of difference rings, with the unary function 
symbol $\sigma$ to be interpreted in a difference field as its difference operator, which is accordingly also denoted by $\sigma$ (unless specified otherwise). 
Let $K$ be a difference field. The {\em fixed field\/} of $K$ is its subfield
$$\operatorname{Fix}(K):= \{a\in K:\ \sigma(a)=a\}.$$
We let $\sigma^n$ denote the $n^{\rm th}$ iterate of $\sigma$ and 
let $\sigma^{-n}$ denote 
the $n^{\rm th}$ iterate of $\sigma^{-1}$. Let $K \subseteq K'$ be 
an extension of difference fields 
and $a \in K'$. We define $K\langle a \rangle$ to be the smallest difference subfield of $K'$ 
containing $K$ and $a$. The underlying field of $K\langle a \rangle$ is 
$K(\sigma^{i}(a): i \in \mathbb{Z})$.   

\medskip\noindent 
We now introduce difference polynomials in one variable over $K$.
Each polynomial $$f(x_0,\dots,x_n)\in K[x_0,\dots,x_n]$$ 
gives rise to a
difference polynomial $F(x)= f(x, \sigma(x), \dots,\sigma^n(x))$
in the variable $x$ over $K$; we put 
$\deg F := \deg f\in \mathbb{N} \cup\{-\infty\}$ (where $\deg f$ is the 
total degree of $f$), and refer to $F$ as a {\em $\sigma$-polynomial\/} (over $K$). 
If $F$ is not constant (that is, $F\notin K$), let $f(x_0,\dots,x_n)$ 
be as above with least possible $n$ (which determines $f$ uniquely), and put
$$\text{order}(F):= n, \quad 
\text{complexity}(F):= (n,\ \deg_{x_n} f,\ \deg f)\in \mathbb{N}^3.$$
If $F\in K$, $F\ne 0$, then $\text{order}(F):=-\infty$ and  
$\text{complexity}(F):=(-\infty, 0,0)$.
Finally, $\text{order}(0):= -\infty$
and $\text{complexity}(0):=(-\infty, -\infty, -\infty)$. So in all cases 
we have
$\text{complexity}(F)\in (\mathbb{N}\cup \{-\infty\})^3$, and we order
complexities lexicographically.

\medskip\noindent
Let $a$ be an element of a difference field extension of $K$. We say that
$a$ is {\em $\sigma$-transcendental\/} over $K$ if there is no nonzero
$F$ as above with
$F(a)=0,$ and otherwise $a$ is said to be {\em $\sigma$-algebraic\/} over $K$.  
As an example, let $F(x):= \sigma(x)-x$. It has order $1$, and $F(a)=0$
for all $a$ in the prime subfield of $K$, in particular, $F(a)=0$ for
infinitely many $a\in K$ if $K$ has characteristic $0$. If $b$ is also an element in a difference field extension of $K$ and $a$ and $b$ are $\sigma$-transcendental over $K$,
then there is a unique difference field isomorphism $K\langle a \rangle \to K \langle b \rangle$ over $K$ sending $a$ to $b$.

A {\em minimal
$\sigma$-polynomial of $a$ over $K$ \/} is a
nonzero $\sigma$-polynomial $F(x)$ over $K$ such that $F(a)=0$ and $G(a) \neq
0$ for all nonzero $\sigma$-polynomials $G(x)$ over $K$ of lower
complexity than $F(x)$. So $a$ has a minimal $\sigma$-polynomial over $K$ iff
$a$ is $\sigma$-algebraic over $K$. 
{\em Suppose $b$ is also an element
in some difference field extension of $K$, and $a$ and $b$ have a common minimal
$\sigma$-polynomial $F(x)$ over $K$. Is there a difference field
isomorphism $K\langle a \rangle \to K\langle b \rangle$ over $K$ sending $a$ to $b$?}
The answer is not always {\em yes}, but it is {\em yes\/} if $F$ is of degree $1$
in $\sigma^m(x)$ with $F$ of order $m$. Another case in which the answer is {\em yes\/} is treated in Lemma~\ref{extresa} below.

A difference field extension $L$ of $K$
is said to be {\em $\sigma$-algebraic\/} over $K$ if each $c\in L$ is $\sigma$-algebraic
over $K$. For example, if $a$ is $\sigma$-algebraic over $K$, then 
$K\langle a\rangle$ is $\sigma$-algebraic over $K$.

\bigskip\noindent
Let $x_0,\dots,x_n,y_0,\dots,y_n$ be distinct indeterminates, and put
$\mathbf{x}=(x_0,\dots,x_n)$, $\mathbf{y}=(y_0,\ldots,y_n)$. 
For a polynomial $f(\mathbf{x})$ over a field $K$ we have a unique
Taylor expansion in $K[\mathbf{x},\mathbf{y}]$:
$$f(\mathbf{x}+ \mathbf{y})=\sum_{\i}f_{(\i)}(\mathbf{x})\cdot
\mathbf{y}^{\i} ,$$
where the sum is over all $\i = (i_0,\dots,i_n)\in{\mathbb
N}^{n+1}$, each $f_{(\i)}(\mathbf{x})\in K[\mathbf{x}]$, with
$f_{(\i)}=0$ for $|\i|:=i_0+\cdots+i_n > \deg F$, and  
$\mathbf{y}^{\i}:=y_0^{i_0}\cdots y_n^{i_n}$. (Also, for a
tuple $a=(a_0,\dots,a_n)$ with components $a_i$ in any field we put 
$a^{\i}:=a_0^{i_0}\cdots a_n^{i_n}$.)
Thus $\i! f_{(\i)}(\mathbf{x}) =\partial_{\i} f$ 
where $\partial_{\i}$ is the operator $({\partial}/\partial x_0)^{i_0}
\cdots
({\partial}/\partial x_n)^{i_n}$ on $K[\mathbf{x}]$, 
and \linebreak $\i!=i_0!\cdots i_n!$.  
We construe ${\mathbb N}^{n+1}$ as a monoid under $+$ (componentwise addition),  
and let $\leq$ be the (partial)
product ordering on ${\mathbb N}^{n+1}$ induced by the natural order on ${\mathbb N}$.
Define $\left(
\begin{array}{c}
\i\\
\j
\end{array}
\right)$
 as
$\left(
\begin{array}{c}
i_0\\
j_0
\end{array}
\right)\cdots
\left(
\begin{array}{c}
i_n\\
j_n
\end{array}
\right)
\in {\mathbb N}$, when $\j\leq \i$ in ${\mathbb N}^{n+1}$.  Then:

\begin{lemma} 
For $\i,\j \in {\mathbb N}^{n+1}$ we have $(f_{(\i)})_{(\j)}=
\left(
\begin{array}{c}
\i+\j\\
\i
\end{array}
\right) f_{(\i+ \j)}$.
\end{lemma}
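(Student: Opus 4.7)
My plan is to prove the identity by computing $f(\mathbf{x}+\mathbf{y}+\mathbf{z})$, where $\mathbf{z}=(z_0,\dots,z_n)$ is a fresh tuple of indeterminates, in two different ways and then comparing the coefficients of the monomials $\mathbf{y}^{\i}\mathbf{z}^{\j}$. The whole argument is a formal bookkeeping exercise inside the polynomial ring $K[\mathbf{x},\mathbf{y},\mathbf{z}]$; no analysis or characteristic assumption is needed, since the Taylor expansion as defined in the paper is purely algebraic.

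For the first computation, I would apply the Taylor expansion with $\mathbf{y}+\mathbf{z}$ playing the role of the displacement:
\[
f(\mathbf{x}+(\mathbf{y}+\mathbf{z})) \;=\; \sum_{\k} f_{(\k)}(\mathbf{x})\,(\mathbf{y}+\mathbf{z})^{\k}.
\]
Expanding each $(\mathbf{y}+\mathbf{z})^{\k}$ componentwise by the binomial theorem yields $(\mathbf{y}+\mathbf{z})^{\k}=\sum_{\i+\j=\k}\binom{\k}{\i}\mathbf{y}^{\i}\mathbf{z}^{\j}$, where $\binom{\k}{\i}=\binom{\i+\j}{\i}$ in the notation of the lemma. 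Reindexing by $\i,\j$ gives
\[
f(\mathbf{x}+\mathbf{y}+\mathbf{z}) \;=\; \sum_{\i,\j}\binom{\i+\j}{\i}\,f_{(\i+\j)}(\mathbf{x})\,\mathbf{y}^{\i}\mathbf{z}^{\j}.
\]

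For the second computation, I would instead expand in $\mathbf{y}$ around the base point $\mathbf{x}+\mathbf{z}$, and then expand each of the resulting coefficients in $\mathbf{z}$ around $\mathbf{x}$:
\[
f((\mathbf{x}+\mathbf{z})+\mathbf{y}) \;=\; \sum_{\i} f_{(\i)}(\mathbf{x}+\mathbf{z})\,\mathbf{y}^{\i} \;=\; \sum_{\i,\j} (f_{(\i)})_{(\j)}(\mathbf{x})\,\mathbf{y}^{\i}\mathbf{z}^{\j}.
\]
Since the Taylor coefficients of a polynomial are uniquely determined as the coefficients of the monomials $\mathbf{y}^{\i}\mathbf{z}^{\j}$ in $K[\mathbf{x}][\mathbf{y},\mathbf{z}]$, comparing the two expressions gives the desired identity $(f_{(\i)})_{(\j)}=\binom{\i+\j}{\i}f_{(\i+\j)}$.

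There is no real obstacle here: the only thing that needs to be checked carefully is that the two iterated Taylor expansions in the second computation are legitimate (which is immediate from the definition, since $f_{(\i)}$ is itself a polynomial in $\mathbf{x}$) and that the binomial identity $\binom{\k}{\i}=\binom{i_0+j_0}{i_0}\cdots\binom{i_n+j_n}{i_n}$ agrees with the lemma's convention for $\binom{\i+\j}{\i}$, which it does by definition.
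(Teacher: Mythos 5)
Your proof is correct, and it is the standard argument for this identity (expanding $f(\mathbf{x}+\mathbf{y}+\mathbf{z})$ in two ways and comparing coefficients of $\mathbf{y}^{\i}\mathbf{z}^{\j}$); the paper itself states the lemma without proof, treating it as a known fact about Hasse-type Taylor coefficients. Both substitutions you use are legitimate ring homomorphisms of polynomial rings, so the bookkeeping is exactly as you describe and no further justification is needed.
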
 

\noindent
In particular, $f_{(\i)}=f$ for $|\i|=0$,
and if
$|\i|=1$ with $i_k=1$, then
$f_{(\i)}= \frac{\partial f}{\partial x_k}.$  Also, 
$\deg f_{(\i)}< \deg f$ if $|\i|\geq 1$ and $f\ne 0$.

\medskip\noindent
Let $K$ be a difference field, and $x$ an indeterminate. 
When $n$ is clear from context we set 
$\boldsymbol{\sigma}(x)=(x, \sigma(x),\dots,\sigma^n(x))$, and also
$\boldsymbol{\sigma}(a)=(a, \sigma(a),\dots,\sigma^n(a))$ for $a\in K$. 
Then for $f\in K[x_0,\dots,x_n]$ as above and 
$F(x)= f(\bsigma(x))$ we have the 
following identity in the ring of difference polynomials in the distinct
indeterminates $x$ and $y$ over $K$:
\begin{eqnarray*}
F(x+y)&=&f(\bsigma(x+y))
=f(\bsigma(x)+\bsigma(y))\\
&=&\sum_\i f_{(\i)}(\bsigma(x))\cdot
\bsigma(y)^{\i} = \sum_\i F_{(\i)}(x)\cdot
\bsigma(y)^{\i},
\end{eqnarray*}
where $F_{(\i)}(x):=f_{(\i)}(\bsigma(x)).$

\bigskip\noindent
{\bf Valued fields.}
We consider valued fields as three-sorted 
structures $$\ca{K}=(K,\Gamma,\bk;v, \pi)$$ as explained in the introduction.
The three sorts are referred to as the {\em field sort\/} with variables
ranging over $K$, the {\em value group sort\/} with variables ranging over
$\Gamma$, and the {\em residue sort\/} with variables ranging over $\bk$. We 
say that $\ca{K}$ is of {\em equal characteristic $0$} if 
$\cha(K)=\cha(\bk)=0$. If $\cha(K)=0$ and $\cha(\bk)=p>0$, we say that 
$\ca{K}$ is of {\em mixed characteristic}.  

In dealing with a valued field $\ca{K}$ as above we also let
$v$ denote the valuation of any valued field
extension of $\ca{K}$ that gets mentioned,
unless we indicate otherwise, and any subfield $E$ of $K$ is construed 
as a {\em valued\/} subfield of $\ca{K}$ in the obvious way. 

\medskip\noindent
A valued field extension $\ca{K}'$ of a valued field $\ca{K}$ is said to be {\em immediate} if the residue field and the value group of $\ca{K}'$ are
the same as those of $\ca{K}$. A valued field is {\em maximal} if it has 
no proper immediate valued field extension and is {\em algebraically maximal} if it has no proper immediate algebraic valued field extension.

\medskip\noindent
A key notion in the study of immediate extensions of valued fields
is that of pseudo-cauchy sequence. First, a {\em well-indexed sequence}
is a sequence $\{a_\rho\}$ indexed by the elements $\rho$ of some
nonempty well-ordered set without largest element; in this connection
``eventually'' means ``for all sufficiently large $\rho$''.

Let $\ca{K}$ be a valued field.
A {\em pseudo-Cauchy sequence} (henceforth {\em pc-sequence}) in 
$\ca{K}$ is a well-indexed sequence
$\{a_\rho\}$ in $K$ such that for some index
$\rho_0 $, $$\rho'' > \rho' > \rho \ge \rho_0\ \Longrightarrow\ 
v(a_{\rho''}-a_{\rho'}) > v(a_{\rho'}-a_\rho).$$
In particular, a pc-sequence in $\ca{K}$ cannot be eventually constant.
For a well-indexed sequence
$\{a_\rho\}$ in $\ca{K}$ and $a$ in some valued field extension of $\ca{K}$ 
we say that 
$\{a_\rho\}$ {\em pseudoconverges\/} to $a$, or $a$ is a 
{\em pseudolimit\/} of 
$\{a_\rho\}$ (notation: $a_\rho \leadsto a$) if the sequence 
$\{v(a-a_\rho)\}$ is eventually strictly increasing; note that then
$\{a_\rho\}$ is a pc-sequence in $\ca{K}$.  

Let $\{a_\rho\}$ be a pc-sequence in $\ca{K}$, pick $\rho_0$ 
as above, and put 
$$\gamma_\rho :=v(a_{\rho'}-a_\rho)$$ for
$\rho'>\rho \ge \rho_0$; this depends only on $\rho$ as the notation
suggests. Then $\{\gamma_\rho\}_{\rho \ge \rho_0}$ is strictly increasing.
The {\em width} of $\{a_\rho\}$ is the set
$$\{\gamma \in
\Gamma\cup\{\infty\}:\gamma>\gamma_\rho\ \mbox{for all}\ \rho \ge \rho_0\}.$$ 
Its significance is that if $a,b\in K$ and $a_\rho \leadsto a$, then 
$a_\rho \leadsto b$ if and only if $v(a-b)$ is in the width of $\{a_\rho\}$.

An old and useful observation by Macintyre is that if $\{a_\rho\}$ is a pc-sequence 
in an expansion of a valued field (for example, in a valued difference field), 
then $\{a_\rho\}$ has a pseudolimit in an elementary extension of that
expansion.

\medskip\noindent
The following easy lemma will be useful in dealing with pc-sequences.

\begin{lemma}\label{kapla} Let $\Gamma$ be an ordered abelian group, 
$A$ a subset of
$\Gamma$, and $\{\gamma_\rho\}$ a well-indexed strictly increasing sequence 
in $A$. Let $f_1,\dots,f_n: A \to \Gamma$ be such that
for all distinct $i,j\in \{1,\dots,n\}$ the function
$f_i - f_j$ is either strictly increasing or strictly decreasing.
Then there is a unique enumeration $i_1,\dots,i_n$ of $\{1,\dots,n\}$
such that 
$$f_{i_1}(\gamma_\rho) < \dots < f_{i_n}(\gamma_\rho), \quad \text{eventually}.$$
For this enumeration and $\delta\in \Gamma$ such that
$\{\gamma\in \Gamma:\ 0 < \gamma < \delta\}$ is finite, if
$1\le \mu < \nu \le n$, then
$f_{i_\nu}(\gamma_\rho) - f_{i_\mu}(\gamma_\rho) > \delta,$ eventually.
\end{lemma}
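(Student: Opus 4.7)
The plan is to first show that for each pair $i \ne j$ the sign of $f_i(\gamma_\rho) - f_j(\gamma_\rho)$ stabilizes, then patch these per-pair outcomes into a single total order via transitivity, and finally exploit the finiteness hypothesis on $\delta$ to upgrade the strict inequalities to the quantitative lower bound.

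Fix distinct $i,j$. Since $f_i - f_j \colon A \to \Gamma$ is strictly monotonic and $\{\gamma_\rho\}$ is strictly increasing in $A$, the induced well-indexed sequence $f_i(\gamma_\rho) - f_j(\gamma_\rho)$ is strictly monotonic in $\Gamma$, and so attains $0$ at most once. Hence eventually either $f_i(\gamma_\rho) < f_j(\gamma_\rho)$ or $f_i(\gamma_\rho) > f_j(\gamma_\rho)$; declare $i \prec j$ in the first case. This relation is irreflexive and asymmetric by construction, and transitive because the intersection of two ``eventually'' conditions is again ``eventually''. So $\prec$ is a strict linear order on the finite set $\{1,\dots,n\}$, which yields a unique enumeration $i_1,\dots,i_n$ with $i_1 \prec \cdots \prec i_n$. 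Since only finitely many pairwise thresholds are involved, a single index past which all $\binom{n}{2}$ strict inequalities hold simultaneously can be chosen, giving the first conclusion.

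For the quantitative part, fix $\mu < \nu$ and set $g := f_{i_\nu} - f_{i_\mu}$. Then $g \colon A \to \Gamma$ is strictly monotonic and $g(\gamma_\rho) > 0$ eventually by the previous paragraph. If $g$ is strictly increasing, the sequence $g(\gamma_\rho)$ is strictly increasing and hence injective; were it to satisfy $g(\gamma_\rho) < \delta$ for all large $\rho$, it would eventually take infinitely many distinct values in the finite set $\{\gamma : 0 < \gamma < \delta\}$, which is absurd. Thus some $g(\gamma_{\rho_1}) \geq \delta$, and by strict increase $g(\gamma_\rho) > \delta$ for all $\rho > \rho_1$. The strictly decreasing case is symmetric: if $g(\gamma_{\rho_1}) \leq \delta$ for some $\rho_1$, then strict decrease and eventual positivity together would force infinitely many terms into $\{\gamma : 0 < \gamma < \delta\}$, a contradiction.

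The only place that needs genuine care is the quantitative part: one must remember that in an ordered abelian group a strictly monotonic sequence takes each value at most once, so it cannot be eventually confined to a finite positive interval below $\delta$. With that observation in place the rest is routine bookkeeping; the first part is essentially definitional once the per-pair dichotomy is established, and the patching step is automatic because $n$ is finite.
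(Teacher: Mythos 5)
Your proof is correct. The paper states Lemma~\ref{kapla} without proof (it calls the lemma easy and credits the idea, for linear functions, to Kaplansky), and your argument is exactly the routine one it has in mind: each strictly monotone difference sequence has eventually constant sign, transitivity assembles these into a strict total order, and injectivity of a strictly monotone sequence clashes with the finiteness of $\{\gamma:\ 0<\gamma<\delta\}$ to give the quantitative bound. One cosmetic point: the eventual-sign dichotomy follows from strict monotonicity itself (once a strictly increasing sequence is $\ge 0$ it stays $>0$ thereafter, and otherwise it is always negative), not merely from the observation that it attains $0$ at most once.
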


\noindent
For linear functions on $\Gamma$ this was used by Kaplansky~\cite{kaplansky} 
in his work on immediate extensions of valued fields. The last part of the 
lemma is needed in dealing with finitely ramified valued fields of mixed
characteristic. As in \cite{BMS} we call the valued field $\ca{K}$
{\em finitely ramified\/} if the following two conditions are satisfied:
\begin{enumerate}
\item[(i)] $K$ has characteristic $0$;
\item[(ii)] $\{\gamma\in \Gamma:\ 0<\gamma < v(p)\}$ is finite if
$\bk$ has characteristic $p>0$.
\end{enumerate} 
In particular, $\ca{K}$ is finitely ramified if
$\ca{K}$ is unramified as defined in the introduction.

\medskip\noindent
Let $\ca{K}=(K, \Gamma, \bk; v, \pi)$ be a valued field. A {\em cross-section\/}
on $\ca{K}$ is a group morphism $s: \Gamma \to K^\times$ such that $v(s\gamma)=\gamma$ for all $\gamma\in \Gamma$. The following is well-known:

\begin{lemma}\label{xs1} If $\ca{K}$ is $\aleph_1$-saturated, then there is 
a cross-section on $\ca{K}$. In particular, there is a
cross-section on some elementary extension of $\ca{K}$.
\end{lemma}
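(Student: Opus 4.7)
My plan is a Zorn's lemma construction, with $\aleph_1$-saturation used at the crucial extension step.

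Let $\mathcal{S}$ denote the set of pairs $(\Delta, s)$ where $\Delta$ is a subgroup of $\Gamma$ and $s \colon \Delta \to K^\times$ is a group homomorphism satisfying $v \circ s = \mathrm{id}_\Delta$, partially ordered by extension. Since $(\{0\}, 0 \mapsto 1) \in \mathcal{S}$ and every chain has the union as an upper bound, Zorn's lemma produces a maximal element $(\Delta, s)$. It suffices to prove $\Delta = \Gamma$. Suppose for contradiction that $\gamma \in \Gamma \setminus \Delta$; I split into two cases.

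If $n\gamma \notin \Delta$ for every positive integer $n$ (Case~A), then $\Delta + \mathbb{Z}\gamma = \Delta \oplus \mathbb{Z}\gamma$ is a direct sum: by surjectivity of $v$ pick $a \in K^\times$ with $v(a) = \gamma$, and extend $s$ to this direct sum by $s(\delta + n\gamma) := s(\delta)a^n$, contradicting maximality. Otherwise let $m \geq 2$ be the smallest positive integer with $m\gamma \in \Delta$ (Case~B); one seeks $a \in K^\times$ with $v(a) = \gamma$ and $a^m = s(m\gamma)$, possibly after modifying $s$ on $\Delta$ by a character $\chi \colon \Delta \to \mathcal{O}^\times$.

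Case~B is the main obstacle, since a bare $m$-th root of $s(m\gamma)$ need not exist in $\mathcal{K}$. The remedy combines an algebraic observation with model-theoretic saturation. Algebraically, the minimality of $m$ together with torsion-freeness of $\Gamma$ forces $\gcd(k,m) = 1$ whenever $m\gamma = k\delta$ in $\Delta$ (otherwise $(m/\gcd(k,m))\gamma \in \Delta$ would contradict minimality of $m$). This coprimality permits a character-twist of $s$ that converts the obstruction $a^m = s(m\gamma)$ into a condition of the form $a^m = s(m\gamma)\chi(m\gamma)$ for some character $\chi$. The existence of a compatible pair $(\chi, a)$ is then captured by a partial type in finitely many variables over finitely many parameters, and $\aleph_1$-saturation of $\mathcal{K}$ realizes it in $\mathcal{K}$, contradicting maximality. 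The ``in particular'' clause follows by passing to an $\aleph_1$-saturated elementary extension of any $\mathcal{K}$.
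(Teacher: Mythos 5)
Your Case A is fine, but Case B --- which you correctly identify as the heart of the matter --- does not go through as written, for two independent reasons. First, once you modify $s$ on $\Delta$ by a character $\chi$, the resulting cross-section on $\Delta+\mathbb{Z}\gamma$ restricts on $\Delta$ to $s\chi$, not to $s$; it therefore does not lie above $(\Delta,s)$ in your poset and does not contradict the maximality of $(\Delta,s)$. This is not a repairable bookkeeping slip: maximal elements of your poset genuinely need not be total. Already in $\mathbb{Q}_p$ (where a cross-section obviously exists), take a unit $u$ that is not a square and set $\Delta=2\mathbb{Z}$, $s(2n)=(p^2u)^n$; any extension of this $s$ to $\mathbb{Z}$ would give $a$ with $v(a)=1$ and $a^2=p^2u$, hence $(a/p)^2=u$, a contradiction. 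So $(2\mathbb{Z},s)$ is maximal but $\Delta\neq\Gamma$, and Zorn's lemma alone cannot finish the proof. Second, the assertion that the existence of a compatible pair $(\chi,a)$ is ``captured by a partial type in finitely many variables over finitely many parameters'' is unjustified: $\chi$ is a homomorphism defined on all of $\Delta$, which may be uncountable, the solvability of the extension problem for $\chi$ (prescribing $\chi(m\gamma)$ and extending to $\Delta$) is not a first-order condition on finitely many elements of $K$, and in any case $\aleph_1$-saturation only realizes types over countable parameter sets. The coprimality observation is correct but does not by itself produce the required character.

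The paper's proof sidesteps all of this with a purely algebraic splitting argument: the exact sequence $1\to U(\mathcal{O})\to K^\times\to\Gamma\to 0$ has $U(\mathcal{O})$ pure in $K^\times$ because $\Gamma$ is torsion-free (if $x^n\in U(\mathcal{O})$ then $nv(x)=0$, so $v(x)=0$), and $U(\mathcal{O})$, being $\aleph_1$-saturated, is pure-injective; hence the sequence splits, and a splitting of $v$ is exactly a cross-section. If you want to keep a hands-on flavour, you would essentially have to reprove this pure-injectivity, which means solving a whole compatible system of conditions over $\Delta$ at once rather than extending one step at a time from a maximal element.
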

\begin{proof} With $U(\mathcal{O})$ the multiplicative group of units of 
$\mathcal{O}$, the inclusion $U(\mathcal{O}) \to K^\times$ and 
$v: K^\times \to \Gamma$ yield the exact sequence of abelian groups
$$ 1\ \to\ U(\mathcal{O})\ \to\  K^\times\ \to\  \Gamma\ \to\  0.$$
Suppose $\ca{K}$ is $\aleph_1$-saturated. Then the group 
$U(\mathcal{O})$ is $\aleph_1$-saturated, hence pure-injective by 
\cite{cherlin}, p. 171. It is also pure in $K^\times$ since $\Gamma$ is 
torsion-free, and thus the above exact sequence splits. 
\end{proof} 

\noindent
In the proof of Theorem~\ref{embed11a} we need the following variant:

\begin{lemma}\label{xs2}
Let $\ca{K}$ be $\aleph_1$-saturated, let $\ca{E}=(E, \Gamma_E, \dots)$ 
be an $\aleph_1$-saturated valued subfield of $\ca{K}$ such that 
$\Gamma_E$ is pure in $\Gamma$, and let $s_E$ be a cross-section on $\ca{E}$. 
Then $s_E$ extends to a cross-section on $\ca{K}$.
\end{lemma}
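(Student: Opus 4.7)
My plan is to mimic the proof of Lemma~\ref{xs1}, applied not to the short exact sequence $1 \to U(\ca{O}) \to K^\times \to \Gamma \to 0$ directly, but to its quotient by the subgroup $s_E(\Gamma_E) \subseteq K^\times$. Since $s_E(\Gamma_E) \cap U(\ca{O}) = \{1\}$, the image of $U(\ca{O})$ still injects, and we obtain a short exact sequence of abelian groups
$$1 \to U(\ca{O}) \to K^\times/s_E(\Gamma_E) \to \Gamma/\Gamma_E \to 0.$$
If this sequence splits via a group morphism $r : K^\times/s_E(\Gamma_E) \to U(\ca{O})$ restricting to the identity on $U(\ca{O})$, then composing with the quotient map gives a group morphism $\hat r : K^\times \to U(\ca{O})$ which is the identity on $U(\ca{O})$ and trivial on $s_E(\Gamma_E)$. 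Setting $s(\gamma) := a \cdot \hat r(a)^{-1}$ for any $a \in K^\times$ with $v(a) = \gamma$ then yields a well-defined group morphism $s : \Gamma \to K^\times$ with $v \circ s = \mathrm{id}_\Gamma$ and $s|_{\Gamma_E} = s_E$ (the latter because $\hat r(s_E(\gamma)) = 1$ for $\gamma \in \Gamma_E$).

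To split the sequence I invoke pure-injectivity, exactly as in Lemma~\ref{xs1}: the $\aleph_1$-saturation of $\ca{K}$ makes $U(\ca{O})$ an $\aleph_1$-saturated abelian group, hence pure-injective by \cite{cherlin}. The main step -- and the only substantive piece of the argument -- is verifying that $U(\ca{O})$ is a \emph{pure} subgroup of $K^\times/s_E(\Gamma_E)$; this is precisely where the hypothesis that $\Gamma_E$ be pure in $\Gamma$ enters. Suppose $a \in K^\times$ and $n \geq 1$ are such that $(a \cdot s_E(\Gamma_E))^n$ lies in $U(\ca{O})$ inside the quotient; then $a^n = u \cdot s_E(\gamma_0)$ for some $u \in U(\ca{O})$ and $\gamma_0 \in \Gamma_E$, so $n\cdot v(a) = \gamma_0 \in \Gamma_E$. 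Purity of $\Gamma_E$ in the torsion-free $\Gamma$ forces $v(a) \in \Gamma_E$, and then $a \cdot s_E(-v(a)) \in U(\ca{O})$ is a representative of the coset $a \cdot s_E(\Gamma_E)$ lying in $U(\ca{O})$, as required.

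The remaining steps are entirely formal: apply pure-injectivity to obtain $r$, lift to $\hat r$, define $s$ as above, and check well-definedness, homomorphism property, $v\circ s = \mathrm{id}_\Gamma$, and extension of $s_E$ from the fact that $s_E$ itself is a group morphism with $v \circ s_E = \mathrm{id}_{\Gamma_E}$. Note that the $\aleph_1$-saturation of $\ca{E}$ does not appear to be needed in this strategy; presumably it is assumed so that the given $s_E$ exists in the first place via Lemma~\ref{xs1}.
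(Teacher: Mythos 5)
Your proof is correct, but it takes a genuinely different route from the paper's. The paper splits the \emph{value group}: since $\ca{E}$ is $\aleph_1$-saturated, $\Gamma_E$ is an $\aleph_1$-saturated abelian group, hence pure-injective, and being pure in $\Gamma$ it is a direct summand, $\Gamma=\Gamma_E\oplus\Delta$; one then defines the extension by using $s_E$ on $\Gamma_E$ and the restriction to $\Delta$ of an arbitrary cross-section on $\ca{K}$ supplied by Lemma~\ref{xs1}. You instead work upstairs in $K^\times$, quotient by $s_E(\Gamma_E)$, and split the sequence $1\to U(\ca{O})\to K^\times/s_E(\Gamma_E)\to \Gamma/\Gamma_E\to 0$ using pure-injectivity of $U(\ca{O})$; your purity verification (reducing $n\cdot v(a)\in\Gamma_E$ to $v(a)\in\Gamma_E$ via purity of $\Gamma_E$ and torsion-freeness of $\Gamma$) is exactly right, and the formal steps defining $s$ from the retraction $\hat r$ all check out. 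A noteworthy byproduct: your closing observation is correct --- your argument never uses the $\aleph_1$-saturation of $\ca{E}$, only that of $\ca{K}$ together with purity of $\Gamma_E$ in $\Gamma$, so you in fact prove a slightly stronger statement than the lemma as stated; in the paper's proof the saturation of $\ca{E}$ is essential, since it is what makes $\Gamma_E$ pure-injective and hence a direct summand. The trade-off is that the paper's argument is shorter once Lemma~\ref{xs1} is in hand, whereas yours is self-contained at the level of the exact sequence and weakens the hypotheses.
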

\begin{proof} By Lemma~\ref{xs1} we have a cross-section $s$ on $\ca{K}$. Now $\Gamma_E$ is pure-injective, and pure in $\Gamma$, so we have an internal direct sum decomposition $\Gamma=\Gamma_E \oplus \Delta$ with $\Delta$ a subgroup of $\Gamma$. This gives a cross-section on $\ca{K}$ that concides with 
$s_E$ on $\Gamma_E$ and with $s$ on $\Delta$.
\end{proof}

\noindent
An {\em angular component map\/} on $\ca{K}$ is a 
multiplicative group morphism $\ac \colon K^{\times} \to \bk^{\times}$
such that $\ac(a)=\pi(a)$ whenever $v(a)=0$; we extend it to $\ac \colon K \to \bk$ by setting $\ac(0)=0$ and also refer to this extension as an angular 
component map on $\ca{K}$. A cross-section $s$ on $\ca{K}$ yields an angular 
component map $\ac$ on $\ca{K}$ by setting $\ac(x)= \pi\big(x/s(v(x))\big)$ for 
$x\in K^\times$. Thus Lemma~\ref{xs1} goes through with angular component maps
instead of cross-sections.  



\bigskip\noindent
{\bf Valued difference fields.} A {\em valued difference field\/} is a valued field $\ca{K}$ as above 
where $K$ is not just a field, but a difference field whose
difference operator $\sigma$ satisfies
$\sigma(\ca{O})= \ca{O}$. It follows that
$\sigma$ induces an automorphism of the residue field:
$$ \pi(a) \mapsto \pi(\sigma(a)):\ \bk \to \bk, \quad a \in \ca{O}.$$  
We denote this automorphism by $\bar{\sigma}$, and $\bk$ equipped with
$\bar{\sigma}$ is called the {\em residue difference field of $\ca{K}$}. (Likewise, $\sigma$ induces an automorphism of the value group $\Gamma$; at a later stage
we restrict attention to $\ca{K}$ where $\sigma$ induces the identity on $\Gamma$.) 

\medskip\noindent
Let $\ca{K}$ be a valued difference field as above. The difference operator $\sigma$ of $K$ is also referred to as the {\em difference operator of $\ca{K}$}. By an {\em extension\/} of $\ca{K}$ we shall mean a valued difference field $\ca{K}'=(K',\dots)$ that extends $\ca{K}$ as a valued field and whose difference operator extends the difference operator of $\ca{K}'$. In this situation we also say that $\ca{K}$ is a {\em valued difference subfield of\/} $\ca{K}'$, and we indicate this by $\ca{K} \le \ca{K}'$. Such an extension is called {\em immediate\/} 
if it is immediate as an extension of valued fields.
In dealing with a valued difference field $\ca{K}$ as above $v$ also denotes the valuation of any extension of $\ca{K}$ that 
gets mentioned (unless specified otherwise), and any difference 
subfield $E$ of $K$ is construed 
as a valued difference subfield of $\ca{K}$ in the obvious way. The 
residue field of the valued subfield $\text{Fix}(K)$ of $\ca{K}$ is clearly a
subfield of $\text{Fix}(\bk)$. 

Let $\ca{K}^h= (K^h, \Gamma, \bk;\dots)$ be the henselization of the 
underlying valued field of $\ca{K}$. By the universal property of ``henselization''
the operator $\sigma$ extends uniquely to an automorphism $\sigma^h$ of the field $K^h$ such that $\ca{K}^h$ with $\sigma^h$ is a valued difference field. Accordingly we shall view $\ca{K}^h$ as a valued difference field, making it thereby an immediate extension of the valued difference field $\ca{K}$.

Given an extension  $\ca{K} \leq \ca{K}'$ of valued difference fields and 
$a \in K'$, we define $\ca{K} \langle a \rangle$ to be the 
smallest valued difference subfield of $\ca{K}'$ extending $\ca{K}$ and 
containing $a$ in its underlying difference field; thus
the underlying difference field of $\ca{K} \langle a \rangle$ 
is $K \langle a \rangle$.

\bigskip\noindent
{\bf Two lemmas.} Suppose
$\ca{K}=(K,\Gamma,\bk;v, \pi)$ and $\ca{K}'=(K', \Gamma', \bk'; v', \pi')$ are valued difference fields, put $\ca{O}:= \ca{O}_v,\ \ca{O}':= \ca{O}_{v'}$, and 
let $\sigma$ denote both the difference operator of $\ca{K}$ and of $\ca{K}'$.  Let $\ca{E}=(E, \Gamma_E, \bk_E; \dots)$ be a valued difference subfield of both $\ca{K}$ and $\ca{K}'$, that is,
$\ca{E} \le \ca{K}$ and $\ca{E}\le \ca{K}'$.
The next lemma is rather obvious, but Lemma~\ref{extresa} is more subtle and our later use of it is one of the reasons that we never need the 
Genericity Axiom of \cite{BMS}.

\begin{lemma}\label{extrest}  Let $a \in \ca{O}$ and assume
$\alpha=\bar{a}$ is $\bar{\sigma}$-transcendental over $\bk_E$. 
Then \begin{enumerate}
\item[(i)] $v(P(a))= \min\limits_{\l} \{v(b_{\l})\}$  for each
$\sigma$-polynomial $P(x)=\sum b_{\l} \boldsymbol{\sigma}^{\l}(x)$ over
$E$;
\item[(ii)]  $v(E \langle a \rangle^{\times})=v(E^{\times})=\Gamma_E$, and
 $\ca{E} \langle a \rangle$ has residue field $\bk_E \langle
\alpha \rangle$;
\item[(iii)] if $b\in \ca{O}'$ is such that $\beta=\bar{b}$ is $\bar{\sigma}$-transcendental over $\bk_E$, then there is
a valued difference field isomorphism $\ca{E}\langle a \rangle \to  \ca{E} \langle b \rangle$ over $\ca{E}$ sending $a$ to $b$. 
\end{enumerate} 
\end{lemma}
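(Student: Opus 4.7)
\bigskip\noindent
\textbf{Proof plan for Lemma~\ref{extrest}.}

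Part (i) is the core of the lemma, and the other two parts will follow from it with little extra work. Let $\gamma := \min_{\l} v(b_\l)$ (with the convention $\min \emptyset = \infty$, so that the statement is trivial when $P=0$). The inequality $v(P(a)) \ge \gamma$ is immediate, since $a \in \ca{O}$ and $\sigma(\ca{O})=\ca{O}$ give $v(\bsigma^{\l}(a))\ge 0$ for every $\l$. For the reverse inequality I would pick a coefficient $b_{\l_0}$ realizing the minimum and consider $P(a)/b_{\l_0} = \sum_\l (b_\l/b_{\l_0})\,\bsigma^{\l}(a)$. All coefficients $b_\l/b_{\l_0}$ lie in $\ca{O}_E$ and the one indexed by $\l_0$ is a unit, so reducing modulo $\fr{m}$ yields
\[
\overline{P(a)/b_{\l_0}} \;=\; \sum_{\l} \overline{b_\l/b_{\l_0}}\cdot\bsigma^{\l}(\alpha),
\]
using the crucial identity $\overline{\sigma^{i}(a)}=\bar\sigma^{i}(\alpha)$, which comes directly from the definition of $\bar\sigma$. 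The right-hand side is the value at $\alpha$ of a nonzero $\bar\sigma$-polynomial over $\bk_E$, hence is nonzero by the $\bar\sigma$-transcendence of $\alpha$. Thus $v(P(a)/b_{\l_0})=0$, giving $v(P(a))=\gamma$, as wanted.

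For part (ii), every nonzero $c \in E\langle a \rangle$ is of the form $P(a)/Q(a)$ with $P,Q$ nonzero $\sigma$-polynomials over $E$ and $Q(a)\ne 0$; part (i) then places $v(c)$ in $\Gamma_E$, yielding $v(E\langle a\rangle^\times) = \Gamma_E$. For the residue field, given $c$ with $v(c)\ge 0$, pick $d\in E^\times$ whose valuation equals the minimum of the coefficient valuations in $Q$, and rescale to $P_1:=P/d$, $Q_1:=Q/d$; both are $\sigma$-polynomials over $\ca{O}_E$, at least one coefficient of $Q_1$ is a unit, and by (i) $v(Q_1(a))=0$ while $v(P_1(a))\ge 0$. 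Reducing gives $\bar c = \overline{P_1(a)}/\overline{Q_1(a)} \in \bk_E\langle\alpha\rangle$. The reverse inclusion $\bk_E\langle\alpha\rangle \subseteq $ residue field of $\ca{E}\langle a\rangle$ is obvious from $\overline{\sigma^i(a)}=\bar\sigma^i(\alpha)$.

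For part (iii), note first that $a$ is $\sigma$-transcendental over $E$: otherwise some nonzero $\sigma$-polynomial $P$ over $E$ satisfies $P(a)=0$, which by (i) forces all coefficients of $P$ to have valuation $\infty$, contradicting $P\ne 0$. The same argument gives that $b$ is $\sigma$-transcendental over $E$. By the uniqueness statement for such extensions already recorded in the excerpt, there is a unique difference field isomorphism $E\langle a\rangle \to E\langle b\rangle$ over $E$ sending $a\mapsto b$. Applying part (i) in both $\ca{K}$ and $\ca{K}'$, this isomorphism sends $P(a) \mapsto P(b)$ with $v(P(a))=\min_\l v(b_\l)=v'(P(b))$, so it is valuation-preserving on the image of the $\sigma$-polynomial ring, hence on $E\langle a\rangle$. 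This yields the desired valued difference field isomorphism.

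The only delicate step is (i); once (i) is in hand the rest is essentially bookkeeping. The one thing to watch is the compatibility $\overline{\sigma^i(a)}=\bar\sigma^i(\alpha)$, which hinges on the hypothesis $\sigma(\ca{O})=\ca{O}$ built into the definition of valued difference field.
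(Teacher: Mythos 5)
Your proposal is correct and follows essentially the same route as the paper: dividing $P$ by a coefficient of minimal valuation (the paper writes this as the factorization $P=cQ$ with $Q$ over $\ca{O}_E$ having a coefficient equal to $1$), reducing modulo $\fr{m}$, and invoking the $\bar{\sigma}$-transcendence of $\alpha$ to get (i), with (ii) and (iii) deduced from (i) exactly as the paper does. No issues.
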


\begin{proof} Let
$P(x)=\sum b_{\l} \boldsymbol{\sigma}^{l}(x)$ be a nonzero
$\sigma$-polynomial over $E$. Then $P(x)=cQ(x)$ where $c
\in E^{\times}$ and $v(c)=\min\limits_{\l} \{v(b_{\l})\}$, and $Q(x)$ is a $\sigma$- polynomial over the valuation
ring of $E$ with some coefficient equal to $1$. Since
$\alpha=\bar{a}$ is $\bar{\sigma}$-transcendental over $\bk_E$,
$\bar{Q}(\bar{a}) \neq 0$. Therefore $v(Q(a))=0$, and thus
$$v(P(a))= v(c)=\min\limits_{\l} \{v(b_{\l})\}.$$ 
It follows that $v(E \langle a
\rangle^{\times})=v(E^{\times})$. A similar argument
shows that $E \langle a \rangle$ has 
residue field $\bk_E \langle \alpha \rangle$. 
It also follows from (i) that $a$ is $\sigma$-transcendental over $E$, and (iii) is an easy consequence of this fact and of (i).
\end{proof}

\noindent
Recall that in the beginning of this section we defined the 
complexity of a difference polynomial over a difference field.

\begin{lemma}\label{extresa}\footnote{We thank Martin Hils
for pointing out a serious error in the proof of a related lemma in \cite{salih1}.}
Assume $\operatorname{char}(\bk)=0$, and let $G(x)$ be a nonconstant
$\sigma$-polynomial over the valuation ring of $E$ whose reduction
$\bar{G}(x)$ has the same complexity as $G(x)$. Let $a\in \ca{O}$, 
$b\in \ca{O}'$, and assume that
$G(a)=0$, $G(b)=0$, and that $\bar{G}(x)$ is a minimal $\bar{\sigma}$-polynomial of $\alpha:=\bar{a}$ and of $\beta:= \bar{b}$ over $\bk_E$. Then
\begin{enumerate}
\item[(i)] $\ca{E} \langle a \rangle$
has value group $v(E^{\times})=\Gamma_E$ and residue field 
$\bk_E \langle \alpha \rangle$;
\item[(ii)] if there is a difference field isomorphism $\bk_E\langle \alpha \rangle \to \bk_E \langle \beta \rangle$ over $\bk_E$ sending $\alpha$ to $\beta$, then there is
a valued difference field isomorphism $\ca{E}\langle a \rangle \to  \ca{E} \langle b \rangle$ over $\ca{E}$ sending $a$ to $b$. 
\end{enumerate} 
\end{lemma}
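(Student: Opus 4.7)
The plan is to establish a symbolic ``normal form'' for $\sigma$-polynomials over $E$ evaluated at $a$ (or at $b$) via pseudo-division modulo $G$, and to read off (i) and (ii) as consequences of the fact that this normal form is symmetric in $a$ and $b$. Write $G(x)=g(\bsigma(x))$ with $g\in \ca{O}_E[x_0,\dots,x_n]$ of complexity $(n,d,D)$, and let $I \in \ca{O}_E[x_0,\dots,x_{n-1}]$ be the coefficient of $x_n^d$ in $g$. The assumption that $\bar g$ has the same complexity as $g$ forces $\bar I \ne 0$; as a $\bar\sigma$-polynomial, $\bar I$ has order at most $n-1$, hence complexity strictly less than that of $\bar G$, so minimality of $\bar G$ for $\alpha$ yields $\bar I(\bsigma(\alpha)) \ne 0$. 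Thus $I(\bsigma(a))$ is a unit of $\ca{O}$; symmetrically $I(\bsigma(b))$ is a unit of $\ca{O}'$, and applying $\sigma^j$ to both statements gives the analogue for each shift $\sigma^j(G)$.

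For $P(x)=p(\bsigma(x))$ of order at most $n$, polynomial pseudo-division of $p$ by $g$ in $x_n$ produces $k \in \mathbb{N}$ and $Q,R \in E[x_0,\dots,x_n]$ with $\deg_{x_n} R < d$ and $I^k p = gQ + R$. Evaluating at $\bsigma(a)$ and using $G(a)=0$ together with $v(I(\bsigma(a)))=0$ gives $v(P(a)) = v(R^\ast(a))$, where $R^\ast(x):=r(\bsigma(x))$ has complexity strictly below $G$. Writing $R^\ast = cR'$ with $c \in E^\times$ and $R' \in \ca{O}_E\{x\}$ having some coefficient of value $0$, a second application of minimality of $\bar G$ to the lower-complexity $\bar R'$ gives $\bar R'(\alpha) \ne 0$, so $v(P(a))= v(c) \in \Gamma_E$ and the residue of $P(a)/c$ is $\bar R'(\alpha) \in \bk_E\langle\alpha\rangle$. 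For $P$ of order $m>n$, one iterates, first using $\sigma^{m-n}(G)$ to pseudo-divide in the top variable, then $\sigma^{m-n-1}(G)$ in the next, and so on down to order $n$; combined with the fact that $\sigma^{\pm 1}$ preserve $v$ and $E\langle a\rangle = E(\sigma^i(a):i\in \mathbb{Z})$, this gives (i).

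For (ii), the decisive point is that the pseudo-division is a symbolic identity in $E[x_0,x_1,\dots]$: the scalar $c$ and the remainder $R$ depend only on $P$ and $G$, not on the chosen root. Hence $v(P(b))$ is computed by the same formula as $v(P(a))$, and in particular $P(a) = 0 \iff R = 0 \iff P(b)=0$, so the rule $P(a) \mapsto P(b)$ extends to a valuation-preserving difference field isomorphism $\ca{E}\langle a\rangle \to \ca{E}\langle b\rangle$ over $\ca{E}$ sending $a$ to $b$; the hypothesized residue field isomorphism $\bk_E\langle\alpha\rangle \to \bk_E\langle\beta\rangle$ matches the one it induces on residue fields. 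The main technical obstacle is the iterated pseudo-division when $P$ has order $>n$: one must verify that the products of shifted initials that accumulate all remain units at $\bsigma(a)$ (which reduces via $\bar\sigma^j$ to the minimality step already performed for $\bar I$), and that the successive reductions terminate in a remainder to which the order-$\le n$ case applies.
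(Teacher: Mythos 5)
Your handling of $\sigma$-polynomials of order at most $n=\operatorname{order}(G)$ is fine: there the pseudo-remainder has $x_n$-degree less than $d$, hence strictly lower complexity than $G$, so minimality of $\bar G$ applies. The gap is in the iterated reduction for order $m>n$. Pseudo-dividing successively by $\sigma^{m-n}(G),\dots,\sigma(G)$ lowers the degree of the remainder in each of $x_m,\dots,x_{n+1}$ below $d$, but it does not \emph{eliminate} those variables: the terminal remainder is merely reduced, and if it still involves $x_{n+1},\dots,x_m$ its order exceeds $n$, so its complexity is \emph{higher} than that of $G$ in the lexicographic order. Minimality of $\bar G$ therefore gives no control over $\bar{R}'(\alpha)$, and this quantity can genuinely vanish for nonzero reduced $R'$. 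Concretely, take $\bk_E=\mathbb{Q}$ with $\bar\sigma=\operatorname{id}$ on $\bk_E$, $\bar G(x)=x^2-2$, and $\alpha=\sqrt2$ with $\bar\sigma(\alpha)=-\alpha$: then $\bar G$ is a minimal $\bar\sigma$-polynomial of $\alpha$, while $x_1+x_0$ is nonzero, fully reduced, of higher complexity, and vanishes at $\alpha$. So your normal form does not compute $v(P(a))$ for $P=\sigma(x)+x$, and (i) is not established for the full difference field $E\langle a\rangle$ (which unavoidably involves transforms outside a single window of length $n+1$).

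This failure is fatal for (ii). The chain ``$P(a)=0\iff R=0\iff P(b)=0$'' is false: in the example above, with $\beta=\sqrt2$ but $\bar\sigma(\beta)=\beta$, both $\alpha$ and $\beta$ have $\bar G$ as minimal $\bar\sigma$-polynomial, yet $\sigma(x)+x$ kills (the residue of) $a$ and not of $b$, and $\bk_E\langle\alpha\rangle\not\cong\bk_E\langle\beta\rangle$ as difference fields. Tellingly, your argument never actually uses the hypothesis of (ii) that an isomorphism $\bk_E\langle\alpha\rangle\to\bk_E\langle\beta\rangle$ sending $\alpha$ to $\beta$ exists; but that hypothesis is essential, as the paper stresses in Section 2 when noting that a common minimal $\sigma$-polynomial does not by itself yield an isomorphism. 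The paper's proof avoids the problem by never reducing across more than one transform at a time: it tracks the minimum polynomial $g_{n+1}$ of $a_{n+1}$ over the henselization $E_n^h$ as a monic irreducible factor of $g_n^\sigma$, proves inductively (using $\operatorname{char}(\bk)=0$ for simplicity of the residue roots and Hensel lifting of coprime factorizations --- ingredients absent from your proposal) that $\bar g_{n+1}$ is the minimum polynomial of $\alpha_{n+1}$, and in (ii) uses the given isomorphism $\iota$ precisely to see that $g_{n+1}$ and $h_{n+1}$ correspond. That inductive bookkeeping across transforms is the real content of the lemma and cannot be replaced by a single symbolic pseudo-division identity.
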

\begin{proof} To simplify notation we set, for $k\in \mathbb{Z}$,
$$a_k:= \sigma^k(a),\quad \alpha_k:= \bar{\sigma}^k(\alpha),\quad b_k:= \sigma^k(b),\quad \beta_k:= \bar{\sigma}^k(\beta).$$ 
As in the proof of Lemma~\ref{extrest} one shows that
if $P(x)=\sum b_{\l} \boldsymbol{\sigma}(x)^{\l}$ is a
$\sigma$-polynomial over $E$ of lower complexity than $G(x)$, then
$v(P(a))=\min\limits_{\l}\{v(b_{\l})\}$. It is also clear that $G$ is a minimal $\sigma$-polynomial of $a$ over $E$. Let $G$ have order $m$ and degree 
$d>0$ with respect to $\sigma^m(x)$, so  
$$G(x)= P_0(x) + P_1(x)\sigma^m(x) + \cdots + P_d(x)\sigma^m(x)^d $$ 
where
$P_0, \dots, P_d$ are $\sigma$-polynomials over the valuation ring of $E$ of order less than $m$, with $P_d\ne 0$. 
Then the valued subfield $E_{m-1}:=E(a_0,\dots, a_{m-1})$ of $\ca{K}$ has transcendence basis $a_0,\dots, a_{m-1}$ over $E$, the residue field of $E_{m-1}$ is
$\bk_E\big(\alpha_0,\dots,\alpha_{m-1})$ with transcendence basis 
$\alpha_0,\dots,\alpha_{m-1}$ over $\bk_E$, and the value group of $E_{m-1}$ is $\Gamma_E$. 
 Also,  
$v(P_d(a))=0$ and $v(P_i(a))\ge 0$ for $i=0,\dots, d-1$, and 
$$g(T):=T^d + p_{d-1}T^{d-1} + \dots + p_0, \quad \text{with } p_i:= P_i(a)/P_d(a) \text{ for }i=0,\dots, d-1,$$ 
is the minimum polynomial of $a_m$
over $E_{m-1}$ and has its coefficients in the valuation ring of $E_{m-1}$, 
and the reduction $\bar{g}(T)$ of $g(T)$ is the minimum polynomial of $\alpha_m$ over $\bk_E(\alpha_0,\dots, \alpha_{m-1})$.
For the rest of the proof we assume without loss that $\ca{K}$ and $\ca{K}'$ are henselian as valued fields. 

\medskip\noindent
For $n\ge m$ we set
 $$E_n:= E(a_0, \dots, a_n), \quad \text{ a valued subfield of } \ca{K},$$
we let $E_n^h$ be the henselization of $E_n$ in $\ca{K}$, and let $E_{m-1}^h$ be the henselization of $E_{m-1}$ in $\ca{K}$. For $n\ge m$, let $g_n(T)$ be the minimum polynomial of 
$a_n$ over $E_{n-1}^h$.

\medskip\noindent  
{\bf Claim 1}: for $n\ge m$ the polynomial $g_n$ has its coefficients in the valuation ring of $E_{n-1}^h$, the residue field of $E_n$ is $\bk_E(\alpha_0,\dots, \alpha_n)$, the value group of $E_n$ is $\Gamma_E$, 
the reduction $\bar{g}_n$ of $g_n$ is the minimum polynomial
of $\alpha_n$ over $\bk_E(\alpha_0,\dots, \alpha_{n-1})$.

\medskip\noindent
Claim 1 holds for $n=m$: $\bar{g}(T)$ is
the minimum polynomial
of $\alpha_m$ over the residue field $\bk_E(\alpha_0,\dots, \alpha_{m-1})$ of $E_{m-1}$, and so the monic polynomial $g(T)$ is
necessarily the minimum polynomial of $a_m$ over $E_{m-1}^h$, that is,
$g_m=g$. Assume inductively that the claim holds for a certain $n\ge m$.
By applying $\sigma$ to the coefficients of $g_n(T)$ we obtain a monic polynomial 
$g_n^{\sigma}(T)$ over the valuation ring of 
$E_{n}^h$ with $a_{n+1}$ as a zero. Thus $g_{n+1}(T)$ is a monic irreducible factor of $g_n^{\sigma}(T)$ in $E_{n}^h[T]$, and has therefore coefficients in the valuation ring of $E_{n}^h$. Its reduction $\bar{g}_{n+1}$ divides
the reduction of $g_n^{\sigma}$ (in the polynomial ring $\bk_E( \alpha_0, \dots, \alpha_n)[T]$) and so 
$\alpha_{n+1}$, being a simple zero of this last reduction, is a simple zero of $\bar{g}_{n+1}$. It only remains to show that $\bar{g}_{n+1}$ is irreducible in $\bk_E( \alpha_0, \dots, \alpha_n)[T]$. Suppose it is not.
Then $\bar{g}_{n+1}(T)=\phi(T)\psi(T)$ where $\phi,\psi\in \bk_E( \alpha_0, \dots, \alpha_n)[T]$ are monic of degree $\ge 1$, with $\phi$ irreducible in this polynomial ring and $\phi(\alpha_{n+1})=0$. Hence $\phi$ and
$\psi$ are coprime. Then the factorization $\bar{g}_{n+1}=\phi\psi$ can be lifted to
a nontrivial factorization of $g_{n+1}$ in $E_{n}^h[T]$, a contradiction.
Claim 1 is established.

It follows that $E(a_k:\ k\in \mathbb{N})$ has residue field 
$\bk_E(\alpha_k:\ k\in \mathbb{N})$ and value group $\Gamma_E$. Applying the valued field automorphism
$\sigma^{-n}$ yields that the valued subfield $E(a_{k-n}:\ k\in \mathbb{N})$ of $\ca{K}$ has residue field 
$\bk_E(\alpha_{k-n}:\  k\in \mathbb{N})$ and value group $\Gamma_E$.
Hence $E\langle a \rangle$ has residue field $\bk_E\langle \alpha \rangle$ and value group $\Gamma_E$. We have proved (i).

\medskip\noindent
To prove (ii), let $\iota: \bk_ E\langle \alpha\rangle \to \bk_E\langle \beta\rangle$
be a difference field isomorphism 
over $\bk_E$ sending $\alpha$ to $\beta$. Let $F_{m-1}:= E\big(b_0,\dots, b_{m-1})$, a valued
subfield of $\ca{K}'$. Then
$$h(T):=T^d + q_{d-1}T^{d-1} + \dots + q_0, \quad \text{with } q_i:= P_i(b)/P_d(b) \text{ for }i=0,\dots, d-1,$$ 
is the minimum polynomial of $b_m$
over $F_{m-1}$ and has its coefficients in the valuation ring of $F_{m-1}$, 
and the reduction $\bar{h}(T)$ of $h(T)$ is the minimum polynomial of $\beta_m$ over $\bk_E(\beta_0,\dots, \beta_{m-1})$.
Now $\bar{g}$ and $\bar{h}$ correspond under $\iota$. For $n\ge m$, let 
$$F_n:= E(b_0,\dots, b_n), \quad \text{ a valued subfield of }\ca{K}',$$
and let $F_n^h$ be the henselization of $F_n$ in $\ca{K}'$.
For $n\ge m$, let $h_n(T)$ be the minimum polynomial of $b_n$ over
$F_{n-1}^h$, so $h_n$ has its coefficients in the valuation ring of $F_{n-1}^h$, the residue field of $F_n$ is $\bk_E(\beta_0,\dots, \beta_n)$, the value group of $F_n$ is $\Gamma_E$, the reduction $\bar{h}_n$ of $h_n$ is the minimum polynomial of $\beta_n$ over $\bk_E(\beta_0,\dots, \beta_{n-1})$. It follows that $\bar{g}_n$ and $\bar{h}_n$
correspond under $\iota$, for each $n\ge m$. 

\medskip\noindent
{\bf Claim 2}: for $n\ge m$ there is a (unique) valued field isomorphism 
$i_n\ :\ E_n \to F_n$ over $E$ sending $a_k$ to $b_k$ for $k=0,\dots,n$. 

\medskip\noindent
From the remarks at the beginning of the proof it is clear that we have a unique valued field isomorphism $i_{m-1}\ :\ E_{m-1} \to F_{m-1}$ over $E$ sending $a_k$ to $b_k$ for $k=0,\dots,m-1$. It follows that the minimum polynomials $g$ and $h$ correspond under $i_{m-1}$, and so we have a field isomorphism
$E_m \to F_m$ extending $i_{m-1}$ and sending $a_m$ to $b_m$. This is a {\em valued\/} field isomorphism since the residue field  
$\bk_E(\alpha_0, \dots, \alpha_m)$ of $E_m$ has the same degree
over the residue field $\bk_E(\alpha_0, \dots,\alpha_{m-1})$ of $E_{m-1}$ as $E_m$ has over $E_{m-1}$, and likewise with $F_m$ and $F_{m-1}$. This proves Claim 2 for $n=m$.
Assume the claim holds for a certain $n\ge m$. Then $g_n$ and $h_n$ correspond under
$i_{n-1}$, and so $g_n^\sigma$ and $h_n^\sigma$ correspond under $i_n$. 
From the unique lifting properties of henselian local rings it follows that $g_{n+1}$ is the unique monic polynomial in $E_n^h[T]$ that divides $g_n^\sigma$, has its coefficients in the valuation ring of $E_n^h$,
and whose reduction is $\bar{g}_{n+1}$; likewise with $h_{n+1}$. 
Therefore $g_{n+1}$ and $h_{n+1}$ correspond under $i_n$, and so 
we have a field isomorphism 
$E_n^h(a_{n+1}) \to F_n^h(b_{n+1})$ that extends $i_n$ and sends
$a_{n+1}$ to $b_{n+1}$. Arguing as in the case $n=m$ we see that this
field isomorphism is a valued field isomorphism; its restriction to $E_{n+1}$ is
the desired $i_{n+1}$. This proves Claim 2, and then it is easy to finish the 
proof of (ii).
\end{proof}

\noindent
Lemma~\ref{extresa} and its proof go through if we replace the assumption $\cha{\bk}=0$ by its consequence that $\alpha_m$ is a simple zero of its minimum polynomial over $\bk_E(\alpha_0, \dots, \alpha_{m-1})$,
where $m$ is the order of $G$ as in the proof. (Just add to Claim 1 in the proof that $\alpha_n$ is a simple zero of $\bar{g}_n$, for all $n\ge m$.)

\bigskip\noindent
{\bf Hahn difference fields and Witt difference fields.} 
Let $\bk$ be a field and $\Gamma$ an ordered abelian group. This gives the Hahn field
$\bk((t^{\Gamma}))$ whose elements are the formal sums
$a=\sum_{\gamma \in \Gamma} a_\gamma t^{\gamma}$  
with $a_\gamma \in \bk$ for all $\gamma$, with well-ordered {\em support\/}
$\{\gamma:\ a_\gamma \neq 0\} \subseteq \Gamma$. With $a$ as above, we define the valuation $v: \bk((t^{\Gamma}))^{\times} \to \Gamma$ by
$v(a):=\min \{\gamma: a_\gamma \neq 0\}$,
and the surjective ring morphism  
$\pi:\ \ca{O}_v \to \bk$ by $\pi(a):=a_0$.
In this way we obtain the (maximal) valued field
$\ca{K}=(\bk((t^\Gamma)), \Gamma, \bk;v,\pi)$ to which we also just refer to
as the {\em Hahn field} $\bk((t^\Gamma))$.

\medskip\noindent
Let the field $\bk$ also be equipped with an
automorphism $\bar{\sigma}$. Then
$$\sum_{\gamma} a_\gamma t^{\gamma} \mapsto 
\sum_{\gamma} \bar{\sigma}(a_\gamma) t^\gamma $$
is an automorphism, to be denoted by $\sigma$, 
of the field $\bk((t^\Gamma))$, with $\sigma(\ca{O}_v)=\ca{O}_v$.
We consider the three-sorted structure 
$(\bk((t^\Gamma)), \Gamma, \bk;\ v,\pi)$, with the field $\bk((t^\Gamma))$
equipped with the
automorphism $\sigma$ as above, as a valued difference field,
and also refer to it as the {\em Hahn difference field} $\bk((t^\Gamma))$.
Thus $\text{Fix}\big(\bk((t^\Gamma))\big)=\text{Fix}(\bk)((t^\Gamma))$.

\bigskip\noindent
Let now $\bk$ be a perfect field of characteristic $p>0$. Then we have the ring $\operatorname{W}[\bk]$ of 
Witt vectors over $\bk$; it is a complete discrete valuation ring whose elements are the infinite sequences $(a_0, a_1, a_2,\dots)$ with all $a_n\in \bk$; see for example \cite{serre} for how addition and multiplication are defined. The Frobenius
automorphism $x \mapsto x^p$ of $\bk$ induces the ring automorphism
$$(a_0,a_1,a_2,\dots) \mapsto (a_0^p, a_1^p, a_2^p,\dots)$$  of $\operatorname{W}[\bk]$. This automorphism of $\operatorname{W}[\bk]$ extends to a field automorphism, the {\em Witt frobenius}, of
the fraction field $\operatorname{W}(\bk)$ of $\operatorname{W}[\bk]$. We consider 
$\operatorname{W}(\bk)$ as a valued difference field by taking the Witt frobenius as
its difference operator, by taking the valuation $v$ to be the unique one with
valuation ring $\operatorname{W}[\bk]$, value group $\mathbb{Z}$ and $v(p)=1$,
and by letting
$\pi: \operatorname{W}[\bk] \to \bk$ be the canonical map 
$$(a_0, a_1, a_2,\dots) \mapsto a_0.$$ 
We refer to this valued difference field as the {\em Witt difference field \/} $\operatorname{W}(\bk)$. For any perfect subfield $\bk'$ of $\bk$ we consider
$\operatorname{W}(\bk')$ as a valued difference subfield of
$\operatorname{W}(\bk)$ in the obvious way. In particular, with $\mathbb{F}_p$
the prime field of $\bk$, we have 
$\text{Fix}\big(\operatorname{W}(\bk)\big)= \operatorname{W}(\mathbb{F}_p)$, and the latter is identified with the valued field $\mathbb{Q}_p$ of 
$p$-adic numbers in the usual way. In the last section the functorial nature of
$\operatorname{W}$ plays a role:  any field embedding 
$\iota: \bk \to \bk'$ into a perfect field $\bk'$ induces the ring embedding
$$\operatorname{W}[\iota]\colon \operatorname{W}[\bk] \to \operatorname{W}[\bk'], \quad
(a_0, a_1, a_2,\dots) \mapsto (\iota a_0, \iota a_1, \iota a_2,\dots).$$

\bigskip\noindent
{\bf Two axioms.}
Let $\ca{K}$ be a valued difference field, and consider the following two conditions on $\ca{K}$. The first one says that $\sigma$ preserves the valuation $v$.

\medskip\noindent
{\bf Axiom 1.} For all $a \in K^{\times}$, $v(\sigma(a))=v(a)$.  

\medskip\noindent
{\bf Axiom 2.} For all $\gamma \in \Gamma$ there is 
$a \in \operatorname{Fix}(K)$ such that
$v(a)=\gamma$.

\medskip\noindent
It is easy to see that Axiom 2 implies Axiom 1. If $\Gamma$ is an ordered abelian group and $\bk$ a difference field, then the Hahn difference field $\bk((t^\Gamma))$ satisfies Axiom 2. If $\bk$ is a perfect field of characteristic $p>0$, then the  
Witt difference field $\operatorname{W}(\bk)$ satisfies Axiom 2. If $\ca{K}$ satisfies Axiom 1, so does any valued difference subfield of $\ca{K}$, and any extension of $\ca{K}$ with
the same value group. If $\ca{K}$ satisfies Axiom 2, so does any extension with the
same value group. 

\medskip\noindent
{\em From now on we assume that all our valued difference fields 
satisfy Axiom $1$}. By this convention, whenever we refer to an extension 
of a valued difference field, this extension is also assumed to satisfy Axiom 1.

\end{section}

\begin{section}{Pseudoconvergence and $\sigma$-polynomials}

\medskip\noindent
If $\{a_\rho\}$ is a pc-sequence in a valued field $K$ and
$a_\rho \leadsto a$ with $a\in K$, then for an ordinary nonconstant 
polynomial $f(x) \in K[x]$ we have $f(a_\rho) \leadsto f(a)$, see \cite{kaplansky}.
This fails in general for nonconstant $\sigma$-polynomials over valued difference fields. We do, however, have a variant of
this pseudo-continuity for $\sigma$-polynomials using
{\em equivalent pc-sequences}. This is a key device from \cite{BMS},
and we follow its treatment, but with some differences. 

\begin{definition}\label{equiv.pc} Two pc-sequences
$\{a_\rho\},\{b_\rho\}$
in a valued field are equivalent if for all $a$ in all 
valued field extensions,
$a_\rho \leadsto a
\Leftrightarrow b_\rho \leadsto a.$
\end{definition}

\noindent
This is an equivalence relation on the set of pc-sequences with given index set and
in a given valued field, and: 

\begin{lemma}
 Two pc-sequences $\{a_\rho\}$ and $\{b_\rho\}$ in a valued field 
are equivalent if and only if they
 have the same width and a
 common pseudolimit in some valued field extension.
\end {lemma}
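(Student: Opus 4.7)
The plan is to handle the two directions separately, using Macintyre's observation (quoted in the preliminaries) that any pc-sequence acquires a pseudolimit in an elementary extension of its ambient valued field.

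For the forward direction, assume $\{a_\rho\}$ and $\{b_\rho\}$ are equivalent. By Macintyre I can find an elementary extension $\ca{K}^*$ and a pseudolimit $a$ of $\{a_\rho\}$ in $\ca{K}^*$; equivalence forces $b_\rho \leadsto a$ as well, yielding the common pseudolimit. To match the widths $W_a, W_b$, fix $\gamma \in W_a \cap \Gamma$ and pick $c \in K$ with $v(c)=\gamma$. Since $v(a - a_\rho) = \gamma_\rho < \gamma = v(c)$ eventually, ultrametricity gives $v((a+c) - a_\rho) = \gamma_\rho$, so $a+c$ is also a pseudolimit of $\{a_\rho\}$ and hence, by equivalence, of $\{b_\rho\}$. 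The characterization of width recalled in the excerpt then yields $\gamma = v((a+c)-a) \in W_b$. Symmetry, together with the convention that $\infty$ lies in every width, completes the equality.

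For the backward direction, suppose $\{a_\rho\}$ and $\{b_\rho\}$ share the width $W$ and a common pseudolimit $a$ in some extension $\ca{M}$. Take any $a' \in \ca{L}$ (in a possibly unrelated valued field extension) with $a_\rho \leadsto a'$; I want $b_\rho \leadsto a'$. The preparatory inequality is $v(a_\rho - b_\rho) \ge \min(\gamma_\rho, \gamma'_\rho)$, obtained in $\ca{M}$ from $v(a-a_\rho)=\gamma_\rho$ and $v(a-b_\rho)=\gamma'_\rho$; since $a_\rho, b_\rho \in K$, this inequality persists into $\ca{L}$. For any $\delta$ strictly below $W$, both $\gamma_\rho$ and $\gamma'_\rho$ eventually exceed $\delta$ (using $W_a = W_b$), hence so does $v(a_\rho - b_\rho)$; combining with $v(a' - a_\rho) = \gamma_\rho$ gives $v(a' - b_\rho) > \delta$ eventually. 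Specializing $\delta = \gamma'_\rho$ for fixed $\rho$ and then using $v(b_{\rho'} - b_\rho) = \gamma'_\rho$ for $\rho' > \rho$ together with ultrametricity pins down $v(a' - b_\rho) = \gamma'_\rho$ eventually, a strictly increasing sequence, so $b_\rho \leadsto a'$.

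The main obstacle is the backward direction: the pseudolimit $a \in \ca{M}$ and the putative pseudolimit $a' \in \ca{L}$ live in potentially incompatible valued field extensions of $\ca{K}$ that need not amalgamate over $\ca{K}$. The trick is to use $\ca{M}$ only to extract a lower bound on $v(a_\rho - b_\rho)$, a quantity that already lives in $K$, and then to invoke that bound inside $\ca{L}$; no joint embedding of $\ca{M}$ and $\ca{L}$ is required.
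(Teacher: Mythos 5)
Your proof is correct. The paper states this lemma without proof (it is quoted as a standard fact about pc-sequences), so there is nothing to compare against; your argument is the standard one, and you correctly identify and resolve the only delicate point, namely that in the backward direction the two pseudolimits live in extensions that need not amalgamate, which you circumvent by extracting from $\ca{M}$ only the lower bound on $v(a_\rho-b_\rho)$ --- a quantity already computed in $K$ --- before working inside $\ca{L}$.
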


\bigskip\noindent
{\bf The Basic Calculation.}
Let $\ca{K}$ be a valued difference field satisfying Axiom 2, and
let $\{a_\rho\}$ be a pc-sequence from $K$ such that 
$a_\rho \leadsto a$ with $a$ in an extension of $\ca{K}$. Put $\gamma_\rho:=v(a_\rho-a)$; then $\{\gamma_\rho\}$ is eventually
strictly increasing. Let
$G$ be a nonconstant $\sigma$-polynomial over $K$ of order $\le n$. Under an additional assumption on $\bk$ we shall 
construct a pc-sequence $\{b_\rho\}$ from $K$  
equivalent to $\{a_\rho\}$ such that $G(b_\rho) \leadsto G(a)$. 
We first choose $\theta_\rho\in \operatorname{Fix}(K)$ such that 
$v(\theta_\rho)= \gamma_\rho;$ this is possible by Axiom 2.
We set $b_\rho:=a_\rho+\mu_\rho\theta_\rho$ and for now we demand only that $\mu_\rho\in K$ and $v(\mu_\rho)=0.$
Define $d_\rho$ by $a_\rho-a=\theta_\rho d_\rho,$ so $v(d_\rho)=0$. Then
\begin{eqnarray*}
b_\rho-a&=&b_\rho-a_\rho+a_\rho-a\\
&=&\theta_\rho(\mu_\rho+d_\rho).
\end{eqnarray*}
We now impose also $v(\mu_\rho+d_\rho)=0$. This ensures that 
$b_\rho\leadsto a$, and that $\{a_\rho\}$ and $\{b_\rho\}$
have the same width, so they are equivalent.  Note that $d_\rho$ 
depends only on our choice of $\theta_\rho$ (not on $\mu_\rho$),
and $d_\rho$ won't normally be in $K$.
Now
\begin{eqnarray*}
G(b_\rho)-G(a)&=&
\sum\limits_{|\l|\geq 1}G_{(\l)}(a)\cdot\boldsymbol{\sigma}
(b_\rho-a)^{\l}\\
&=&\sum\limits_{m\geq 1}\sum\limits_{|\l|=m}G_{(\l)}(a)\cdot
\boldsymbol{\sigma}(b_\rho-a)^{\l}\\
&=&\sum\limits_{m\geq 1}\sum\limits_{|\l|=m}G_{(\l)}(a)\cdot
\boldsymbol{\sigma}\big(\theta_\rho(\mu_\rho+d_\rho)\big)^{\l}\\
&=&\sum\limits_{m\geq 1}\sum\limits_{|\l|=m}G_{(\l)}(a)\cdot
\boldsymbol{\sigma}
(\theta_\rho)^{\l}\cdot\boldsymbol{\sigma}(\mu_\rho+d_\rho)^{\l}\\
&=&\sum\limits_{m\geq 1}\theta_\rho^m\cdot G_m(\mu_\rho+d_\rho)
\end{eqnarray*}
where $G_m$ is the $\sigma$-polynomial over $K\langle a\rangle$ given by 
$$G_m(x)=\sum\limits_{|\l|=m}G_{(\l)}(a)\cdot\boldsymbol{\sigma}(x)^{\l}.$$
Since $G\notin K$, there is an $m\ge 1$ such that $G_m\ne 0$.
For such $m$, pick $\l=\l(m)$ with $|\l|=m$ for which
$v\big(G_{(\l)}(a)\big)$ is minimal, so $G_m(x)=
G_{(\l)}(a)\cdot g_m\big(\boldsymbol{\sigma}(x)\big)$
where $g_m(x_0,\dots,x_n)$ has its coefficients 
in the valuation ring of
$K\langle a \rangle$, with one of its coefficients equal to $1$. Then
$$v\big(\theta_\rho^mG_m(\mu_\rho+d_\rho)\big)=
m\gamma_\rho + v(G_{(\l)}(a)) + 
v\big(g_m(\boldsymbol{\sigma}(\mu_\rho+d_\rho))\big).$$
This calculation suggests a new constraint
on $\{\mu_\rho\}$, namely that for each $m\ge 1$ with 
$G_m\ne 0$,  
$$ v\big(g_m(\boldsymbol{\sigma}(\mu_\rho + d_\rho))\big)=0 
\qquad \text{(eventually in $\rho$)}. $$ 
Assume this constraint is met. Then Lemma~\ref{kapla} yields a fixed $m\ge 1$
such that 
if $m'\ge 1$ and $m'\ne m$, then, eventually in $\rho$,
$$v\big(\theta_\rho^mG_m(\mu_\rho+d_\rho)\big) <
v\big(\theta_\rho^{m'}G_{m'}(\mu_\rho+d_\rho)\big).$$
For this $m$ we have, eventually in $\rho$, 
$$v\big(G(b_\rho) - G(a)\big)= m\gamma_\rho +  v(G_{(\l)}(a)), 
\qquad \l=\l(m),$$ so
$G(b_\rho) \leadsto G(a)$, as desired.     

To have $\{\mu_\rho\}$ meet all constraints 
we introduce an axiom about $\ca{K}$ which involves only
the residue difference field $\bk$ of $\ca{K}$.
(More precisely, it is an {\em axiom scheme}.)

\vglue.3cm
\noindent
{\bf Axiom 3.}\ For each integer $d>0$ there is
$y\in \bk$ with $\bar{\sigma}^d(y)\ne y$. If $\text{char}(\bk)=p>0$, then
for any integers $d,e$ with $d\ne 0$ and $e> 0$ there is
$y\in \bk$ with $\bar{\sigma}^d(y)\ne y^{p^e}$. 

\medskip\noindent
By \cite{cohn}, p. 201, this axiom implies that there are
no residual $\sigma$-identities at all, that is, 
for every nonzero $f \in \bk[x_0,\ldots,x_n]$, there is
a $y$ in $\bk$ with $f(\bar{\boldsymbol{\sigma}}(y))\not=0$
(and thus the set $\{y\in \bk: f(\bar{\boldsymbol{\sigma}}(y))\not=0\}$ 
is infinite).
Even so, it may not be obvious that Axiom 3 allows us to 
select $\{\mu_\rho\}$ as
required, since the $g_m$'s are over $K\langle a\rangle,$ and we need
$\bar{\mu}_\rho\in \bk.$ Here is a well-known fact that will
take care of this:

\begin{lemma}\label{zariskitop} Let $k \subseteq k'$ be a field extension, and
$g(x_0,\dots,x_n)$ a nonzero polynomial over $k'$. Then there is
a nonzero polynomial $f(x_0,\dots,x_n)$ over $k$ such that whenever
$y_0,\dots,y_n\in k$ and
$f(y_0,\dots,y_n)\ne 0$, then 
$g(y_0,\dots,y_n) \ne 0$.
\end{lemma}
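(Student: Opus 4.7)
The plan is to linearize $g$ over $k$ by decomposing its coefficients along a $k$-basis of $k'$, and then pick out one of the resulting nonzero polynomials over $k$ as the witness $f$.

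More concretely, I would fix a $k$-linear basis $(e_\lambda)_{\lambda\in \Lambda}$ of $k'$ (which exists since $k'$ is a $k$-vector space), and write
\[
g(x_0,\dots,x_n)=\sum_\lambda f_\lambda(x_0,\dots,x_n)\,e_\lambda,
\]
where each $f_\lambda$ is a polynomial over $k$, obtained by expanding every coefficient of $g$ in the basis $(e_\lambda)$ and grouping monomials by the basis element they are multiplied by. Since only finitely many coefficients of $g$ are nonzero, only finitely many $f_\lambda$ are nonzero, and because $g\ne 0$, at least one $f_\lambda$ is nonzero. Choose such a nonzero $f_\lambda$ and call it $f$.

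The verification is then immediate from $k$-linear independence of the $e_\lambda$: for any tuple $y=(y_0,\dots,y_n)\in k^{n+1}$ one has $f_\lambda(y)\in k$ for every $\lambda$, so
\[
g(y)=\sum_\lambda f_\lambda(y)\,e_\lambda=0
\]
forces $f_\lambda(y)=0$ for every $\lambda$, and in particular $f(y)=0$. Contrapositively, $f(y)\ne 0$ implies $g(y)\ne 0$, as required.

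There is no real obstacle here; the only point deserving any care is that a $k$-basis of $k'$ exists, which is just the standard fact that every vector space has a basis (using Zorn's lemma in the infinite-dimensional case). Everything else is a direct bookkeeping argument.
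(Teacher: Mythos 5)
Your proof is correct and is essentially the paper's argument: decompose $g$ along a $k$-basis and use $k$-linear independence to conclude. The only (cosmetic) difference is that the paper takes a basis of the finite-dimensional $k$-subspace of $k'$ spanned by the coefficients of $g$, which avoids any appeal to Zorn's lemma.
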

\begin{proof} Using a basis $b_1,\dots,b_m$ of the 
$k$-vector subspace of $k'$ generated by
the coefficients of $g$, we have $g = b_1 f_1 + \dots + b_mf_m$ with
$f_1,\dots,f_m\in k[x_0,\dots,x_n]$. Let $f$ be one of the nonzero $f_i$'s.
Then $f$ has the required property.
\end{proof}

\noindent
Consider an $m\ge 1$ with nonzero $G_m$, and define
$$ g_{m,\rho}(x_0,\dots,x_n):= 
g_m(x_0+d_\rho,\dots,x_n+\sigma^n(d_\rho)).$$
Then the reduced polynomial 
$$\bar{g}_{m,\rho}(x_0,\dots,x_n)=  
\bar{g}_m(x_0+\bar{d_\rho},\dots,x_n+\bar{\sigma}^n(\bar{d_\rho}))$$
is also nonzero for each $\rho$. 
By the lemma above we can pick a nonzero polynomial 
$f_{\rho}(x_0,\dots,x_n)\in \bk[x_0,\dots,x_n]$ such that if 
$y\in \mathcal{O}$ and
$f_{\rho}\big(\bar{\boldsymbol{\sigma}}(\bar{y})\big)\ne 0$, then 
$\bar{g}_{m,\rho}\big(\bar{\boldsymbol{\sigma}}(\bar{y}))\big) \ne 0$
for each  $m\ge 1$ with $G_m\ne 0$.

\medskip\noindent
{\bf Conclusion:} if for each $\rho$ 
the element $\mu_\rho\in \mathcal{O}$ satisfies $\bar{\mu}_\rho\ne 0$, 
$\bar{\mu}_\rho + \bar{d}_\rho \ne 0$, and
$f_{\rho}\big(\bar{\boldsymbol{\sigma}}(\bar{\mu_\rho})\big)\ne 0$, then
all constraints on $\{\mu_\rho\}$ are met.

\medskip\noindent
Axiom 3 allows us to meet these constraints, even if instead of a single
$G$ of order $\le n$ we have finitely many nonconstant 
$\sigma$-polynomials $G(x)$
of order $\le n$ and we have to meet simultaneously the constraints
coming from each of those $G$'s. This leads to:

\begin{theorem}\label{adjustment1}
Suppose $\ca{K}$ satisfies Axioms $2$ and $3$.  Suppose
$\{a_\rho\}$ in $K$ is a pc-sequence and $a_\rho \leadsto a$ in an
extension.  Let $\Sigma$ be a finite set of $\sigma$-polynomials $G(x)$
over $K$. Then there is a pc-sequence $\{b_\rho\}$ from $K$,
equivalent to $\{a_\rho\},$ such that $G(b_\rho)
\leadsto G(a)$ for each nonconstant $G$ in $\Sigma.$
\end{theorem}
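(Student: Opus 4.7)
The plan is to reduce Theorem~\ref{adjustment1} to the Basic Calculation by bundling the constraints from the finitely many $G \in \Sigma$ into a single polynomial condition on each $\mu_\rho$.

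First I would discard the constant $G \in \Sigma$ (there is nothing to prove for them) and apply the Basic Calculation machinery in parallel to each nonconstant $G \in \Sigma$. This produces, for each such $G$ and each integer $1 \le m \le \deg G$ with $G_m^{(G)} \ne 0$, a polynomial $g_m^{(G)}(x_0,\dots,x_n)$ over the valuation ring of $K\langle a\rangle$, with some coefficient equal to $1$, and its shift
\[
g_{m,\rho}^{(G)}(x_0,\dots,x_n) := g_m^{(G)}(x_0+d_\rho,\dots,x_n+\sigma^n(d_\rho)),
\]
whose reduction $\bar{g}_{m,\rho}^{(G)}$ is nonzero. Here I am choosing a single $n$ bounding the orders of all $G \in \Sigma$, and a single sequence $\theta_\rho \in \operatorname{Fix}(K)$ with $v(\theta_\rho) = \gamma_\rho$ (by Axiom~2), with $d_\rho$ defined by $a_\rho - a = \theta_\rho d_\rho$, independently of $G$.

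Next I would form the product
\[
h_\rho(x_0,\dots,x_n) := x_0 \cdot (x_0 + \bar{d}_\rho) \cdot \prod_{G,\,m} \bar{g}_{m,\rho}^{(G)}(x_0,\dots,x_n),
\]
the product running over all nonconstant $G \in \Sigma$ and all $m \ge 1$ with $G_m^{(G)} \ne 0$. This is a nonzero polynomial over the residue field of $K\langle a\rangle$, which contains $\bk$. Apply Lemma~\ref{zariskitop} to obtain a nonzero polynomial $f_\rho(x_0,\dots,x_n) \in \bk[x_0,\dots,x_n]$ such that $f_\rho(\bar{\bsigma}(\bar{y})) \ne 0$ forces $h_\rho(\bar{\bsigma}(\bar{y})) \ne 0$ for all $y \in \ca{O}$; in particular $\bar{y} \ne 0$, $\bar{y} + \bar{d}_\rho \ne 0$, and $\bar{g}_{m,\rho}^{(G)}(\bar{\bsigma}(\bar{y})) \ne 0$ for every relevant $G$ and $m$ simultaneously.

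Now Axiom~3 is applied via Cohn's result quoted just before Lemma~\ref{zariskitop}: since $\bk$ has no residual $\sigma$-identities, the set $\{z \in \bk : f_\rho(\bar{\bsigma}(z)) \ne 0\}$ is nonempty. Pick such a $z$ and let $\mu_\rho \in \ca{O}$ be a lift, so $\bar{\mu}_\rho = z$. Setting $b_\rho := a_\rho + \mu_\rho \theta_\rho$, the Basic Calculation applied to each nonconstant $G \in \Sigma$ yields $G(b_\rho) \leadsto G(a)$; the $x_0$ and $(x_0+\bar{d}_\rho)$ factors built into $h_\rho$ ensure the equivalence of $\{b_\rho\}$ and $\{a_\rho\}$. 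The only genuine point of concern is that the cascade of constraints must be satisfied by a single element $\mu_\rho$ per index $\rho$, and this is exactly what the packaging of all $\bar{g}_{m,\rho}^{(G)}$ into one polynomial $h_\rho$, followed by the descent to $\bk$ via Lemma~\ref{zariskitop}, accomplishes; there is no obstruction because $\Sigma$ is finite and each $G$ contributes only finitely many $m$'s.
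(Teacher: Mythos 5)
Your proposal is correct and is essentially the paper's own argument: the paper proves Theorem~\ref{adjustment1} exactly by running the Basic Calculation for each nonconstant $G\in\Sigma$ with a common order bound $n$ and a common choice of $\theta_\rho$, $d_\rho$, packaging all the resulting nonvanishing conditions (including $\bar{\mu}_\rho\ne 0$ and $\bar{\mu}_\rho+\bar{d}_\rho\ne 0$) into finitely many nonzero polynomials, descending to $\bk$ via Lemma~\ref{zariskitop}, and invoking Axiom~3 through Cohn's ``no residual $\sigma$-identities'' result to pick $\mu_\rho$. Your explicit product $h_\rho$ is just a clean way of writing the simultaneous constraint, and nothing further is needed.
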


\begin{corollary}
The same result, where $a$ is removed and one only asks that 
$\{G(b_\rho)\}$ is a pc-sequence. 
\end{corollary}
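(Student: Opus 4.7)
The plan is to reduce the corollary to Theorem~\ref{adjustment1} by manufacturing a pseudolimit for $\{a_\rho\}$ in some extension of $\ca{K}$. Since the conclusion of the corollary only concerns $\{b_\rho\}$ and $\{G(b_\rho)\}$ (which lie in $K$) and does not mention $a$ at all, we are free to let $a$ live anywhere we like, provided only that it exists.

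The tool that guarantees existence is Macintyre's observation recalled in the Preliminaries: any pc-sequence in a valued difference field has a pseudolimit in some elementary extension. Applying this to $\{a_\rho\}$ in $\ca{K}$, we obtain an elementary extension $\ca{K}^*$ of $\ca{K}$ and an element $a\in K^*$ with $a_\rho\leadsto a$. Viewed simply as an extension of valued difference fields, $\ca{K}^*$ now puts us precisely in the hypothesis of Theorem~\ref{adjustment1}: $\ca{K}$ still satisfies Axioms~$2$ and $3$, $\{a_\rho\}$ is a pc-sequence in $K$, and the pseudolimit $a$ sits in an extension. The theorem therefore supplies a pc-sequence $\{b_\rho\}$ in $K$, equivalent to $\{a_\rho\}$, such that $G(b_\rho)\leadsto G(a)$ for every nonconstant $G\in\Sigma$.

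To finish, invoke the remark (just after the definition of pseudoconvergence) that a well-indexed sequence which pseudoconverges is automatically a pc-sequence. So $\{G(b_\rho)\}$, pseudoconverging to $G(a)\in K^*$, is a pc-sequence; and since its terms lie in $K$, it is a pc-sequence in $\ca{K}$.

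There is no real obstacle here: all the delicate work -- the Basic Calculation, the $\theta_\rho$'s from Axiom~$2$, and the choice of $\mu_\rho$'s via Lemma~\ref{zariskitop} and Axiom~$3$ -- was already carried out in the proof of Theorem~\ref{adjustment1}. The only conceptual point is that that theorem is stated with a preassigned pseudolimit $a$, and Macintyre's observation is exactly the device that lets us fabricate such an $a$ at no cost, since the auxiliary $a$ then disappears from the statement we actually need.
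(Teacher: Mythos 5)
Your proposal is correct and is exactly the paper's argument, which reads in full: ``Put in an $a$ from an elementary extension.'' You have simply spelled out the same step -- Macintyre's observation furnishes the pseudolimit $a$ in an elementary extension, Theorem~\ref{adjustment1} applies, and pseudoconvergence of $\{G(b_\rho)\}$ to $G(a)$ makes it a pc-sequence.
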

\begin{proof}
Put in an $a$ from an elementary extension.
\end{proof}

\bigskip\noindent
{\bf Refinement of the Basic Calculation.}
The following improvement of the basic calculation will be needed later on. 
Some minor differences with \cite{BMS} are because we shall use this in combination with a simpler notion of ``pc-sequence of $\sigma$-algebraic type''
and we do not assume $\ca{K}$ is unramified.

\begin{theorem} \label{crucial.result.nonwitt}
Suppose $\ca{K}$ satisfies Axioms $2$ and $3$. Let
$\{a_\rho\}$ be a pc-sequence from $K$ and let $a$ in some extension of $\ca{K}$ be such that $a_\rho \leadsto a$.
Let $G(x)$ be a $\sigma$-polynomial over $K$ such that
\begin{enumerate}
\item[(i)]
$G(a_\rho) \leadsto 0$, 
\item[(ii)] $ G_{(\l)}(b_\rho) \not\leadsto 0$ whenever $|\l|\geq 1$ and
$\{b_\rho\}$ is a pc-sequence in $K$ equivalent to $\{a_\rho\}$.
\end{enumerate}
Let $\Sigma$ be a finite set of $\sigma$-polynomials $H(x)$ over $K$. 
Then there is a
pc-sequence $\{b_\rho\}$ in $K$, equivalent to $\{a_\rho\}$, such that 
$G(b_\rho) \leadsto 0$, and
$H(b_\rho) \leadsto H(a)$ for every nonconstant $H$ in $\Sigma$.
\end{theorem}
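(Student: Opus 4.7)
The plan is to apply Theorem~\ref{adjustment1} to an enlarged finite family and then upgrade the conclusion ``$G(b_\rho) \leadsto G(a)$'' to ``$G(b_\rho) \leadsto 0$'' by a valuation analysis that invokes hypothesis~(i) to rule out the one bad case. Explicitly, set $\Sigma' := \Sigma \cup \{G\} \cup \{G_{(\l)} : |\l| \ge 1\}$, a finite set of $\sigma$-polynomials over $K$. Applying Theorem~\ref{adjustment1} to $\Sigma'$ produces a pc-sequence $\{b_\rho\}$ in $K$, equivalent to $\{a_\rho\}$, with $F(b_\rho) \leadsto F(a)$ for each nonconstant $F \in \Sigma'$. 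In particular $H(b_\rho) \leadsto H(a)$ for each nonconstant $H \in \Sigma$, giving half the conclusion; and $G(b_\rho) \leadsto G(a)$, and $G_{(\l)}(b_\rho) \leadsto G_{(\l)}(a)$ for each $|\l| \ge 1$ with $G_{(\l)}$ nonconstant.

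Next I observe that hypothesis~(ii) forces $G_{(\l)}(a) \ne 0$ for each $|\l| \ge 1$ with $G_{(\l)}$ nonconstant: if $G_{(\l)}(a) = 0$, then $G_{(\l)}(b_\rho) \leadsto G_{(\l)}(a)$ simply reads $G_{(\l)}(b_\rho) \leadsto 0$, contradicting~(ii). (For $|\l| \ge 1$ with $G_{(\l)} \in K^\times$ the same conclusion is automatic.) Thus for every $m \ge 1$ with $G_m(x) := \sum_{|\l| = m} G_{(\l)}(a)\,\bsigma(x)^\l \ne 0$, there is $\l(m)$ with $|\l(m)| = m$ attaining $\min_{|\l| = m} v(G_{(\l)}(a))$. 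The Basic Calculation used to prove Theorem~\ref{adjustment1} (applied with $G$ in the family) then singles out a unique $m_0 \ge 1$ and $\l_0 := \l(m_0)$ such that, writing $\gamma_\rho := v(a_\rho - a)$ and $\delta_\rho := m_0 \gamma_\rho + v(G_{(\l_0)}(a))$, one has $v(G(b_\rho) - G(a)) = \delta_\rho$ eventually, with $\delta_\rho$ strictly increasing.

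The remaining argument is a case split on $v(G(a))$ versus $\delta_\rho$; the main obstacle lies in ruling out the dangerous case via hypothesis~(i). If $v(G(a)) > \delta_\rho$ eventually (in particular if $G(a) = 0$), then $v(G(b_\rho)) = \delta_\rho$ eventually, which is strictly increasing, so $G(b_\rho) \leadsto 0$ as required. The only other possibility is $v(G(a)) < \delta_\rho$ eventually --- the equality case $v(G(a)) = \delta_\rho$ can occur for at most one $\rho$ since $\delta_\rho$ strictly increases. To exclude it, substitute $b_\rho$ by $a_\rho$ in the Taylor expansion at $a$: writing $a_\rho - a = \theta_\rho d_\rho$ with $v(d_\rho) = 0$ one has
$$G(a_\rho) - G(a) = \sum_{m \ge 1} \theta_\rho^m\, G_m(d_\rho).$$
Axiom~1 yields $v(\bsigma(d_\rho)^\l) = 0$, so $v(G_m(d_\rho)) \ge v(G_{(\l(m))}(a))$ and hence $v(\theta_\rho^m G_m(d_\rho)) \ge m\gamma_\rho + v(G_{(\l(m))}(a)) \ge \delta_\rho$ by the defining property of $m_0$. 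Therefore $v(G(a_\rho) - G(a)) \ge \delta_\rho > v(G(a))$ eventually, which forces $v(G(a_\rho)) = v(G(a))$ eventually --- contradicting $G(a_\rho) \leadsto 0$ from hypothesis~(i). This eliminates the bad case, and $G(b_\rho) \leadsto 0$ holds in all remaining cases.
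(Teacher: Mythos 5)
Your proof is correct, but it takes a genuinely different route from the paper's. Both arguments feed the augmented family $\Sigma\cup\{G\}\cup\{G_{(\l)}\}$ into the basic calculation and extract from its \emph{proof} (not just the statement of Theorem~\ref{adjustment1}) the exact formula $v\big(G(b_\rho)-G(a)\big)=m_0\gamma_\rho+v\big(G_{(\l(m_0))}(a)\big)=:\delta_\rho$ eventually; the paper does this too, so that reliance is legitimate. From there the paths diverge. The paper expands $G(a_\rho)$ around $b_\rho$, uses hypothesis (ii) to write $G_{(\l)}(b_\rho)=G_{(\l)}(a)(1+\epsilon_{\l,\rho})$ with $v(\epsilon_{\l,\rho})>0$, and then imposes \emph{further} residue constraints on the perturbations $\mu_\rho$ --- including a final anti-cancellation constraint at those $\rho$ with $v\big(G(a_\rho)\big)=\delta_\rho$ --- so as to force $v\big(G(b_\rho)\big)=\min\big\{v\big(G(a_\rho)\big),\delta_\rho\big\}$ for all $\rho$. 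You instead expand both $G(b_\rho)$ and $G(a_\rho)$ around the pseudolimit $a$, note $v\big(G(a_\rho)-G(a)\big)\ge\delta_\rho$ eventually by the ultrametric inequality, and let hypothesis (i) eliminate the only dangerous case $v\big(G(a)\big)<\delta_\rho$ eventually, since there $v\big(G(a_\rho)\big)$ would be eventually constant, contradicting $G(a_\rho)\leadsto 0$; in the surviving case $v\big(G(b_\rho)\big)=\delta_\rho$ eventually, with no extra constraints on $\{\mu_\rho\}$. Your route is shorter, and it shows that hypothesis (ii) is not actually needed for this statement: your only appeal to it, to get $G_{(\l)}(a)\ne 0$, is redundant, since $\l(m)$ exists whenever $G_m\ne 0$. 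What the paper's heavier argument buys is the pointwise identity $v\big(G(b_\rho)\big)=\min\big\{v\big(G(a_\rho)\big),\delta_\rho\big\}$ valid for all $\rho$, and a template of residue-level constraints that transfers to the Witt case via the $D$-transform in Theorem~\ref{crucial.result.witt}; for the downstream use in Lemma~\ref{henselconf} only the distinguished $m_0$ and the eventual inequalities matter, and your construction supplies those as well.
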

\begin{proof} By augmenting $\Sigma$ we can assume that 
$G_{(\l)}\in \Sigma$ for all $\l$. Take $n$ such that all $H\in \Sigma$
have order $\le n$. 
Let $\{\theta_\rho\}$ and  $\{d_\rho\}$ be as before. 
Then the {\em basic calculation}
constructs nonzero polynomials 
$f_\rho\in \bk[x_0,\dots,x_n]$ such that if $\{\mu_\rho\}$ satisfies the
constraints 
$$\mu_\rho\in \mathcal{O},\ \bar{\mu}_\rho\ne 0,\ \bar{\mu}_\rho + \bar{d}_\rho \ne 0,\ 
 f_{\rho}\big(\bar{\boldsymbol{\sigma}}(\bar{\mu_\rho})\big)\ne 0,$$ 
then, setting $b_\rho:= a_\rho + \theta_\rho \mu_\rho$,
we have: $$H(b_\rho) \leadsto H(a)\text{ for each nonconstant }
H\in \Sigma .$$  
Let  $\{\mu_\rho\}$ satisfy these constraints, and define $\{b_\rho\}$
accordingly. Using Axiom 3 we shall be able to constrain $\{\mu_\rho\}$ 
further to achieve also $G(b_\rho) \leadsto 0$. We have
\begin{eqnarray*}
G(a_\rho)&=&G(b_\rho-\theta_\rho \mu_\rho)\\
&=&G(b_\rho)+\sum\limits_{m\geq 1}\sum\limits_{|\l|=m}
G_{(\l)}(b_\rho)\cdot\boldsymbol{\sigma}(-\theta_\rho \mu_\rho)^{\l}\\
&=&G(b_\rho)+\sum\limits_{m\geq 1}(-\theta_\rho)^m\cdot\sum\limits_{|\l|=m}
G_{(\l)}(b_\rho) \boldsymbol{\sigma}(\mu_\rho)^{\l}\\
&=&G(b_\rho)+\sum\limits_{m\geq 1}(-\theta_\rho)^m H_{m,\rho}(\mu_\rho)
\end{eqnarray*}
where $H_{m,\rho}$ is the $\sigma$-polynomial over $K$ defined by
$$  H_{m,\rho}(x)=
\sum\limits_{|\l|=m}
G_{(\l)}(b_\rho)\cdot \boldsymbol{\sigma}(x)^{\l}.$$
For $|\l|\ge 1$ we have $G_{(\l)}(b_\rho) \not \leadsto 0$
and, provided $G_{(\l)}\notin K$, 
$G_{(\l)}(b_\rho) \leadsto G_{(\l)}(a)$. Hence, 
for $|\l|\ge 1$ we have
$G_{(\l)}(b_\rho)=G_{(\l)}(a)(1+\epsilon_{\l,\rho})$, eventually in $\rho$,
where $v(\epsilon_{\l,\rho})>0$, and  
for our purpose we may assume that this holds  
for {\em all\/} $\rho$. Thus $H_{m,\rho}(x)= G_{m}(x) + \epsilon_{m,\rho}(x)$
where $G_m$ is as in {\em the basic calculation\/}: 
$$G_{m}(x) = \sum\limits_{|\l|=m}
G_{(\l)}(a)\cdot \boldsymbol{\sigma}(x)^{\l}, \quad 
\epsilon_{m,\rho}(x) = \sum\limits_{|\l|=m}
G_{(\l)}(a)\epsilon_{\l,\rho}\cdot \boldsymbol{\sigma}(x)^{\l}. $$ 
We put $\gamma_{\l}:=v\big(G_{(\l)}(a)\big)$, and restrict $m$ in what follows
to be $\ge 1$ with $G_m\ne 0$. For each $m$ 
we pick $\l=\l(m)$ with $|\l|=m$ for which
$\gamma_{\l}$ is minimal, so $G_m(x)=
G_{(\l)}(a)\cdot g_m\big(\boldsymbol{\sigma}(x)\big)$
where $g_m(x_0,\dots,x_n)$ has its coefficients 
in the valuation ring of
$K\langle a \rangle$, with one of its coefficients equal to $1$. As in 
{\em the basic calculation\/} we now impose the constraint on 
$\{\mu_\rho\}$ that for each $m$,
$$ v\big(g_m(\boldsymbol{\sigma}(\mu_\rho))\big)=0 
\qquad \text{(all $\rho$)}. $$ 
Then $v\big((-\theta_\rho)^mH_{m,\rho}(\mu_\rho)\big)=
m\gamma_\rho + \gamma_{\l(m)}$. Take the unique $m_0\ge 1$ with
 $G_{m_0}\ne 0$ such that if $m\ne m_0$, then
$$m_0\gamma_\rho + \gamma_{\l(m_0)} < m\gamma_\rho + \gamma_{\l(m)},
\quad \text{(eventually in $\rho$)}.$$
Again we can assume this holds for all $\rho$. Then for all $\rho$:
\begin{align*} 
v\big(&\sum\limits_{m}(-\theta_\rho)^m H_{m,\rho}(\mu_\rho)\big)= 
m_0\gamma_\rho +  \gamma_{\l(m_0)},\\
G(b_\rho)= G(a_\rho) - &\sum\limits_{m}(-\theta_\rho)^m H_{m,\rho}(\mu_\rho).
\end{align*} 
It follows that if $\rho$ is such that 
$v\big(G(a_\rho)\big) \ne  m_0\gamma_\rho + \gamma_{\l(m_0)}$, then 
$$ v\big(G(b_\rho)\big)= 
\min\left\{v\big(G(a_\rho)\big), m_0\gamma_\rho + \gamma_{\l(m_0)}\right\}.$$
We shall make this true for {\em all\/} $\rho$. 
Suppose that $v\big(G(a_\rho)\big) =  m_0\gamma_\rho + \gamma_{\l(m_0)}$. Then
$$G(b_\rho)=
G(a_\rho)\cdot\big(1- c_{\rho}g_{m_0}(\boldsymbol{\sigma}(\mu_\rho))+ 
\epsilon_\rho\big)$$
where $c_{\rho}=(-\theta_\rho)^{m_0} G_{(\l)}(a)/G(a_\rho)$, ($\l=\l(m_0)$), 
and $v( \epsilon_\rho) >0$. Note that $v(c_{\rho})=0$
and that $g_{m_0}$ is homogeneous of degree $m_0 >0$.
This leads to our final constraint on $\{\mu_\rho\}$: for each
$\rho$ such that $v\big(G(a_\rho)\big) =  
m_0\gamma_\rho + \gamma_{\l(m_0)}$ we impose
$$ 1-  \bar{c}_{\rho}\bar{g}_{m_0}
\big(\bar{\boldsymbol{\sigma}}(\bar{\mu}_\rho)\big)\ne 0.$$
Then $v\big(G(b_\rho)\big)= 
\min\left\{v\big(G(a_\rho)\big), m_0\gamma_\rho + \gamma_{l(m_0)}\right\}$
for all $\rho$, and thus $\left\{v\big(G(b_\rho)\big)\right\}$ 
is eventually strictly increasing. 
\end{proof}

\end{section}

\section{The Witt case}\label{witt}

\noindent
Let $\ca{K}=(K, \Gamma, \bk; v, \pi)$ be a valued difference field, 
satisfying Axiom 1 as usual. Assume that $\text{char}(K)=0$, 
$\text{char}(\bk)=p>0$, $\bk$ is perfect, that $\Gamma$ has a least positive element $1$ with $v(p)=1$, and, finally, that 
$\bar{\sigma}(y)=y^p$ on $\bk$.  
We call this the {\em Witt case\/} (for $p$).
These assumptions are satisfied by the Witt difference field $\operatorname{W}(\bk)$.

\medskip\noindent
Axiom 3 fails in the Witt case, but we shall adjust the basic calculation and its 
refinement to deal with this. As in \cite{BMS} we use the formalism of $\partial$-rings from \cite{joyal}.

\medskip\noindent
{\bf $\partial$-rings.}
Let $\partial_0:  \ca{O} \to \ca{O}$ be the identity map, and define
$$\partial_1: \ca{O} \to \ca{O}, \qquad \partial_1(x):=\ds{\frac{\sigma(x)-x^p}{p}}.$$ Usually $\partial_1$ is written as $\partial$; it satisfies 
the axioms for a $p$-derivation on
$\ca{O}$, namely
\begin{eqnarray*}
\partial(1)&=&0,\\
\partial(x+y)&=&\partial(x)+\partial(y)-\sum\limits_{i=1}^{p-1} 
a(p,i) x^i y^{p-i}, \quad a(p,i):={p\choose i}/p, \\
\partial(xy)&=&x^p\partial(y)+y^p\partial(x)+p\partial(x)\partial(y).
\end{eqnarray*}
A $\partial$-ring is a commutative ring with $1$ equipped with 
a unary
operation $\partial$ satisfying the above identities. 
For the basic facts on $\partial$-rings used below, see \cite{joyal}.
Because $\ca{O}$ is a $\partial$-ring, there
is a unique sequence of unary operations
$\partial_0,\partial_1,\partial_2,\ldots: \ca{O} \to \ca{O}$ with
$\partial_0,\partial_1$ as above such that for all $a\in \mathcal{O}$ and 
all $n$,
\begin{align*} \sigma^n(a)&= W_n(\partial_0(a),\dots, \partial_n(a)),\\
    W_n(x_0,\dots, x_n) &:= x_0^{p^n} + px_1^{p^{n-1}}+ \cdots + p^nx_n\in 
\mathbb{Z}[x_0,\dots, x_n].
\end{align*}
Recall that addition of Witt vectors \cite{serre} is given in terms of
polynomials 
\begin{align*} S_n\in \mathbb{Z}[y_0,\dots,y_n, z_0, \dots,z_n]\ &\text{ such that}\\
W_n(y_0,\dots, y_n)+ W_n(z_0,\dots, z_n)&=W_n(S_0,\dots, S_n),
\end{align*}
and accordingly, $\partial_n(a+b)= S_n\big(\partial_0(a),\dots, \partial_n(a),
\partial_0(b),\dots, \partial_n(b)\big)$ for all $a,b\in \ca{O}$.

In $\operatorname{W}[\bk]$, the ${\partial}_n$ yield the {\em components} of Witt vectors,
namely,
each $a\in \operatorname{W}[\bk]$ equals
$\big(\overline{\partial_0(a)},\overline{\partial_1(a)},\overline{\partial_2(a),}\ldots\big).$
In our Witt case, $\ca{O}/p^{n+1}\ca{O}\  \cong\ \operatorname{W}[\bk]/(p^{n+1})$:

\begin{lemma}\label{del.surjective} Identifying the vectors $(a_0,\dots,a_n)\in \bk^{n+1}$ with the elements of $\operatorname{W}[\bk]/(p^{n+1})$ in the usual way, we have a surjective ring morphism  
$$\ca{O} \to \operatorname{W}[\bk]/(p^{n+1}),\quad a\mapsto
\big(\overline{\partial_0(a)},
\overline{\partial_1(a)},\ldots,\overline{\partial_n(a)}\big) $$
with kernel $p^{n+1}\ca{O}$.
\end{lemma}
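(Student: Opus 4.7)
The plan is to verify in order that $\phi\colon a\mapsto(\overline{\partial_0 a},\dots,\overline{\partial_n a})$ is a ring morphism, that it is surjective, and that its kernel is $p^{n+1}\ca{O}$. The ring-morphism statement follows from the identity $\partial_n(a+b)=S_n(\partial_0 a,\dots,\partial_n a,\partial_0 b,\dots,\partial_n b)$ stated in the excerpt, together with the multiplicative analogue obtained from Witt polynomials $P_n$ characterised by $W_n(y)W_n(z)=W_n(P_0,\dots,P_n)$; reducing these identities modulo $p$ yields exactly the defining formulas for addition and multiplication in $\operatorname{W}[\bk]$. For the inclusion $p^{n+1}\ca{O}\subseteq \ker\phi$, write $a=p^{n+1}b$ and use $\sigma^i(a)=p^{n+1}\sigma^i(b)=W_i(\partial_0 a,\dots,\partial_i a)$: a short induction on $i$ shows $\partial_i(a)\in p^{n+1-i}\ca{O}$ for $0\le i\le n$, since under the inductive assumption each term $p^j(\partial_j a)^{p^{i-j}}$ of $W_i$ has valuation at least $j+(n+1-j)p^{i-j}\ge n+2$, while $p^{n+1}\sigma^i(b)$ has valuation exactly $n+1$.

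For surjectivity I argue by induction on $n$. Given $(\alpha_0,\dots,\alpha_{n+1})\in \bk^{n+2}$, use the inductive hypothesis to pick $a_n\in \ca{O}$ with $\phi(a_n)=(\alpha_0,\dots,\alpha_n,\beta)\in \operatorname{W}[\bk]/(p^{n+2})$ for some $\beta\in \bk$, and adjust as $a_n+p^{n+1}t$. Since $\phi$ is a ring morphism, $\phi(a_n+p^{n+1}t)=\phi(a_n)+p^{n+1}\phi(t)$; the key identity $p\cdot(x_0,x_1,x_2,\dots)=(0,x_0^p,x_1^p,\dots)$ in Witt vectors over a ring of characteristic $p$ then gives $p^{n+1}\phi(t)=(0,\dots,0,\bar t^{p^{n+1}})$ in $\operatorname{W}[\bk]/(p^{n+2})$, and the addition polynomials $S_i$ are structured so that adding such a vector leaves all but the last coordinate unchanged. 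Hence $\phi(a_n+p^{n+1}t)=(\alpha_0,\dots,\alpha_n,\beta+\bar t^{p^{n+1}})$, and perfectness of $\bk$ lets me solve $\bar t^{p^{n+1}}=\alpha_{n+1}-\beta$ and lift $\bar t$ to the required $t$. Finally, the induced $\bar\phi\colon \ca{O}/p^{n+1}\ca{O}\to \operatorname{W}[\bk]/(p^{n+1})$ is a surjective ring morphism respecting the $p$-adic filtrations on both sides; on the $i$-th graded piece both sides canonically identify with $\bk$ (on the left via $p^i b\mapsto \bar b$, using that $v(p)=1$ is the least positive element of $\Gamma$; on the right via the $i$-th Witt coordinate), and the induced map becomes $\bar b\mapsto \bar b^{p^i}$, again via the same characteristic-$p$ Witt identity. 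Perfectness makes each graded map bijective, so $\bar\phi$ is an isomorphism and $\ker\phi=p^{n+1}\ca{O}$.

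The main obstacle is the Witt-vector bookkeeping: the identity $p\cdot(x_0,x_1,\dots)=(0,x_0^p,\dots)$ in Witt vectors over a characteristic-$p$ ring does the heavy lifting for both the surjectivity step and the graded-piece computation, and one must be careful to verify that the sequence of $\partial_n$'s on $\ca{O}$ genuinely transports to Witt coordinates in $\operatorname{W}[\bk]/(p^{n+1})$ in a way compatible with the ring structures on both sides. Everything else is then a formal consequence of the valuation estimates above and the perfectness of $\bk$.
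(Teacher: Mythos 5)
Your proof is correct. Note that the paper itself gives no proof of this lemma: it is stated as one of the ``basic facts on $\partial$-rings'' and deferred to Joyal's paper, where the essential point is that a $p$-derivation on a $p$-torsion-free ring $\ca{O}$ is the same as a ring morphism $\ca{O}\to \operatorname{W}[\ca{O}]$ splitting the projection, which one then composes with $\operatorname{W}[\pi]$ and truncates. Your argument reconstructs exactly this via the ghost components and the polynomials $S_n$ (and their multiplicative analogues), and all the individual steps check out: the valuation estimate for $p^{n+1}\ca{O}\subseteq\ker\phi$, the $V^{n+1}$-adjustment in the surjectivity induction, and the graded-piece computation all work, with the unramifiedness hypothesis ($v(p)$ the least positive element) entering precisely where you say it does, to identify $\ca{O}/p\ca{O}$ with $\bk$. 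The only remark worth making is that your proof is somewhat redundant: once $\phi$ is known to be a ring morphism, $\phi(p^{n+1}b)=p^{n+1}\phi(b)=0$ gives $p^{n+1}\ca{O}\subseteq\ker\phi$ immediately, and your final graded-piece argument (all $\operatorname{gr}^i$-maps bijective, both filtrations finite and exhaustive) already yields bijectivity of $\bar\phi$, hence both surjectivity and the exact kernel, making the separate induction for surjectivity and the direct valuation estimate unnecessary.
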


\noindent
A difference with \cite{BMS} is our use of the following in proving Theorem~\ref{adjustment2}.

\begin{lemma}\label{sum} Let $g\in \ca{O}[y_0,\dots,y_n]$ be such that 
its image $\bar{g}\in \bk[y_0,\dots,y_n]$ is nonzero. Then there is
a $g^*\in \ca{O}[y_0,\dots,y_n,z_0,\dots,z_n]$ such that
for all $a,b\in \ca{O}$, 
$$g\big(\partial_0(a+b),\dots, \partial_n(a+b)\big)= 
g^*\big(\partial_0(a),\dots, \partial_n(a),
\partial_0(b),\dots, \partial_n(b)\big),$$
and the image of 
$g^*\big(y_0,\dots,y_n,\partial_0(b),\dots, \partial_n(b)\big)$ in  
$\bk[y_0,\dots,y_n]$ is nonzero.
\end{lemma}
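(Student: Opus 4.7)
The natural definition is
$$g^*(y_0,\dots,y_n,z_0,\dots,z_n)\ :=\ g\bigl(S_0(y,z),\,\dots,\,S_n(y,z)\bigr),$$
where $S_i \in \mathbb{Z}[y_0,\dots,y_i,z_0,\dots,z_i]$ is the $i$th Witt sum polynomial. Since $g \in \ca{O}[y]$ and each $S_i$ lies in $\mathbb{Z}[y,z] \subseteq \ca{O}[y,z]$, we have $g^* \in \ca{O}[y_0,\dots,y_n,z_0,\dots,z_n]$. The identity $\partial_i(a+b) = S_i\bigl(\partial_0(a),\dots,\partial_i(a),\partial_0(b),\dots,\partial_i(b)\bigr)$ recorded just above the statement then yields the first assertion of the lemma by direct substitution $y_i = \partial_i(a)$, $z_i = \partial_i(b)$.

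The main structural ingredient I would invoke next is the triangular shape of the Witt sum polynomials,
$$S_i(y,z)\ =\ y_i + z_i + T_i(y_0,\dots,y_{i-1},z_0,\dots,z_{i-1}),\qquad T_i \in \mathbb{Z}[y_0,\dots,y_{i-1},z_0,\dots,z_{i-1}],$$
which follows by induction on $i$ from the defining relation $W_i(y) + W_i(z) = W_i(S_0,\dots,S_i)$, since the coefficients of $y_i$, $z_i$, and $S_i$ on the respective sides are all equal to $p^i$. Specializing $z_j$ to $\partial_j(b)$ I obtain $S_i\bigl(y,\partial(b)\bigr) = y_i + c_i$ where
$$c_i\ :=\ \partial_i(b) + T_i\bigl(y_0,\dots,y_{i-1},\partial_0(b),\dots,\partial_{i-1}(b)\bigr)\ \in\ \ca{O}[y_0,\dots,y_{i-1}]$$
and $c_0 \in \ca{O}$ is a constant. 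Reducing modulo $\fr{m}$ yields
$$\overline{g^*\bigl(y,\partial(b)\bigr)}\ =\ \bar g\bigl(y_0+\bar c_0,\,y_1+\bar c_1,\,\dots,\,y_n+\bar c_n\bigr),\qquad \bar c_i \in \bk[y_0,\dots,y_{i-1}].$$

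To finish it suffices to show that the $\bk$-algebra endomorphism $\phi$ of $\bk[y_0,\dots,y_n]$ defined by $\phi(y_i) = y_i + \bar c_i$ is injective, whence $\bar g \neq 0$ will force $\phi(\bar g) \neq 0$. I would prove this by induction on $n$: writing any nonzero $f \in \bk[y_0,\dots,y_n]$ as $\sum_{k \le d} f_k(y_0,\dots,y_{n-1}) y_n^k$ with $f_d \neq 0$, the coefficient of $y_n^d$ in $\phi(f)$ equals $f_d(y_0+\bar c_0,\dots,y_{n-1}+\bar c_{n-1})$, which is nonzero by the inductive hypothesis; the base case $n=0$ is trivial since $\bar c_0 \in \bk$. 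The only delicate point in the whole argument is the triangular form of the Witt sum polynomials, but this is a standard, elementary consequence of their defining identity, so I do not anticipate any real obstacle.
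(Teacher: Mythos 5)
Your construction of $g^*$ as $g(S_0,\dots,S_n)$ is exactly the paper's, and the first assertion is handled identically. For the nonvanishing of $\overline{g^*(y,\boldsymbol{\partial}(b))}$, however, you take a genuinely different and more self-contained route. The paper first reduces to the case of infinite $\bk$ (by passing to a Witt extension), picks $c_0,\dots,c_n\in\bk$ with $\bar g(c)\ne 0$, and then invokes Lemma~\ref{del.surjective} to realize $(c_0,\dots,c_n)$ as $\bar{\boldsymbol{\partial}}(x)$ for some $x\in\ca{O}$, setting $a:=x-b$ so that the specialized polynomial takes a nonzero value. You instead exploit the triangular shape $S_i=y_i+z_i+T_i(y_{<i},z_{<i})$ of the Witt sum polynomials — which is indeed a correct and standard consequence of the defining identity, given that the $S_i$ have integer coefficients — to see that specializing $z_j\mapsto\partial_j(b)$ turns $g^*$ into $\bar g$ composed with a triangular substitution $y_i\mapsto y_i+\bar c_i$ with $\bar c_i\in\bk[y_0,\dots,y_{i-1}]$, and your induction showing such a substitution is injective is correct (it is in fact an automorphism of $\bk[y_0,\dots,y_n]$). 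What your approach buys is independence from Lemma~\ref{del.surjective} and from any cardinality hypothesis on $\bk$, plus a sharper structural conclusion about the specialized polynomial; what the paper's approach buys is brevity, since Lemma~\ref{del.surjective} is already on hand and used elsewhere. Both arguments are valid.
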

\begin{proof}
With the $S_n$ as above, put 
$g^*:= g(S_0,\dots, S_n)$. Then the displayed identity
holds. Let $b\in \ca{O}$ and put 
$h:= g^*\big(y_0,\dots,y_n,
\partial_0(b),\dots, \partial_n(b)\big)\in \ca{O}[y_0,\dots,y_n]$. 
In order to show that its image $\bar{h}$ in $\bk[y_0,\dots,y_n]$ 
is nonzero, we can assume that $\bk$ is
infinite (passing to a suitable Witt extension of $K$ if necessary).
Take $c_0,\dots,c_n\in \bk$ such that $\bar{g}(c_0,\dots,c_n)\ne 0$. 
By Lemma~\ref{del.surjective}, $(c_0,\dots,c_n)=(\overline{\partial_0(x)},
\overline{\partial_1(x)},\ldots,\overline{\partial_n(x)})$
for a suitable $x\in \ca{O}$. Let $a:= x-b$. Then by the above,
$$g\big(\partial_0(x),\dots,\partial_n(x)\big)=
h\big(\partial_0(a),\dots, \partial_n(a)\big),$$ with
image $\bar{g}(c_0,\dots,c_n)\ne 0$ in $\bk$. Thus $\bar{h}\ne 0$.
\end{proof}

\noindent
{\bf The $D$-transform.}
In analogy with $\boldsymbol{\sigma}$ and 
$\bar{\boldsymbol{\sigma}}$, we sometimes write 
$\boldsymbol{\partial}(a)$ for $(\partial_0(a),\ldots,\partial_n(a)),$ and 
$\bar{\boldsymbol{\partial}}(a)$ for
$\big(\overline{\partial_0(a)},\ldots,\overline{\partial_n(a)}\big)$ for 
$a$ in the valuation ring of a Witt extension.
Thus $\boldsymbol{\sigma}(a)=D(\boldsymbol{\partial}(a))$ for all such 
$a$, where
$$D(y_0,\ldots,y_n)=(y_0,y_0^p+py_1,\ldots,y_0^{p^n}+py_1^{p^{n-1}}+
\dots+p^ny_n).$$

\medskip\noindent
Let $F\in K[x_0,\dots,x_n]$ be homogeneous of degree $m>0$, and
consider its $D$-transform $F\big(D(y_0,\dots,y_n)\big)\in K[y_0,\dots,y_n]$. 
This $D$-transform is not in general homogeneous, but 
its constant term is zero and it has total
degree $\le mp^n$. Write
\begin{align*} F(x_0,\dots,x_n)&= \sum_{|\l|=m}a_{\l}\boldsymbol{x}^{\l},\quad 
(\text{all }a_{\l}\in K),\\ 
F\big(D(y_0,\dots,y_n)\big)&= 
\sum_{1\le |\j|\le mp^n} b_{\j}\boldsymbol{y}^{\j}, \quad
(\text{all }b_{\j}\in K).
\end{align*} 
To express how the $b_{\j}$ depend on the $a_{\l}$ we introduce a tuple 
$(x_{\l})$ of new variables, indexed by the $\l$ with $|\l|=m$.  

\begin{lemma}\label{lin} $b_{\j}=\Lambda_{\j,m}\big((a_{\l})\big)$ where
$\Lambda_{\j,m}\in \mathbb{Z}[(x_{\l})]$ is homogeneous 
of degree $1$ and depends only on
$\j,m$ and $p$, not on $K$ or $F$. 
\end{lemma}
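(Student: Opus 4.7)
The plan is to exploit the fact that the map $D$ is defined over $\mathbb{Z}$: each component
$$D_i(y_0,\dots,y_n) = y_0^{p^i} + py_1^{p^{i-1}} + \cdots + p^iy_i$$
lies in $\mathbb{Z}[y_0,\dots,y_n]$ and depends only on $i$ and $p$. Hence for each multi-index $\l$ with $|\l|=m$, the product
$$D(y)^{\l} := \prod_{i=0}^n D_i(y_0,\dots,y_n)^{l_i}$$
lies in $\mathbb{Z}[y_0,\dots,y_n]$ and depends only on $\l$, $m$, and $p$, not on $K$ or on the coefficients of $F$.

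First I would expand $F(D(y))$ by linearity in the coefficients $(a_{\l})$:
$$F(D(y)) = \sum_{|\l|=m} a_{\l}\, D(y)^{\l}.$$
Then, writing each $D(y)^{\l}$ as $\sum_{\j} c_{\l,\j}\, \boldsymbol{y}^{\j}$ with $c_{\l,\j} \in \mathbb{Z}$ depending only on $\l,\j,m,p$, and interchanging the two sums, I obtain
$$F(D(y)) = \sum_{\j}\left(\sum_{|\l|=m} c_{\l,\j}\, a_{\l}\right) \boldsymbol{y}^{\j}.$$
Comparing with $F(D(y)) = \sum_{\j} b_{\j}\, \boldsymbol{y}^{\j}$ gives $b_{\j} = \sum_{|\l|=m} c_{\l,\j}\, a_{\l}$. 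Thus the polynomial
$$\Lambda_{\j,m}\bigl((x_{\l})\bigr) := \sum_{|\l|=m} c_{\l,\j}\, x_{\l} \in \mathbb{Z}[(x_{\l})]$$
is homogeneous of degree $1$ in the variables $(x_{\l})$, depends only on $\j, m, p$, and satisfies $b_{\j} = \Lambda_{\j,m}\bigl((a_{\l})\bigr)$, as required.

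For the claimed index range $1\le |\j| \le mp^n$: each $D_i$ has total degree $p^i \le p^n$ in $(y_0,\dots,y_n)$, so $D(y)^{\l}$ has total degree at most $m p^n$; and $D_i(0) = 0$ for every $i$, so $D(y)^{\l}$ has zero constant term whenever $|\l| = m > 0$. The ``hard part'' is essentially nonexistent: the content of the lemma is the bookkeeping observation that, since the map $D$ involves no parameters from $K$, the $D$-transform of a fixed-degree homogeneous polynomial $F$ acts $\mathbb{Z}$-linearly on its coefficient vector $(a_{\l})$.
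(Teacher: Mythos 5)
Your proof is correct and is precisely the bookkeeping argument the paper has in mind (the lemma is stated without proof there, being regarded as immediate): expand $F(D(y))=\sum_{|\l|=m}a_{\l}D(y)^{\l}$, note each $D(y)^{\l}$ has integer coefficients depending only on $\l$ and $p$, and collect terms. The degree bound and vanishing constant term are also verified correctly.
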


\bigskip\noindent
{\bf Adjusting the Basic Calculation to the Witt Case.}
We now revisit the {\em basic calculation\/}
with the assumption that $\ca{K}=(K, \Gamma, \bk; v, \pi)$
is a Witt case satisfying Axiom 2,
with infinite $\bk$, and that
$a$ lies in a Witt case extension. As before $G(x)$ is a 
nonconstant $\sigma$-polynomial over $K$ of order $\le n$, and we have the
constraint on the sequence  $\{\mu_\rho\}$ in $\ca{O}$ that 
$$v(\mu_\rho)= v(\mu_\rho + d_\rho)=0 \quad \text{for all } \rho.$$ 
We now pick up the calculation at the point where 
$m\ge 1$, $G_m\ne 0$, and 
$$v\big(\theta_\rho^mG_m(\mu_\rho+d_\rho)\big)=
m\gamma_\rho + v(G_{(\l)}(a)) + 
v\big(g_m(\boldsymbol{\sigma}(\mu_\rho+d_\rho))\big).$$
Now 
$g_m\big(\boldsymbol{\sigma}(\mu_\rho+d_\rho)\big)= 
g_m\big(D(\boldsymbol{\partial}(\mu_\rho+d_\rho))\big)$, and the polynomial \\ 
$g_m\big(D(y_0,\dots,y_n)\big)$ over $K\langle a \rangle$ is nonzero, since
$D$ defines (in characteristic $0$) a generically surjective map from 
affine $(n+1)$-space to itself.
We have
$$g_{m}(D(y_0,\dots,y_n))=
\lambda_{m}\cdot g_{m}^D(y_0,\dots,y_n), 
\quad 0\ne \lambda_{m}\in K\langle a \rangle, $$
where the polynomial $g_{m}^D(y_0,\dots,y_n)$ 
is over the valuation ring of $K\langle a\rangle$ and has 
a coefficient equal to 1. Thus
$$ v\big(\theta_\rho^mG_m(\mu_\rho+d_\rho)\big)=
m\gamma_\rho + v(G_{(l)}(a)) + v(\lambda_m) + 
v\big(g_m^D(\boldsymbol{\partial}(\mu_\rho + d_\rho))\big).$$
This suggests that we constrain
$\{\mu_\rho\}$ such that for each $m\ge 1$ with 
$G_m\ne 0$,  
$$ v\big(g_m^D(\boldsymbol{\partial}(\mu_\rho + d_\rho))\big)=0 
\qquad \text{(eventually in $\rho$)}. $$ 
If this constraint is met, then $G(b_\rho) \leadsto G(a)$ 
as in the original calculation. We can meet the constraint as follows:
Lemma~\ref{sum} applied to $g_m^D$ yields for each $\rho$ a polynomial
$g_{m,\rho}(y_0,\dots,y_n)$
over the valuation ring of $K\langle a\rangle$ whose reduction 
$\bar{g}_{m,\rho}$ is nonzero such that 
$g_{m,\rho}(\boldsymbol{\partial}(c))= 
g_m^D\big(\boldsymbol{\partial}(c + d_\rho)\big)$ for all 
$c$ in the valuation ring
of $K\langle a\rangle$. Then by Lemma~\ref{zariskitop} we can pick 
for each $\rho$ a nonzero polynomial 
$f_\rho(y_0,\dots,y_n)\in \bk[y_0,\dots,y_n]$ such that if 
$c_0,\dots,c_n\in \ca{O}$ and
$f_{\rho}\big(\bar{c}_0,\dots,\bar{c}_n)\ne 0$, then 
$v\big(g_{m,\rho}(c_0,\dots,c_n)\big) = 0$ for all $m\ge 1$ such that
$G_m\ne 0$. 

\medskip\noindent
{\bf Conclusion:} if for each $\rho$ 
the element $\mu_\rho\in \ca{O}$ satisfies $\bar{\mu}_\rho\ne 0$,  
$\bar{\mu}_\rho + \bar{d}_\rho\ne 0$, and 
$f_{\rho}\big(\bar{\boldsymbol{\partial}}(\mu_\rho)\big)\ne 0$, then
all constraints on $\{\mu_\rho\}$ are met.

\medskip\noindent
Using Lemma~\ref{del.surjective}, it follows that all the constraints can 
be met. Thus:

\begin{theorem}\label{adjustment2}
Suppose $\ca{K}$ satisfies Axiom $2$ and is a Witt case
with infinite $\bk$.  Suppose $\{a_\rho\}$ is a pc-sequence from $K$, and
$a_\rho \leadsto a$ in a Witt case extension.  Then the conclusion
of Theorem \ref{adjustment1} holds, as does the Corollary to Theorem
\ref{adjustment1}.
\end{theorem}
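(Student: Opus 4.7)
The plan is to follow the Witt-case adjustment of the basic calculation worked out just before the theorem statement, but apply it simultaneously to all members of $\Sigma$. Choose $n$ so large that every $G \in \Sigma$ has order $\le n$. As in Theorem~\ref{adjustment1}, use Axiom~2 to pick $\theta_\rho \in \Fix(K)$ with $v(\theta_\rho) = \gamma_\rho := v(a_\rho - a)$, and set $d_\rho := (a_\rho - a)/\theta_\rho$, so $v(d_\rho) = 0$. We aim for $b_\rho := a_\rho + \theta_\rho \mu_\rho$ with suitable $\mu_\rho \in \ca{O}$; the conditions $\bar{\mu}_\rho \ne 0$ and $\bar{\mu}_\rho + \bar{d}_\rho \ne 0$ already force $\{b_\rho\}$ to be a pc-sequence equivalent to $\{a_\rho\}$ with pseudolimit $a$.

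For each nonconstant $G \in \Sigma$ and each $m \ge 1$ with $G_m \ne 0$ (in the notation of the basic calculation for $G$), extract the polynomial $g_m^D$ over the valuation ring of $K\langle a\rangle$, having a coefficient equal to $1$, with $g_m(\boldsymbol{\sigma}(\mu_\rho + d_\rho)) = \lambda_m \cdot g_m^D(\boldsymbol{\partial}(\mu_\rho + d_\rho))$ for a nonzero $\lambda_m \in K\langle a\rangle$. Applying Lemma~\ref{sum} to each $g_m^D$ yields, for every $\rho$, a polynomial $g_{m,\rho}$ over that valuation ring with nonzero reduction $\bar{g}_{m,\rho}$ and with $g_{m,\rho}(\boldsymbol{\partial}(c)) = g_m^D(\boldsymbol{\partial}(c + d_\rho))$ for all $c$ in the valuation ring of $K\langle a\rangle$. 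The pairs $(G,m)$ that arise are finite in number; by Lemma~\ref{zariskitop} applied to the nonzero product $y_0 \cdot (y_0 + \bar{d}_\rho) \cdot \prod_{(G,m)} \bar{g}_{m,\rho}$, there is a single nonzero $f_\rho \in \bk[y_0, \dots, y_n]$ such that $f_\rho(c_0, \dots, c_n) \ne 0$ forces $c_0 \ne 0$, $c_0 + \bar{d}_\rho \ne 0$, and $v(g_{m,\rho}(c_0, \dots, c_n)) = 0$ for every such pair.

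Because $\bk$ is infinite, such a tuple $(c_0, \dots, c_n) \in \bk^{n+1}$ exists; Lemma~\ref{del.surjective} then lifts it to $\mu_\rho \in \ca{O}$ with $\bar{\boldsymbol{\partial}}(\mu_\rho) = (c_0, \dots, c_n)$, so in particular $\bar{\mu}_\rho = c_0$. All constraints of the adjusted basic calculation are now met simultaneously for every $G \in \Sigma$, and that calculation gives $G(b_\rho) \leadsto G(a)$ for each nonconstant $G \in \Sigma$. For the corollary, pick a pseudolimit $a$ of $\{a_\rho\}$ in an $\aleph_1$-saturated elementary extension of $\ca{K}$ (still a Witt case with infinite residue field); apply the theorem there, noting that the $\mu_\rho$'s produced above already lie in $\ca{O} \subseteq K$, so $b_\rho \in K$, and that $\{G(b_\rho)\}$ pseudoconverges to $G(a)$ and is therefore a pc-sequence in $K$.

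The only real work is the combinatorial step of packaging the constraints from all pairs $(G, m)$ into a single nonzero residue-polynomial condition on $\bar{\boldsymbol{\partial}}(\mu_\rho)$, which is immediate once one multiplies together the finitely many relevant reductions and invokes Lemma~\ref{zariskitop}. The use of the $D$-transform (needed because Axiom~3 fails in the Witt case) has been absorbed into the single-$G$ case by the adjusted basic calculation, and Lemma~\ref{del.surjective} provides the final lift from $\bk^{n+1}$ back to $\ca{O}$.
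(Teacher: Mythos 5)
Your proposal is correct and follows essentially the same route as the paper: the theorem is proved there precisely by the preceding ``Adjusting the Basic Calculation to the Witt Case,'' i.e.\ by the $D$-transform together with Lemma~\ref{sum}, Lemma~\ref{zariskitop}, and Lemma~\ref{del.surjective}, with the finitely many constraints from all pairs $(G,m)$ met simultaneously. Your only (harmless) variation is folding the conditions $\bar{\mu}_\rho\ne 0$ and $\bar{\mu}_\rho+\bar{d}_\rho\ne 0$ into the single polynomial $f_\rho$ by multiplying in the factors $y_0$ and $y_0+\bar{d}_\rho$ before invoking Lemma~\ref{zariskitop}.
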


\bigskip\noindent
{\bf Adjusting the Refinement of the Basic Calculation to the Witt Case.}
Let $\ca{K}$ 
be a Witt case with
$\bk$ of characteristic $p$. For a $\sigma$-polynomial $G(x)$ over $K$
of order $\le n$ and $a\in K$ we set
$$G(m,x):=\big(G_{(\l)}(x)\big)_{|\l|=m}, 
\quad G(m,a):=\big(G_{(\l)}(a)\big)_{|\l|=m}.$$ 
Note that if
$G$ is nonconstant, then
$\Lambda_{\j,m}\big(G(m,x)\big)$ has lower complexity than $G$ for  
$1 \le m\le \deg G$, $\j\in \mathbb{N}^{n+1}$, 
$1\le |\j| \le mp^n$, where $\Lambda_{\j,m}$ is as in Lemma~\ref{lin}.

\begin{theorem} \label{crucial.result.witt}
Suppose that $\ca{K}$ satisfies Axiom $2$, and is a 
Witt case with infinite $\bk$ of characteristic $p$. Let
$\{a_\rho\}$ be a pc-sequence from $K$ and $a_\rho \leadsto a$ 
with $a$ in a Witt case extension.
Let $G(x)$ be a $\sigma$-polynomial over $K$ of order $\le n$ so that
\begin{enumerate}
\item[(i)]
$G(a_\rho) \leadsto 0$; 
\item[(ii)]  $\Lambda_{\j,m}(G(m,b_\rho)) \not\leadsto 0$
whenever $1 \le m\le \deg G$, $\j\in \mathbb{N}^{n+1}$, 
$1\le |\j| \le mp^n$, and $\{b_\rho\}$ is a pc-sequence in $K$ 
equivalent to $\{a_\rho\}$. 
\end{enumerate}
Let $\Sigma$ be a finite set of $\sigma$-polynomials $H(x)$ over $K$. 
Then there is a
pc-sequence $\{b_\rho\}$ from $K$, equivalent to $\{a_\rho\}$, such that 
$G(b_\rho) \leadsto 0$, and
$H(b_\rho) \leadsto H(a)$ for each nonconstant $H\in \Sigma$.
\end{theorem}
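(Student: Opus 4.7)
We adapt the proof of Theorem~\ref{crucial.result.nonwitt}, replacing the direct $\bsigma$-analysis by the $D$-transform apparatus. First augment $\Sigma$ by adjoining each $G_{(\l)}$ with $|\l|\ge 1$, and each $\sigma$-polynomial $\Lambda_{\j,m}(G(m,x))$ for $1\le m\le \deg G$ and $1\le |\j|\le mp^n$; the latter are $\mathbb{Z}$-linear combinations of the former, hence of order $\le n$. Choose $\theta_\rho\in \operatorname{Fix}(K)$ with $v(\theta_\rho)=\gamma_\rho$ and put $d_\rho:=(a_\rho-a)/\theta_\rho$. Applying Theorem~\ref{adjustment2} to this enlarged $\Sigma$ yields nonzero polynomials $f_\rho\in \bk[y_0,\dots,y_n]$ such that whenever $\{\mu_\rho\}\subset \ca{O}$ satisfies $\bar{\mu}_\rho\ne 0$, $\bar{\mu}_\rho+\bar{d}_\rho\ne 0$, and $f_\rho(\bar{\boldsymbol{\partial}}(\mu_\rho))\ne 0$, the sequence $b_\rho:=a_\rho+\theta_\rho\mu_\rho$ is equivalent to $\{a_\rho\}$ and satisfies $H(b_\rho)\leadsto H(a)$ for each nonconstant $H$ in the augmented $\Sigma$. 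The plan is to impose additional polynomial non-vanishing conditions on $\bar{\boldsymbol{\partial}}(\mu_\rho)$ that also force $G(b_\rho)\leadsto 0$.

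Expanding $G$ around $b_\rho$ gives
\[G(a_\rho)=G(b_\rho)+\sum_{m\ge 1}(-\theta_\rho)^m H_{m,\rho}(\mu_\rho),\qquad H_{m,\rho}(x):=\sum_{|\l|=m}G_{(\l)}(b_\rho)\,\bsigma(x)^{\l}.\]
Substituting $\bsigma(x)=D(\boldsymbol{\partial}(x))$ and applying Lemma~\ref{lin} rewrites
\[H_{m,\rho}(x)=\sum_{1\le |\j|\le mp^n}\Lambda_{\j,m}(G(m,b_\rho))\,\boldsymbol{\partial}(x)^{\j}.\]
By hypothesis~(ii) each coefficient $\Lambda_{\j,m}(G(m,b_\rho))$ does not pseudoconverge to $0$, and for those $\j,m$ with $\Lambda_{\j,m}(G(m,x))$ nonconstant the adjustment yields $\Lambda_{\j,m}(G(m,b_\rho))\leadsto \Lambda_{\j,m}(G(m,a))$; either way, the coefficient has an eventually stable valuation $\gamma_{\j,m}:=v(\Lambda_{\j,m}(G(m,a)))\in\Gamma$. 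For each $m\ge 1$ such that some $\Lambda_{\j,m}(G(m,a))$ is nonzero, pick $\j(m)$ minimizing $\gamma_m:=\gamma_{\j(m),m}$ and factor
\[H_{m,\rho}(x)=\Lambda_{\j(m),m}(G(m,b_\rho))\cdot h_{m,\rho}^D(\boldsymbol{\partial}(x)),\]
where $h_{m,\rho}^D\in K[y_0,\dots,y_n]$ has its $\boldsymbol{y}^{\j(m)}$-coefficient equal to $1$. Minimality of $\gamma_m$ ensures that eventually $h_{m,\rho}^D\in \ca{O}[y_0,\dots,y_n]$, and its reduction $\bar h_{m,\rho}^D\in \bk[y_0,\dots,y_n]$ is nonzero. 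This is a simplification over the situation in Theorem~\ref{adjustment2}, where the analogous polynomial had coefficients in the valuation ring of $K\langle a\rangle$; here the $\Lambda_{\j,m}$-coefficients already live in $K$.

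Impose the further constraints $\bar h_{m,\rho}^D(\bar{\boldsymbol{\partial}}(\mu_\rho))\ne 0$ for every such $m$; then $v\big(\theta_\rho^m H_{m,\rho}(\mu_\rho)\big)=m\gamma_\rho+\gamma_m$ eventually, for each $m$. By Lemma~\ref{kapla} a unique dominant $m_0$ emerges with $m_0\gamma_\rho+\gamma_{m_0}<m\gamma_\rho+\gamma_m$ for $m\ne m_0$, eventually, so
\[v\Big(\sum_{m}(-\theta_\rho)^m H_{m,\rho}(\mu_\rho)\Big)=m_0\gamma_\rho+\gamma_{m_0}.\]
When $v(G(a_\rho))\ne m_0\gamma_\rho+\gamma_{m_0}$ this immediately forces $v(G(b_\rho))=\min\{v(G(a_\rho)),\,m_0\gamma_\rho+\gamma_{m_0}\}$. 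When equality holds, factor $G(b_\rho)=G(a_\rho)\big(1-\bar c_\rho\,\bar h_{m_0,\rho}^D(\bar{\boldsymbol{\partial}}(\mu_\rho))+\eta_\rho\big)$ with $v(\eta_\rho)>0$ and $c_\rho$ a unit of $\ca{O}$, and impose the final constraint $\bar c_\rho\,\bar h_{m_0,\rho}^D(\bar{\boldsymbol{\partial}}(\mu_\rho))\ne 1$. All accumulated constraints amount to the non-vanishing of a single nonzero polynomial in $\bk[y_0,\dots,y_n]$ on $\bar{\boldsymbol{\partial}}(\mu_\rho)$; since $\bk$ is infinite, Lemma~\ref{del.surjective} provides $\mu_\rho\in \ca{O}$ realizing any desired value of $\bar{\boldsymbol{\partial}}(\mu_\rho)$, so all constraints can be simultaneously met. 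Thus $\{v(G(b_\rho))\}$ is eventually strictly increasing, and $G(b_\rho)\leadsto 0$. The main obstacle is translating hypothesis~(ii), phrased via the linear forms $\Lambda_{\j,m}$, into genuine control on the $\boldsymbol{\partial}$-coefficients of $H_{m,\rho}$; Lemma~\ref{lin} is precisely what makes this bridge explicit.
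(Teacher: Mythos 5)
Your proof follows the paper's argument essentially step for step: the same augmentation of $\Sigma$ by the $\Lambda_{\j,m}(G(m,x))$, the same use of Lemma~\ref{lin} to rewrite $H_{m,\rho}(\mu_\rho)$ as a polynomial in $\boldsymbol{\partial}(\mu_\rho)$ with coefficients $\Lambda_{\j,m}(G(m,b_\rho))$, the same choice of dominant index $\j(m)$ and dominant degree $m_0$, and the same final residue constraints met via Lemma~\ref{del.surjective}. The only (harmless) deviation is that you normalize by the coefficient $\Lambda_{\j(m),m}(G(m,b_\rho))$ evaluated at $b_\rho$ rather than at $a$, so your constraint polynomials $\bar h_{m,\rho}^D$ formally depend on the $\mu_\rho$ being selected; this apparent circularity dissolves because hypothesis (ii) makes each ratio $\Lambda_{\j,m}(G(m,b_\rho))/\Lambda_{\j(m),m}(G(m,b_\rho))$ reduce, eventually and for any admissible choice of $\{\mu_\rho\}$, to the fixed residue of $\Lambda_{\j,m}(G(m,a))/\Lambda_{\j(m),m}(G(m,a))$, which is exactly the polynomial the paper constrains.
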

\begin{proof} We can assume that 
$\Lambda_{\j,m}\big(G(m,x)\big)\in \Sigma$ whenever 
$1 \le m\le \deg G$ and $1\le |\j| \le mp^n$. By increasing $n$ if necessary 
we arrange that all $H(x)$ in $\Sigma$ have order $\le n$.
Let $\{\theta_\rho\}$ and  $\{d_\rho\}$ be as before. 
Then the {\em adjustment of the basic calculation to the Witt case\/}
constructs nonzero polynomials 
$f_\rho\in \bk[y_0,\dots,y_n]$ such that if $\{\mu_\rho\}$ satisfies the
constraints 
$$\mu_\rho\in \ca{O},\ \bar{\mu}_\rho\ne 0,\ \bar{\mu}_\rho + \bar{d}_\rho \ne 0,\ 
 f_{\rho}\big(\bar{\boldsymbol{\partial}}(\mu_\rho)\big)\ne 0,$$ 
then, setting $b_\rho:= a_\rho + \theta_\rho \mu_\rho$,
we have: 
$$H(b_\rho) \leadsto H(a), \text{ for each nonconstant }H\in \Sigma .$$  
Let  $\{\mu_\rho\}$ satisfy these constraints, and define $\{b_\rho\}$
accordingly. Proceeding as in the {\em refinement of the basic calculation}
we have
$$ G(a_\rho)\ =\ G(b_\rho)+\sum\limits_{m\geq 1}(-\theta_\rho)^m 
H_{m,\rho}(\mu_\rho)$$
where $H_{m,\rho}$ is the $\sigma$-polynomial over $K$ defined by
$$  H_{m,\rho}(x)=
\sum\limits_{|\l|=m}
G_{(\l)}(b_\rho)\cdot \boldsymbol{\sigma}(x)^{\l}.$$
Let $1 \le m\le \deg G$, and
 put $b_{\j,m,\rho}:= \Lambda_{\j,m}(G(m,b_\rho))$
for $1\le |\j|\le mp^n$. 
Then by Lemma~\ref{lin} and the facts stated just before it, 
\begin{align*} H_{m,\rho}(\mu_\rho)&= 
B_{m,\rho}\big(\boldsymbol{\partial}(\mu_\rho)\big), \text{ where}\\
B_{m,\rho}(\boldsymbol{y}):&=
\sum_{1\le |\j|\le mp^n} b_{\j,m,\rho}\boldsymbol{y}^{\j} 
\in K[y_0,\dots,y_n].
\end{align*}
For $1\le |\j|\le mp^n$ we put 
$b_{\j,m}:=  \Lambda_{\j,m}(G(m,a))$, 
and we may assume by (ii) that
$b_{\j,m,\rho}=b_{\j,m}(1+\epsilon_{\j,m,\rho})$ 
with $v(\epsilon_{\j,m,\rho})>0$, for all $\rho$. Thus 
\begin{align*}B_{m,\rho}(\boldsymbol{y})&= 
B_{m}(\boldsymbol{y}) + \epsilon_{m,\rho}(\boldsymbol{y}), \quad
\text{ where}\\ 
B_{m}(\boldsymbol{y}):&=\sum_{|\j|\le mp^n} b_{\j,m}\boldsymbol{y}^{\j}, \qquad 
\epsilon_{m,\rho}(\boldsymbol{y}) = \sum\limits_{|\j|\le mp^n}
b_{\j,m}\epsilon_{\j,m,\rho}\cdot \boldsymbol{y}^{\j}. 
\end{align*} 
In the rest of the argument, $m$ ranges over the natural numbers
$1,\dots,\deg G$ such that $B_m\ne 0$; put 
$\gamma_{\j,m}:= v(b_{\j,m})$ for $1\le |\j|\le mp^n$.
Pick $\j(m)\in \mathbb{N}^{n+1}$ with $1\le |\j(m)|\le mp^n$ such that 
$\gamma_{\j(m),m}= \min\{\gamma_{\j,m}:1\le |\j|\le mp^n\}$. Then 
$$H_{m,\rho}(\mu_\rho)= 
b_{\j(m),m}\cdot \left(h_{m}\big(\boldsymbol{\partial}(\mu_\rho)\big)+
\delta_{m,\rho}\right), $$
where $v(\delta_{m,\rho})>0$, and $h_{m}\in \ca{O}[y_0,\dots,y_n]$ 
has constant term zero and a coefficient equal to $1$. 
We now constrain
$\{\mu_\rho\}$ further by demanding, for each $m$ and $\rho$,
$$ \bar{h}_{m}\big(\bar{\boldsymbol{\partial}}(\mu_\rho)\big)\ne 0.$$
Then $v\big((-\theta_\rho)^m H_{m,\rho}(\mu_\rho)\big)=
m\gamma_\rho + \gamma_{j(m),m}$.
Take the unique $m_0\ge 1$ with $B_{m_0}\ne 0$ such that if $m\ne m_0$, then
$$m_0\gamma_\rho + \gamma_{\j(m_0),m_0} < m\gamma_\rho + \gamma_{\j(m),m},
\quad \text{(eventually in $\rho$)}.$$
Again we can assume this holds for all $\rho$. Then for all $\rho$:
\begin{align*} 
v\big(&\sum\limits_{m}(-\theta_\rho)^m H_{m,\rho}(\mu_\rho)\big)= 
m_0\gamma_\rho + \gamma_{j(m_0),m_0},\\
G(b_\rho)=G(a_\rho)-&\sum\limits_{m}(-\theta_\rho)^m H_{m,\rho}(\mu_\rho).
\end{align*}
Hence, if $\rho$ is such that 
$v\big(G(a_\rho)\big) \ne  m_0\gamma_\rho + \gamma_{\j(m_0),m_0}$, then 
$$ v\big(G(b_\rho)\big)= 
\min\left\{v\big(G(a_\rho)\big), 
m_0\gamma_\rho + \gamma_{\j(m_0),m_0}\right\}.$$
We shall make this true for {\em all\/} $\rho$. 
Suppose that $v\big(G(a_\rho)\big) =  m_0\gamma_\rho + \gamma_{\j(m_0),m_0}$. 
Then
$$G(b_\rho)=
G(a_\rho)\cdot\big(1- c_{\rho}h_{m_0}(\boldsymbol{\partial}(\mu_\rho))+ 
\epsilon_\rho\big)$$
where $c_{\rho}=(-\theta_\rho)^{m_0}\cdot b_{\j(m_0),m_0}/G(a_\rho)$, 
and $v( \epsilon_\rho) >0$. Note that $v(c_{\rho})=0$
and that $\bar{h}_{m_0}$ is not zero but that its constant term is zero.
This leads to our final constraint on $\{\mu_\rho\}$: for each
$\rho$ such that 
$v\big(G(a_\rho)\big) =  m_0\gamma_\rho + \gamma_{\j(m_0),m_0}$ 
we impose
$$ 1-  \bar{c}_{\rho}\bar{h}_{m_0}
\big(\bar{\boldsymbol{\partial}}(\mu_\rho)\big)\ne 0.$$
If all constraints hold, then $G(b_\rho) \leadsto 0$. 
Using Lemma~\ref{del.surjective} we can meet all constraints.
\end{proof}

\section{Newton-Hensel Approximation}\label{newtonhensel1}

\noindent
Let $\ca{K}=(K, \Gamma, \bk; v, \pi)$ be a valued difference field, satisfying Axiom 1 of course. Until Definition~\ref{shensel1} we fix a
$\sigma$-polynomial $G$ over $\ca{O}$ of order $\le n$, and let $a \in \ca{O}$.

\begin{definition}
$G$ is {\em $\sigma$-henselian at $a$\/} if
$v\big(G(a)\big) >0$ and   
$\min\limits_{|{\i}|=1}v\big(G_{(\i)}(a)\big)=0$.
\end{definition}

\noindent
The coefficients of all $G_{(\i)}$ are in $\ca{O}$. Hence, if
$G$ is $\sigma$-henselian at $a$, and $b\in \ca{O}$ satisfies $v(a-b)>0$, then
$G$ is also $\sigma$-henselian at $b$.  
If $G$ is $\sigma$-henselian at $a$ and $G(a) \neq 0$, does there 
exist $b\in \ca{O}$ such that $v(a-b)>0$ and $v(G(b))>v(G(a))$? To get a positive answer 
we use an additional assumption on $\bk$: 

\bigskip\noindent
{\bf Axiom $4_n$.} Each inhomogeneous linear $\bar{\sigma}$-equation 
$$1+\alpha_0x + \dots + \alpha_n\bar{\sigma}^n(x)=0 \qquad(\text{all }\alpha_i\in \bk, \text{ some }\alpha_i\ne 0),$$ 
has a solution in $\bk$. (And we say that $\ca{K}$ satisfies Axiom $4_n$ if $\bk$ does.)

\begin{lemma}\label{newton}
Suppose $\ca{K}$ satisfies Axiom $4_n$ and
$G$ is $\sigma$-henselian at $a$, with $G(a)\ne 0$.
Then there is $b \in \ca{O}$ such that $v(a-b)\ge v\big(G(a)\big)$ 
and $v\big(G(b)\big)>v\big(G(a)\big)$. For any such $b$ we have $v(a-b)=v\big(G(a)\big)$ and $G$ is
$\sigma$-henselian at $b$.
\end{lemma}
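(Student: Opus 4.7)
The plan is a Newton-style iteration step. Set $c := G(a)$, and search for the new approximation in the form $b = a + cu$ with $u \in \ca{O}$ to be determined; note that any such $b$ automatically satisfies $v(b-a) \ge v(c) = v\big(G(a)\big)$. Taylor-expanding in the $\sigma$-variables yields
\begin{equation*}
G(b) = G(a) + \sum_{k=0}^{n} G_{(\i_k)}(a)\,\sigma^k(cu) + \sum_{|\i|\ge 2} G_{(\i)}(a)\,\bsigma(cu)^{\i},
\end{equation*}
where $\i_k$ denotes the multi-index with a $1$ in position $k$ and zeros elsewhere. By Axiom 1, $v(\sigma^k(cu)) = v(cu) \ge v(c)$, and because every $G_{(\i)}$ has coefficients in $\ca{O}$, each term with $|\i|\ge 2$ has valuation at least $2v(c) > v(c)$. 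Thus the only obstacle to forcing $v(G(b)) > v(c)$ is to arrange that the linear contribution cancels $G(a)$ modulo $\fr{m}_v \cdot c$.

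Dividing the linear equation $G(a) + \sum_k G_{(\i_k)}(a)\sigma^k(c)\sigma^k(u) \equiv 0 \pmod{\fr{m}_v c}$ by $c$ and reducing to $\bk$, we obtain the inhomogeneous linear $\bar\sigma$-equation
\begin{equation*}
1 + \sum_{k=0}^n \alpha_k\, \bar\sigma^k(\bar u) = 0, \qquad \alpha_k := \overline{G_{(\i_k)}(a)\,\sigma^k(c)/c}\ \in\ \bk,
\end{equation*}
where Axiom 1 ensures $v(\sigma^k(c)/c) = 0$ so the $\alpha_k$ are well-defined residues, and $\sigma$-henselianity at $a$ guarantees at least one index $k$ with $v(G_{(\i_k)}(a)) = 0$, hence at least one nonzero $\alpha_k$. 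Axiom $4_n$ then produces $\bar u \in \bk$; lift to $u \in \ca{O}$ and set $b := a + cu$. By the valuation estimates above, $v(G(b)) > v(G(a))$, which gives existence.

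For the uniqueness part, suppose $b \in \ca{O}$ satisfies $v(a-b) \ge v(G(a))$ and $v(G(b)) > v(G(a))$. If one had $v(a-b) > v(G(a))$, the same Taylor expansion (now around $a$, evaluated at $b$) would give $v(G(b) - G(a)) > v(G(a))$ because every term, linear and higher, has valuation strictly greater than $v(G(a))$; the ultrametric inequality would then force $v(G(b)) = v(G(a))$, contradicting $v(G(b)) > v(G(a))$. Hence $v(a-b) = v(G(a))$. Finally, since $v(a-b) = v(G(a)) > 0$, the remark immediately following the definition of $\sigma$-henselian (that this property is preserved under residue-equivalent moves in $\ca{O}$) yields that $G$ is $\sigma$-henselian at $b$.

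The main obstacle is the bookkeeping for the linear system: one must confirm that the reduction step legitimately produces an instance of Axiom $4_n$, which requires using Axiom 1 to control $v(\sigma^k(c)/c)$ and the $\sigma$-henselian hypothesis to ensure a nonzero coefficient $\alpha_k$; everything else reduces to standard ultrametric estimates on the Taylor expansion.
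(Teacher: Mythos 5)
Your proof is correct and follows essentially the same route as the paper: the Newton step $b=a+G(a)u$, Taylor expansion, extraction of the factor $G(a)$ so that Axiom 1 and $\sigma$-henselianity at $a$ turn the linear part into an instance of Axiom $4_n$ over $\bk$, and the ultrametric estimate showing the quadratic and higher terms are negligible. Your explicit ultrametric argument for why $v(a-b)=v\big(G(a)\big)$ is forced is a slightly more detailed version of what the paper dismisses as "clear," but it is the same reasoning.
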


\begin{proof} 
Let $b=a + G(a)u$ where $u \in \ca{O}$ is to be determined later. Then 
$$G(b)=G(a)+\sum\limits_{|\i|\geq
1}G_{(\i)}(a)\cdot\boldsymbol{\sigma}(G(a)u)^{\i}.$$
Extracting a factor $G(a)$ and using Axiom 1 it follows that
$$G(b)= G(a)\cdot\big( 1+\sum_{|\i|=1} c_{\i}\boldsymbol{\sigma}(u)^{\i} + 
\sum_{|\j|>1}c_{\j}\boldsymbol{\sigma}(u)^{\j}\big)$$
where $\min\limits_{\i|=1} v(c_{\i})=0$ and 
$v(c_{\j})>0$ for $|\j|>1$.  
Using Axiom $4_n$, we can pick $u \in \ca{O}$ such that $\bar{u}$ is a 
solution of 
$$1+ \sum\limits_{|\i|=1}\overline{c_{\i}}\cdot \bar{\boldsymbol{\sigma}}(x)^{\i}=0.$$
Then $v(b - a)=v(G(a))$, and
$v(G(b)) > v(G(a))$. It is clear that any $b \in \ca{O}$ with $v(a-b)\ge v\big(G(a)\big)$ 
and $v\big(G(b)\big)>v\big(G(a)\big)$ is obtained in this way.
\end{proof}

\begin{lemma}\label{hensel.imm} Suppose $\ca{K}$
satisfies Axiom $4_n$ and $G(x)$ is $\sigma$-henselian at $a$.
Suppose also that there is no $b\in K$ with $G(b)=0$ and $v(a-b) = v(G(a))$.
Then there is a pc-sequence $\{a_\rho\}$ in $K$ 
with the following properties:
\begin{enumerate}
\item $a_0=a$ and $\{a_\rho\}$ has no pseudolimit in $K$;
\item $\{v(G(a_\rho))\}$ is strictly increasing, and thus 
 $G(a_\rho) \leadsto 0$;
\item $v(a_{\rho'} -a_\rho)=v\big(G(a_\rho)\big)$ whenever $\rho<\rho'$;
\item for any extension $\ca{K}'=(K',\dots)$ of $\ca{K}$ and $b,c\in K'$ such that $a_\rho \leadsto b$, $G(c)=0$ and $v(b-c)\ge v(G(b))$, we have $a_\rho \leadsto c$.
\end{enumerate}
\end{lemma}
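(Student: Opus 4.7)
The plan is a transfinite Newton iteration. Set $a_0:=a$ and proceed by transfinite induction. At a successor stage, Lemma~\ref{newton} delivers $a_{\rho+1}\in\ca{O}$ with $v(a_\rho-a_{\rho+1})=v(G(a_\rho))$, $v(G(a_{\rho+1}))>v(G(a_\rho))$, at which $G$ is again $\sigma$-henselian. At a limit stage $\lambda$, check whether the pc-sequence $\{a_\rho\}_{\rho<\lambda}$ has a pseudolimit in $K$: if not, stop; if yes, pick one as $a_\lambda$ and continue. Writing $\gamma_\rho:=v(G(a_\rho))$, the key is to maintain simultaneously the invariants (a) $\{\gamma_\rho\}$ is strictly increasing, (b) $v(a_{\rho'}-a_\rho)=\gamma_\rho$ for $\rho<\rho'$, (c) $G$ is $\sigma$-henselian at $a_\rho$, and (d) no $c\in K$ satisfies $G(c)=0$ and $v(a_\rho-c)=\gamma_\rho$. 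Invariants (a)--(c) at successor stages are immediate from Lemma~\ref{newton}; for (d), a witness $c\in K$ at $a_{\rho+1}$ would give $v(a_\rho-c)=\min(\gamma_\rho,\gamma_{\rho+1})=\gamma_\rho$ by the ultrametric inequality, contradicting (d) at $a_\rho$. In particular (d) rules out $G(a_\rho)=0$, so the iteration never halts at a successor stage.

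At a limit stage $\lambda$ with pseudolimit $a_\lambda\in K$, the Taylor identity
$$G(a_\lambda)-G(a_\rho)=\sum_{|\i|\ge 1}G_{(\i)}(a_\rho)\cdot\bsigma(a_\lambda-a_\rho)^{\i},$$
together with Axiom~1 (so $v(\sigma^j x)=v(x)$) and the equality $v(a_\lambda-a_\rho)=\gamma_\rho$ that holds eventually (derived from (b) together with $a_\rho\leadsto a_\lambda$), gives $v(G(a_\lambda)-G(a_\rho))\ge\gamma_\rho$ eventually. Applied with $\rho+1$ in place of $\rho$, this yields $v(G(a_\lambda))\ge\gamma_{\rho+1}>\gamma_\rho$ for every $\rho<\lambda$, so (a) is preserved; (b) and (c) at $a_\lambda$ are routine, and (d) at $a_\lambda$ is inherited as in the successor case. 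Since $\{\gamma_\rho\}$ is a strictly increasing transfinite sequence in $\Gamma$, the construction must terminate at some limit $\lambda$ at which $\{a_\rho\}_{\rho<\lambda}$ has no pseudolimit in $K$; this sequence satisfies (1)--(3).

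Property (4) comes from the very same Taylor bound carried out in $\ca{K}'$. Given $b,c\in K'$ with $a_\rho\leadsto b$, $G(c)=0$, and $v(b-c)\ge v(G(b))$, one gets $v(G(b))>\gamma_\rho$ for all $\rho$ (exactly as for $a_\lambda$ above), so $v(b-c)>\gamma_\rho$ for all $\rho$; combined with $v(b-a_\rho)=\gamma_\rho$ eventually, the ultrametric inequality forces $v(c-a_\rho)=\gamma_\rho$ eventually, hence $a_\rho\leadsto c$.

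The main obstacle is the limit step. Since $\sigma$-polynomials lack pseudo-continuity in general, one has to extract the strict inequality $v(G(a_\lambda))>\gamma_\rho$ by hand from the Taylor expansion; Axiom~1 is the critical input, ensuring all components of $\bsigma(a_\lambda-a_\rho)$ have valuation $\gamma_\rho$ and, since $\gamma_\rho>0$, that the higher-order terms ($|\i|\ge 2$) are uniformly dominated by the linear ones.
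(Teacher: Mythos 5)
Your proposal is correct and follows essentially the same route as the paper: a transfinite Newton iteration using Lemma~\ref{newton} at successor stages, adjoining a pseudolimit at limit stages and preserving the strict increase of $v(G(a_\rho))$ via the Taylor expansion with coefficients in $\ca{O}$ together with Axiom 1, termination by cardinality, and property (4) from the same ultrametric estimate. The only cosmetic difference is your explicit invariant (d) guaranteeing $G(a_\rho)\ne 0$; the paper leaves this implicit (it follows from the hypothesis together with $v(a-a_\rho)=v(G(a))$).
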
  
\begin{proof} Let $\{a_\rho\}_{\rho<\lambda}$ be a sequence 
in $\ca{O}$ with $\lambda$ an ordinal $>0$, $a_0=a$, and
\begin{enumerate}
  \item[(i)] $G$ is $\sigma$-henselian at $a_\rho$, for all $\rho < \lambda$,
  \item[(ii)] $v(a_{\rho'} -a_\rho)=v\big(G(a_\rho)\big)$ 
whenever $\rho<\rho'<\lambda$,
  \item[(iii)] $v(G(a_{\rho')})>v(G(a_\rho))$ whenever $\rho<\rho'<\lambda$.
\end{enumerate}
(Note that for $\lambda=1$ we have such a sequence.)
Suppose $\lambda= \mu +1$ is a successor ordinal. Then Lemma~\ref{newton} 
yields $a_\lambda\in K$ such that $v(a_\lambda - a_\mu)=v\big(G(a_\mu)\big)$
and $v\big(G(a_\lambda)\big)> v\big(G(a_\mu)\big)$. Then the extended
sequence $\{a_\rho\}_{\rho<\lambda +1}$ has the above properties 
with $\lambda+1$ instead of $\lambda$. 

Suppose $\lambda$ is a limit ordinal. Then $\{a_\rho\}$ is
a pc-sequence and 
$G(a_\rho) \leadsto 0$. If $\{a_\rho\}$ has no pseudolimit in $K$ 
we are done. Assume otherwise, and take
a pseudolimit $a_\lambda\in K$ of $\{a_\rho\}$. The extended 
sequence $\{a_\rho\}_{\rho<\lambda +1}$ clearly satisfies the conditions 
(i) and (ii) with $\lambda+1$ instead of $\lambda$. Since
$G$ is over $\ca{O}$ we have 
$$v(G(a_\lambda) - G(a_{\rho} )) \ge v(a_\lambda - a_{\rho})=
v(a_{\rho +1}-a_\rho)= v (G(a_{\rho}))$$ for 
$\rho < \lambda$. 
Therefore $v(G(a_\lambda)) \ge v(G(a_\rho))$ for $\rho<\lambda$, and by (iii)
this yields $v(G(a_\lambda))>v(G(a_\rho))$ for 
$\rho<\lambda$. So the extended sequence also satisfies (iii) with $\lambda+1$ instead of $\lambda$. For cardinality reasons this building process must come to an end and thus yield a pc-sequence $\{a_\rho\}$ satisfying (1), (2), (3). 

Let $b,c$ in an extension of $\ca{K}$ be such that $a_\rho \leadsto b$, $G(c)=0$ and $v(b-c)\ge v(G(b))$.  
Then $G$ is $\sigma$-henselian at $b$, and for $\rho < \rho'$,
\begin{align*} \gamma_\rho:&= v(a_{\rho'}-a_\rho)=v(G(a_\rho))=v(b-a_\rho), \text{ so}\\
   v(b-c)& \ge v(G(b))=v\big(G(b)-G(a_\rho) + G(a_\rho)\big)\ge \gamma_\rho,
   \end{align*}
since $v(G(b)-G(a_\rho))\ge v(b-a_\rho)=\gamma_\rho$. Thus $a_\rho \leadsto c$, as claimed.
\end{proof}

\begin{definition}\label{shensel1} We say
$\ca{K}$ is {\em $\sigma$-henselian\/} if for each $\sigma$-polynomial
$G(x)$ over $\ca{O}$ and $a\in \ca{O}$ such that $G$ is 
$\sigma$-henselian at $a$, 
there exists $b \in \ca{O}$ such that $G(b)=0$ and $v(a-b)\ge v\big(G(a)\big)$. {\em(By the arguments above, any such $b$ will actually satisfy 
$v(a-b)= v\big(G(a)\big)$.)}
\end{definition}

\begin{corollary}\label{fixh} If $\ca{K}$ is $\sigma$-henselian, then
the residue field of $\operatorname{Fix}(K)$ is $\operatorname{Fix}(\bk)$.
\end{corollary}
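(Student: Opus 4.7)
The plan is to produce, given any $\alpha \in \operatorname{Fix}(\bk)$, an element $b \in \operatorname{Fix}(K) \cap \ca{O}$ with $\pi(b) = \alpha$; the reverse inclusion that $\pi(\operatorname{Fix}(K) \cap \ca{O}) \subseteq \operatorname{Fix}(\bk)$ was already observed in the subsection on valued difference fields (since $\bar{\sigma} \circ \pi = \pi \circ \sigma$ on $\ca{O}$).

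To produce such a $b$, I would pick any lift $a \in \ca{O}$ of $\alpha$ and then apply the $\sigma$-henselian property to the $\sigma$-polynomial
\[ G(x) := \sigma(x) - x, \]
which has order $1$ and coefficients in $\ca{O}$ (indeed in the prime ring). Since $\bar{\sigma}(\alpha) = \alpha$, we have $v\bigl(G(a)\bigr) = v\bigl(\sigma(a)-a\bigr) > 0$. Moreover, the only nonvanishing partials $G_{(\i)}$ with $|\i|=1$ are $\partial G/\partial x_0 = -1$ and $\partial G/\partial x_1 = 1$, both of valuation $0$, so $\min_{|\i|=1} v\bigl(G_{(\i)}(a)\bigr) = 0$. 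Hence $G$ is $\sigma$-henselian at $a$ in the sense of the definition preceding Lemma~\ref{newton}.

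By Definition~\ref{shensel1}, there exists $b \in \ca{O}$ with $G(b) = 0$ and $v(a-b) \ge v\bigl(G(a)\bigr) > 0$. The first condition says $\sigma(b) = b$, i.e.\ $b \in \operatorname{Fix}(K)$, and the second condition gives $\pi(b) = \pi(a) = \alpha$, as required. There is essentially no obstacle here: the whole point is that $G(x) = \sigma(x) - x$ is tailor-made so that the $\sigma$-henselian hypothesis on $\ca{K}$ immediately promotes a fixed residue to a fixed element of $\ca{O}$, without needing any residual axiom such as Axiom $4_n$.
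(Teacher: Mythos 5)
Your proof is correct and follows exactly the paper's own argument: lift $\alpha$ to $a\in\ca{O}$, observe that $\sigma(x)-x$ is $\sigma$-henselian at $a$ because $v(\sigma(a)-a)>0$ while the first-order coefficients are units, and invoke the definition of $\sigma$-henselian to get a fixed point $b$ with $\bar b=\alpha$. No gaps.
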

\begin{proof} Suppose $\ca{K}$ is $\sigma$-henselian, and let $\alpha\in \operatorname{Fix}(\bk)$; we shall find 
$b\in \operatorname{Fix}(K)$ such that $v(b)=0$ and $\bar{b}=\alpha$. 
Take $a\in K$ with $v(a)=0$  and $\bar{a}=\alpha$. Then $v(\sigma(a)-a)>0$, so
$\sigma(x)-x$ is $\sigma$-henselian at $a$. So there is a $b$ as promised.
\end{proof}

\noindent
By {\bf Axiom $4$} we mean the axiom scheme
$\{\operatorname{Axiom} 4_n:\  n=0,1,2,\dots\}$. 

\begin{remark}\label{scan} If
$\Gamma = \{0\}$, then $\ca{K}$ is $\sigma$-henselian. Suppose 
$\Gamma \ne \{0\}$, $\ca{K}$ satisfies 
Axiom $2$ and is $\sigma$-henselian. Then $\ca{K}$
satisfies Axiom $4$ by \cite{scanlon1}, Proposition 5.3, so
$\bar{\sigma}^n\ne \operatorname{id}_{\bk}$ for all $n\ge 1$. Hence, if also
$\cha(\bk)=0$, then $\ca{K}$ satisfies Axiom $3$. 
\end{remark}

\noindent
From part (1) of Lemma~\ref{hensel.imm} we obtain:

\begin{corollary}\label{disc} If $\ca{K}$ is maximal as valued field and satisfies Axiom $4$, then $\ca{K}$ is $\sigma$-henselian. In particular, if
$\ca{K}$ is complete with discrete valuation and satisfies Axiom $4$, then
$\ca{K}$ is $\sigma$-henselian.
\end{corollary}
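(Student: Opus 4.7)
The plan is to reduce to Lemma~\ref{hensel.imm} using Kaplansky's characterization of maximal valued fields as exactly those in which every pc-sequence has a pseudolimit in the field itself. Let $G(x)$ be a $\sigma$-polynomial over $\ca{O}$ of some order $\le n$ and let $a\in \ca{O}$ with $G$ $\sigma$-henselian at $a$; we must find $b\in \ca{O}$ with $G(b)=0$ and $v(a-b)\ge v(G(a))$. If $G(a)=0$ take $b=a$, so assume $G(a)\neq 0$.

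Suppose for contradiction that no $b\in K$ satisfies $G(b)=0$ and $v(a-b)=v(G(a))$. Since Axiom~4 includes Axiom~$4_n$, Lemma~\ref{hensel.imm} produces a pc-sequence $\{a_\rho\}$ in $K$ starting at $a$ that has no pseudolimit in $K$. But maximality of $\ca{K}$ forbids this: pick any pseudolimit $a^*$ of $\{a_\rho\}$ in some valued field extension (such an extension exists by a standard argument, e.g.\ by taking a suitable elementary extension as in Macintyre's observation); then the valued subfield $K(a^*)$, with the restricted valuation, is a proper immediate extension of the valued field underlying $\ca{K}$, contradicting maximality. Hence a $b\in K$ with $G(b)=0$ and $v(a-b)=v(G(a))$ exists, and since $v(a)\ge 0$ and $v(a-b)>0$ we have $b\in \ca{O}$. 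In view of the parenthetical in Definition~\ref{shensel1}, this proves that $\ca{K}$ is $\sigma$-henselian.

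For the ``in particular'' clause, it suffices to show that a complete discretely valued field is automatically maximal. Given a pc-sequence $\{a_\rho\}$ in such a field, the values $\gamma_\rho := v(a_{\rho'}-a_\rho)$ (for $\rho' > \rho \ge \rho_0$) form a strictly increasing sequence in $\mathbb{Z}$, hence a cofinal one; extracting a countable cofinal subsequence $\{a_{\rho_k}\}_{k\in \mathbb{N}}$ yields a Cauchy sequence in the metric topology of $\ca{K}$, whose limit by completeness is a pseudolimit of $\{a_\rho\}$ in $K$. Thus every pc-sequence pseudoconverges in $K$, so $\ca{K}$ is maximal and the first part applies. The main obstacle is just marshalling the equivalence between maximality and pseudo-completeness; once it is in hand, Lemma~\ref{hensel.imm} has already done all the difference-algebraic work.
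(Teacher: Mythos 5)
Your overall architecture is exactly the paper's: the corollary is obtained there directly from part (1) of Lemma~\ref{hensel.imm} together with Kaplansky's characterization of maximal valued fields as the pseudo-complete ones, and your reduction of the complete discretely valued case to maximality (extract a countable cofinal subsequence, which is Cauchy) is the standard argument and is fine. The one step that does not hold as written is your justification of the Kaplansky direction you need: it is \emph{not} true that for an arbitrary pseudolimit $a^*$ of $\{a_\rho\}$ in an arbitrary valued field extension the subfield $K(a^*)$ with the restricted valuation is an immediate extension of $K$. Concretely, take $K=\mathbb{Q}$ with the $p$-adic valuation ($p$ odd) and let $\{a_n\}$ be the partial sums of the $p$-adic expansion of $c=\sqrt{1+p}\in\mathbb{Z}_p$; in an extension whose value group properly contains $\mathbb{Z}$, pick $\epsilon$ with $v(\epsilon)>\mathbb{Z}$. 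Then $a^*:=c+\epsilon$ is still a pseudolimit of $\{a_n\}$ (as $v(\epsilon)$ lies in the width), but $(a^*)^2-(1+p)=\epsilon(2c+\epsilon)\in K(a^*)$ has value $v(\epsilon)\notin v(K^\times)$, so $K(a^*)$ is not immediate over $K$. The correct statement is Kaplansky's: a pc-sequence with no pseudolimit in $K$ yields a proper \emph{immediate} extension, where in the transcendental-type case any pseudolimit works, but in the algebraic-type case one must adjoin a suitably chosen root of the minimal polynomial rather than an arbitrary pseudolimit. Since the paper explicitly assumes familiarity with pseudo-convergence, citing this theorem as a black box (as your opening sentence in fact does) repairs the argument; everything else in your proof is correct.
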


\noindent
Thus if the difference field $\bk$ satisfies Axiom 4, then the Hahn difference field
$\bk((t^\Gamma))$ is $\sigma$-henselian.  
Suppose $\bk$ has characteristic $p>0$ and every equation
$$1+ \alpha_0x + \alpha_{1}x^{p}+ \cdots + \alpha_nx^{p^n}=0 \qquad  (\text{all }\alpha_i\in \bk, \text{ some }\alpha_i\ne 0),$$
is solvable in $\bk$. Then by Corollary~\ref{disc} the Witt
difference field $\operatorname{W}(\bk)$ is $\sigma$-henselian, where $\sigma$ is the
Witt frobenius. As noted in \cite{BMS}, this
condition on the residue field $\bk$ is {\em
Hypothesis A} in Kaplansky~\cite{kaplansky} where it is 
related to uniqueness of maximal immediate
extensions of valued fields. It is equivalent to $\bk$
not having
any field extension of finite degree divisible by $p$; see
\cite{whaples}.

\medskip\noindent
Note that if $\ca{K}$ is
$\sigma$-henselian, then it is henselian as a valued field.
We have the following analogue of an
important result about henselian valued fields:

\begin{theorem}\label{lift.res.field} Suppose that 
$\ca{K}$ is 
$\sigma$-henselian and $\operatorname{char}(\bk)=0$.
Let $K_0\subseteq \ca{O}$ be a $\sigma$-subfield of $K$. Then there is
a $\sigma$-subfield $K_1$ of $K$ such that
$K_0\subseteq K_1 \subseteq \ca{O}$ and $\bar{K}_1=\bk$.
\end{theorem}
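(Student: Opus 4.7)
The plan is to apply Zorn's lemma and then show the maximal object is a full lift. Let $\mathcal{F}$ be the collection of $\sigma$-subfields $K'$ of $K$ with $K_0 \subseteq K' \subseteq \ca{O}$, partially ordered by inclusion. It is nonempty (contains $K_0$), and the union of any chain is again a $\sigma$-subfield of $K$ sitting in $\ca{O}$, so Zorn's lemma yields a maximal $K_1 \in \mathcal{F}$. Note that any such $K_1$ satisfies $v(K_1^\times) = \{0\}$, since for $x \in K_1^\times$ both $x$ and $x^{-1}$ lie in $\ca{O}$.

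It remains to prove $\bar{K}_1 = \bk$. Suppose for contradiction that there is some $\alpha \in \bk$ with $\alpha \notin \bar{K}_1$. I will produce $a \in \ca{O}$ with $\bar{a} = \alpha$ such that $K_1\langle a\rangle \subseteq \ca{O}$, contradicting maximality. By the previous paragraph and Lemma~\ref{extrest}(ii) (resp.\ Lemma~\ref{extresa}(i)) any such $K_1\langle a\rangle$ will automatically lie in $\ca{O}$, since it will have trivial value group; so the only task is to find $a$ with the right residue, such that moreover $K_1\langle a\rangle$ has residue field $\bar{K}_1\langle \alpha\rangle$.

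Split into two cases. If $\alpha$ is $\bar{\sigma}$-transcendental over $\bar{K}_1$, pick any lift $a \in \ca{O}$ of $\alpha$; Lemma~\ref{extrest} gives $v(K_1\langle a\rangle^\times) = \{0\}$ and residue field $\bar{K}_1\langle \alpha\rangle$. If $\alpha$ is $\bar{\sigma}$-algebraic over $\bar{K}_1$, let $\bar{G}(x)$ be a minimal $\bar{\sigma}$-polynomial of $\alpha$ over $\bar{K}_1$ with, say, order $m$ and degree $d > 0$ in $\bar{\sigma}^m(x)$, chosen with all coefficients in $\bar{K}_1$. Lift $\bar{G}$ coefficient-by-coefficient to a $\sigma$-polynomial $G$ over $K_1 \subseteq \ca{O}$ of the same complexity. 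Choose any $a_0 \in \ca{O}$ with $\bar{a}_0 = \alpha$. Then $v(G(a_0)) > 0$. Since $\cha(\bk) = 0$, the irreducible polynomial $\bar{G}$ is separable as a polynomial in $\bar{\sigma}^m(x)$ over $\bar{K}_1(\alpha, \bar{\sigma}(\alpha), \ldots, \bar{\sigma}^{m-1}(\alpha))$, so its partial derivative with respect to that variable does not vanish at $\bar{\boldsymbol{\sigma}}(\alpha)$; unraveling this shows $v(G_{(\i)}(a_0)) = 0$ for the multi-index $\i$ with a $1$ in position $m$. Hence $G$ is $\sigma$-henselian at $a_0$, and $\sigma$-henselianity of $\ca{K}$ yields $a \in \ca{O}$ with $G(a) = 0$ and $v(a - a_0) > 0$, so $\bar{a} = \alpha$.

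Now apply Lemma~\ref{extresa}(i) to $G$, $a$ and the base $K_1$: the complexity condition on $\bar{G}$ holds by construction, and $\bar{G}$ is a minimal $\bar{\sigma}$-polynomial of $\alpha = \bar{a}$ over $\bar{K}_1$ by hypothesis. The conclusion is that $K_1\langle a\rangle$ has value group $v(K_1^\times) = \{0\}$ and residue field $\bar{K}_1\langle \alpha\rangle$. Therefore $K_1\langle a\rangle \in \mathcal{F}$ properly extends $K_1$, contradicting maximality. This completes the proof. The main technical point — which is exactly what makes the proof work smoothly — is the verification that the lifted $G$ is $\sigma$-henselian at $a_0$, and this is precisely where characteristic zero is used, via separability of $\bar{G}$ in its leading variable.
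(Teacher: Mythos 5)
Your proof is correct and follows essentially the same route as the paper: Zorn's lemma plus a one-step extension obtained by applying $\sigma$-henselianity to a lift of a minimal $\bar{\sigma}$-polynomial (the characteristic-zero hypothesis entering exactly where you say, to get a unit first-order coefficient). The only cosmetic difference is that you invoke Lemma~\ref{extresa}(i) to see that $K_1\langle a\rangle$ has trivial value group and hence lies in $\ca{O}$, where the paper argues this directly by algebraicity of the $\sigma^k(a)$ over $K_1(a,\dots,\sigma^{m-1}(a))$.
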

\begin{proof}
Suppose that $\bar{K_0}\ne \bk$. Take $a\in \ca{O}$ such that 
$\bar{a}\notin \bar{K_0}$. If $v(G(a))=0$ for all nonzero
$G(x)$ over $K_0$, then $K_0\langle a\rangle$ is a proper
$\sigma$-field extension of $K_0$ contained in $\ca{O}$. Next,
consider the case that $v\big(G(a)\big)>0$  for some nonzero
$G(x)$ over $K_0$. Pick such $G$ of minimal complexity.  So $v(H(a))=0$
for all nonzero $H(x)$ over $K_0$ of lower complexity.  It follows that
$G$ is $\sigma$-henselian at $a$. 
So there is $b\in \ca{O}$ with $G(b)=0$ and 
$v(a-b)=v(G(a))$, so $\bar{a}=\bar{b}$. We claim that $K_0\langle b \rangle$ 
is a proper $\sigma$-field extension of $K_0$ contained in $\ca{O}$.
To prove the claim, let $G$ have order $m$. Then the $\sigma^k(b)$ with
$k\in \mathbb{Z}$ are algebraic over $K_0\big(b,\dots, \sigma^{m-1}(b)\big)$ and thus 
$$K_0\langle b\rangle\ =\ K_0\big(\sigma^k(b): k\in \mathbb{Z}\big)\ =\ K_0\big(b,\dots, \sigma^{m-1}(b)\big)[\sigma^k(b): k\in \mathbb{Z}] \subseteq \ca{O},$$
which establishes the claim. We finish the proof by Zorn's Lemma. 
\end{proof}

\noindent
The notion ``$\sigma$-henselian at $a$'' 
applies only to $\sigma$-polynomials over $\ca{O}$ and $a\in \ca{O}$.
It will be convenient to extend it a little. Let $G(x)$ be over $K$
of order $\le n$ and $a\in K$.

\begin{definition}
We say $(G,a)$ is in {\em $\sigma$-hensel
configuration\/} if $G_{(\i)}(a)\ne 0$ for some 
$\i \in \mathbb{N}^{n+1}$ with $|\i|=1$, and either
$G(a)=0$ or there is $\gamma\in \Gamma$ such that
$$ v\big(G(a)\big)= \min_{|\i|=1}v\big(G_{(\i)}(a)\big) + \gamma 
<v\big(G_{(\j)}(a)\big) + |\j|\cdot\gamma$$
for all $\j$ with $|\j|>1$. For $(G,a)$ in $\sigma$-hensel configuration we put 
$$\gamma(G,a):= v\big(G(a)\big) - \min_{|\i|=1}v\big(G_{(\i)}(a)\big).$$
\end{definition}

\noindent
Let $(G,a)$ be in $\sigma$-hensel configuration, $G(a)\ne 0$, and
take $c\in K$ with $v(c)=\gamma(G,a)$ and
put $H(x):= G(cx)/G(a)$, $\alpha:= a/c$. Then 
$$H(\alpha)=1, \quad \min_{|\i|=1} v\big(H_{(\i)}(\alpha)\big)=0, \quad v\big(H_{(\j)}(\alpha)\big) >0  \text{ for }|\j|>1,$$ 
as is easily verified. In
particular, $H(\alpha +x)$ is over $\ca{O}$.
Now assume that $\ca{K}$ satisfies also Axiom 4.
This gives a unit $u\in \ca{O}$ such that
$v\big(H(\alpha+u)\big)>0$. We claim that $H(\alpha +x)$ is 
$\sigma$-henselian at $u$. This is because for $P(x):=H(\alpha +x)$ we 
have $v\big(P(u)\big)>0$, and 
for each $i$,
$$P_{(\i)}(u)= H_{(\i)}(\alpha+u)= H_{(\i)}(\alpha)+ 
\sum_{|\j|\ge 1}H_{(\i)(\j)}(\alpha)\boldsymbol{\sigma}(u)^{\j},$$
so $\min_{|\i|=1} v\big(P_{(\i)}(u)\big)=0$.
If  $\ca{K}$ is $\sigma$-henselian, we can take
$u$ as above such that $H(\alpha+u)=0$, and then $b:= a+cu$ satisfies
$G(b)=0$ and $v(a-b)=\gamma$. Summarizing:

\begin{lemma}\label{hensel-conf} 
Assume $\ca{K}$ satisfies Axiom $4$ and is
$\sigma$-henselian. Let $(G,a)$ be in $\sigma$-hensel configuration.
Then there is $b\in K$ such that $G(b)=0$ and 
$v(a-b)= \gamma(G,a)$.
\end{lemma}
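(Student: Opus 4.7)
The plan is to translate the problem, via the multiplicative change of variable already introduced in the paragraph above the statement, into a standard application of $\sigma$-henselianity. If $G(a) = 0$ there is nothing to do (take $b := a$), so I assume $G(a) \ne 0$, set $\gamma := \gamma(G, a)$, and pick $c \in K^\times$ with $v(c) = \gamma$.

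Following the discussion preceding the statement, I introduce $H(x) := G(cx)/G(a)$ and $\alpha := a/c$, and I verify the three key properties: $H(\alpha) = 1$, $\min_{|\i|=1} v(H_{(\i)}(\alpha)) = 0$, and $v(H_{(\j)}(\alpha)) > 0$ for $|\j| > 1$. The computation rests on the identity $H_{(\i)}(y) = G_{(\i)}(cy)\boldsymbol{\sigma}(c)^{\i}/G(a)$, which follows from the Taylor expansion of $G$ combined with Axiom~1 (so that $v(\boldsymbol{\sigma}(c)^{\i}) = |\i|\gamma$). The third inequality is precisely the strict inequality in the definition of $\sigma$-hensel configuration. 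Consequently $P(x) := H(\alpha + x)$ is a $\sigma$-polynomial over $\ca{O}$ with $P(0) = 1$, $\min_{|\i|=1} v(P_{(\i)}(0)) = 0$, and $v(P_{(\j)}(0)) > 0$ for $|\j| > 1$.

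Next I apply Axiom~4 to produce a unit $u \in \ca{O}$ such that $v(P(u)) > 0$: for a unit $u$ the expansion
$$P(u) = 1 + \sum_{|\i|=1} P_{(\i)}(0)\boldsymbol{\sigma}(u)^{\i} + \sum_{|\j|>1} P_{(\j)}(0)\boldsymbol{\sigma}(u)^{\j}$$
reduces modulo $\fr m$ to an inhomogeneous linear $\bar\sigma$-equation whose coefficients $\overline{P_{(\i)}(0)}$ (for $|\i| = 1$) are not all zero, and this is solvable by Axiom~$4_n$; any lift of a solution works. I then use the Taylor expansion a second time to verify that $P$ is $\sigma$-henselian at this $u$: $P_{(\i)}(u) \equiv P_{(\i)}(0) \pmod{\fr m}$ for $|\i|=1$, so $\min_{|\i|=1} v(P_{(\i)}(u)) = 0$, while $v(P(u)) > 0$ by construction. $\sigma$-henselianity of $\ca{K}$ (Definition~\ref{shensel1}) then produces $u' \in \ca{O}$ with $P(u') = 0$ and $v(u' - u) \ge v(P(u)) > 0$, so $u'$ is still a unit. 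Setting $b := a + cu'$ gives $G(b) = G(a)\cdot P(u') = 0$ and $v(b - a) = v(cu') = v(c) = \gamma$, as required.

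The only conceptually delicate piece is the translation in the second paragraph: verifying that the strict inequality defining $\sigma$-hensel configuration translates exactly into the condition $v(P_{(\j)}(0)) > 0$ for $|\j| > 1$, and that the multiplicative change of variable $x \mapsto cx$ interacts cleanly with the $\sigma$-polynomial Taylor expansion only because Axiom~1 ensures $v(\sigma^i(c)) = v(c)$ for all $i$. Once $P$ is in hand, the remainder is a transparent application of Axiom~$4_n$ to produce a residue-level approximate zero and of $\sigma$-henselianity to upgrade it to an exact zero.
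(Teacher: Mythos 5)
Your proposal is correct and follows essentially the same route as the paper, which proves the lemma by exactly this multiplicative normalization $H(x)=G(cx)/G(a)$, $\alpha=a/c$, followed by Axiom~$4_n$ to get a residue-level solution and $\sigma$-henselianity of $P(x)=H(\alpha+x)$ at that unit to get an exact zero; the paper presents this as the "Summarizing" paragraph preceding the statement. You merely fill in the details the paper declares "easily verified" (the interaction of the scaling with the Taylor expansion via Axiom~1, and the congruence $P_{(\i)}(u)\equiv P_{(\i)}(0)$ modulo $\fr{m}$), and you are slightly more careful in distinguishing the approximate root $u$ from the exact root $u'$.
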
 

\noindent
Up to this point
we treated the Witt and non-Witt case separately, but
from now on it makes sense to handle both cases at once. We say that 
$\ca{K}$ is {\em workable} if either it satisfies Axioms 2, 3  (as in
Theorem~\ref{crucial.result.nonwitt}), or it satisfies Axiom 2
and is a Witt case with infinite $\bk$ (as in
Theorem~\ref{crucial.result.witt}). An {\em extension\/} 
of a workable $\ca{K}$ is an extension as before 
(and doesn't have to be workable), but
in the Witt case we also require the extension to be a Witt case.

In the next definition we assume that $\ca{K}$ is workable, 
and that $\{a_\rho\}$ is a pc-sequence from $K$.

\begin{definition}
We say $\{a_\rho\}$ is of
{\em $\sigma$-algebraic type over $K$\/} if $G(b_\rho) \leadsto 0$
for some $\sigma$-polynomial $G(x)$ over $K$ and an equivalent 
pc-sequence $\{b_\rho\}$ in $K$.

If $\{a_\rho\}$ is of $\sigma$-algebraic type over $K$, then a 
{\em minimal $\sigma$-polynomial of  $\{a_\rho\}$ over $K$\/} is a 
$\sigma$-polynomial $G(x)$ over $K$ with the following properties: 
\begin{enumerate}
\item[(i)] $ G(b_\rho) \leadsto 0$ for some pc-sequence 
$\{b_\rho\}$ in $K$, 
equivalent to $\{a_\rho\}$;
\item[(ii)] $ H(b_\rho) \not\leadsto 0$ whenever $H(x)$ 
is a $\sigma$-polynomial 
over $K$ of lower complexity than $G$ and 
$\{b_\rho\}$ is a pc-sequence in $K$ equivalent to $\{a_\rho\}$.
\end{enumerate}
\end{definition}

\noindent
If  $\{a_\rho\}$ is of $\sigma$-algebraic type over $K$, then 
$\{a_\rho\}$ clearly has a minimal $\sigma$-polynomial over $K$.
The next lemma is used to study immediate extensions 
in the next section. Its finite ramification hypothesis is satisfied by all
Witt cases. (``Finitely ramified'' is defined right after Lemma~\ref{kapla}.)

\begin{lemma}\label{henselconf}
Suppose $\ca{K}$ is workable and finitely ramified. 
Let $\{a_\rho\}$ from $K$ be a pc-sequence of
$\sigma$-algebraic type over $K$ with minimal $\sigma$-polynomial  
$G(x)$ over $K$, and with pseudolimit $a$ in some extension.
Let $\Sigma$ be a finite set of $\sigma$-polynomials $H(x)$ 
over $K$. Then there is a pc-sequence
$\{b_\rho\}$ in $K$, equivalent to $\{a_\rho\}$, such that, 
with $\gamma_\rho:=v(a-a_\rho)$:
\begin{enumerate}
\item[$(1)$] $v(a-b_\rho)=\gamma_\rho$, eventually, and 
$ G(b_\rho) \leadsto 0$;
\item[$(2)$] if $H\in \Sigma$ and $H \notin K$, then  
$H(b_\rho) \leadsto H(a)$;
\item[$(3)$] $(G,a)$ is in  $\sigma$-hensel configuration, and 
$\gamma(G,a) > \gamma_\rho$, eventually.
\end{enumerate} 
\end{lemma}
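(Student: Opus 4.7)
\medskip\noindent
\emph{Plan.} The plan is to produce $\{b_\rho\}$ by a single application of the refined basic calculation — Theorem~\ref{crucial.result.nonwitt} in the non-Witt workable case, Theorem~\ref{crucial.result.witt} in the Witt case. By the definition of $\sigma$-algebraic type I may replace $\{a_\rho\}$ by an equivalent pc-sequence (which preserves $a$ as a pseudolimit and, after the standard re-indexing, preserves $v(a-a_\rho)$ eventually), and so assume $G(a_\rho)\leadsto 0$ at the outset. I then invoke the theorem with the enlarged finite set $\Sigma':=\Sigma\cup\{G\}\cup\{G_{(\l)}:|\l|\ge 1,\ G_{(\l)}\ne 0\}$ (adjoining the $\Lambda_{\j,m}(G(m,\cdot))$ in the Witt case). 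The hypothesis that no lower-complexity $\sigma$-polynomial pseudoconverges to $0$ on an equivalent pc-sequence is exactly the minimality of $G$, so the theorem applies and yields $\{b_\rho\}$ equivalent to $\{a_\rho\}$ with $G(b_\rho)\leadsto 0$ and $H(b_\rho)\leadsto H(a)$ for each nonconstant $H\in\Sigma'$. In particular $G(b_\rho)\leadsto G(a)$ and $G_{(\l)}(b_\rho)\leadsto G_{(\l)}(a)$ for each $|\l|\ge 1$ with $G_{(\l)}\ne 0$, which by minimality forces $G_{(\l)}(a)\ne 0$ for every such $G_{(\l)}$.

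\medskip\noindent
Conclusion (1) is immediate from the construction: one has $b_\rho=a_\rho+\theta_\rho\mu_\rho$ with $\theta_\rho\in\operatorname{Fix}(K)$, $v(\theta_\rho)=\gamma_\rho$, and $v(\mu_\rho+d_\rho)=0$, so $v(a-b_\rho)=v(\theta_\rho(\mu_\rho+d_\rho))=\gamma_\rho$. Conclusion (2) is part of the theorem's output. For (3), the finite ramification hypothesis forces $\cha(K)=0$, so the nonconstancy of $G$ guarantees some $G_{(\i)}$ with $|\i|=1$ to be nonzero, and by the pseudoconvergence just established $G_{(\i)}(a)\ne 0$. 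If $G(a)=0$, then $(G,a)$ is in $\sigma$-hensel configuration by this alone, and $\gamma(G,a)=\infty>\gamma_\rho$ trivially, so only the case $G(a)\ne 0$ requires real work.

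\medskip\noindent
In that case, set $\delta_m:=\min_{|\l|=m}v(G_{(\l)}(a))$ and $\gamma:=v(G(a))-\delta_1$, and decompose
$$G(b_\rho)-G(a)\ =\ \sum_{m\ge 1}T_m,\qquad T_m:=\sum_{|\l|=m}G_{(\l)}(a)\,\bsigma(b_\rho-a)^{\l}.$$
The basic calculation, run as a subroutine of the refinement's proof via the constraint $v\bigl(g_m(\bsigma(\mu_\rho+d_\rho))\bigr)=0$, gives $v(T_m)=m\gamma_\rho+\delta_m$ eventually for each $m$ with $T_m\ne 0$. Lemma~\ref{kapla} combined with the finite ramification hypothesis orders the affine functions $\gamma_\rho\mapsto m\gamma_\rho+\delta_m$ so that for all large $\rho$ the $m=1$ term uniquely achieves the minimum, whence $v(G(b_\rho)-G(a))=\gamma_\rho+\delta_1$ eventually. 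Since $v(G(b_\rho))$ must be eventually strictly increasing (from $G(b_\rho)\leadsto 0$), an elementary case analysis of the ultrametric sum $G(b_\rho)=G(a)+[G(b_\rho)-G(a)]$ rules out $\gamma_\rho+\delta_1\ge v(G(a))$ and forces $\gamma_\rho+\delta_1<v(G(a))$, i.e.\ $\gamma_\rho<\gamma$, eventually; this already yields $\gamma(G,a)=\gamma>\gamma_\rho$ eventually.

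\medskip\noindent
To finish (3), suppose for contradiction that $v(G_{(\j)}(a))+|\j|\gamma\le v(G(a))$ for some $\j$ with $|\j|>1$. A short manipulation using $\gamma_\rho<\gamma$ shows that the $\j$-summand in $T_{|\j|}$ has valuation at most $\delta_1+\gamma_\rho-(|\j|-1)(\gamma-\gamma_\rho)<\gamma_\rho+\delta_1=v(T_1)$, and hence $v(T_{|\j|})<v(T_1)$, contradicting the established fact that the $m=1$ term is the unique minimizer in $\sum_m T_m$ for large $\rho$. Thus $(G,a)$ is in $\sigma$-hensel configuration. The main technical obstacle in the plan is precisely this valuation bookkeeping in $\sum_m T_m$: keeping the $|\l|\ge 2$ contributions eventually strictly dominated by the $|\l|=1$ term, which is where the finite-ramification hypothesis is essential because it supplies the strengthened form of Lemma~\ref{kapla} that separates the affine functions even in the mixed-characteristic setting.
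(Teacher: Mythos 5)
Your overall architecture matches the paper's: obtain $\{b_\rho\}$ from Theorem~\ref{crucial.result.nonwitt} or \ref{crucial.result.witt} with an augmented $\Sigma$, read off (1) and (2), and deduce (3) from the eventual identity $v\big(G(b_\rho)-G(a)\big)=m_0\gamma_\rho+\delta_{m_0}$ together with the eventual strict increase of $v\big(G(b_\rho)\big)$. But there is a genuine gap at the decisive step: you assert that ``Lemma~\ref{kapla} combined with the finite ramification hypothesis orders the affine functions $\gamma_\rho\mapsto m\gamma_\rho+\delta_m$ so that for all large $\rho$ the $m=1$ term uniquely achieves the minimum.'' Lemma~\ref{kapla} only guarantees that \emph{some} unique $m_0$ is the eventual minimizer; it says nothing about which one. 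A priori one could have $2\gamma_\rho+\delta_2<\gamma_\rho+\delta_1$ for all $\rho$ (i.e.\ $\gamma_\rho<\delta_1-\delta_2$ throughout, which is entirely possible when $\{\gamma_\rho\}$ is bounded in $\Gamma$), and then $m_0=2$. Since both halves of your proof of (3) --- that $\gamma(G,a)=v(G(a))-\delta_1>\gamma_\rho$ eventually, and the contradiction ruling out $v\big(G_{(\j)}(a)\big)+|\j|\gamma\le v\big(G(a)\big)$ for $|\j|>1$ --- rest entirely on $m_0=1$, the argument does not close.

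Establishing $m_0=1$ is precisely the content of the paper's ``Claim'' and is the real work of the lemma. It needs more than the ordering lemma: one additionally imposes on $\{b_\rho\}$ the constraints obtained by running the adjustment procedure with each first-order partial $G_{(\i)}$ ($|\i|=1$, $G_{(\i)}\ne 0$) in the role of $G$, yielding $v\big(G_{(\i)}(b_\rho)-G_{(\i)}(a)\big)\le v\big(G_{(\i)(\l)}(a)\big)+|\l|\gamma_\rho$ eventually; one uses that $v\big(G_{(\i)}(b_\rho)\big)=v\big(G_{(\i)}(a)\big)$ eventually (minimality of $G$ forces $G_{(\i)}(b_\rho)\not\leadsto 0$); one converts $G_{(\i)(\l)}$ into $\binom{\i+\l}{\i}G_{(\i+\l)}$ via the iterated Taylor-coefficient identity from Section~2; and one invokes finite ramification (through the last clause of Lemma~\ref{kapla}) to absorb the resulting term $v\binom{\j}{\i}$, which can be a positive multiple of $v(p)$ in mixed characteristic. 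This chain gives $v\big(G_{(\i)}(a)\big)+\gamma_\rho<v\big(G_{(\j)}(a)\big)+|\j|\gamma_\rho$ eventually for all $\j>\i$, which is what pins down $m_0=1$. None of this appears in your write-up, and without it the claimed minimality of the $m=1$ term is unsupported.
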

\begin{proof}
Let $G$ have order $n$. We can assume that $\Sigma$ includes
all $G_{(\i)}$.
In the rest of the proof $\i,\j,\l$ range over $\mathbb{N}^{n+1}$.
Theorems~\ref{crucial.result.nonwitt} and ~\ref{crucial.result.witt} and
their proofs 
yield an equivalent pc-sequence 
$\{b_\rho\}$ in $K$ such that (1) and (2) hold. The proof of
Theorem~\ref{adjustment1} 
shows that we can arrange that in addition 
there is a unique $m_0$ with $1\le m_0 \leq \deg G$ such that, eventually,
$$v\big(G(b_\rho)-G(a)\big)=
\min\limits_{|\i|=m_0}v\big(G_{(\i)}(a)\big)
+m_0\gamma_\rho < v\big(G_{(\j)}(a)\big)+|\j|\cdot\gamma_\rho,$$
for each $\j$ with $|\j|\ge 1$ and $|\j|\not=m_0$. 
Now $\left\{v\big(G(b_\rho)\big)\right\}$ is strictly increasing, 
eventually, so $v\big(G(a)\big) > v\big(G(b_\rho)\big)$ eventually, 
and for $|\j|\ge 1$, $|\j|\not=m_0$: 
$$v\big(G(b_\rho)\big)=\min\limits_{|\i|=m_0}v\big(G_{(\i)}(a)\big)
+m_0\cdot \gamma_\rho < v\big(G_{(\j)}(a)\big) + |\j|\cdot\gamma_\rho,
\quad \text{eventually}.$$  
We claim that $m_0=1$. Let $|\i|=1$ with $G_{(\i)}\ne 0$, and let $\j>\i$;
our claim will then follow by deriving 
$$v\big(G_{(\i)}(a)\big) + \gamma_\rho < v\big(G_{(\j)}(a)\big) + 
|\j|\gamma_\rho, \quad \text{eventually}.$$ The proof of
Theorem~\ref{adjustment1} with $G_{(\i)}$ in the role of $G$ shows that 
we can arrange that our sequence $\{b_\rho\}$ also satisfies
$$v\big(G_{(\i)}(b_\rho)-G_{(\i)}(a)\big) \le  v\big(G_{(\i)(\l)}(a)\big)
+|\l|\cdot\gamma_\rho, \quad\text{eventually}$$
for all $\l$ with $|\l|\ge 1$. Since 
$v\big(G_{(\i)}(b_\rho)\big)=v\big(G_{(\i)}(a)\big)$ eventually, this yields
$$v\big(G_{(\i)}(b_\rho)\big)\leq v\big(G_{(\i)(\l)}(a)\big)
+|\l|\cdot\gamma_\rho, \quad\text{eventually}$$
for all $\l$ with $|\l|\ge 1$, hence for all such $\l$,
$$v\big(G_{(\i)}(b_\rho)\big)\leq
 v{{\i+ \l}\choose \i}
+v\big(G_{(\i+ \l)}(a)\big)+|\l|\cdot\gamma_\rho, \quad\text{eventually} $$
For $\l$ with $\i+\l=\j$, this yields
$$v\big(G_{(\i)}(a))\leq v{{\j\choose \i}}
+v\big(G_{(\j)}(a)\big)+(|\j|-1)\cdot\gamma_\rho, \quad\text{eventually}.$$
Now $K$ is finitely ramified, so 
$$v\big(G_{(\i)}(a))<v\big(G_{(\j)}(a)\big)+(|\j|-1)\cdot
\gamma_\rho, \quad\text{eventually, hence}  $$
$$v\big(G_{(\i)}(a)\big)+\gamma_\rho<v(G_{(\j)}(a))+
|\j|\cdot\gamma_\rho, \quad\text{eventually}. $$
Thus $m_0=1,$ as claimed. The above inequalities 
then yield (3). 
\end{proof}

\section{Immediate Extensions}\label{max.imm.ext}

\noindent
Throughout this section
$\ca{K}=(K,\Gamma,\bk; v,\pi)$ is a workable valued difference field. The immediate 
extensions of $\ca{K}$ are then workable as well, and we prove here the basic facts on these immediate extensions. To avoid 
heavy handed notation we often let $K$ stand for $\ca{K}$ 
when the context permits. 

\begin{definition}
A pc-sequence $\{a_\rho\}$ from $K$ is said to be of 
{\em $\sigma$-transcendental type over $K$\/}
if it is not of $\sigma$-algebraic type over $K$, that is, 
$G(b_\rho) \not\leadsto 0$ for each $\sigma$-polynomial $G(x)$
over $K$ and each equivalent pc-sequence $\{b_\rho\}$ from $K$. 
\end{definition}

\noindent
In particular, such a pc-sequence cannot have a pseudolimit in $K$.
The next lemma is a $\sigma$-analogue
of a result familiar for valued fields.

\begin{lemma}\label{ext.with.pseudolimit} 
Let $\{a_\rho\}$ from $K$ be a pc-sequence of $\sigma$-transcendental type
over $K$.  Then  $\ca{K}$ has an
immediate extension $(K\langle a \rangle, \Gamma,\bk; v_a, \pi_a)$
such that: \begin{enumerate} 
\item[$(1)$] $a$ is $\sigma$-transcendental over $K$ and 
$a_\rho \leadsto a$;
\item[$(2)$] for any extension $(K_1,\Gamma_1,\bk_1;v_1,\pi_1)$ of 
$\ca{K}$ and any $b\in K_1$ with
$a_\rho \leadsto b$ there is a unique
embedding 
$$(K\langle a \rangle, \Gamma,\bk; v_a, \pi_a)\ \longrightarrow\ 
(K_1,\Gamma_1,\bk_1;v_1,\pi_1)$$ 
over  $\ca{K}$ that sends $a$ to $b$. 
\end{enumerate} 
\end{lemma}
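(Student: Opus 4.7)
\smallskip\noindent
\emph{Proof plan.} The strategy is the familiar Kaplansky-style construction of an immediate extension with a prescribed pseudolimit, transplanted to the valued difference setting. First I would invoke Macintyre's observation, stated earlier, to put a pseudolimit $a$ of $\{a_\rho\}$ inside some extension $\ca{K}^*$ of $\ca{K}$. I then claim $a$ is $\sigma$-transcendental over $K$: if not, some nonzero $\sigma$-polynomial $G(x)$ over $K$ would satisfy $G(a)=0$, and Theorem~\ref{adjustment1} (or its Witt counterpart Theorem~\ref{adjustment2}, since $\ca{K}$ is workable) would produce a pc-sequence $\{b_\rho\}$ from $K$, equivalent to $\{a_\rho\}$, with $G(b_\rho)\leadsto G(a)=0$, contradicting $\sigma$-transcendental type.

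Next I would verify that $K\langle a\rangle$, viewed as a difference subfield of $\ca{K}^*$ and equipped with the restriction of the valuation on $\ca{K}^*$, is an immediate extension of $\ca{K}$. The key point is: for every nonzero $\sigma$-polynomial $P(x)$ over $K$, one has $v(P(a))\in \Gamma$, and moreover if also $Q(x)$ is a nonzero $\sigma$-polynomial over $K$ with $v(P(a))=v(Q(a))$, then $\overline{P(a)/Q(a)}\in \bk$. To see both, apply Theorem~\ref{adjustment1} or Theorem~\ref{adjustment2} to the finite set $\{P,Q\}$ (or just $\{P\}$): we obtain an equivalent pc-sequence $\{b_\rho\}$ from $K$ with $P(b_\rho)\leadsto P(a)$ and $Q(b_\rho)\leadsto Q(a)$. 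Since $\{a_\rho\}$ is of $\sigma$-transcendental type, $P(b_\rho)\not\leadsto 0$ and $Q(b_\rho)\not\leadsto 0$, so $\{v(P(b_\rho))\}$ and $\{v(Q(b_\rho))\}$ are eventually constant, with eventual values $v(P(a))$ and $v(Q(a))$ in $\Gamma$. Eventually $v(P(a)/Q(a))=v(P(b_\rho)/Q(b_\rho))=0$ and the residue $\overline{P(a)/Q(a)}$ equals the residue of $P(b_\rho)/Q(b_\rho)$, which lies in $\bk$. This proves immediateness and at the same time shows that $v$ on $K\langle a\rangle$ and the residue map are completely determined by $\{a_\rho\}$.

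For (2), given an extension $\ca{K}_1$ of $\ca{K}$ and $b\in K_1$ with $a_\rho\leadsto b$, the element $b$ is likewise $\sigma$-transcendental over $K$ by the same argument as above. Hence there is a unique difference field isomorphism $K\langle a\rangle\to K\langle b\rangle\subseteq K_1$ over $K$ sending $a$ to $b$ (uniqueness of $\sigma$-transcendental extensions, recalled in Section 2). Running the argument of the previous paragraph inside $\ca{K}_1$ shows that $v_1(P(b))$ and $\overline{P(b)/Q(b)}$ are computed from $\{a_\rho\}$ by the same recipe as $v(P(a))$ and $\overline{P(a)/Q(a)}$; therefore this isomorphism is valuation-preserving and preserves residues, yielding the desired embedding $\ca{K}\langle a\rangle \to \ca{K}_1$. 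Uniqueness of the embedding is immediate since $K\langle a\rangle$ is generated as a difference field by $a$ over $K$.

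The main obstacle is the immediateness step: one must ensure that the basic calculation can be applied simultaneously to finitely many $\sigma$-polynomials (here $P$ and $Q$) to produce a single equivalent pc-sequence $\{b_\rho\}$ controlling all their values. This is precisely why Theorems~\ref{adjustment1} and~\ref{adjustment2} are stated for a finite set $\Sigma$, and invoking them in that form is what makes the argument go through uniformly in both the equicharacteristic zero and the Witt cases.
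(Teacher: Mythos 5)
Your proposal follows essentially the same route as the paper's proof: place a pseudolimit $a$ in an elementary extension, use Theorem~\ref{adjustment1} (or Theorem~\ref{adjustment2} in the Witt case) together with the hypothesis of $\sigma$-transcendental type to see that $v(P(a))\in\Gamma$ and that the relevant residues lie in $\bk$, and then transport everything to $b$ for part (2). The steps for (1) and for immediateness are correct and, if anything, more detailed than the paper's.

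The one step that needs more care is the phrase ``computed from $\{a_\rho\}$ by the same recipe'' in your treatment of (2). Applying Theorem~\ref{adjustment1} once with pseudolimit $a$ and once with pseudolimit $b$ produces two (possibly different) pc-sequences $\{b_\rho\}$ and $\{b'_\rho\}$, each equivalent to $\{a_\rho\}$; what you obtain is that $v(P(a))$ is the eventual value of $v\big(P(b_\rho)\big)$ while $v_1(P(b))$ is the eventual value of $v\big(P(b'_\rho)\big)$. It is not automatic that two equivalent pc-sequences on which $v\circ P$ is eventually constant yield the same eventual constant, so the identity $v(P(a))=v_1(P(b))$ (and the corresponding statement for residues) does not yet follow from the statement of Theorem~\ref{adjustment1} alone. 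The paper closes exactly this point by going into the \emph{proof} of that theorem: since $a$ and $b$ are both pseudolimits of $\{a_\rho\}$, one has $v(a_\rho-a)=v(a_\rho-b)$ eventually, so the finitely many residue constraints arising from $a$ and from $b$ can be met by a single choice of $\{\mu_\rho\}$, producing one sequence $\{b_\rho\}$ with $P(b_\rho)\leadsto P(a)$ and $P(b_\rho)\leadsto P(b)$ simultaneously; the two eventual values are then literally the same. With that one addition your argument is complete.
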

\begin{proof}
Let $\ca{K}'$ be an elementary extension
of $\ca{K}$ containing a pseudolimit $a$ of $\{a_\rho\}$.  
Let  $(K\langle a \rangle, \Gamma_a,\bk_a; v_a, \pi_a)$
be the valued $\sigma$-field generated by $a$ over $\ca{K}$. 
To prove that $\Gamma_a=\Gamma$, consider a
nonconstant $\sigma$-polynomial $G(x)$ over $K$.
Use Theorem \ref{adjustment1} to get an equivalent
$\{b_\rho\}$ so that $G(b_\rho) \leadsto  G(a).$
Now, $G(b_\rho) \not\leadsto 0$, since $\{a_\rho\}$
is of $\sigma$-transcendental type.  So 
$v_a(G(a))=$ eventual value of $v(G(b_\rho))\in\Gamma.$ 
Thus $\Gamma_a=\Gamma$ and $a$ is $\sigma$-transcendental over $(K, \sigma)$. 
A similar argument shows that $\bk_a=\bk$. 

With $b$ as in (2), the proof of Theorem \ref{adjustment1}
shows that in the argument above we can arrange, in addition to 
$G(b_\rho) \leadsto  G(a)$, that
$G(b_\rho) \leadsto  G(b)$; hence $v_a(G(a))= v_1(G(b))\in \Gamma$.  
\end{proof}

\noindent
The following consequence involves both (1) and (2):

\begin{corollary}\label{ctra} Let $a$ from some extension of
$\ca{K}$ be $\sigma$-algebraic over $K$ and let 
$\{a_\rho\}$ be a  pc-sequence in $K$ such that $a_\rho \leadsto a$.
Then $\{a_\rho\}$ is of $\sigma$-algebraic type over $K$.
\end{corollary}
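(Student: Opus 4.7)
The plan is to argue by contradiction, using Lemma \ref{ext.with.pseudolimit} as a black box.

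Suppose, for contradiction, that $\{a_\rho\}$ is of $\sigma$-transcendental type over $K$. Let $\ca{K}' \supseteq \ca{K}$ be the extension in which $a$ lives, so that $a_\rho \leadsto a$ in $\ca{K}'$ and $a$ is $\sigma$-algebraic over $K$. Apply Lemma \ref{ext.with.pseudolimit} to the pc-sequence $\{a_\rho\}$: this yields an immediate extension $\ca{K}\langle a^* \rangle$ of $\ca{K}$, with a distinguished element $a^*$, such that (1) $a^*$ is $\sigma$-transcendental over $K$ and $a_\rho \leadsto a^*$, and (2) for any extension of $\ca{K}$ containing a pseudolimit $b$ of $\{a_\rho\}$, there is an embedding of $\ca{K}\langle a^*\rangle$ over $\ca{K}$ sending $a^*$ to $b$.

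Now invoke the universal property (2) with the extension $\ca{K}' \supseteq \ca{K}$ and the element $b := a$, which is indeed a pseudolimit of $\{a_\rho\}$ by hypothesis. This produces an embedding of valued difference fields
\[
 \ca{K}\langle a^* \rangle\ \longrightarrow\ \ca{K}'
\]
that fixes $\ca{K}$ pointwise and sends $a^*$ to $a$. Such an embedding is, in particular, a difference field embedding over $K$, so it preserves the property of being $\sigma$-transcendental over $K$: since $a^*$ is $\sigma$-transcendental over $K$, so is its image $a$. This contradicts the assumption that $a$ is $\sigma$-algebraic over $K$, completing the proof.

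There is no real obstacle here — the entire content is packaged into Lemma \ref{ext.with.pseudolimit}; the corollary is just the contrapositive reading of part (1), made rigorous by using part (2) to transport the $\sigma$-transcendental pseudolimit $a^*$ into $\ca{K}'$ and identify it with $a$.
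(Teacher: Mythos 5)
Your proof is correct and is essentially the paper's intended argument: the paper gives no explicit proof but introduces the corollary with the remark that it ``involves both (1) and (2)'' of Lemma~\ref{ext.with.pseudolimit}, which is exactly how you use that lemma (part (2) to embed $K\langle a^*\rangle$ into the given extension over $K$ sending $a^*$ to $a$, and part (1) to conclude that $a$, as the image of the $\sigma$-transcendental element $a^*$ under a difference field embedding over $K$, would be $\sigma$-transcendental over $K$). No gaps.
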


\noindent
The $\sigma$-algebraic analogue of Lemma~\ref{ext.with.pseudolimit} is
trickier:

\begin{lemma}\label{imm.alg.ext} 
Suppose $\ca{K}$ is finitely ramified. Let $\{a_\rho\}$ from $K$ be a pc-sequence of $\sigma$-algebraic type over
$K$, with no pseudolimit in $K$. Let $G(x)$ be a minimal 
$\sigma$-polynomial of $\{a_\rho\}$ over $K$. Then $\ca{K}$ 
has an immediate extension 
$(K\langle a \rangle, \Gamma,\bk; v_a, \pi_a)$ such that
\begin{enumerate}
\item[$(1)$] $G(a)=0$ and $a_\rho \leadsto a$;
\item[$(2)$] for any extension $(K_1, \Gamma_1,\bk_1; v_1, \pi_1)$ of 
$\ca{K}$ and any $b\in K_1$ with $G(b)=0$ and
$a_\rho \leadsto b$ there is a unique
embedding 
$$(K\langle a \rangle, \Gamma,\bk; v_a, \pi_a) \longrightarrow\ 
(K_1, \Gamma_1,\bk_1; v_1, \pi_1)$$ 
over  $\ca{K}$ that sends $a$ to $b$. 
\end{enumerate} 
\end{lemma}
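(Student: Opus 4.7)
The plan is to adapt the proof of Lemma~\ref{ext.with.pseudolimit}: find an appropriate zero $a$ of $G$ in an extension, then show the resulting $\ca{K}\langle a \rangle$ is immediate with the claimed universal property. First I would apply Macintyre's observation to pass to an elementary extension $\ca{K}^\sharp$ of $\ca{K}$ containing a pseudolimit $a^\sharp$ of $\{a_\rho\}$, and then invoke Lemma~\ref{henselconf} with $\Sigma$ a finite set of $\sigma$-polynomials over $K$ containing all nonconstant $G_{(\i)}$. After replacing $\{a_\rho\}$ by the equivalent pc-sequence in $K$ supplied by that lemma, I may assume $(G, a^\sharp)$ is in $\sigma$-hensel configuration with $\gamma(G, a^\sharp) > \gamma_\rho := v(a^\sharp - a_\rho)$ eventually, and with $G(a_\rho) \leadsto 0$.

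The main step is to produce an actual zero $a$ of $G$ with $a_\rho \leadsto a$. If $G(a^\sharp) = 0$, take $a := a^\sharp$. Otherwise, I would carry out a transfinite Newton-style iteration modelled on Lemma~\ref{hensel.imm}: at each successor stage, the $\sigma$-hensel configuration reduces the problem of producing a new pseudolimit with strictly larger $v(G(\cdot))$ to solving an inhomogeneous linear $\bar{\sigma}$-equation over the residue field, which always admits a solution in a suitable $\sigma$-field extension even when Axiom~$4$ fails. Pseudolimits at limit stages are supplied by Macintyre's observation applied to the appropriate elementary extension. This is the principal technical obstacle: one cannot invoke Lemma~\ref{hensel-conf} directly (since $\ca{K}$ need not be $\sigma$-henselian or satisfy Axiom~$4$), and one must instead manage the chain of extensions coherently and ensure termination on cardinality grounds.

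Once $a$ is in hand, immediacy of $\ca{K}\langle a \rangle$ follows along the lines of Lemma~\ref{ext.with.pseudolimit}. For any $\sigma$-polynomial $P$ over $K$ of complexity strictly less than that of $G$, minimality of $G$ for $\{a_\rho\}$ forces $P(c_\rho) \not\leadsto 0$ for every pc-sequence $\{c_\rho\}$ in $K$ equivalent to $\{a_\rho\}$. By Theorem~\ref{adjustment1} (or its Witt analogue Theorem~\ref{adjustment2}) I may choose $\{c_\rho\}$ with $P(c_\rho) \leadsto P(a)$, so $\{v(P(c_\rho))\}$ is eventually constant at $v(P(a)) \in \Gamma$. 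Since every nonzero element of $K\langle a \rangle$ reduces to a ratio $P(a)/Q(a)$ with $P, Q$ of complexity strictly less than that of $G$, this yields $v(K\langle a\rangle^\times) = \Gamma$; the analogous residue computation gives residue field $\bk$.

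For the universal property (2), Corollary~\ref{ctra} together with the minimality of $G$ for $\{a_\rho\}$ shows that $G$ is also a minimal $\sigma$-polynomial of $b$ over $K$. Applying the same construction to $b$ yields $\ca{K}\langle b \rangle$ as an immediate extension of $\ca{K}$ inside $\ca{K}_1$, with the same intrinsic valuation characterization: $v(P(\cdot))$ equals the eventual value of $v(P(c_\rho))$ for equivalent pc-sequences $\{c_\rho\}$ in $K$. The $\sigma$-field embedding $K\langle a \rangle \to K_1$ sending $a$ to $b$, whose existence rests on this shared intrinsic structure, therefore automatically preserves the valuation; uniqueness is immediate since $K\langle a \rangle$ is generated over $\ca{K}$ by $a$ alone.
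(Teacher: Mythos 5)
Your proposal diverges fundamentally from the paper's proof, and it has genuine gaps. The paper does not hunt for a zero of $G$ by Newton iteration at all: it first shows $F$ (with $G(x)=F(\boldsymbol{\sigma}(x))$) is irreducible, forms $L=K[x_0,\dots,x_n]/(F)=K(\xi_0,\dots,\xi_n)$, defines a valuation on $L$ via the pc-sequence (every nonzero element of $L$ is $f(\xi)/g(\xi)$ with $f$ of lower $x_n$-degree than $F$ and $g\in K[x_0,\dots,x_{n-1}]$, so minimality of $G$ pins down its value as the eventual value along a suitable equivalent pc-sequence), and then extends $\sigma$ to the henselization $L^h$ by showing that the henselizations of $K(\xi_0,\dots,\xi_{n-1})$ and $K(\xi_1,\dots,\xi_n)$ inside $L^h$ coincide --- this is exactly where finite ramification is used, and it is the mechanism that produces the difference operator on $K\langle a\rangle$ with $a:=\xi_0$. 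Your route bypasses all of this, and the places where you bypass it are the places that fail.

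Concretely: (i) Your immediacy argument rests on the claim that every nonzero element of $K\langle a\rangle$ is a ratio $P(a)/Q(a)$ with $P,Q$ of complexity strictly below that of $G$. This is false whenever $d:=\deg_{x_n}F>1$: then $\sigma^{n+1}(a)$ is algebraic of degree up to $d$ over $K(a,\dots,\sigma^n(a))$ and is not a rational expression in $\sigma$-polynomials of lower complexity evaluated at $a$; likewise for $\sigma^{-1}(a)$ and for general elements of $K\langle a\rangle=K(\sigma^k(a):k\in\mathbb{Z})$. Minimality of $G$ gives you no control over $v(P(a))$ for $P$ of higher complexity, so the value-group and residue-field computations do not go through as stated; the paper handles these elements precisely by showing $L^h$ is an immediate extension of $L$ and that $K\langle a\rangle$ sits inside $L^h$. (ii) For part (2) you assert that a $\sigma$-field embedding $K\langle a\rangle\to K_1$ over $K$ sending $a$ to $b$ exists because of ``shared intrinsic structure.'' But the paper explicitly warns in Section 2 that two elements with a common minimal $\sigma$-polynomial need \emph{not} generate isomorphic difference field extensions; producing this embedding is the crux of the lemma, and the paper gets it from the canonicity of the construction (the valuation on $L$ is independent of the chosen pseudolimit $e$, so one may take $e=b$ and use the uniqueness of the extension of $\sigma$ to $L^h$). (iii) Your transfinite Newton iteration through a growing tower of extensions is unsupported: each step forces a residue-field extension, the cardinality-based termination argument of Lemma~\ref{hensel.imm} no longer applies once the ambient field keeps changing, and even if a zero $a$ of $G$ with $a_\rho\leadsto a$ were produced this way, nothing guarantees that $K\langle a\rangle$ is immediate over $\ca{K}$ or that it is independent of the choices made.
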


\begin{proof}
Let $G(x)=F(\boldsymbol{\sigma}(x))$ with  
$F(x_0, \dots, x_n) \in K[x_0, \dots, x_n]$, $n=\text{order}(G)$.

{\em Claim}. $F$ is irreducible in $K[x_0,\dots,x_n]$. Suppose 
otherwise. Then $F=F_1F_2$ with nonconstant $F_1,F_2\in K[x_0,\dots,x_n]$, 
and thus $G=G_1G_2$ where $G_1(x),G_2(x)$ are $\sigma$-polynomials over
$K$ of lower complexity than $G$. Take a pc-sequence $\{b_\rho\}$ in $K$ 
equivalent to 
$\{a_\rho\}$ such that $G(b_\rho) \leadsto 0$ and $\{G_1(b_\rho)\}$, 
$\{G_2(b_\rho)\}$ 
pseudoconverge. Then $\{v(G(b_\rho))\}$ is eventually strictly increasing, 
but $\{v(G_1(b_\rho))\}$, $\{v(G_2(b_\rho))\}$ are eventually constant, 
contradiction.

Consider the domain $K[\xi_0,\dots,\xi_n]:=K[x_0,\dots,x_n]/(F)$
with $\xi_i:= x_i + (F)$ and let $L=K(\xi_0,\dots,\xi_n)$ be its
field of fractions.
We extend the valuation $v$ on $K$ to a valuation $v: L^\times \to \Gamma$ as
follows.  Pick a pseudolimit 
$e$ of $\{a_\rho\}$ in some extension of $\ca{K}$ 
whose valuation we also denote by $v$. Let
$\phi\in L$, $\phi\ne 0$, so 
$\phi=f(\xi_0,\dots,\xi_n)/g(\xi_0,\dots,\xi_{n-1})$
with $f\in K[x_0,\ldots,x_n]$ of lower $x_n$-degree
than $F$ and  $g\in K[x_0,\ldots,x_{n-1}]$, $g\ne 0$. 
We claim:
$v\big(f(\boldsymbol{\sigma}(e))\big), 
v\big(g(\boldsymbol{\sigma}(e))\big)\in \Gamma$, and 
$v\big(f(\boldsymbol{\sigma}(e))\big)- v\big(g(\boldsymbol{\sigma}(e))\big)$ 
depends only on
$\phi$ and not on the choice of $(f,g)$. To see why this claim is true, 
suppose that
also $\phi=f_1(\xi_0,\dots,\xi_n)/g_1(\xi_0,\dots,\xi_{n-1})$
with $f_1\in K[x_0,\ldots,x_n]$ of lower $x_n$-degree
than $F$ and  $g_1\in K[x_0,\ldots,x_{n-1}]$, $g_1\ne 0$. Then 
$fg_1\equiv f_1g \mod F$ in $K[x_0,\dots,x_n]$, and thus $fg_1=f_1g$ since
$fg_1$ and $f_1g$ have lower degree in $x_n$ than $F$. To avoid some tedious 
case distinctions we assume that $f,g, f_1, g_1$ are all nonconstant. 
(In the other cases the arguments below need some trivial modifications.)   
Take a pc-sequence 
$\{b_\rho\}$ in $K$ equivalent to 
$\{a_\rho\}$ such that  
$\{f(\boldsymbol{\sigma}(b_\rho))\}\leadsto f(\boldsymbol{\sigma}(e))$, 
and likewise with $g, f_1, g_1$ instead of $f$.
Also $\{f(\boldsymbol{\sigma}(b_\rho))\}\not\leadsto 0$ by the minimality 
of $G$, so 
$$v(f(\boldsymbol{\sigma}(e)))= \text{eventual value of }
v(f(\boldsymbol{\sigma}(b_\rho))),$$
in particular, $v(f(\boldsymbol{\sigma}(e)))\in \Gamma$,
and likewise with $g$, $f_1$ and $g_1$ instead of $f$. The 
identity $fg_1=f_1g$ now yields 
$$v\big(f(\boldsymbol{\sigma}(e))\big)-v\big(g(\boldsymbol{\sigma}(e))\big)=
v\big(f_1(\boldsymbol{\sigma}(e))\big)-
v\big(g_1(\boldsymbol{\sigma}(e))\big).$$
This proves the claim and allows us to define $v: L^\times \to \Gamma$
by 
$$v(\phi):=v\big(f(\boldsymbol{\sigma}(e))\big)-
v\big(g(\boldsymbol{\sigma}(e))\big).$$
It is routine to check that this map $v$ is a valuation on the field $L$
that extends the valuation $v$ on $K$. (For the multiplicative law
$v(\phi_1\phi_2)=v(\phi_1) + v(\phi_2)$, let
$$\phi_1=\frac{f_1(\xi_0,\dots,\xi_n)}{g_1(\xi_0,\dots,\xi_{n-1})}, \quad \phi_2=\frac{f_2(\xi_0,\dots,\xi_n)}{g_2(\xi_0,\dots,\xi_{n-1})}, \quad
 \phi_1\phi_2=\frac{f_3(\xi_0,\dots,\xi_n)}{g_3(\xi_0,\dots,\xi_{n-1})}
$$
where $f_1, f_2, f_3\in K[x_0,\ldots,x_n]$ have lower $x_n$-degree
than $F$ and where $g_1,g_2, g_3$ are nonzero polynomials
in $K[x_0,\ldots,x_{n-1}]$.
In view of
$$\frac{f_1}{g_1}\frac{f_2}{g_2}=\frac{QF}{g_1g_2g_3} + \frac{f_3}{g_3}$$ 
with $Q \in K[x_0, \dots ,x_n]$, we obtain 
$v(\phi_1\phi_2)=v(\phi_1) + v(\phi_2)$ as in the proof on  pp. 308--309  
of \cite{kaplansky} for ordinary valued fields, using suitable choices of 
pc-sequences equivalent to $\{a_\rho\}$ in the style above.) 
Likewise one shows that $(L,v)$ has the same residue field as $(K,v)$.

It is clear that $K(\xi_0,\dots,\xi_{n-1})$
is purely transcendental over $K$ of
transcendence degree $n$. The same is true for 
$K(\xi_1,\dots,\xi_n)$: by the minimality of $G$ the variable
$x_0$ must occur in $F$, so $\xi_0$ is algebraic over 
$K(\xi_1,\dots,\xi_n)$.  
This yields an isomorphism 
$$ K(\xi_0,\dots,\xi_{n-1})\buildrel{\sigma}\over\longrightarrow
K(\xi_1,\dots,\xi_n), 
\quad \sigma(\xi_i)=\xi_{i+1}\text{ for }0\leq i\leq n-1, $$
between subfields of $L$ that extends
$\sigma$ on $K$. We consider these subfields as equipped with the valuation
induced by that of $L$, and claim that $\sigma$ is an isomorphism of
{\em valued\/} fields.
To see why, let $c\in K[\xi_0,\dots,\xi_{n-1}]$, $c\ne 0$; the claim will 
follow by deriving $v(c)=v(\sigma(c))$. We have
$$c=h(\xi_0,\dots,\xi_{n-1}), \quad  h\in K[x_0,\dots,x_{n-1}].$$
Let $h^\sigma\in K[x_0,\dots,x_{n-1}]$ be obtained by applying 
$\sigma$ to the coefficients of $h$. Then 
$\sigma(c)=h^\sigma(\xi_1,\dots,\xi_n)$. Also
$\sigma(c)=f(\xi_0,\dots,\xi_n)/g(\xi_0,\dots,\xi_{n-1})$
with $f\in K[x_0,\ldots,x_n]$ of lower $x_n$-degree
than $F$ and  $g\in K[x_0,\ldots,x_{n-1}]$, $g\ne 0$.
Thus 
$$ g(x_0,\dots,x_{n-1})h^\sigma(x_1,\dots,x_n)-f(x_0,\dots,x_n)=qF$$
with $q\in K[x_0,\dots,x_n]$. Put $\alpha:= v(f(\xi_0,\dots,\xi_n))$,
$\beta=v(g(\xi_0,\dots,\xi_{n-1}))$ and 
$\gamma=v(h(\xi_0,\dots,\xi_{n-1}))=v(c)$, so 
$\alpha,\beta, \gamma\in \Gamma$ and $v(\sigma(c))=\alpha - \beta$.
Take a pc-sequence $\{b_\rho\}$ from $K$ 
equivalent to $\{a_\rho\}$
such that, eventually 
$$v\big(f(\boldsymbol{\sigma}(b_\rho))\big)=\alpha, \quad
v\big(g(\boldsymbol{\sigma}(b_\rho))\big)=\beta, \quad
v\big(h(\boldsymbol{\sigma}(b_\rho))\big)=\gamma$$
and also $G(b_\rho) \leadsto 0$ and
$\{v\big(q(\boldsymbol{\sigma}(b_\rho))\big)\}$ is eventually constant.
Now, to be explicit, 
$h(\boldsymbol{\sigma}(b_\rho))=h(b_\rho, \dots, \sigma^{n-1}(b_\rho))$,
so $\sigma\big (h(\boldsymbol{\sigma}(b_\rho))\big)= 
h^\sigma(\sigma(b_\rho),\dots,\sigma^n(b_\rho))$, and thus
$v\big(h^\sigma(\sigma(b_\rho),\dots,\sigma^n(b_\rho))\big)=\gamma$, 
eventually. Since 
$\{v\big((qF)(\boldsymbol{\sigma}(b_\rho))\big)\}$ is 
either eventually strictly increasing, or eventually equal to $\infty$,
it follows that eventually
$$\beta+ \gamma= v\left(g\big(\boldsymbol{\sigma}(b_\rho)\big)\cdot 
h^\sigma\big(\sigma(b_\rho),\dots,\sigma^n(b_\rho)\big)\right) = 
v\left(f\big(\boldsymbol{\sigma}(b_\rho)\big)\right)=\alpha,$$
so $\alpha - \beta=\gamma$, and thus $v(\sigma(c))=v(c)$. This
proves our claim.

Consider the inclusion diagram of valued fields
(with $L^h$ the henselisation of $L$):

$$\begin{array}{ccccc}
 & & L^h & & \\
 & & \uparrow & & \\
 & & L & & \\
 & \nearrow & & \nwarrow & \\
K(\xi_0,\ldots,\xi_{n-1}) & & & & K(\xi_1,\ldots,\xi_n)\\
 & \nwarrow &  & \nearrow & \\
 & & K & &
\end{array}
$$
Note that $L$ is an algebraic immediate extension of both 
$K(\xi_0,\dots,\xi_{n-1})$ and 
$K(\xi_1,\dots,\xi_n)$, so the same is true for $L^h$ instead of $L$.
Since $K$ is finitely ramified---a hypothesis we 
use here for the first time in the proof---this gives
$K(\xi_0,\dots,\xi_{n-1})^h=K(\xi_1,\dots,\xi_n)^h=L^h$ where we take
the henselizations inside $L^h$.
So $\sigma$ extends uniquely to an automorphism $\sigma$ of the {\em valued\/}
field $L^h$. Put $a:= \xi_0$ and let 
$(K\langle a \rangle,\Gamma, \bk; v_a, \pi_a)$ 
be the valued $\sigma$-subfield of $L^h$ generated by $a$ over $K$.  
Note that then $G(a)=0$ and $a_\rho \leadsto a$, since
$v_a(a_\rho-a)=v(a_\rho - c)$. 

To verify (2), the first display in the proof shows that  
the valuation on $L$ defined above does not depend on the choice of the 
pseudolimit $e$.
So we can take for $e$ an element $b$ as in the hypothesis of (2), from which
the conclusion of (2) follows. 
\end{proof}

\noindent
We note the following consequence.
\begin{corollary} \label{immediate.sigma-immediate}
Suppose $\ca{K}$ is finitely ramified. Then $\ca{K}$ as a valued field has 
a proper immediate extension if and only if
$\ca{K}$ as a valued difference field has a proper immediate extension.
\end{corollary}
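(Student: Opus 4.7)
My plan is to dispatch one direction essentially by unwinding definitions, and to reduce the other direction to the two construction lemmas already established in this section.

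The direction $(\Leftarrow)$ is immediate: any proper immediate extension of $\ca{K}$ as a valued difference field is, by definition, a valued field extension with the same value group and residue field, hence also a proper immediate extension of $\ca{K}$ as a valued field.

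For $(\Rightarrow)$, suppose $\ca{K}$ admits a proper immediate valued field extension, so $\ca{K}$ is not maximal as a valued field. Kaplansky's classical characterisation of maximality then supplies a pc-sequence $\{a_\rho\}$ in $K$ that has no pseudolimit in $K$. I would next split on the dichotomy $\sigma$-transcendental type versus $\sigma$-algebraic type, which is exhaustive by the definition at the start of this section. In the $\sigma$-transcendental case, Lemma~\ref{ext.with.pseudolimit} directly produces an immediate valued difference field extension $\ca{K}\langle a\rangle$ in which $a_\rho\leadsto a$; this extension is proper since the existence of a pseudolimit of $\{a_\rho\}$ in $\ca{K}\langle a\rangle$ forbids the inclusion from being an equality. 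In the $\sigma$-algebraic case, pick a minimal $\sigma$-polynomial $G(x)$ of $\{a_\rho\}$ over $K$ and invoke Lemma~\ref{imm.alg.ext}, which is where the finite ramification hypothesis is actually used; this yields an immediate extension $\ca{K}\langle a\rangle$ with $G(a)=0$ and $a_\rho\leadsto a$, and again properness is automatic because $\{a_\rho\}$ has no pseudolimit in $K$.

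At the level of the corollary itself there is no real obstacle: all the serious work has been absorbed into Lemmas~\ref{ext.with.pseudolimit} and~\ref{imm.alg.ext}, the latter being the place where finite ramification enters essentially (to identify the henselisations of $K(\xi_0,\dots,\xi_{n-1})$ and $K(\xi_1,\dots,\xi_n)$ inside $L^h$ and thereby extend $\sigma$ to a valued field automorphism). The only sanity check worth doing explicitly is that "$\sigma$-transcendental type" is defined as the literal negation of "$\sigma$-algebraic type" for pc-sequences, so the case split is exhaustive and the argument closes.
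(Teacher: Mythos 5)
Your proof is correct and is exactly the argument the paper intends: the corollary is stated there without proof as a direct consequence of Lemma~\ref{ext.with.pseudolimit} and Lemma~\ref{imm.alg.ext}, and your write-up (Kaplansky's criterion to extract a pc-sequence with no pseudolimit in $K$, then the exhaustive case split on $\sigma$-transcendental versus $\sigma$-algebraic type) is the standard way to fill in the details.
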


\noindent
We say that $\ca{K}$ is 
{\em $\sigma$-algebraically maximal\/} if it
has no proper immediate $\sigma$-algebraic extension, and we say it is
{\em maximal\/} if it has no proper immediate extension. 
Corollary~\ref{ctra} and Lemmas~\ref{imm.alg.ext} and ~\ref{hensel.imm} yield:

\begin{corollary} Suppose $\ca{K}$ is finitely ramified. Then: 
\begin{enumerate}
\item[$(1)$] $\ca{K}$ is $\sigma$-algebraically maximal 
if and only if each pc-sequence in $K$ of $\sigma$-algebraic type over $K$ 
has a pseudolimit in $K$;
\item[$(2)$] if $\ca{K}$ satisfies Axiom $4$ and is
$\sigma$-algebraically maximal, then $\ca{K}$ is 
$\sigma$-henselian.
\end{enumerate}
\end{corollary}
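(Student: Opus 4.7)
Part (1) is a standard-style equivalence, and (2) reduces to (1). I would handle both by turning the obstructions into pc-sequences and invoking the two key construction lemmas cited in the statement.

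For the forward direction of (1), suppose $\ca{K}$ is $\sigma$-algebraically maximal and let $\{a_\rho\}$ from $K$ be a pc-sequence of $\sigma$-algebraic type. Let $G(x)$ be a minimal $\sigma$-polynomial of $\{a_\rho\}$ over $K$. If $\{a_\rho\}$ had no pseudolimit in $K$, Lemma~\ref{imm.alg.ext} would yield an immediate extension $\ca{K}\langle a\rangle$ of $\ca{K}$ with $G(a)=0$ and $a_\rho\leadsto a$; this extension is proper (since $\{a_\rho\}$ has no pseudolimit in $K$) and $\sigma$-algebraic, contradicting maximality.

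For the converse, assume every pc-sequence from $K$ of $\sigma$-algebraic type pseudoconverges in $K$, and suppose for contradiction that $\ca{K}$ has a proper immediate $\sigma$-algebraic extension $\ca{K}'$. Pick $a\in K'\setminus K$. The set $\{v(a-c):c\in K\}$ has no maximum: if $v(a-c_0)$ were maximal, immediacy would supply $c_1\in K$ with $v(c_1)=v(a-c_0)$ and $c_0'\in K$ with $\overline{(a-c_0)/c_1}=\bar{c}_0'$, and then $c_0+c_1c_0'\in K$ would be strictly closer. Hence we can pick a well-indexed $\{a_\rho\}$ in $K$ with $v(a-a_\rho)$ strictly increasing and cofinal in $\{v(a-c):c\in K\}$, so $a_\rho\leadsto a$. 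By Corollary~\ref{ctra}, $\{a_\rho\}$ is of $\sigma$-algebraic type over $K$, hence has some pseudolimit $b\in K$ by hypothesis. Then $v(a-b)$ lies in the width of $\{a_\rho\}$, i.e.\ $v(a-b)>v(a-a_\rho)$ eventually, contradicting the cofinality of $\{v(a-a_\rho)\}$ in $\{v(a-c):c\in K\}$.

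For (2), let $G(x)$ over $\ca{O}$ be $\sigma$-henselian at $a\in \ca{O}$. If $G(a)=0$ we are done, so assume $G(a)\ne 0$. If some $b\in K$ satisfies $G(b)=0$ and $v(a-b)=v(G(a))$ we are again done. Otherwise Lemma~\ref{hensel.imm} (using Axiom~$4$) produces a pc-sequence $\{a_\rho\}$ in $K$ with $a_0=a$, with $G(a_\rho)\leadsto 0$, and with no pseudolimit in $K$. Taking $\{b_\rho\}=\{a_\rho\}$ in the definition shows $\{a_\rho\}$ is of $\sigma$-algebraic type over $K$, so part (1) forces $\{a_\rho\}$ to pseudoconverge in $K$, a contradiction.

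The only non-mechanical step is the cofinality/width argument in the converse of (1); everything else is a direct application of Lemmas~\ref{imm.alg.ext}, \ref{hensel.imm} and Corollary~\ref{ctra}. I do not expect a genuine obstacle, since the two construction lemmas have already absorbed all of the hard work (the basic calculation, the Witt adjustments, and the finite-ramification bookkeeping).
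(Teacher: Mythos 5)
Your proposal is correct and follows exactly the route the paper intends: the paper gives no proof beyond citing Corollary~\ref{ctra} and Lemmas~\ref{imm.alg.ext} and~\ref{hensel.imm}, and your argument is precisely the standard fleshing-out of those citations (including the Kaplansky-style cofinality argument for the converse of (1) and the reduction of (2) to (1) via Lemma~\ref{hensel.imm}).
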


\noindent
It is clear that $\ca{K}$ has
$\sigma$-algebraically maximal immediate $\sigma$-algebraic extensions, and 
also maximal immediate extensions. If $\ca{K}$ satisfies Axiom 4 both kinds of extensions are
unique up to isomorphism, but for this we need one more lemma: 

\begin{lemma} Suppose $\ca{K}$ is finitely ramified and
$\ca{K}'$ is a workable finitely ramified
$\sigma$-algebraically maximal
extension of $\ca{K}$ satisfying Axiom $4$. Let
$\{a_\rho\}$ from $K$ be a pc-sequence of $\sigma$-algebraic type over
$K$, with no pseudolimit in $K$, and with minimal 
$\sigma$-polynomial $G(x)$ over $K$. Then there exists $b\in K'$
such that $a_\rho \leadsto b$ and $G(b)=0$.
\end{lemma}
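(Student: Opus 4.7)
The plan is to assemble the conclusion directly from the preceding tools: pass to a pseudolimit of $\{a_\rho\}$ inside $K'$, use Lemma~\ref{henselconf} over $\ca{K}$ to exhibit a hensel configuration involving $G$ and that pseudolimit, and then solve for a root in $K'$ using $\sigma$-henselianity of $\ca{K}'$.

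First I would observe that $\{a_\rho\}$ remains a pc-sequence of $\sigma$-algebraic type over $K'$: the witnessing $\sigma$-polynomial $G$ and equivalent pc-sequence in $K$ (coming from the definition of minimal $\sigma$-polynomial) both live in $K'$ as well. Since $\ca{K}'$ is workable, finitely ramified and $\sigma$-algebraically maximal, part~(1) of the Corollary just above this lemma yields a pseudolimit $a\in K'$ with $a_\rho \leadsto a$. Write $\gamma_\rho := v(a_\rho - a)$, so $\{\gamma_\rho\}$ is eventually strictly increasing.

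Next I would apply Lemma~\ref{henselconf} to $\ca{K}$ (workable, finitely ramified) with the pc-sequence $\{a_\rho\}$, minimal $\sigma$-polynomial $G$ over $K$, and pseudolimit $a\in K'$ (viewed as an element of the extension $\ca{K}'$). The lemma gives in particular that $(G,a)$ is in $\sigma$-hensel configuration and that $\gamma(G,a) > \gamma_\rho$ eventually. Note that $\ca{K}'$, being workable, finitely ramified, $\sigma$-algebraically maximal, and satisfying Axiom~4, is itself $\sigma$-henselian by part~(2) of the Corollary above. Hence Lemma~\ref{hensel-conf} applies inside $\ca{K}'$ to $(G,a)$, producing $b\in K'$ with $G(b)=0$ and $v(a-b)=\gamma(G,a)$.

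It only remains to check $a_\rho \leadsto b$, which is immediate from the ultrametric inequality: since $v(a-b) = \gamma(G,a) > \gamma_\rho = v(a-a_\rho)$ eventually, we get $v(a_\rho - b) = \gamma_\rho$ eventually, a sequence that is eventually strictly increasing. Thus $b$ has both required properties. The only step that does real work is verifying $(G,a)$ is in $\sigma$-hensel configuration, but that is precisely what Lemma~\ref{henselconf} was designed to deliver; the main conceptual point is simply that one uses $\sigma$-algebraic maximality of $\ca{K}'$ to get the pseudolimit in $K'$ and then $\sigma$-henselianity of $\ca{K}'$ to convert the configuration into an actual zero.
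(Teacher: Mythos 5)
Your proof is correct and follows essentially the same route as the paper's: obtain a pseudolimit $a\in K'$ from $\sigma$-algebraic maximality (the paper cites Lemma~\ref{imm.alg.ext} directly, you cite the corollary derived from it, after correctly noting that $\{a_\rho\}$ remains of $\sigma$-algebraic type over $K'$), put $(G,a)$ in $\sigma$-hensel configuration via Lemma~\ref{henselconf}, and solve for $b$ with Lemma~\ref{hensel-conf} using the $\sigma$-henselianity of $\ca{K}'$. The closing ultrametric check that $a_\rho\leadsto b$ matches the paper's as well.
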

\begin{proof} Lemma~\ref{imm.alg.ext} provides a pseudolimit $a\in K'$ of
$\{a_\rho\}$. Take a pc-sequence $\{b_\rho\}$ in $K$ 
equivalent to $\{a_\rho\}$ with the properties listed in 
Lemma~\ref{henselconf}. 
Since $\ca{K}'$ is $\sigma$-henselian 
and satisfies Axiom 4, Lemma~\ref{hensel-conf} yields $b\in K'$ such that
$$ v'(a-b)=\gamma(G,a)\ \text{  and }\ G(b)=0.$$ 
Note that $a_\rho \leadsto b$ since
$\gamma(G,a) > v(a-a_\rho)=\gamma_\rho$ eventually.
\end{proof} 

\noindent
Together with Lemmas~\ref{ext.with.pseudolimit} and ~\ref{imm.alg.ext}
this yields:

\begin{theorem}\label{unique.max.imm.ext} Suppose $\ca{K}$ 
is finitely ramified and
satisfies Axiom $4$. Then all its maximal immediate extensions
are isomorphic over
$\ca{K}$, and all its $\sigma$-algebraically maximal immediate 
$\sigma$-algebraic extensions are isomorphic over
$\ca{K}$.
\end{theorem}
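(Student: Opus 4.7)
The plan is to run the classical Kaplansky back-and-forth via Zorn's Lemma, using the two extension lemmas (\ref{ext.with.pseudolimit} and \ref{imm.alg.ext}) together with the preceding lemma that produces roots of minimal $\sigma$-polynomials inside a $\sigma$-algebraically maximal target. Fix two maximal immediate extensions $\ca{K}_1,\ca{K}_2$ of $\ca{K}$. Consider the partially ordered set of embeddings $\iota\colon\ca{K}^\ast\to\ca{K}_2$ of valued difference fields over $\ca{K}$, where $\ca{K}\le\ca{K}^\ast\le\ca{K}_1$; it is nonempty and closed under chains, so Zorn yields a maximal $\iota$. The goal is to show $\ca{K}^\ast=\ca{K}_1$ and $\iota(K^\ast)=K_2$.

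Assume $\ca{K}^\ast\ne\ca{K}_1$ and take $a\in K_1\setminus K^\ast$. Because $\ca{K}_1$ is immediate over $\ca{K}^\ast$, the standard argument (there is no $c\in K^\ast$ minimising $v(a-c)$) produces a pc-sequence $\{a_\rho\}$ in $K^\ast$ with $a_\rho\leadsto a$ and no pseudolimit in $K^\ast$. Note that $\ca{K}^\ast$ and $\iota(\ca{K}^\ast)$ are immediate over $\ca{K}$, hence inherit finite ramification and Axiom 4 (Axiom 4 speaks about $\bk$, which is unchanged), and remain workable (Axioms 2 and 3 and the Witt-case hypotheses only involve the value group and the residue difference field). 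Now split on the type of $\{a_\rho\}$ over $K^\ast$.

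If $\{a_\rho\}$ is of $\sigma$-transcendental type, then $\{\iota(a_\rho)\}$ is a pc-sequence in $\iota(K^\ast)\subseteq K_2$. Maximality of $\ca{K}_2$, together with Lemmas~\ref{ext.with.pseudolimit} and \ref{imm.alg.ext}, forces every pc-sequence in $K_2$ to have a pseudolimit in $K_2$, so pick $b\in K_2$ with $\iota(a_\rho)\leadsto b$; part (2) of Lemma~\ref{ext.with.pseudolimit} then extends $\iota$ to an embedding of $\ca{K}^\ast\langle a\rangle$ sending $a$ to $b$, contradicting maximality. If instead $\{a_\rho\}$ is of $\sigma$-algebraic type, let $G(x)$ be a minimal $\sigma$-polynomial of $\{a_\rho\}$ over $K^\ast$; since $\ca{K}_2$ is workable, finitely ramified, $\sigma$-algebraically maximal and satisfies Axiom 4, the lemma immediately preceding the theorem (applied over $\iota(\ca{K}^\ast)$) yields $b\in K_2$ with $\iota(a_\rho)\leadsto b$ and $G^\iota(b)=0$, and part (2) of Lemma~\ref{imm.alg.ext} extends $\iota$, again contradicting maximality. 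Therefore $\ca{K}^\ast=\ca{K}_1$. Finally, $\iota(\ca{K}_1)$ is an immediate extension of $\ca{K}$ inside $\ca{K}_2$, and the maximality of $\ca{K}_1$ transported by $\iota$ shows that $\iota(K_1)$ has no proper immediate extension inside $\ca{K}_2$; since $\ca{K}_2$ is itself immediate over $\iota(K_1)$, we conclude $\iota(K_1)=K_2$.

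For the second statement, restrict the Zorn set to embeddings whose domain is a $\sigma$-algebraic extension of $\ca{K}$ inside a $\sigma$-algebraically maximal $\sigma$-algebraic extension $\ca{K}_1$. Only the $\sigma$-algebraic case arises in the bifurcation above, and the same argument applies verbatim, using that $\ca{K}^\ast\langle a\rangle$ stays $\sigma$-algebraic over $\ca{K}$. The one delicate point—and the place where the hypotheses are really used—is the second case: it depends on the previous lemma, which itself combines Lemma~\ref{henselconf} (the refinement of the basic calculation, where finite ramification was essential) with Lemma~\ref{hensel-conf} (which requires Axiom 4 and $\sigma$-henselianity, the latter coming from $\sigma$-algebraic maximality plus Axiom 4 via the corollary just before the lemma). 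Everything else is bookkeeping.
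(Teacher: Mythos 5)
Your argument is correct and is essentially the paper's intended proof: the paper simply asserts that the theorem follows from the lemma preceding it together with Lemmas~\ref{ext.with.pseudolimit} and~\ref{imm.alg.ext}, i.e., the standard Kaplansky--Zorn back-and-forth that you spell out. The only point left implicit is that in the $\sigma$-algebraic case you should also apply that preceding lemma inside $\ca{K}_1$ to obtain a root of $G$ there pseudoconverging to $\{a_\rho\}$, so that the enlarged domain of $\iota$ is genuinely a valued difference subfield of $\ca{K}_1$; this is routine and matches the level of detail the paper itself supplies.
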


\noindent
We now state minor variants of these results using
the notion of saturation from model theory, as needed 
in the proof of the embedding theorem in the next
section. Let $|X|$ denote the cardinality of a set $X$, and let $\kappa$ be a 
cardinal.

\begin{lemma} Suppose $\ca{E}=(E, \Gamma_E, \dots)\le \ca{K}$ is workable and $\ca{K}$ is finitely ramified, $\sigma$-henselian, and $\kappa$-saturated with
$\kappa>|\Gamma_E|$. 
Let $\{a_\rho\}$ from $E$ be a pc-sequence of $\sigma$-algebraic type over
$E$, with no pseudolimit in $E$, and with minimal
$\sigma$-polynomial $G(x)$ over $E$. Then there exists $b\in K$
such that $a_\rho \leadsto b$ and $G(b)=0$.
\end{lemma}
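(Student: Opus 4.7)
The plan is to mirror the preceding lemma, using $\kappa$-saturation of $\ca{K}$ in place of $\sigma$-algebraic maximality of an extension.

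First I would realize a pseudolimit $a \in K$ of $\{a_\rho\}$ inside $\ca{K}$ by a saturation argument. Let $\gamma_\rho$ denote the eventual value of $v(a_{\rho'} - a_\rho)$ for $\rho' > \rho$; then $\{\gamma_\rho\}$ is strictly increasing in $\Gamma_E$, so the index set of the pc-sequence has cardinality at most $|\Gamma_E|$. The partial type consisting of the formulas $v(x - a_\rho) \ge \gamma_\rho$ (indexed by $\rho$ past some threshold) expresses that $x$ is a pseudolimit of $\{a_\rho\}$, and its parameter set $\{a_\rho, \gamma_\rho\}$ has cardinality at most $|\Gamma_E| < \kappa$. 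By Macintyre's observation recorded in Section~$2$, each finite subset of this type is realized in some elementary extension of $\ca{K}$ and hence in $\ca{K}$ itself by elementarity, so $\kappa$-saturation produces $a \in K$ with $a_\rho \leadsto a$.

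Next I would invoke Lemma~\ref{henselconf} with $\ca{E}$ (workable by hypothesis, and finitely ramified since $\Gamma_E \subseteq \Gamma$) playing the role of $\ca{K}$ there, and $\ca{K}$ as the extension of $\ca{E}$ containing $a$. This produces a pc-sequence $\{b_\rho\}$ in $E$, equivalent to $\{a_\rho\}$, with $v(a - b_\rho) = \gamma_\rho$ eventually, $G(b_\rho) \leadsto 0$, and $(G, a)$ in $\sigma$-hensel configuration with $\gamma(G, a) > \gamma_\rho$ eventually.

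Finally, applying Lemma~\ref{hensel-conf} inside $\ca{K}$ (which is $\sigma$-henselian, and satisfies Axiom~$4$ via Remark~\ref{scan}) yields $b \in K$ with $G(b) = 0$ and $v(a - b) = \gamma(G, a)$. Since $\gamma(G, a) > \gamma_\rho$ eventually, one has $v(b - b_\rho) = \min\{v(b - a), v(a - b_\rho)\} = \gamma_\rho$ eventually, so $b_\rho \leadsto b$; by equivalence of $\{a_\rho\}$ and $\{b_\rho\}$, $a_\rho \leadsto b$, as required. The main delicate point will be the saturation step, where one must carefully bound the parameter set of the pseudoconvergence type by $|\Gamma_E|$ to use the hypothesis $\kappa > |\Gamma_E|$.
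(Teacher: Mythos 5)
Your proposal is correct and follows essentially the same route as the paper's proof: saturation yields a pseudolimit $a\in K$, Lemma~\ref{henselconf} (applied over $\ca{E}$) puts $(G,a)$ in $\sigma$-hensel configuration with $\gamma(G,a)>\gamma_\rho$ eventually, and Lemma~\ref{hensel-conf} then produces the zero $b$ with $a_\rho\leadsto b$. Your version merely spells out two points the paper leaves implicit (the cardinality bound on the index set via $\Gamma_E$, and the finite ramification of $\ca{E}$), which is fine.
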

\begin{proof} By the saturation assumption we have a pseudolimit $a\in K$ of
$\{a_\rho\}$. Let $\gamma_\rho=v(a-a_\rho)$. By Lemma~\ref{henselconf}, $(G,a)$ is in $\sigma$-hensel
configuration with $\gamma(G,a)>\gamma_\rho$, eventually.
Since  $\ca{K}$ is $\sigma$-henselian, it satisfies
Axiom 4, so Lemma~\ref{hensel-conf} yields $b\in K$ such that
$v(a-b)=\gamma(G,a)$ and $G(b)=0$. Note that $a_\rho \leadsto b$ since
$\gamma(G,a) > \gamma_\rho$ eventually.
\end{proof}

\noindent
In combination with Lemmas~\ref{ext.with.pseudolimit} and 
~\ref{imm.alg.ext} this yields:

\begin{corollary}\label{immsat} If $\ca{E}=(E, \Gamma_E, \dots)\le \ca{K}$ is 
workable and satisfies Axiom $4$, and $\ca{K}$ is finitely ramified, 
$\sigma$-henselian, and  $\kappa$-saturated with
$\kappa>|\Gamma_E|$, then any maximal immediate 
extension of $\ca{E}$ can be embedded in $\ca{K}$ over $\ca{E}$.
\end{corollary}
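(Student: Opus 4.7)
The plan is to carry out a Zorn's Lemma argument that builds an embedding of a maximal immediate extension of $\ca{E}$ into $\ca{K}$ one step at a time. Fix a maximal immediate extension $\ca{E}^*$ of $\ca{E}$, and consider the poset of pairs $(\ca{F}, \iota)$ where $\ca{E} \le \ca{F} \le \ca{E}^*$ and $\iota : \ca{F} \to \ca{K}$ is a valued difference field embedding extending the given inclusion $\ca{E} \hookrightarrow \ca{K}$, ordered by extension. Chains admit unions, so Zorn yields a maximal pair $(\ca{F}, \iota)$; the goal is to show $\ca{F} = \ca{E}^*$.

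Suppose for contradiction that $\ca{F} \subsetneq \ca{E}^*$. Since $\ca{E}^*/\ca{E}$ is immediate and $\ca{E} \le \ca{F} \le \ca{E}^*$, both $\ca{F}/\ca{E}$ and $\ca{E}^*/\ca{F}$ are immediate, hence $\Gamma_F = \Gamma_E$ and $\bk_F = \bk_E$. It follows that $\ca{F}$ inherits from $\ca{E}$ the properties of being workable, finitely ramified, and satisfying Axiom~4, since each such condition is phrased in terms of $\Gamma$ and $\bk$. Because $\ca{F} \ne \ca{E}^*$, Kaplansky's classical result on immediate extensions of valued fields yields an element $a \in E^* \setminus F$ together with a pc-sequence $\{a_\rho\}$ in $F$ such that $a_\rho \leadsto a$ and $\{a_\rho\}$ has no pseudolimit in $F$.

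Two cases now arise, depending on the type of $\{a_\rho\}$ over $\ca{F}$. If $\{a_\rho\}$ is of $\sigma$-transcendental type over $\ca{F}$, then $\kappa$-saturation of $\ca{K}$, which applies because $|\Gamma_{\iota(F)}| = |\Gamma_E| < \kappa$, produces a pseudolimit $b \in K$ of $\{\iota(a_\rho)\}$, and Lemma~\ref{ext.with.pseudolimit}(2) extends $\iota$ to an embedding $\ca{F}\langle a \rangle \to \ca{K}$ sending $a$ to $b$, contradicting maximality. If instead $\{a_\rho\}$ is of $\sigma$-algebraic type over $\ca{F}$ with minimal $\sigma$-polynomial $G(x)$ over $F$, then the lemma immediately preceding the corollary, applied with $\iota(\ca{F})$ in place of $\ca{E}$, yields $b \in K$ with $\iota(a_\rho) \leadsto b$ and $G^{\iota}(b) = 0$; Lemma~\ref{imm.alg.ext}(2) then extends $\iota$ to $\ca{F}\langle a \rangle$, again contradicting maximality.

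The main point requiring care is the \emph{inheritance step}: each of the standing hypotheses on $\ca{E}$ (workable, finitely ramified, Axiom~4) and the cardinality constraint $|\Gamma_{\cdot}| < \kappa$ needed for saturation must persist when $\ca{E}$ is replaced by the intermediate $\ca{F}$ chosen by Zorn. The essential observation that rescues the argument is that $\ca{E}^*/\ca{E}$ being immediate forces $\Gamma_F = \Gamma_E$ and $\bk_F = \bk_E$ uniformly throughout the Zorn chain, so the bound $\kappa > |\Gamma_E|$ suffices no matter how large $\ca{F}$ grows; without this uniformity one might worry that the saturation budget gets exhausted along the transfinite construction, but here it does not.
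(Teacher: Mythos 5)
Your overall strategy is exactly the one the paper intends (the paper gives no explicit proof, merely citing Lemmas~\ref{ext.with.pseudolimit} and~\ref{imm.alg.ext} together with the saturation lemma preceding the corollary), and the Zorn setup, the inheritance step, and the $\sigma$-transcendental case are all fine: in that case Lemma~\ref{ext.with.pseudolimit}(2) really does apply both to the arbitrary pseudolimit $a\in E^*$ (giving $\ca{F}\langle a\rangle\le\ca{E}^*$ as the canonical extension) and to the pseudolimit $b\in K$ supplied by saturation.

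There is, however, a genuine gap in the $\sigma$-algebraic case. Lemma~\ref{imm.alg.ext} constructs a specific immediate extension $\ca{F}\langle a_{\mathrm{can}}\rangle$ generated by a \emph{root} of $G$ pseudolimiting $\{a_\rho\}$, and part (2) only embeds \emph{that} extension into a target containing an element $b$ with $G(b)=0$ and $a_\rho\leadsto b$. Your element $a\in E^*\setminus F$ was chosen merely so that $a_\rho\leadsto a$; nothing guarantees $G(a)=0$, so $\ca{F}\langle a\rangle$ need not be isomorphic to the canonical extension and Lemma~\ref{imm.alg.ext}(2) does not extend $\iota$ to it. The fix is standard and uses material already in the paper: since $\ca{E}^*$ is a maximal immediate extension of $\ca{E}$, it is a workable, finitely ramified, $\sigma$-algebraically maximal extension of $\ca{F}$ satisfying Axiom~4, so the lemma preceding Theorem~\ref{unique.max.imm.ext} yields $a'\in E^*$ with $G(a')=0$ and $a_\rho\leadsto a'$; then $a'\notin F$ (as $\{a_\rho\}$ has no pseudolimit in $F$), $\ca{F}\langle a'\rangle\le\ca{E}^*$ is the canonical extension, and Lemma~\ref{imm.alg.ext}(2) embeds it into $\ca{K}$ over $\iota$ via the $b$ produced by the saturation lemma. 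Since the Zorn argument only needs \emph{some} proper extension of $(\ca{F},\iota)$ inside $\ca{E}^*$, adjoining $a'$ instead of $a$ completes the proof.
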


\begin{section}{The Equivalence Theorem}\label{et}

\noindent
Theorem~\ref{embed11}, the main result of the paper,
tells us when two workable $\sigma$-henselian valued difference fields
of equal characteristic zero are elementarily 
equivalent over a common substructure.  
In Section 8 we derive from it in the 
usual way some attractive consequences on the elementary theories of
such valued difference fields and on the induced structure on value group and
residue difference field. In Section 9 we use coarsening to obtain 
analogues in the mixed characteristic case. 

We begin with a short subsection on angular component maps. The presence of 
such maps simplifies the proof of the Equivalence Theorem, but in the aftermath 
we can often discard these maps again, by Corollary~\ref{acm2}.

\medskip\noindent
{\bf Angular components.} Let $\ca{K}=(K, \Gamma, \bk;\ v,\pi)$ be a valued
difference field. An {\em angular component map\/} on $\ca{K}$ is
an angular component map $\ac$ on $\ca{K}$ as valued field such that in 
addition $\bar{\sigma}(\ac(a))=\ac(\sigma(a))$ for all $a\in K$. Examples are
the Hahn difference fields $\bk((t^\Gamma))$ with angular component map given 
by $\ac(a)=a_{\gamma_0}$ for
nonzero $a=\sum a_\gamma t^\gamma\in \bk((t^\Gamma))$ and $\gamma_0=v(a)$, 
and also the Witt difference fields
$\operatorname{W}(\bk)$ with angular component map determined by $\ac(p)=1$.
(To see this, use the next lemma and the fact that 
$\text{Fix}\big(\operatorname{W}(\bk)\big)=\operatorname{W}(\mathbb{F}_p)=\mathbb{Q}_p$.)

\begin{lemma}\label{acm1} Suppose $\ca{K}$ satisfies Axiom $2$. Then
each angular component map on the valued subfield $\operatorname{Fix}(K)$ of 
$\ca{K}$ 
extends uniquely to an angular component map on $\ca{K}$. If in addition
$\ca{K}$ is $\sigma$-henselian, then every angular
component map on $\ca{K}$ is obtained in this way from an
angular component map on $\operatorname{Fix}(K)$.
\end{lemma}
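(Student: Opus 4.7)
The plan is to use Axiom 2 to make $\operatorname{Fix}(K)$ behave like a system of ``representatives'' for $\Gamma$ inside $K$, so every element of $K^\times$ can be normalized by a fixed-field element of the same valuation. Throughout, write $\bk_0$ for the residue field of $\operatorname{Fix}(K)$; note $\bk_0 \subseteq \operatorname{Fix}(\bk)$ in general, with equality when $\ca{K}$ is $\sigma$-henselian (Corollary~\ref{fixh}). Also recall that for any angular component map on $\operatorname{Fix}(K)$, the image lies in $\bk_0^\times \subseteq \operatorname{Fix}(\bk)^\times$, since $\operatorname{Fix}(K)$ has trivial difference operator and the image of an angular component map sits in the residue field.

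For existence: given an angular component map $\ac_0$ on $\operatorname{Fix}(K)$, and $x \in K^\times$, use Axiom 2 to pick $a \in \operatorname{Fix}(K)^\times$ with $v(a)=v(x)$, so $x/a \in \ca{O}^\times$, and define
\[ \ac(x) := \ac_0(a)\cdot \pi(x/a). \]
First check this is independent of the choice of $a$: if $a'$ is another such element, then $a/a' \in \operatorname{Fix}(K)^\times$ with $v(a/a')=0$, so $\ac_0(a/a') = \pi(a/a')$, which gives $\ac_0(a)\pi(x/a) = \ac_0(a')\pi(x/a')$. Multiplicativity and agreement with $\pi$ on $\ca{O}^\times$ are immediate. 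For the $\sigma$-compatibility, write $x = a\cdot u$ with $u=x/a\in \ca{O}^\times$; since $a\in \operatorname{Fix}(K)$, $\sigma(x)=a\cdot \sigma(u)$ with $\sigma(u)\in\ca{O}^\times$, so
\[ \ac(\sigma(x)) = \ac_0(a)\cdot \pi(\sigma(u)) = \ac_0(a)\cdot \bar{\sigma}(\bar{u}), \]
while $\bar{\sigma}(\ac(x)) = \bar{\sigma}(\ac_0(a))\cdot \bar{\sigma}(\bar{u}) = \ac_0(a)\cdot \bar{\sigma}(\bar{u})$, using $\ac_0(a)\in \operatorname{Fix}(\bk)$. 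Thus $\ac$ is an angular component map on $\ca{K}$ extending $\ac_0$.

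Uniqueness is forced by the same decomposition: any angular component map $\ac'$ on $\ca{K}$ extending $\ac_0$ must satisfy $\ac'(x) = \ac'(a)\ac'(x/a) = \ac_0(a)\pi(x/a)$, since $v(x/a)=0$. So the extension is unique.

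For the second statement, assume $\ca{K}$ is $\sigma$-henselian and let $\ac$ be any angular component map on $\ca{K}$. For $a\in \operatorname{Fix}(K)^\times$, $\sigma(a)=a$ gives $\bar{\sigma}(\ac(a)) = \ac(\sigma(a)) = \ac(a)$, so $\ac(a)\in \operatorname{Fix}(\bk)$. By Corollary~\ref{fixh}, $\operatorname{Fix}(\bk) = \bk_0$ is precisely the residue field of $\operatorname{Fix}(K)$. Hence $\ac_0 := \ac\!\upharpoonright\! \operatorname{Fix}(K)^\times$ is a multiplicative map into $\bk_0^\times$ agreeing with the residue map on units, i.e.\ an angular component map on $\operatorname{Fix}(K)$. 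The extension of $\ac_0$ constructed above then agrees with $\ac$ by the uniqueness argument, completing the proof. The only subtle point is the use of Corollary~\ref{fixh} to ensure that the image of $\ac$ on $\operatorname{Fix}(K)$ actually lands in the residue field of $\operatorname{Fix}(K)$; everything else is routine bookkeeping with Axiom 2.
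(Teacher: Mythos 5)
Your proof is correct and follows essentially the same route as the paper: decompose $x=uy$ with $v(u)=0$ and $y\in\operatorname{Fix}(K)$ (possible by Axiom 2), set $\ac(x)=\bar{u}\ac_0(y)$, and reduce the second claim to Corollary~\ref{fixh}. Your version just spells out the well-definedness, $\sigma$-compatibility, and uniqueness checks that the paper leaves implicit.
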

\begin{proof} Given an angular component map $\ac$ on $\operatorname{Fix}(K)$
the claimed extension to $\ca{K}$, also denoted by $\ac$, is obtained as 
follows: for $x\in K^\times$
we have $x=uy$ with $u,y\in K^\times,\ v(u)=0, \sigma(y)=y$; then
$\ac(x)=\bar{u}\ac(y)$. The second claim of the lemma follows from
Corollary~\ref{fixh}. 
\end{proof}

\noindent
Here is an immediate consequence of Lemmas~\ref{xs1} and \ref{acm1}: 

\begin{corollary}\label{acm2} 
Suppose $\ca{K}$ satisfies Axiom $2$. Then there is an 
angular component map on some elementary extension of $\ca{K}$.
\end{corollary}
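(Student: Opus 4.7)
The plan is to pass to a sufficiently saturated elementary extension, produce a cross-section on its fixed field, and then pull the resulting angular component map up to the full structure using Lemma~\ref{acm1}.

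First, I would take $\ca{K}^*$ to be an $\aleph_1$-saturated elementary extension of $\ca{K}$; since Axiom~2 is expressible by a first-order scheme, $\ca{K}^*$ again satisfies Axiom~2. Write $F := \operatorname{Fix}(K^*)$ for the fixed field of $\ca{K}^*$, viewed as a valued subfield, let $\Gamma^*$ be the value group of $\ca{K}^*$, and let $\mathcal{O}_F$ be the valuation ring of $F$. By Axiom~2 applied to $\ca{K}^*$, the restricted valuation $v\colon F^\times \to \Gamma^*$ is surjective, giving the short exact sequence of abelian groups
\[
 1\ \to\ U(\mathcal{O}_F)\ \to\ F^\times\ \to\ \Gamma^*\ \to\ 0.
\]

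Next, I would replicate the argument of Lemma~\ref{xs1} on this sequence. The group $U(\mathcal{O}_F)$ is $\emptyset$-definable in $\ca{K}^*$ (as $\{x \in K^* : v(x)=0,\ \sigma(x)=x\}$), hence $\aleph_1$-saturated as an abelian group, and therefore pure-injective by \cite{cherlin}; it is pure in $F^\times$ because $\Gamma^*$ is torsion-free. Thus the sequence splits, yielding a cross-section $s\colon \Gamma^* \to F^\times$, and then setting $\ac_F(x) := \pi\bigl(x/s(v(x))\bigr)$ for $x \in F^\times$ produces an angular component map on the valued field $F$.

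Finally, since $\ca{K}^*$ satisfies Axiom~2, Lemma~\ref{acm1} extends $\ac_F$ uniquely to an angular component map on the valued difference field $\ca{K}^*$, as required. The main obstacle is the verification that $U(\mathcal{O}_F)$ is pure-injective; this is handled by the observation that a $\emptyset$-definable abelian subgroup of an $\aleph_1$-saturated structure inherits $\aleph_1$-saturation and hence pure-injectivity, so the proof of Lemma~\ref{xs1} transfers to $F$ essentially verbatim.
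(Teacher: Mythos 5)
Your proof is correct and follows the route the paper intends when it calls the corollary an immediate consequence of Lemmas~\ref{xs1} and \ref{acm1}: obtain a cross-section on the fixed field of a saturated elementary extension (using Axiom~2 to see that $v$ maps $\operatorname{Fix}(K^*)^\times$ onto all of $\Gamma^*$), and then lift the induced angular component map via Lemma~\ref{acm1}. Your observation that $U(\mathcal{O}_F)$ inherits $\aleph_1$-saturation because it is definable in $\ca{K}^*$ correctly handles the one point the paper leaves implicit, namely why the splitting argument of Lemma~\ref{xs1} applies to the fixed field rather than to $\ca{K}^*$ itself (where a cross-section need not be $\sigma$-compatible).
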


\medskip\noindent
{\bf The Main Result.}
In this subsection we consider $3$-sorted structures
$$ \ca{K}=\big(K, \Gamma, \bk; v, \pi, \ac \big)$$
where $\big(K, \Gamma, \bk; v, \pi \big)$ is a valued difference
field (satisfying Axiom 1 of course) and 
where $\ac: K \to \bk$ is an angular component map on 
$\big(K, \Gamma, \bk; v, \pi \big)$.
Such a structure will be called an
{\em ac-valued difference field\/}. Any subfield $E$ of $K$ is
viewed as a valued subfield of $\ca{K}$ with valuation ring
$\ca{O}_E:=\ca{O}\cap E$.

\medskip\noindent
If $\cha(\bk)=0$ and $\ca{K}$ is
$\sigma$-henselian, then by Theorem~\ref{lift.res.field} there is a difference
ring morphism $i:\bk \to \ca{O}$ such that $\pi(i(a))=a$ for all $a \in K$;
we call such $i$ a {\em $\sigma$-lifting\/} of $\bk$ to  $\ca{K}$. This will play a minor
role in the proof of the Equivalence Theorem.

\medskip\noindent
A {\em good substructure\/} of 
$\ca{K}=(K, \Gamma, \bk; v,\pi,\ac)$ is a triple
$\ca{E}=(E, \Gamma_{\ca{E}}, \bk_{\ca{E}})$ such that \begin{enumerate}
\item $E$ is a difference subfield of $K$, 
\item $\Gamma_{\ca{E}}$ is an ordered abelian subgroup of $\Gamma$ 
with $v(E^\times)\subseteq \Gamma_{\ca{E}}$,
\item $\bk_{\ca{E}}$ is a difference subfield of $\bk$ with 
$\ac(E)\subseteq \bk_{\ca{E}}$ (hence $\pi(\ca{O}_E)\subseteq \bk_{\ca{E}}$).
\end{enumerate}
 For good substructures $\ca{E}_1=(E_1, \Gamma_1, \bk_1)$ and 
$\ca{E}_2=(E_2, \Gamma_2, \bk_2)$ of $\ca{K}$, we define 
$\ca{E}_1\subseteq \ca{E}_2$ to mean that 
$E_1 \subseteq E_2,\ \Gamma_1 \subseteq \Gamma_2,\ \bk_1 \subseteq \bk_2$.
If $E$ is a difference subfield of $K$ with 
$\ac(E)=\pi(\ca{O}_E)$, then 
$\big(E, v(E^\times), \pi(\ca{O}_E)\big)$ is a good substructure of $\ca{K}$, 
and if in addition $F\supseteq E$ is a difference subfield
of $K$ such that $v(F^\times)=v(E^\times)$, then $\ac(F)=\pi(\ca{O}_F)$.
Throughout this subsection 
$$\ca{K}=(K, \Gamma, \bk; v,\pi, \ac), \qquad \ca{K}'=(K',
\Gamma', \bk'; v', \pi', \ac')$$ are ac-valued difference fields, with 
valuation rings $\ca{O}$ and $\ca{O}'$, and
$$\ca{E}=(E,\Gamma_{\ca{E}}, \bk_{\ca{E}}), \qquad \ca{E'}=(E',\Gamma_{\ca{E}'},
\bk_{\ca{E}'})$$ are good substructures of
$\ca{K}$, $\ca{K'}$ respectively. To avoid too many accents we let $\sigma$ 
denote the difference operator of each of $K, K', E, E'$, and put 
$\ca{O}_{E'}:= \ca{O}'\cap E'$.

 A {\em good map\/} $\mathbf{f}: \ca{E} \to \ca{E'}$
is a triple $\mathbf{f}=(f, f_{\val}, f_{\res})$ consisting
of a difference field isomorphism
$f:E \to E'$, an ordered group isomorphism $f_{\val}:\Gamma_{\ca{E}} \to
\Gamma_{\ca{E}'}$ and a difference field isomorphism $f_{\res}: \bk_{\ca{E}} \to
\bk_{\ca{E}'}$ such that
\begin{enumerate}
\item[(i)] $f_{\val}(v(a))=v'(f(a))$ for all $a \in E^\times$, and
$f_{\val}$ is elementary as a partial map between the
ordered abelian groups $\Gamma$ and $\Gamma'$; 
\item[(ii)]
$f_{\res}(\ac(a))=\ac'(f(a))$ for all $a \in E$, and
$f_{\res}$ is elementary as a partial map between the
difference fields $\bk$ and $\bk'$.
\end{enumerate}
Let $\mathbf{f}: \ca{E} \to \ca{E'}$ be a good map as above. Then the 
field part $f: E \to E'$ of $\af$ is a valued difference field isomorphism, and
$f_{\val}$ and $f_{\res}$ agree on $v(E^\times)$ and $\pi(\ca{O}_E)$ with
the maps $v(E^\times) \to v'(E'^\times)$ and 
$\pi(\ca{O}_E) \to \pi'(\ca{O}_{E'})$ induced by $f$. 
We say that a good map $\ag= (g, g_{\val}, g_{\res}) : \ca{F} \to \ca{F'}$ 
{\em extends\/} $\af$ if $\ca{E}\subseteq \ca{F}$, $\ca{E'}\subseteq \ca{F'}$, 
and $g$, $g_{\val}$, $g_{\res}$ extend $f$, $f_{\val}$, $f_{\res}$, respectively.  
The {\em domain\/} of $\mathbf{f}$ is $\ca{E}$.

The next two lemmas show that condition (ii) above is automatically
satisfied by certain extensions of good maps.

\begin{lemma}\label{ac1} Let $\af: \ca{E} \to \ca{E}'$ be a good map, and
$F\supseteq E$ and $F'\supseteq E'$ subfields of $K$ and
$K'$, respectively, such that $v(F^\times)=v(E^\times)$ and 
$\pi(\ca{O}_F)\subseteq \bk_{\ca{E}}$. Let $g: F \to F'$ be a valued field 
isomorphism such that $g$ extends $f$ and $f_{\res}(\pi(u))=\pi'(g(u))$
for all $u\in \ca{O}_F$. Then $\ac(F)\subseteq  \bk_{\ca{E}}$
and $f_{\res}(\ac(a))= \ac'(g(a))$ for all $a \in F$.  
\end{lemma}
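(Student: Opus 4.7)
The plan is to reduce the claim to two already-available facts: that $\ac$ is a multiplicative group morphism agreeing with $\pi$ on units, and that $\af$ itself is a good map. The key observation is that the hypothesis $v(F^\times)=v(E^\times)$ lets us split off the valuation of any element of $F$ into $E$, leaving only a unit factor on which $\ac$ coincides with $\pi$.

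First I would handle the trivial case $a=0$ (both sides are $0$), and then fix $a\in F^\times$. Using $v(F^\times)=v(E^\times)$, pick $b\in E^\times$ with $v(b)=v(a)$, and set $u:=a/b\in F^\times$. Then $v(u)=0$, so $u\in \ca{O}_F^\times$ and $\ac(u)=\pi(u)$. By multiplicativity of $\ac$,
\[
\ac(a)=\ac(u)\,\ac(b)=\pi(u)\,\ac(b).
\]
Since $\pi(u)\in \pi(\ca{O}_F)\subseteq \bk_{\ca{E}}$ by hypothesis and $\ac(b)\in \ac(E)\subseteq \bk_{\ca{E}}$ because $\ca{E}$ is a good substructure, this already gives $\ac(a)\in \bk_{\ca{E}}$, i.e.\ the first conclusion $\ac(F)\subseteq \bk_{\ca{E}}$.

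Next I would apply $f_{\res}$ and unwind: $f_{\res}(\ac(a))=f_{\res}(\pi(u))\cdot f_{\res}(\ac(b))$. The first factor equals $\pi'(g(u))$ by the hypothesis on $g$, and the second equals $\ac'(f(b))$ because $\af$ is a good map and $b\in E$. On the other side, since $g$ extends $f$ and is multiplicative, $g(a)=g(u)\,f(b)$; and since $g$ is a valued field isomorphism, $v'(g(u))=v(u)=0$, so $\ac'(g(u))=\pi'(g(u))$. Multiplicativity of $\ac'$ then yields $\ac'(g(a))=\pi'(g(u))\,\ac'(f(b))$, matching the expression above.

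There is no real obstacle here; the lemma is essentially bookkeeping once one observes that $v(F^\times)=v(E^\times)$ allows the ``angular'' information in $F$ to be measured by residues of units in $\ca{O}_F$, which $g$ handles via its compatibility with $\pi$ and $\pi'$. The only point requiring care is to insist that $g$ be a \emph{valued} field isomorphism, so that $v'(g(u))=0$ and hence $\ac'(g(u))=\pi'(g(u))$; without this, multiplicativity of $\ac'$ would not suffice to conclude.
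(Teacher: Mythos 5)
Your proof is correct and is essentially the paper's own argument: the paper likewise writes $a=a_1u$ with $a_1\in E$ and $u\in\ca{O}_F$, $v(u)=0$, concludes $\ac(a)=\ac(a_1)\pi(u)\in\bk_{\ca{E}}$, and leaves the compatibility with $f_{\res}$ as ``it follows easily'' --- which is exactly the unwinding you carry out. No differences worth noting.
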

\begin{proof} Let $a \in F$. Then $a=a_1u$ where $a_1 \in E$ and  
$u\in \ca{O}_F$, $v(u)=0$, so $\ac(a)=\ac(a_1)\pi(u)\in \bk_{\ca{E}}$. 
It follows easily that $f_{\res}(\ac(a))= \ac'(g(a))$.
\end{proof}

\noindent
In the same way we obtain: 

\begin{lemma}\label{ac2} Suppose $\pi(\ca{O}_E)=\bk_{\ca{E}}$, let 
$\af: \ca{E} \to \ca{E}'$ be a good map, and let
$F\supseteq E$ and $F'\supseteq E'$ be subfields of $K$ and
$K'$, respectively, such that $v(F^\times)=v(E^\times)$. 
Let $g: F \to F'$ be a valued field isomorphism extending 
$f$.   
Then $\ac(F)= \pi(\ca{O}_F)$ and
$g_{\res}(\ac(a))= \ac'(g(a))$ for all $a \in F$, where the map
$g_{\res}:\pi(\ca{O}_F)\to \pi'(\ca{O}_{F'})$ is induced by $g$
$($and thus extends $f_{\res})$. 
\end{lemma}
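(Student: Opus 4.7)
The plan is to mirror the proof of Lemma~\ref{ac1}, exploiting the stronger hypothesis $\pi(\ca{O}_E)=\bk_{\ca{E}}$ to replace the inclusion $\pi(\ca{O}_F) \subseteq \bk_{\ca{E}}$ (which is no longer assumed) by the observation that $\pi(\ca{O}_E) \subseteq \pi(\ca{O}_F)$ automatically. The only real input is the equality $v(F^\times)=v(E^\times)$, which lets us split any element of $F$ into an $E$-part carrying the value, times a unit.

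Concretely, I would first establish $\ac(F)=\pi(\ca{O}_F)$. For the inclusion $\supseteq$, a nonzero $\beta\in\pi(\ca{O}_F)$ lifts to some unit $u\in\ca{O}_F^\times$ with $\pi(u)=\beta$, and then $\ac(u)=\pi(u)=\beta$. For the inclusion $\subseteq$, given $a\in F^\times$, the hypothesis $v(F^\times)=v(E^\times)$ produces $a_1\in E^\times$ with $v(a_1)=v(a)$; setting $u:=a/a_1\in F^\times$ gives $v(u)=0$, hence $\ac(a)=\ac(a_1)\,\ac(u)=\ac(a_1)\,\pi(u)$. Here $\ac(a_1)\in\pi(\ca{O}_E)\subseteq\pi(\ca{O}_F)$ and $\pi(u)\in\pi(\ca{O}_F)$, so $\ac(a)\in\pi(\ca{O}_F)$.

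Second, I would verify the compatibility $g_{\res}(\ac(a))=\ac'(g(a))$ for $a\in F$. It suffices to treat $a\in F^\times$, and with the decomposition $a=a_1u$ from the previous step compute
\[
g_{\res}(\ac(a))\ =\ g_{\res}(\ac(a_1))\cdot g_{\res}(\pi(u))\ =\ f_{\res}(\ac(a_1))\cdot \pi'(g(u))\ =\ \ac'(g(a_1))\cdot\ac'(g(u))\ =\ \ac'(g(a)),
\]
using in order: multiplicativity of $g_{\res}$; that $g_{\res}$ extends $f_{\res}$ and is induced on $\pi(\ca{O}_F)$ by $g$; that $\af$ is a good map together with $g|_E=f$; that $v'(g(u))=0$ since $g$ is a valued field isomorphism, so $\pi'(g(u))=\ac'(g(u))$; and multiplicativity of $\ac'$.

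There is no serious obstacle: the only delicate point is noticing that, although we no longer assume $\pi(\ca{O}_F)\subseteq\bk_{\ca{E}}$, the factor $\ac(a_1)$ still lands in the domain of $f_{\res}$ because of the equality $\pi(\ca{O}_E)=\bk_{\ca{E}}$, so $g_{\res}$ agrees with $f_{\res}$ where needed.
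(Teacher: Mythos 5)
Your proposal is correct and follows essentially the same route as the paper, which proves this lemma ``in the same way'' as Lemma~\ref{ac1}: decompose $a=a_1u$ with $a_1\in E^\times$ of the same value and $u$ a unit of $\ca{O}_F$, so that $\ac(a)=\ac(a_1)\pi(u)$, and then push this through $g$. Your extra remarks (the trivial inclusion $\pi(\ca{O}_F)\subseteq\ac(F)$ and the observation that $\ac(a_1)$ lies in $\pi(\ca{O}_E)=\bk_{\ca{E}}$, which is why $g_{\res}$ agrees with $f_{\res}$ where needed) are exactly the details the paper leaves implicit.
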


\medskip\noindent
The following is useful in connection with Axiom 2:

\begin{lemma}\label{fixr} Let $b\in K^{\times}$. Then the following are
equivalent:
\begin{enumerate}
 \item[$(1)$] There is $c \in \Fix(K)$ such that $v(c)=v(b)$. 
 \item[$(2)$] There is $d \in K$
such that $v(d)=0$ and $\sigma(d)=(b/ \sigma(b))\cdot d$.
\end{enumerate}
\end{lemma}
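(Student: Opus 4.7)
The plan is to prove both implications by exhibiting explicit elements: given $c$ in one direction, set $d = c/b$; given $d$ in the other direction, set $c = bd$. These are natural choices because condition (2) asserts that the element $d$ satisfies a ``cocycle-type'' relation that rearranges to say $bd$ is $\sigma$-fixed, and conversely any $\sigma$-fixed element of the same valuation as $b$ gives such a $d$ after dividing by $b$.

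For (1) $\Rightarrow$ (2), I would take $c\in\Fix(K)$ with $v(c)=v(b)$, put $d:=c/b$, and check the two required properties: $v(d)=v(c)-v(b)=0$, and $\sigma(d)=\sigma(c)/\sigma(b)=c/\sigma(b)=(b/\sigma(b))\cdot (c/b)=(b/\sigma(b))\cdot d$, using $\sigma(c)=c$ in the middle equality.

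For (2) $\Rightarrow$ (1), given $d\in K$ with $v(d)=0$ and $\sigma(d)=(b/\sigma(b))\cdot d$, I would set $c:=bd$. Then $v(c)=v(b)+v(d)=v(b)$, and the defining relation for $d$ rearranges to $\sigma(b)\sigma(d)=b\cdot d$, that is, $\sigma(bd)=bd$, so $c\in\Fix(K)$.

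There is no real obstacle here; the only thing to be careful about is that $b\neq 0$ (given, since $b\in K^\times$) so that $c/b$ and $bd$ make sense and have the expected valuations, and that we are working under Axiom~1 so that $v(\sigma(b))=v(b)$ and in particular $\sigma(b)\ne 0$ whenever $b\ne 0$, making the quotient $b/\sigma(b)$ well-defined.
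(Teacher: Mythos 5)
Your proof is correct and matches the paper's own argument exactly: the paper likewise takes $d=cb^{-1}$ for (1)$\Rightarrow$(2) and $c=bd$ for (2)$\Rightarrow$(1), and you have simply written out the verifications it leaves implicit.
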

\begin{proof} 
For $c$ as in (1), $d=cb^{-1}$ is as in (2). For $d$ as in (2), $c=bd$ is as in (1).
\end{proof}

\noindent
We say that $\ca{E}$ satisfies Axiom 2 (respectively, Axiom 3, Axiom 4) 
if the valued difference subfield $(E, v(E^\times),\pi(\ca{O}_E);\dots)$ of 
$\ca{K}$ does. Likewise, we say that $\ca{E}$ is workable (respectively, 
$\sigma$-henselian) if this valued difference subfield of $\ca{K}$ is.

\begin{theorem}\label{embed11} Suppose $\cha(\bk)=0$, $\ca{K}$, $\ca{K}'$ 
satisfy Axiom $2$ and are $\sigma$-henselian. Then any good map
$\ca{E} \to \ca{E}'$ is a partial elementary map between $\ca{K}$ and 
$\ca{K}'$.
\end{theorem}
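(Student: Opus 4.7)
The plan is a back-and-forth argument between saturated models. Pass to $\kappa$-saturated elementary extensions of $\ca{K}$ and $\ca{K}'$ for some $\kappa > |E|$; this preserves Axioms~1--2, $\sigma$-henselianity, and (by Corollary~\ref{acm2}) the presence of an angular component map. It then suffices to show that any good map $\af\colon \ca{E} \to \ca{E}'$ with $|\ca{E}| < \kappa$ extends, on either side, to absorb a prescribed new element, after which a standard zig-zag produces an isomorphism between the saturated extensions and hence the elementarity of $\af$.

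The extension is built from four complementary moves, invoked in the order dictated by the element being absorbed. \textbf{(i)} To enlarge $\Gamma_{\ca{E}}$, use elementarity of $f_{\val}$ and saturation of $\Gamma'$ to extend $f_{\val}$ to any new $\gamma \in \Gamma$. \textbf{(ii)} To bring $v(E^\times)$ up to $\Gamma_{\ca{E}}$, given $\gamma \in \Gamma_{\ca{E}} \setminus v(E^\times)$, use Axiom~2 to pick $c \in \Fix(K)$ and $c' \in \Fix(K')$ with $v(c)=\gamma$ and $v'(c')=f_{\val}(\gamma)$; since $\sigma c = c$ and $\sigma c' = c'$, the difference field $E\langle c\rangle$ is the purely transcendental extension $E(c)$, yielding a valued difference field isomorphism $E(c) \to E'(c')$ with $\ac$ preserved by Lemma~\ref{ac1}. \textbf{(iii)} To enlarge $\bk_{\ca{E}}$ and simultaneously absorb it into $\pi(\ca{O}_E)$, first extend $f_{\res}$ to a new $\alpha \in \bk$ by elementarity and saturation; then, if $\alpha$ is $\bar\sigma$-transcendental over $\pi(\ca{O}_E)$, lift to any $a \in \ca{O}$ and $a' \in \ca{O}'$ with $\bar a = \alpha$, $\bar{a'} = f_{\res}(\alpha)$, and apply Lemma~\ref{extrest}; if $\alpha$ is $\bar\sigma$-algebraic, lift its minimal $\bar\sigma$-polynomial to a $\sigma$-polynomial $G \in \ca{O}_E[x]$ of the same complexity as $\bar G$, use that $\cha(\bk)=0$ makes $\alpha$ a simple zero of $\bar G$ so that $(G,\tilde a)$ is in $\sigma$-hensel configuration for any lift $\tilde a$ of $\alpha$, and invoke $\sigma$-henselianity of $\ca{K}$ and $\ca{K}'$ to produce zeros $a,a'$ of $G,f(G)$ with $\bar a = \alpha$, $\bar{a'} = f_{\res}(\alpha)$; Lemma~\ref{extresa}(ii) then provides the required extension of the field part of $\af$, with $\ac$ preserved by Lemma~\ref{ac2}. \textbf{(iv)} Once $v(E^\times) = \Gamma_{\ca{E}}$ and $\pi(\ca{O}_E) = \bk_{\ca{E}}$, any further extension inside $\ca{K}$ sharing these sorts is immediate; by Remark~\ref{scan} our hypotheses imply Axiom~4, so Corollary~\ref{immsat} embeds the maximal immediate extension of $\ca{E}$ into $\ca{K}$ and, compatibly, into $\ca{K}'$, while Theorem~\ref{unique.max.imm.ext} shows these embeddings are unique up to $\af$. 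Elements $\sigma$-transcendental over the enlarged $\ca{E}$ are absorbed through Lemma~\ref{ext.with.pseudolimit}.

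The main obstacle is move (iii) in the $\bar\sigma$-algebraic subcase. Producing the lifted $\sigma$-polynomial $G$ whose reduction $\bar G$ has matching complexity \emph{and} in which $\alpha$ is a simple zero is precisely what is required to fire Lemma~\ref{extresa}(ii); the hypothesis $\cha(\bk)=0$ is used twice here, once to guarantee the simple-zero property (via which $\sigma$-henselianity supplies matched lifts $a,a'$ on the two sides) and once to power Lemma~\ref{extresa} itself. This is exactly the step at which the authors' stronger structural lemma replaces the Genericity Axiom of~\cite{BMS}; Lemma~\ref{extresa} is the linchpin of the entire argument, and bookkeeping the compatibility of the angular component map through the whole back-and-forth (via Lemmas~\ref{ac1} and \ref{ac2}) is the routine part.
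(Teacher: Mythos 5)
Your overall architecture (back-and-forth between saturated models, with residue-field, value-group, and immediate-extension moves) matches the paper's, and your move (iii) is essentially the paper's treatment of the residue field. The genuine gap is in move (ii), the value-group step. Picking arbitrary $c\in\Fix(K)$ and $c'\in\Fix(K')$ with $v(c)=\gamma$ and $v'(c')=f_{\val}(\gamma)$ does not yield a good map on $E(c)$, for two reasons. First, if $\gamma$ is torsion modulo $v(E^\times)$, say $\ell\gamma=v(b)$ with $b\in E^\times$ and $\ell$ prime, then $v\big(c^\ell-b\big)$ depends on the residue of $c^\ell/b$, which is not determined by the condition $v(c)=\gamma$; so $c\mapsto c'$ need not extend to a valued field isomorphism at all. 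The paper splits this into two cases: in the torsion-free case (its step (5)) the Gauss-valuation computation does go through, but in the torsion case (its step (6)) one must produce $c$ with $c^\ell=b$ exactly, which requires henselianity plus an argument (via $\ac(c)=1$ and the $\ell$-th roots of unity) that the root can be chosen in $\Fix(K)$. Second, even in the torsion-free case your appeal to Lemma~\ref{ac1} is invalid: that lemma assumes $v(F^\times)=v(E^\times)$, which is exactly what fails here since $\gamma\notin v(E^\times)$. One must control $\ac(c)$ directly; the paper normalizes $\ac(a)=1$ on both sides by dividing by $i(\ac(a))$, where $i$ is a $\sigma$-lifting of the residue field (Theorem~\ref{lift.res.field}), and this is one of the places where $\cha(\bk)=0$ together with $\sigma$-henselianity is actually used.

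Relatedly, your move (iv) quotes Corollary~\ref{immsat} and Theorem~\ref{unique.max.imm.ext}, but both require the \emph{domain} $\ca{E}$ to be workable (in particular to satisfy Axiom 2) and to satisfy Axiom 4; Remark~\ref{scan} gives Axiom 4 only for $\ca{K}$, whose residue difference field is much larger than $\pi(\ca{O}_E)$. The paper has a separate extension procedure (its step (4)) that enlarges $E$ so that its own fixed field realizes every value in $v(E^\times)$, using Lemma~\ref{fixr} together with Lemma~\ref{extresa} applied to the order-one equation $\sigma(x)=(b/\sigma(b))x$, and it arranges Axiom 4 for $\ca{E}$ by iterating the residue-field steps. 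Without these preparations neither the uniqueness of maximal immediate extensions over $\ca{E}$ nor their embeddability into $\ca{K}'$ is available, so the final absorption of an arbitrary $a\in K$ does not go through as written.
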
    
\begin{proof} The theorem holds trivially for $\Gamma=\{0\}$, so assume 
that $\Gamma\ne \{0\}$. Then $\ca{K}$ and $\ca{K}'$ are workable.
Let $\mathbf{f}=(f, f_{\val}, f_{\res}): \ca{E} \to \ca{E}'$
be a good map. By passing to suitable elementary extensions of 
$\ca{K}$ and $\ca{K}'$ we arrange that
$\ca{K}$ and $\ca{K}'$ are $\kappa$-saturated, where 
$\kappa$ is an uncountable cardinal such that 
$|\bk_{\ca{E}}|,\ |\Gamma_{\ca{E}}| < \kappa$.  
Call a good substructure $\ca{E}_1=(E_1,\bk_1, \Gamma_1)$ of $\ca{K}$ 
{\em small\/} if $|\bk_1|,\ |\Gamma_1|<\kappa$.
We shall prove that the good maps with small domain form a back-and-forth
system between $\ca{K}$ and $\ca{K}'$. (This clearly suffices to obtain 
the theorem.) In other words, we shall prove that under the 
present assumptions on $\ca{E}$, $\ca{E}'$ and $\mathbf{f}$, there is   
for each $a \in K$ a good map $\ag$ extending $\af$ such that
$\ag$ has small domain $\ca{F}=(F,\dots)$ with $a\in F$.

In addition to Corollary~\ref{immsat}, we have several basic extension 
procedures:

\medskip\noindent
(1) {\em Given $\alpha\in \bk$, arranging that $\alpha\in \bk_{\ca{E}}$}.
By saturation and the definition of ``good map'' this can be achieved without 
changing $f$, $f_{\val}$, $E$, $\Gamma_{\ca{E}}$
by extending $f_{\res}$ to a partial elementary map between $\bk$ and $\bk'$ 
with $\alpha$ in its domain.

\medskip\noindent
(2) {\em Given $\gamma\in \Gamma$, arranging that $\gamma\in \Gamma_{\ca{E}}$}.
This follows in the same way.  

\medskip\noindent
(3) {\em Arranging $\bk_{\ca{E}}=\pi(\ca{O}_E)$}. Suppose 
$\alpha\in \bk_{\ca{E}},\ \alpha\notin \pi(\ca{O}_E)$; set 
$\alpha':= f_{\res}(\alpha)$. 

If $\alpha$ is $\bar{\sigma}$-transcendental over $\pi(\ca{O}_E)$,
we pick $a\in \ca{O}$ and $a'\in \ca{O}'$ such
that $\bar{a}=\alpha$ and $\bar{a'}=\alpha'$, and then
Lemmas~\ref{extrest} and ~\ref{ac1} yield a 
good map $\ag=(g, f_{\val}, f_{\res})$ with small domain
$(E \langle a \rangle, \Gamma_{\ca{E}}, \bk_{\ca{E}})$ 
such that $\ag$ extends $\af$ and $g(a)=a'$.

Next, assume that $\alpha$ is $\bar{\sigma}$-algebraic
over $\pi(\ca{O}_E)$. Let $G(x)$ be a $\sigma$-polynomial over $\ca{O}_E$
such that $\bar{G}(x)$ is a minimal
$\bar{\sigma}$-polynomial of $\alpha$ over $\pi(\ca{O}_E)$ and 
has the same complexity
as $G(x)$. Pick $a\in \ca{O}$ such
that $\bar{a}=\alpha$. Then $G$ is $\sigma$-henselian at $a$. So we have $b
\in \ca{O}$ such that $G(b)=0$ and $\bar{b}=\bar{a}=\alpha$. 
Likewise, we obtain $b'\in \ca{O}'$ such that $f(G)(b')=0$ and 
$\bar{b'}=\alpha'$, where $f(G)$ is
the difference polynomial over $E'$ that corresponds to $G$ under $f$.
By Lemmas~\ref{extresa} and \ref{ac1} we obtain a good map extending $\af$ 
with small domain $(E \langle b \rangle,  \Gamma_{\ca{E}}, \bk_{\ca{E}})$ and 
sending $b$ to $b'$. 

\bigskip\noindent
By iterating these steps we can arrange 
$\bk_{\ca{E}}=\pi(\ca{O}_E)$; this condition 
is actually preserved in the extension procedures (4), (5), (6) below, 
as the reader may easily verify. We do assume in the rest of the proof that  
$\bk_{\ca{E}}=\pi(\ca{O}_E)$, and so we can refer from now on to
$\bk_{\ca{E}}$ as the {\em residue difference field\/} of $E$.

\medskip\noindent
(4) {\em Extending $\af$ to a good map whose domain satisfies Axiom $2$}.
Let $\delta \in v(E^\times)$. Pick $b \in E^{\times}$ such
that $v(b)=\delta$. Since Axiom 2 holds in $\ca{K}$, we can use 
Lemma~\ref{fixr}
to get $d \in K$ such that $v(d)=0$ and $G(d)=0$ where
$$G(x)\ :=\  \sigma(x) - \frac{b}{\sigma(b)}\cdot x.$$ Note that
$v(qd)=0$ and $G(qd)=0$ for all $q \in \mathbb{Q}^{\times} \subseteq
E^{\times}$. Hence by saturation we can assume that $v(d)=0$, $G(d)=0$
and $\bar{d}$ is transcendental
over $\bk_{\ca{E}}$. We set $\alpha= \bar{d}$, so $\bar{G}(x)$ is a minimal
$\bar{\sigma}$-polynomial of $\alpha$ over $\bk_{\ca{E}}$. 
By Lemma~\ref{extresa},
$$E\langle d \rangle=E(d), \qquad v(E(d)^\times)=v(E^\times), 
\qquad \pi(\ca{O}_{E(d)})=\bk_{\ca{E}}(\alpha), \qquad
\sigma (E(d))=E(d).$$
We shall find a good map extending $\af$ with domain 
$(E(d),\Gamma_{\ca{E}}, \bk_{\ca{E}}(\alpha))$. 
Consider the
$\sigma$-polynomial $H:= f(G)$, that is,
$$H(x)\ =\ \sigma(x) - \frac{f(b)}{\sigma(f(b))}\cdot x.$$
By saturation we
can find $\alpha' \in \bk'$ with $\bar{H}(\alpha')=0$ and a difference field
isomorphism $g_{\res}: \bk_{\ca{E}}(\alpha) \to
\bk_{\ca{E}'}(\alpha')$ that extends $f_{\res}$, sends $\alpha$
to $\alpha'$ and is elementary as a partial map between the
difference fields $\bk$ and $\bk'$. 
Using again Lemma~\ref{fixr} we find  $d' \in K'$
such that $v'(d')=0$ and $H(d')=0$. Since 
$\bar{H}(\bar{d'})=\bar{H}(\alpha')=0$, we can
multiply $d'$ by an element in $K'$ of valuation zero and
fixed by $\sigma$
to assume further that $\bar{d'}=\alpha'$.
Then Lemmas~\ref{extresa} and \ref{ac2} yield a 
good map $\mathbf{g}=(g, f_{\val}, g_{\res})$ where 
$g:E(d) \to E'(d')$ extends $f$ and sends $d$ to
$d'$. The domain 
$(E(d), \Gamma_{\ca{E}}, \bk_{\ca{E}}(\alpha))$  of $\mathbf{g}$ is  small.

\bigskip\noindent
In the extension procedures (3) and (4) the
value group $v(E^\times)$ does not change, so if the domain $\ca{E}$ of $\af$
satisfies Axiom 2, then so does the domain of any extension of $\af$
constructed as in (3) or (4). Also $\Gamma_{\ca{E}}$ does not change in
(3) and (4), but at this stage we can have $\Gamma_{\ca{E}}\ne v(E^\times)$. 
By repeated application of (1)--(4) we can arrange that $\ca{E}$ is workable 
and satisfies Axiom 4. Then by Corollary~\ref{immsat} we can 
arrange that in addition $\ca{E}$ is $\sigma$-henselian.
(Any use of this in what follows will be explicitly indicated.)

\bigskip\noindent
(5) {\em Towards arranging $\Gamma_{\ca{E}}=v(E^\times)$; the case of no torsion
modulo $v(E^\times)$}. 

Suppose $\gamma\in \Gamma_{\ca{E}}$ has no torsion
modulo $v(E^\times)$, that is, $n\gamma\notin v(E^\times)$ for all $n>0$.
Take $a \in \Fix(K)$ such that
$v(a)=\gamma$. Let $i$ be a $\sigma$-lifting 
of the residue difference field $\bk$ to $\ca{K}$. Since $\ac(a)$ is fixed by
$\bar{\sigma}$, $a/i(\ac(a))\in \Fix(K)$ and $v\big(a/i(\ac(a))\big)=\gamma$. 
So replacing $a$ by $a/i(\ac(a))$ we arrange that 
$v(a)=\gamma$ and $\ac(a)=1$. In the same way we obtain $a' \in \Fix(K')$ such
that $v'(a')=\gamma':= f_{\val}(\gamma)$ and $\ac'(a')=1$. Then by
a familiar fact from the valued field context we have an
isomorphism of valued fields $g: E(a) \to E'(a')$ extending
$f$ with $g(a)=a'$. Then $(g, f_{\val},
f_{\res})$ is a good map with small domain $(E(a),\Gamma_{\ca{E}},\bk_{\ca{E}})$;
this domain satisfies Axiom 2 if $\ca{E}$ does.

\bigskip\noindent
(6) {\em Towards arranging $\Gamma_{\ca{E}}=v(E^\times)$; 
the case of prime torsion modulo $v(E^\times)$}. Here we assume that $\ca{E}$ 
satisfies Axiom 2 and is $\sigma$-henselian. 

Let $\gamma\in \Gamma_{\ca{E}}\setminus v(E^\times)$ with
$\ell\gamma \in v(E^\times)$, where $\ell$ is a prime number. 
As $\ca{E}$ satisfies Axiom 2
we can pick $b \in \Fix(E)$ such that $v(b)=\ell\gamma$. 
Since $\ca{E}$ is
$\sigma$-henselian we have a $\sigma$-lifting of its difference residue field 
$\bk_{\ca{E}}$ to $\ca{E}$ and we can use this as in (5) 
to arrange that
$\ac(b)=1$. We shall find $c \in \Fix(K)$ such that $c^\ell=b$ and $\ac(c)=1$. 
As in (5) we have $a \in \Fix(K)$ such that $v(a)=\gamma$ and 
$\ac(a)=1$. Then the polynomial
$P(x):=x^\ell-b/a^\ell$ over $K$ is henselian at $1$. This gives
$u \in K$ such that $P(u)=0$ and $\bar{u}=1$. Now let $c=au$.
Clearly $c^\ell=b$ and $\ac(c)=1$. Note that $\sigma(c)^\ell=b$, hence
$\sigma(c)=\omega c$ where $\omega$ is an $\ell^{th}$-root of unity. Using $\ac(c)=1$
we get $\ac(\omega)=1$, so $\omega=1$, that is, $c\in \Fix(K)$,  
as promised.  Likewise we find $c' \in \Fix(K')$ such that $c'^\ell=f(b)$ 
and $\ac'(c')=1$. Then $\af$ extends easily to a good map with domain 
$(E(c), \Gamma_{\ca{E}},\bk_{\ca{E}})$ sending $c$ to $c'$; this domain satisfies
Axiom 2.

\bigskip\noindent
By iterating (5) and (6) we can assume in the rest of the proof that  
$\Gamma_{\ca{E}}=v(E^\times)$, and we shall do so. This condition is actually
preserved in the earlier extension procedures (3) and (4), as the reader 
may easily verify. Anyway, we can refer from now on to
$\Gamma_{\ca{E}}$ as the {\em value group\/} of $E$. 
Note also that in the extension procedures (5) and (6) the
residue difference field does not change.

\bigskip\noindent
Now let $a \in K$ be given. We want to extend $\af$ to a good map
whose domain is small and contains $a$. At this stage we can assume 
$\bk_{\ca{E}}=\pi(\ca{O}_E)$, $\Gamma_{\ca{E}}=v(E^\times)$, and 
$\ca{E}$ is workable.
Appropriately iterating and alternating the above 
extension procedures
we arrange in addition that $\ca{E}$
satisfies Axiom 4 and $E\langle a \rangle$ is an immediate
extension of $E$. Let $\ca{E} \langle a \rangle$ be the valued
difference subfield of $\ca{K}$ that has $E\langle a \rangle$ as underlying
difference field. By Corollary~\ref{immsat}, $\ca{E} \langle a \rangle$ has a maximal immediate 
valued difference field extension $\ca{E}_1\le \ca{K}$. Then
$\ca{E}_1$ is a maximal immediate extension of $\ca{E}$ as well. 
Applying Corollary~\ref{immsat} 
to $\ca{E}'$ and using 
Theorem~\ref{unique.max.imm.ext}, we can extend
$\af$ to a good map with domain $\ca{E}_1$, construed here as a good
substructure of $\ca{K}$ in the obvious way. It remains to note that 
$a$ is in the underlying difference field of $\ca{E}_1$.
\end{proof}

\bigskip\noindent
{\bf A variant.} At the cost of a purity assumption we can eliminate 
angular component maps in the Equivalence Theorem.
More precisely,  let $\ca{K}, \ca{K}'$ be as before except that we
do not require angular component maps as part of these structures. The notion 
of {\em good substructure\/} of $\ca{K}$ is similarly modified by changing
clause (3) in its definition to: $\bk_{\ca{E}}$ is a difference subfield of 
$\bk$ with $\pi(\ca{O}_E)\subseteq \bk_{\ca{E}}$. In
defining good maps, condition (ii) on
$f_{\text{r}}$ is to be changed to: $f_{\text{r}}(\pi(a))=\pi'(f(a))$ for all $a\in \mathcal{O}_E$, and $f_{\text{r}}$ is elementary as a partial map between the difference fields $\bk$ and $\bk'$.   

\begin{theorem}\label{embed11a} Suppose $\cha(\bk)=0$, $\ca{K}$, $\ca{K}'$ satisfy Axiom $2$ and are $\sigma$-henselian, and $v(E^\times)$ is pure in 
$\Gamma$. Then any good map 
$\ca{E} \to \ca{E}'$ is a partial elementary map between $\ca{K}$ and $\ca{K}'$.
\end{theorem}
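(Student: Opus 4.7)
The plan is to reduce Theorem~\ref{embed11a} to Theorem~\ref{embed11} by equipping $\ca{K}$ and $\ca{K}'$ with compatible angular component maps; this is where the purity hypothesis and Lemma~\ref{xs2} come in. To begin, I would pass to $\aleph_1$-saturated elementary extensions of $\ca{K}$ and $\ca{K}'$, which we continue to denote $\ca{K}$ and $\ca{K}'$. Since $f_{\val}$ is partial elementary between $\Gamma$ and $\Gamma'$, purity of $v(E^\times)$ in $\Gamma$ transfers through $f_{\val}$: given $\gamma' = f_{\val}(\gamma) \in v'(E'^\times)$ divisible by $n$ in $\Gamma'$, elementarity yields $\gamma = n\delta$ in $\Gamma$, then purity gives $\delta \in v(E^\times)$, and torsion-freeness forces $f_{\val}(\delta)$ to be the divisor of $\gamma'$ in $v'(E'^\times)$. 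So $v'(E'^\times)$ is pure in $\Gamma'$ as well.

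The main technical step is to extend $\af$ to a good map $\tilde{\af}\colon \tilde{\ca{E}} \to \tilde{\ca{E}}'$ between good substructures of $\ca{K}, \ca{K}'$ that are $\aleph_1$-saturated (as three-sorted valued difference fields) and whose value groups $v(\tilde{E}^\times), v'(\tilde{E}'^\times)$ are still pure in $\Gamma, \Gamma'$. For this I would adapt the back-and-forth machinery from the proof of Theorem~\ref{embed11}, carried out now without the angular component compatibility clauses. The crucial observation is that \emph{step $(6)$ of that proof is vacuous under our purity hypothesis}: a prime-torsion element in $\Gamma_{\ca{E}}$ modulo $v(E^\times)$ would violate torsion-freeness of $\Gamma/v(E^\times)$, which is the content of purity. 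Step~$(6)$ is precisely where angular component data is essential, so its disappearance is exactly what allows us to drop ac. The remaining procedures $(1)$--$(5)$ go through with only cosmetic changes; in particular, step $(5)$ produces the valued field isomorphism $E(a) \to E'(a')$ by Gauss's lemma alone, using torsion-freeness mod the current value group. One arranges the construction so that the terminal $v(\tilde{E}^\times)$ is an $\aleph_1$-saturated elementary substructure of $\Gamma$, hence automatically pure.

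Once $\tilde{\af}$ is in hand, by Lemma~\ref{xs1} I pick a cross-section $s_{\tilde{E}}$ on $\tilde{\ca{E}}$ and transport it via $\tilde{f}$ to a cross-section $s_{\tilde{E}'}$ on $\tilde{\ca{E}}'$. Lemma~\ref{xs2}, applied with $\tilde{\ca{E}}$ in the role of $\ca{E}$, extends $s_{\tilde{E}}$ to a cross-section on $\ca{K}$, and similarly on the primed side. The induced angular component maps $\ac, \ac'$ on $\ca{K}, \ca{K}'$ satisfy $\tilde{f}_{\res}(\ac(a)) = \ac'(\tilde{f}(a))$ for $a \in \tilde{E}$, making $\tilde{\af}$ (and hence $\af$) a good map in the ac-sense of Theorem~\ref{embed11}; that theorem then delivers the conclusion. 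The hard part is the middle step: orchestrating the back-and-forth so that the terminal value groups are $\aleph_1$-saturated elementary substructures of $\Gamma, \Gamma'$ (automatically pure) while the good-map data is preserved throughout, which is possible precisely because purity kills the one procedure where angular components were indispensable.
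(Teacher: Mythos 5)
Your overall strategy (use purity to manufacture compatible cross-sections via Lemma~\ref{xs2} and then quote Theorem~\ref{embed11}) is the paper's strategy, but your ``main technical step'' has a genuine gap. You claim that procedure (6) of the proof of Theorem~\ref{embed11} is vacuous because $v(E^\times)$ is pure in $\Gamma$, and that procedures (1)--(5) therefore suffice to run an ac-free back-and-forth ending in an $\aleph_1$-saturated good substructure. But purity is a hypothesis on the \emph{initial} $E$ only: the moment procedure (5) adjoins an element $a$ with $v(a)=\gamma$ torsion-free over $v(E^\times)$, the new value group $v(E^\times)+\mathbb{Z}\gamma$ is in general no longer pure in $\Gamma$ (e.g.\ $\mathbb{Q}\times\{0\}$ is pure in $\mathbb{Q}^2$ but $\mathbb{Q}\times\mathbb{Z}$ is not), so prime torsion reappears at later stages and procedure (6) --- the one step that genuinely needs angular components to pin down which $\ell$-th root to take --- cannot be avoided. ``Orchestrating'' the back-and-forth so that the terminal value group is an elementary substructure of $\Gamma$ does not help, since reaching such a stage already requires realizing value-group types whose realizations create torsion along the way; the argument is circular.

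The paper sidesteps this entirely: instead of saturating $\ca{E}$ by an internal back-and-forth, it passes to an $\aleph_1$-saturated elementary extension of the \emph{single many-sorted structure} built from $\ca{K}$, $\ca{K}'$, $\ca{E}$, $\ca{E}'$ and $\af$ together. This makes $\ca{E}$ $\aleph_1$-saturated for free and preserves purity (each instance ``$n\gamma\in v(E^\times)\Rightarrow\gamma\in v(E^\times)$'' is first-order in that structure). It then applies only those extension procedures that leave $v(E^\times)$ unchanged --- arranging $\bk_{\ca{E}}=\pi(\ca{O}_E)$ and Axiom 2 for $\ca{E}$ --- before constructing cross-sections. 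A second point you gloss over: an angular component map induced by an arbitrary cross-section on $\ca{K}$ need not satisfy $\bar{\sigma}(\ac(a))=\ac(\sigma(a))$, which Theorem~\ref{embed11} requires. This is why the paper builds the cross-sections $s_E$, $s$, $s'$ with values in $\operatorname{Fix}(E)^\times$, $\operatorname{Fix}(K)^\times$, $\operatorname{Fix}(K')^\times$ (using Axiom 2, whence the need to first arrange Axiom 2 for $\ca{E}$) and then invokes Lemma~\ref{acm1} to extend the resulting ac maps $\sigma$-equivariantly. Your purity-transfer argument to the primed side is fine, but the proposal as written does not constitute a proof.
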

\begin{proof} The case $\Gamma=\{0\}$ being trivial, let $\Gamma\ne \{0\}$, and
let $\af: \ca{E} \to \ca{E}'$ be a good map; our task is to show that $\af$ 
is a partial elementary map between $\ca{K}$ and $\ca{K}'$. 
We first arrange 
that the valued difference subfield 
$(E, v(E^\times),\pi(\ca{O}_E);\dots)$ of $\ca{K}$ is $\aleph_1$-saturated
by passing to an elementary extension 
of a suitable many-sorted structure with  
$\ca{K}$, $\ca{K}'$, $\ca{E}$, $\ca{E}'$ and $\af$ as ingredients.
 As in the beginning of the proof of
Theorem~\ref{embed11} we arrange next that
$\ca{K}$ and $\ca{K}'$ are $\kappa$-saturated, where 
$\kappa$ is an uncountable cardinal such that 
$|\bk_{\ca{E}}|$, $|\Gamma_{\ca{E}}| < \kappa$. Then we apply the extension 
procedures (2) and (3)
in the proof of Theorem~\ref{embed11} to arrange that 
$\bk_{\ca{E}}=\pi(\ca{O}_E)$ and 
$\ca{E}$ satisfies Axiom 2, without changing $v(E^\times)$. To simplify notation we identify $\ca{E}$ and $\ca{E}'$ via $\af$; we have to show that then 
$\ca{K} \equiv_{\ca{E}} \ca{K}'$. Since 
$(E, v(E^\times),\pi(\ca{O}_E);\dots)$ is $\aleph_1$-saturated,
Lemmas~\ref{xs1} and ~\ref{xs2} yield 
cross-sections 
$$s_E: v(E^\times) \to \text{Fix}(E)^\times,\quad s: 
\Gamma\to \text{Fix}(K)^\times,\quad s': \Gamma' \to \text{Fix}(K')^\times$$
such that $s$ and $s'$ extend $s_E$. These cross-sections 
induce angular component maps $\ac_E$ on $\text{Fix}(E)$, $\ac$ on $\text{Fix}(K)$, and 
$\ac'$ on $\text{Fix}(K')$, which by Lemma~\ref{acm1} 
extend uniquely to angular component maps
on $\ca{E}$, $\ca{K}$, and $\ca{K}'$. (Here we use that 
$\ca{E}$ satisfies Axiom 2.) 
This allows us to apply Theorem~\ref{embed11}
to obtain the desired conclusion.    
\end{proof}

\end{section}

\begin{section}{Relative Quantifier Elimination}\label{cqe1}

\noindent
Here we derive various consequences of the Equivalence Theorem of 
Section~\ref{et}. We use the symbols $\equiv$ and $\preceq$ for the 
relations of elementary equivalence and being an elementary submodel,
in the setting of many-sorted structures, and ``definable'' means 
``definable with parameters from the ambient structure''. 
Let $\mathcal{L}$ be the 
3-sorted language of valued fields, with 
sorts $\f$ (the field sort), $\v$ (the value group sort), 
and $\re$ (the residue sort). 
We view a valued field $(K, \Gamma, \bk;\dots)$ as an 
$\mathcal{L}$-structure, with $\f$-variables ranging over $K$, 
$\v$-variables
over $\Gamma$, and $\re$-variables over $\bk$. Augmenting $\mathcal{L}$ 
with a function symbol $\sigma$ of sort $(\f,\f)$ gives the language 
$\mathcal{L}(\sigma)$ of valued difference fields, and augmenting it
further with a function symbol $\ac$ of sort $(\f,\re)$ gives the language
$\mathcal{L}(\sigma,\ac)$ of $\ac$-valued difference fields.
In this section
$$\ca{K}=(K, \Gamma, \bk; \dots), \qquad \ca{K}'=(K', \Gamma', \bk'; \dots)$$
are ac-valued difference fields of 
equicharacteristic $0$ that satisfy Axiom $2$ and are $\sigma$-henselian;
they are considered as $\ca{L}(\sigma, \ac)$-structures.

\begin{corollary}\label{comp00}  
$\ca{K} \equiv \ca{K}'$ if and only if $\bk \equiv \bk'$ as difference fields  and  $\Gamma \equiv \Gamma'$ as ordered abelian groups.
\end{corollary}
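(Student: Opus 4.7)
The forward direction is essentially formal: the residue difference field $\bk$ (equipped with $\bar\sigma$, which is $\emptyset$-definable from $\sigma$ and $\pi$) and the ordered abelian group $\Gamma$ are $\emptyset$-definable reducts of the $\mathcal{L}(\sigma, \ac)$-structure $\ca{K}$, so $\ca{K} \equiv \ca{K}'$ at once gives $\bk \equiv \bk'$ as difference fields and $\Gamma \equiv \Gamma'$ as ordered abelian groups.

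For the converse, the plan is to produce a good substructure on each side together with a good map between them, and then invoke Theorem~\ref{embed11}. Since $\ca{K}$ and $\ca{K}'$ have equicharacteristic $0$, their prime subfields are both $\mathbb{Q}$, and $\sigma$ restricts to the identity on $\mathbb{Q}$. Because $\cha(\bk) = \cha(\bk') = 0$, no nonzero integer has positive value, so $v$ and $v'$ are trivial on $\mathbb{Q}^\times$; consequently $\ac$ and $\ac'$ restrict to the natural inclusion $\mathbb{Q} \hookrightarrow \bk$ (resp.\ $\mathbb{Q} \hookrightarrow \bk'$). Take $\ca{E} := (\mathbb{Q}, \{0\}, \mathbb{Q})$ as a good substructure of $\ca{K}$, take $\ca{E}' := (\mathbb{Q}, \{0\}, \mathbb{Q})$ as a good substructure of $\ca{K}'$, and let $\af := (\operatorname{id}, \operatorname{id}, \operatorname{id}) : \ca{E} \to \ca{E}'$.

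To check that $\af$ is a good map, the only nontrivial points are the elementarity requirements on $f_{\val}$ and $f_{\res}$. The map $f_{\val}\colon \{0\} \to \{0\}$ has empty nonconstant data, so it is elementary as a partial map between $\Gamma$ and $\Gamma'$ exactly when $\Gamma \equiv \Gamma'$, which is hypothesis. For $f_{\res}$: each element of $\mathbb{Q}$ is $\emptyset$-definable in the language of difference fields (integers as iterated sums of $1$, rationals as their quotients, with $\sigma$ acting trivially on them), so $\bk \equiv \bk'$ gives that $\operatorname{id}\colon \mathbb{Q} \to \mathbb{Q}$ is elementary between $\bk$ and $\bk'$. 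The angular-component compatibility $f_{\res}(\ac(q)) = \ac'(f(q))$ is automatic since both sides reduce to $q$ for $q \in \mathbb{Q}$.

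Having verified that $\af$ is a good map, Theorem~\ref{embed11} (applicable because $\cha(\bk)=0$ and both $\ca{K}, \ca{K}'$ satisfy Axiom~2 and are $\sigma$-henselian by the standing hypotheses) yields that $\af$ is a partial elementary map between $\ca{K}$ and $\ca{K}'$; in particular $\ca{K} \equiv \ca{K}'$. There is no real obstacle beyond bookkeeping: all the analytic content has already been absorbed into the Equivalence Theorem, and the corollary reduces to transferring the hypotheses $\bk \equiv \bk'$ and $\Gamma \equiv \Gamma'$ into the elementarity clauses in the definition of a good map over the prime substructure.
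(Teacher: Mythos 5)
Your proof is correct and follows the paper's own argument exactly: the forward direction is formal, and the converse takes $\ca{E}=\ca{E}'=(\mathbb{Q},\{0\},\mathbb{Q})$ with the trivial good map and applies Theorem~\ref{embed11}. You have merely spelled out the routine verification that this map is good, which the paper leaves implicit.
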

\begin{proof} The ``only if'' direction is obvious. Suppose $\bk \equiv \bk'$ as difference fields, and $\Gamma \equiv \Gamma'$ as ordered groups. This gives 
good substructures 
$\ca{E}:=(\mathbb{Q},\{0\}, \mathbb{Q})$ of $\ca{K}$, and $\ca{E}':=(\mathbb{Q},\{0\},\mathbb{Q})$ of $\ca{K}'$, and a trivial good map $\ca{E} \to \ca{E}'$. Now apply Theorem~\ref{embed11}.
\end{proof}

\noindent
Thus $\ca{K}$ is elementarily equivalent to the Hahn difference field
$\bk((t^\Gamma))$ with angular component map defined in the beginning of 
Section~\ref{et}. 

\begin{corollary}\label{comp01} Let 
$\ca{E}=(E, \Gamma_E, \bk_E;\dots)$ be a $\sigma$-henselian ac-valued difference subfield of $\ca{K}$ satisfying Axiom $2$ such that $\bk_E\preceq \bk$ as difference fields, and $\Gamma_E \preceq \Gamma$ as ordered abelian groups. Then $\ca{E} \preceq \ca{K}$.
\end{corollary}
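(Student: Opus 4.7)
The plan is to apply the Equivalence Theorem (Theorem~\ref{embed11}) directly to the ``identity'' good map, viewed as going from $\ca{E}$ (as an ambient ac-valued difference field) into $\ca{K}$. The point is that Theorem~\ref{embed11} will then say the identity on the data of $\ca{E}$ is a partial elementary map between $\ca{E}$ and $\ca{K}$, which is exactly the assertion $\ca{E} \preceq \ca{K}$.

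First, I would verify that $\ca{E}$ itself satisfies all the hypotheses of Theorem~\ref{embed11}. By assumption, $\ca{E}$ is $\sigma$-henselian, satisfies Axiom~2, and has angular component map obtained by restricting $\ac$; and since $\bk_E \preceq \bk$ with $\cha(\bk)=0$, also $\cha(\bk_E)=0$. So $\ca{E}$ and $\ca{K}$ are both equicharacteristic zero, $\sigma$-henselian ac-valued difference fields satisfying Axiom~2.

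Next I would set up the good map. Let $\ca{E}^{\flat} := (E, \Gamma_E, \bk_E)$. This is a good substructure of $\ca{E}$ (trivially, being the ``whole'' data of $\ca{E}$), and also a good substructure of $\ca{K}$ (using that $E \subseteq K$ is a difference subfield, $\Gamma_E \subseteq \Gamma$, $\bk_E \subseteq \bk$, and $\ac(E) \subseteq \bk_E$). Let $\af = (\mathrm{id}_E, \mathrm{id}_{\Gamma_E}, \mathrm{id}_{\bk_E})$. Then:
\begin{itemize}
\item $\mathrm{id}_E$ is a difference field isomorphism.
\item $\mathrm{id}_{\Gamma_E}$ satisfies $\mathrm{id}_{\Gamma_E}(v(a)) = v(a)$ for $a \in E^\times$, and is elementary as a partial map between the value group of $\ca{E}$ (namely $\Gamma_E$) and that of $\ca{K}$ (namely $\Gamma$); this last condition is precisely the hypothesis $\Gamma_E \preceq \Gamma$.
\item $\mathrm{id}_{\bk_E}$ satisfies $\mathrm{id}_{\bk_E}(\ac(a)) = \ac(a)$ for $a \in E$ (the $\ac$ of $\ca{E}$ is the restriction of the $\ac$ of $\ca{K}$), and is elementary as a partial map between $\bk_E$ and $\bk$ by the hypothesis $\bk_E \preceq \bk$.
\end{itemize}
So $\af : \ca{E}^{\flat} \to \ca{E}^{\flat}$ is a good map from the good substructure $\ca{E}^{\flat}$ of $\ca{E}$ to the good substructure $\ca{E}^{\flat}$ of $\ca{K}$.

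Finally, applying Theorem~\ref{embed11} with $\ca{E}$ in the role of the first ac-valued difference field and $\ca{K}$ in the role of the second, $\af$ is a partial elementary map between $\ca{E}$ and $\ca{K}$. Since the domain of $\af$ contains every element of $E$, every element of $\Gamma_E$, and every element of $\bk_E$, this says that for any $\ca{L}(\sigma,\ac)$-formula $\phi$ and any tuples from the three sorts of $\ca{E}$, $\phi$ holds in $\ca{E}$ iff it holds in $\ca{K}$. That is exactly $\ca{E} \preceq \ca{K}$. There is no real obstacle here; the only thing to check carefully is the translation between ``$\bk_E \preceq \bk$ and $\Gamma_E \preceq \Gamma$'' and the elementarity clauses in the definition of a good map, which is immediate.
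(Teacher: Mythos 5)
Your proof is correct, and it rests on the same engine as the paper's: a one-step appeal to Theorem~\ref{embed11} via the identity good map, with the hypotheses $\bk_E\preceq\bk$ and $\Gamma_E\preceq\Gamma$ supplying exactly the elementarity clauses (i) and (ii) in the definition of a good map. The route differs in one small but genuine way. You apply Theorem~\ref{embed11} with the pair $(\ca{E},\ca{K})$ as the two ambient ac-valued difference fields, so you must (and do) check that $\ca{E}$ itself qualifies as an ambient structure for the theorem: it is $\sigma$-henselian, satisfies Axiom~2, and has residue characteristic $0$, all of which hold by hypothesis. The paper instead first takes an elementary extension $\ca{K}'\succeq\ca{E}$, applies the theorem to the pair $(\ca{K},\ca{K}')$ with $(E,\Gamma_E,\bk_E)$ as a common good substructure (the needed elementarity of the identity between $\Gamma$ and the value group of $\ca{K}'$ follows by composing $\Gamma_E\preceq\Gamma$ with $\Gamma_E\preceq\Gamma_{\ca{K}'}$), concludes $\ca{K}\equiv_{\ca{E}}\ca{K}'$, and then deduces $\ca{E}\preceq\ca{K}$ from $\ca{E}\preceq\ca{K}'$. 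Your version is more direct and avoids the auxiliary extension; the paper's version has the mild advantage that the ambient structures it feeds into the theorem automatically inherit all required properties from $\ca{E}$ by elementarity, so nothing about $\ca{E}$ itself needs to be rechecked. Both arguments are sound, and your final observation---that a partial elementary map whose domain exhausts all three sorts of $\ca{E}$ is precisely an elementary embedding---is the correct way to finish.
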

\begin{proof} Take an elementary extension $\ca{K}'$ of 
$\ca{E}$. Then $\ca{K}'$ satisfies Axiom $2$, $(E,\Gamma_E, \bk_E)$ is a good substructure of both $\ca{K}$ and
$\ca{K}'$, and the identity on $(E, \Gamma, \bk_E)$ is a good map. Hence by Theorem~\ref{embed11} we have
$\ca{K} \equiv_{\ca{E}} \ca{K}'$. Since $\ca{E} \preceq \ca{K}'$, this gives
$\ca{E}\preceq \ca{K}$.
\end{proof}

\noindent
The proofs of these corollaries use only weak forms of the 
Equivalence Theorem, but now we
turn to a result that uses its full strength: a relative elimination of 
quantifiers 
for the $\ca{L}(\sigma, \ac)$-theory $T$ of 
$\sigma$-henselian ac-valued 
difference fields of equicharacteristic $0$ that satisfy Axiom 2.
We specify that the function symbols $v$ and $\pi$ of $\ca{L}(\sigma, \ac)$ 
are to be 
interpreted as {\em total\/} functions in any $\ca{K}$ as follows: extend 
$v: K^\times \to \Gamma$  to $v: K \to \Gamma$ by $v(0)=0$, and extend 
$\pi: \ca{O} \to \bk$ to $\pi: K \to \bk$ by $\pi(a)=0$ for $a\notin \ca{O}$.  

Let $\mathcal{L}_{\re}$ be the sublanguage of 
$\mathcal{L}(\sigma, \ac)$ involving only the sort $\re$, that is,
$\mathcal{L}_{\re}$ is a copy of the language of difference fields, 
with $\bar{\sigma}$ as the
symbol for the difference operator. Let $\mathcal{L}_{\v}$ be the 
sublanguage of 
$\mathcal{L}(\sigma, \ac)$ involving only the sort $\v$, that is,
$\mathcal{L}_{\v}$ is
the language of ordered abelian groups.
 
Let $x=(x_1, \dots, x_l)$ be 
a tuple of distinct $\f$-variables, 
$y=(y_1, \dots, y_m)$ a tuple of distinct $\re$-variables, 
and $z=(z_1, \dots, z_n)$ a tuple of distinct $\v$-variables. Define a 
{\em special $\re$-formula in $(x,y)$\/} to be an $\ca{L}(\sigma, \ac)$-formula 
$$\psi(x,y)\ :=\ \psi'\big(\ac(q_1(x)),\dots, \ac(q_k(x)),y\big)$$
where $k\in \mathbb{N}$, $\psi'(u_1,\dots, u_k,y)$ is an $\mathcal{L}_{\re}$-formula, 
and $q_1(x),\dots, q_k(x)\in \mathbb{Z}[x]$. Also, a {\em special $\v$-formula in $(x,z)$\/} is an $\ca{L}(\sigma, \ac)$-formula 
$$\theta(x,z)\  :=\ \theta'\big(v(q_1(x)),\dots, v(q_k(x)),z\big)$$
where $k\in \mathbb{N}$, $\theta'(v_1,\dots, v_k,y)$ is an $\mathcal{L}_{\v}$-formula, 
and $q_1(x),\dots, q_k(x)\in \mathbb{Z}[x]$. Note that these special formulas do not
have quantified $f$-variables. We can now state our relative quantifier
elimination:

\begin{corollary}\label{qe} Every $\ \ca{L}(\sigma, \ac)$-formula
$\ \phi(x,y,z)\ $ is $T$-equivalent to a boolean
combination of special
$\re$-formulas in $(x,y)$ and special $\v$-formulas in $(x,z)$.
\end{corollary}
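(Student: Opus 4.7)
This is a standard relative quantifier-elimination statement, and the plan is a compactness argument that reduces it to the Equivalence Theorem (Theorem~\ref{embed11}).

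By compactness, it suffices to show that if $\ca{K}, \ca{K}' \models T$ and the tuples $(a, \alpha, \gamma) \in K^l \times \bk^m \times \Gamma^n$ and $(a', \alpha', \gamma') \in K'^l \times \bk'^m \times \Gamma'^n$ satisfy exactly the same special $\re$-formulas in $(x, y)$ and the same special $\v$-formulas in $(x, z)$, then they realize the same complete $\ca{L}(\sigma, \ac)$-type. Passing to $\kappa$-saturated elementary extensions of $\ca{K}$ and $\ca{K}'$ for a sufficiently large cardinal $\kappa$, I will build good substructures $\ca{E} = (E, \Gamma_{\ca{E}}, \bk_{\ca{E}})$ of $\ca{K}$ and $\ca{E}' = (E', \Gamma_{\ca{E}'}, \bk_{\ca{E}'})$ of $\ca{K}'$ containing the respective tuples, together with a good map $\af = (f, f_{\val}, f_{\res}): \ca{E} \to \ca{E}'$ sending $a \mapsto a'$, $\alpha \mapsto \alpha'$, $\gamma \mapsto \gamma'$; then Theorem~\ref{embed11} will give that $\af$ is partial elementary, yielding equality of types.

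The natural choice is $E := \mathbb{Q}\langle a \rangle$, with $\Gamma_{\ca{E}}$ the subgroup of $\Gamma$ generated by $v(E^\times)$ together with the components of $\gamma$, and $\bk_{\ca{E}}$ the difference subfield of $\bk$ generated by $\ac(E)$ together with the components of $\alpha$; define $\ca{E}'$ analogously. The elementarity of $f_{\val}$ in $\mathcal{L}_{\v}$ on $\Gamma_{\ca{E}}$ is precisely what the special $\v$-formula hypothesis encodes: every $\mathcal{L}_{\v}$-formula $\theta'(v(q_1(a)), \ldots, v(q_k(a)), \gamma)$ appears as a special $\v$-formula in $(x, z)$, and by hypothesis such a formula holds in $\Gamma$ iff the corresponding one holds in $\Gamma'$. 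Likewise, $f_{\res}$ on $\bk_{\ca{E}}$ is $\mathcal{L}_{\re}$-elementary.

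The main obstacle is the construction of the difference-field isomorphism $f: E \to E'$ sending $a \mapsto a'$ compatibly with $v$ and $\ac$. The special $\re$-formulas (via the $\bar{\sigma}$-action of $\mathcal{L}_{\re}$) give access to $\ac(q(\sigma^j(a)))$ for $q \in \mathbb{Z}[x]$ and $j \in \mathbb{Z}$, while the $\v$-formulas control the corresponding values; together these determine, for each difference polynomial $P$ over $\mathbb{Z}$, whether $P(a)=0$ and the $v$- and $\ac$-type of $P(a)$ otherwise. Matching this data between $a$ and $a'$ --- using the $\sigma$-henselian structure and Axiom~2 to interpret additive combinations of $\sigma$-shifts through valuation and residue information --- yields the isomorphism $f$ and its compatibility with $v$ and $\ac$. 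Once $f$ is in place, $\af$ is a good map and Theorem~\ref{embed11} concludes the argument.
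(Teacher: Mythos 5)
Your overall strategy---reduce by compactness to showing that agreement on all special $\re$-formulas and special $\v$-formulas forces equality of complete $\ca{L}(\sigma,\ac)$-types, then build good substructures on $E=\mathbb{Q}\langle a\rangle$ (with $\Gamma_{\ca{E}}$ generated by $v(E^\times)$ and $\gamma$, and $\bk_{\ca{E}}$ by $\ac(E)$ and $\alpha$) and invoke Theorem~\ref{embed11}---is exactly the paper's, and your treatment of $f_{\val}$ and $f_{\res}$ is fine. The gap is in your last paragraph, the construction of the field part $f\colon E\to E'$. The data you propose to extract, namely $\ac\big(q(\sigma^j(a))\big)$ and $v\big(q(\sigma^j(a))\big)$ for ordinary polynomials $q$ and diagonal shifts $\sigma^j$, does \emph{not} determine, for a general difference polynomial $P$ over $\mathbb{Z}$, whether $P(a)=0$, nor $v(P(a))$ or $\ac(P(a))$: the map $\ac$ is only multiplicative and $v$ only respects sums up to the ultrametric inequality, so once cancellation occurs the data is lost. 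Concretely, if $v(b)=v(c)$ and $\ac(b)=\ac(c)$ (e.g.\ $b=a_1$, $c=\sigma(a_1)$ with $\ac(a_1)\in\Fix(\bk)$), then $v(b-c)$---and in particular whether $b=c$---is simply not a function of the $v$- and $\ac$-data of $b$ and $c$, and the appeal to ``$\sigma$-henselian structure and Axiom~2'' does not recover it; that sentence is the whole difficulty and is not an argument.

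The resolution is that the $q_i$ occurring in special formulas must be read as $\sigma$-polynomials over $\mathbb{Z}$, i.e.\ arbitrary $\ca{L}(\sigma)$-terms of the field sort (this is how the proof of the corollary uses them, and without it the statement is false: already $\sigma(x_1)=x_1$ would not be captured). With that reading there is no obstacle at all and no auxiliary structure is needed: for every $\sigma$-polynomial $q$ over $\mathbb{Z}$ one has $q(a)=0$ iff $\ac(q(a))=0$, so the hypothesis applied to the special $\re$-formula $u_1=0$ gives $q(a)=0$ iff $q(a')=0$, whence $a\mapsto a'$ extends to a difference field isomorphism $\mathbb{Q}\langle a\rangle\to\mathbb{Q}\langle a'\rangle$. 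Compatibility of $f$ with $\ac$ and $v$ on all of $E$ then follows from multiplicativity of $\ac$ and $v$ applied to quotients of $\sigma$-polynomials, together with the special $\re$- and $\v$-hypotheses, which also make $f_{\res}$ and $f_{\val}$ well defined and elementary. Neither $\sigma$-henselianity nor Axiom~2 enters until Theorem~\ref{embed11} is applied at the very end.
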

\begin{proof}
Let $\psi(x,y)$ and $\theta(x,z)$ range over special formulas as described
above. For a model $\ca{K}=(K, \Gamma, \bk;\dots)$ of $T$ and 
$a\in K^l$, $r \in \bk^m$, $\gamma \in \Gamma^n$, let
\begin{align*} \tp_{\re}^{\ca{K}}(a,r) &:= 
\{\psi(x,y):\  \ca{K} \models \psi(a,r)\} \\
  \tp_{\v}^{\ca{K}}(a,\gamma) &:= \{\theta(x,z):\ 
\ca{K} \models \theta(a,\gamma)\}.
\end{align*}
Let $\ca{K}$ and $\ca{K}'$ be any models of $T$, and let
$$(a,r,\gamma)\in K^l\times \bk^m\times \Gamma^n, \qquad (a',r',\gamma')\in K'^l\times \bk'^m\times \Gamma'^n$$ be such that
$\tp^{\ca{K}}_{\re}(a,r)=\tp^{\ca{K}'}_{\re}(a',r')$ and $\tp^{\ca{K}}_{\v}(a,\gamma)=\tp^{\ca{K}'}_{\v}(a',\gamma')$. 
It suffices to show that under these assumptions we have 
$$\tp^{\ca{K}}(a, r, \gamma)\ =\ \tp^{\ca{K}'}(a', r', \gamma').$$
Let $\ca{E}:= (E,\Gamma_{\ca{E}}, \bk_{\ca{E}})$ where 
$E:= \mathbb{Q}\langle a \rangle$, $\Gamma_{\ca{E}}$ is the ordered subgroup of $\Gamma$ generated by
$\gamma$ over $v(E^\times)$, and $\bk_{\ca{E}}$ is the difference subfield of
$\bk$ generated by $\ac(E)$ and $r$, so $\ca{E}$ is a good substructure of
$\ca{K}$. Likewise we define the good substructure $\ca{E}'$ of $\ca{K}'$.
For each $q(x)\in \mathbb{Z}[x]$ we have $q(a)=0$ iff $\ac(q(a))=0$, and also 
$q(a')=0$ iff $\ac'(q(a'))=0$. 
In view of this fact, the assumptions give us a good map $\ca{E} \to \ca{E}'$ 
sending $a$ to
$a'$, $\gamma$ to $\gamma'$ and $r$ to $r'$. It remains to apply 
Theorem~\ref{embed11}.
\end{proof}

\noindent
In the proof above it is important that 
our notion of a good substructure 
$\ca{E}=(E, \Gamma_{\ca{E}}, \bk_{\ca{E}})$ 
did not require $\Gamma_{\ca{E}}=v(E^\times)$ or $ \bk_{\ca{E}}=\pi(\ca{O}_E)$.
This is a difference with the treatment in \cite{BMS}. Related to it 
is that in Corollary~\ref{qe} we have a separation of 
$\re$- and $\v$-variables;  this
makes the next result almost obvious. 

\begin{corollary}\label{comp02} Each subset of $\bk^m\times \Gamma^n$ definable in $\ca{K}$ is a finite union of rectangles $X\times Y$ with $X\subseteq \bk^m$
definable in the difference field $\bk$ and $Y\subseteq \Gamma^n$ definable in 
the
ordered abelian group $\Gamma$.
\end{corollary}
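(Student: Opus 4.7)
The plan is to deduce the result directly from the relative quantifier elimination in Corollary~\ref{qe}, the key point being that Corollary~\ref{qe} cleanly separates $\re$-variables from $\v$-variables.

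First I would reduce to the question of analyzing a single defining formula. Let $D\subseteq \bk^m\times \Gamma^n$ be definable in $\ca{K}$; then $D$ is the solution set of some formula $\phi(y,z,\bar p)$ with parameters $\bar p=(\bar p_f,\bar p_r,\bar p_v)$ split by sort. Since residue and value-group parameters can be absorbed into the free variables (they will only contribute definable parameters to the final answer), the essential step is to handle the field parameters. I would therefore apply Corollary~\ref{qe} to the $\ca{L}(\sigma,\ac)$-formula $\tilde\phi(x,y,z)$ obtained by replacing $\bar p_f$ by a tuple $x$ of fresh $\f$-variables, treating $\bar p_r,\bar p_v$ as part of $(y,z)$.

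By Corollary~\ref{qe}, modulo $T$ the formula $\tilde\phi(x,y,z)$ is a boolean combination
$$B\bigl(\psi_1(x,y),\ldots,\psi_p(x,y),\theta_1(x,z),\ldots,\theta_q(x,z)\bigr)$$
of special $\re$-formulas and special $\v$-formulas. Substituting $x=\bar p_f$, each special $\re$-formula $\psi_i(\bar p_f,y)$ becomes an $\ca{L}_{\re}$-formula $\psi_i'(\ac(q_1(\bar p_f)),\ldots,\ac(q_k(\bar p_f)),y)$ whose parameters lie in $\bk$, and hence defines a subset $A_i\subseteq \bk^m$ in the difference field $\bk$. Likewise each $\theta_j(\bar p_f,z)$ defines, after substitution, a subset $B_j\subseteq \Gamma^n$ definable in the ordered abelian group $\Gamma$. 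Consequently
$$D\ =\ B\bigl(A_1\times \Gamma^n,\ldots,A_p\times \Gamma^n,\ \bk^m\times B_1,\ldots,\bk^m\times B_q\bigr).$$

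It remains to observe that any boolean combination of cylinders of the form $A\times \Gamma^n$ (with $A\subseteq \bk^m$ definable in $\bk$) and $\bk^m\times B$ (with $B\subseteq \Gamma^n$ definable in $\Gamma$) is a finite union of rectangles $X\times Y$ of the desired kind: rewrite it in disjunctive normal form, note that the intersection of finitely many $\bk^m$-cylinders is a cylinder $X\times \Gamma^n$ whose first factor is a finite boolean combination of the $A_i$'s (hence still definable in $\bk$), and symmetrically for the $\v$-side; then each conjunctive clause collapses to a single rectangle $X\times Y$ with $X$ definable in $\bk$ and $Y$ definable in $\Gamma$. There is no genuine obstacle here—the entire content sits in the separation of sorts achieved by Corollary~\ref{qe}, and the final combinatorial step is routine.
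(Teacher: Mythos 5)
Your proof is correct and follows essentially the same route as the paper: apply Corollary~\ref{qe}, observe that after substituting the field parameters each special $\re$-formula defines a subset of $\bk^m$ in the difference field $\bk$ (with parameters $\ac(q_i(\bar p_f))$) and each special $\v$-formula a subset of $\Gamma^n$ in the ordered abelian group $\Gamma$, and then collapse the boolean combination of cylinders into a finite union of rectangles. The paper leaves the final disjunctive-normal-form step implicit, but otherwise the arguments coincide.
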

\begin{proof} By Corollary~\ref{qe} and using its notations it is enough to
observe that for $a\in K^l$, a
special $\re$-formula $\psi(x,y)$ in $(x,y)$, and a
special $\v$-formula $\theta(x,z)$in $(x,z)$, 
the set $\{r\in \bk^m: \ca{K}\models \psi(a,r)\}$ is definable
in the difference field $\bk$, and the set $\{\gamma\in \Gamma^n: \ca{K}\models \theta(a,\gamma)\}$ 
is definable in the ordered abelian group $\Gamma$.   
\end{proof}

\noindent
Corollary~\ref{comp02} says in particular 
that the relations on $\bk$ definable in 
$\ca{K}$ are
definable in the difference field $\bk$, and likewise, the relations on 
$\Gamma$ definable in $\ca{K}$ are definable in the ordered abelian group 
$\Gamma$. Thus $\bk$ and $\Gamma$ are stably embedded in $\ca{K}$. 
The corollary says in addition that
$\bk$ and $\Gamma$ are orthogonal in $\ca{K}$.

\medskip\noindent
By Corollary~\ref{acm2} we can get rid of angular component
maps in Corollaries~\ref{comp00} and
\ref{comp02}: these go through if we replace ``ac-valued'' by ``valued''. 
Also Corollary~\ref{comp01} goes through with this change, but for this we
need Theorem~\ref{embed11a}. 
In particular, any $\sigma$-henselian valued difference field 
satisfying Axiom $2$,
with residue difference field $\bk$ of characteristic $0$ and  
value group $\Gamma$, is elementarily equivalent to the Hahn difference field
$\bk((t^\Gamma))$. 

\end{section}

\begin{section}{The unramified mixed characteristic case}\label{mcc}

\noindent
We now aim for mixed characteristic analogues of Sections~\ref{et} and 
~\ref{cqe1}. Kochen~\cite{koch} has a clear 
account how a result like Corollary~\ref{comp00} for henselian valued fields 
can be obtained in mixed characteristic
from the equicharacteristic zero case by coarsening.
We follow here the same track, but to get the mixed characteristic 
Equivalence Theorem~\ref{embed13} we use an elementary fact (Lemma~\ref{lemm1}) 
in a way that may be new and yields a proof that differs from the 
rather complicated treatment in \cite{BMS}.  

\bigskip\noindent
{\bf A better equivalence theorem.}
We first improve Theorem~\ref{embed11} by allowing extra structure 
on the residue difference field and on the value group. 

Let $\mathcal{L}$ be the 3-sorted language of valued fields 
and $\mathcal{L}(\sigma,\ac)$ the language of $\ac$-valued difference fields, 
as 
introduced in Section~\ref{cqe1}. Consider now a language $\mathcal{L}^*\supseteq \mathcal{L}(\sigma,\ac)$
such that every symbol of $\mathcal{L}^*\setminus \mathcal{L}(\sigma,\ac)$
is a relation symbol of some sort 
$(\v,\dots, \v)$ or $(\re,\dots, \re)$.
Let $\mathcal{L}_{\v}^*$ be the sublanguage of 
$\mathcal{L}^*$ involving only the sort $\v$, that is, the language of 
ordered abelian groups
together with the new relation symbols of sort 
$(\v,\dots, \v)$. 
Also, let $\mathcal{L}_{\re}^*$ be the sublanguage of 
$\mathcal{L}^*$ involving only the sort $\re$, that is, 
(a copy of) the language of difference fields 
together with the new relation symbols of sort
$(\re,\dots, \re)$.
(The difference operator symbol of $\mathcal{L}_{\re}^*$ is
$\bar{\sigma}$, to avoid confusion with the difference operator symbol 
$\sigma$ of sort $(\f,\f)$.) 
By a $*$-valued difference field we mean an
 $\mathcal{L}^*$-structure whose $\mathcal{L}(\sigma,\ac)$-reduct is an
$\ac$-valued difference field. 

\medskip\noindent 
Let $\ca{K}=(K,\Gamma, \bk;\cdots)$ be a 
$*$-valued difference field. Then we shall view $\Gamma$ as an
$\mathcal{L}_{\v}^*$-structure and $\bk$ as an
$\mathcal{L}_{\re}^*$-structure, in the obvious way.  
Any subfield $E$ of $K$ is
viewed as a valued subfield of $\ca{K}$ with valuation ring
$\ca{O}_E:=\ca{O}\cap E$.

\medskip\noindent
A {\em good substructure\/} of 
$\ca{K}=(K, \Gamma, \bk;\cdots)$ is a triple
$\ca{E}=(E, \Gamma_{\ca{E}}, \bk_{\ca{E}})$ such that \begin{enumerate}
\item $E$ is a difference subfield of $K$, 
\item $\Gamma_{\ca{E}} \subseteq \Gamma$ as $\mathcal{L}_{\v}^*$-structures
with $v(E^\times)\subseteq \Gamma_{\ca{E}}$,
\item $\bk_{\ca{E}} \subseteq \bk$ as $\mathcal{L}_{\re}^*$-structures with 
$\ac(E)\subseteq \bk_{\ca{E}}$.
\end{enumerate}
In the rest of this subsection 
$\ca{K}=(K, \Gamma, \bk; \cdots)$ and
$\ca{K}'=(K',\Gamma', \bk'; \cdots)$
are $*$-valued difference fields, and
$\ca{E}=(E,\Gamma_{\ca{E}}, \bk_{\ca{E}})$, $\ca{E'}=(E',\Gamma_{\ca{E}'},
\bk_{\ca{E}'})$ are good substructures of
$\ca{K}$, $\ca{K'}$ respectively. 

A {\em good map\/} $\mathbf{f}: \ca{E} \to \ca{E'}$
is a triple $\mathbf{f}=(f, f_{\val}, f_{\res})$ consisting
of an isomorphism $f:E \to E'$ of difference fields, 
an isomorphism $f_{\val}:\Gamma_{\ca{E}} \to
\Gamma_{\ca{E}'}$ of $\mathcal{L}_{\v}^*$-structures and an isomorphism $f_{\res}: \bk_{\ca{E}} \to
\bk_{\ca{E}'}$ of $\mathcal{L}_{\re}^*$-structures such that
\begin{enumerate}
\item[(i)] $f_{\val}(v(a))=v'(f(a))$ for all $a \in E^\times$, and
$f_{\val}$ is elementary as a partial map between the
$\mathcal{L}_{\v}^*$-structures $\Gamma$ and $\Gamma'$; 
\item[(ii)]
$f_{\res}(\ac(a))=\ac'(f(a))$ for all $a \in E$, and
$f_{\res}$ is elementary as a partial map between the
$\mathcal{L}_{\re}^*$-structures $\bk$ and $\bk'$.
\end{enumerate}

\noindent
Theorem~\ref{embed11} goes through in this enriched setting, with the 
same proof except for obvious changes:\footnote{We were told that
Theorem~\ref{embed12} and its corollaries are also 
formal consequences of Theorem~\ref{embed11} and the stable embeddedness and
orthogonality coming from Corollary~\ref{comp02}.}

\begin{theorem}\label{embed12} If $\cha(\bk)=0$ and $\ca{K}$, $\ca{K}'$ 
satisfy Axiom $2$ and are $\sigma$-henselian, then any good map
$\ca{E} \to \ca{E}'$ 
is a partial elementary map between $\ca{K}$ and $\ca{K}'$.
\end{theorem}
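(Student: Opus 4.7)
The plan is to mimic the back-and-forth proof of Theorem~\ref{embed11} in the enriched language $\mathcal{L}^*$, noting that the languages on the residue and value sorts enter the earlier argument only through the partial-elementarity conditions imposed on $f_{\res}$ and $f_{\val}$. As before, the case $\Gamma = \{0\}$ is trivial, so assume $\Gamma \ne \{0\}$.

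First I would pass to $\mathcal{L}^*$-elementary extensions of $\ca{K}$ and $\ca{K}'$ to make them $\kappa$-saturated, where $\kappa$ is an uncountable cardinal with $|\bk_{\ca{E}}|, |\Gamma_{\ca{E}}| < \kappa$. Since the symbols of $\mathcal{L}^* \setminus \mathcal{L}(\sigma,\ac)$ live purely in the $\re$- or $\v$-sort, $\kappa$-saturation of $\ca{K}$ yields $\kappa$-saturation of $\bk$ as an $\mathcal{L}_{\re}^*$-structure and of $\Gamma$ as an $\mathcal{L}_{\v}^*$-structure (over any small parameter set from $K$), and similarly for $\ca{K}'$. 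I will then show that good maps with small domain form a back-and-forth system between $\ca{K}$ and $\ca{K}'$, by running the six extension procedures (1)--(6) of the proof of Theorem~\ref{embed11} verbatim, with the saturation invocations upgraded to the enriched languages.

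Procedures (1) and (2) extend $f_{\res}$ or $f_{\val}$ by a single element of $\bk$ or $\Gamma$; here I simply use $\mathcal{L}_{\re}^*$- or $\mathcal{L}_{\v}^*$-saturation to realise the full $\mathcal{L}_{\re}^*$- or $\mathcal{L}_{\v}^*$-type over the existing small substructure, which preserves partial elementarity in the richer language. In procedures (5) and (6) the residue field $\bk_{\ca{E}}$ and value group $\Gamma_{\ca{E}}$ do not change, so nothing new is needed. In procedures (3) and (4) the residue field is enlarged by a single generator $\alpha$, and we must choose $\alpha' \in \bk'$ so that (a) $\alpha'$ satisfies the relevant algebraic/$\bar{\sigma}$-constraint (being a root of the appropriate $\bar{H}$, or being $\bar{\sigma}$-transcendental), and (b) mapping $\alpha$ to $\alpha'$ extends $f_{\res}$ to a partial $\mathcal{L}_{\re}^*$-elementary map. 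Both requirements are captured by a single partial $\mathcal{L}_{\re}^*$-type over $\bk_{\ca{E}'}$ with fewer than $\kappa$ parameters, so $\mathcal{L}_{\re}^*$-saturation of $\bk'$ provides $\alpha'$. The valued-field parts of these procedures (via Lemmas~\ref{extrest}, \ref{extresa}, \ref{ac1}, \ref{ac2}) are unchanged. Finally, the closing move---invoking Corollary~\ref{immsat} to embed a maximal immediate extension---does not touch $\bk_{\ca{E}}$ or $\Gamma_{\ca{E}}$, so partial elementarity of $f_{\res}$ and $f_{\val}$ in $\mathcal{L}_{\re}^*$ and $\mathcal{L}_{\v}^*$ is automatically preserved.

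The only (mild) obstacle is the joint realisability in procedures (3)--(4): one must know that the $\mathcal{L}_{\re}^*$-type of $\alpha$ over $\bk_{\ca{E}}$ is consistent with the imposed algebraic condition. This is clear because the algebraic condition is already an $\mathcal{L}_{\re}$-formula satisfied by $\alpha$, hence contained in its $\mathcal{L}_{\re}^*$-type, which transfers across $f_{\res}$ by the partial-elementarity hypothesis; saturation of $\bk'$ then yields $\alpha'$.
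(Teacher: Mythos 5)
Your proposal is correct and is essentially the paper's own argument: the paper proves Theorem~\ref{embed12} by stating that the proof of Theorem~\ref{embed11} "goes through in this enriched setting, with the same proof except for obvious changes," and your sketch simply makes those changes explicit (saturation in the enriched language, realising $\mathcal{L}_{\re}^*$- and $\mathcal{L}_{\v}^*$-types in procedures (1)--(4), and noting that the remaining steps do not touch the residue or value sorts).
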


\noindent
The four corollaries of Section 8 also go through in this enriched setting,
with residue difference fields and value groups replaced by their 
$\mathcal{L}_{\re}^*$-expansions and 
$\mathcal{L}_{\v}^*$-expansions, respectively. 
In the notions used in Corollary~\ref{qe} the roles of 
$\mathcal{L}_{\re}$ and $\mathcal{L}_{\v}$ are of course taken over by
by $\mathcal{L}_{\re}^*$ and $\mathcal{L}_{\v}^*$, respectively. Except for 
obvious changes the proofs are the same as in Section 8, using 
Theorem~\ref{embed12} in place of Theorem~\ref{embed11}.

\bigskip\noindent
{\bf A variant.} In dealing with the mixed characteristic case it is useful
to eliminate angular component maps in Theorem~\ref{embed12}. So
 let $\ca{K}, \ca{K}'$ be as in the previous subsection 
except that we
do not require angular component maps as part of these structures. The notion 
of {\em good substructure\/} of $\ca{K}$ is then modified by replacing
in clause (3) of its definition the condition $\ac(E)\subseteq \bk_{\ca{E}}$
by $\pi(\ca{O}_E)\subseteq \bk_{\ca{E}}$. In
defining the notion of a good map 
$\af=(f, f_{\text{v}}, f_{\text{r}}): \ca{E} \to \ca{E}'$ the condition on
$f_{\text{r}}$ is to be changed to: $f_{\text{r}}(\pi(a))=\pi(f(a))$ for all $a\in \mathcal{O}_E$, and $f_{\text{r}}$ 
is elementary as a partial map between the $\mathcal{L}_{\re}^*$-structures $\bk$ and $\bk'$.  Then the same arguments as we used in proving
Theorem~\ref{embed11a} yield the following:

\begin{theorem}\label{embed12a} If $\cha(\bk)=0$ and $\ca{K}$ and $\ca{K}'$ satisfy Axiom $2$ and are $\sigma$-henselian, 
and $\ca{E}$ and $\ca{E}'$ are good substructures of $\ca{K}$ and $\ca{K}'$,
respectively, with $v(E^{\times})$ pure in $\Gamma$, then 
any good map 
$\ca{E} \to \ca{E}'$ is a partial elementary map between $\ca{K}$ and $\ca{K}'$.
\end{theorem}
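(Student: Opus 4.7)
The plan is to mimic the proof of Theorem~\ref{embed11a} in the enriched language $\mathcal{L}^*$, reducing Theorem~\ref{embed12a} to Theorem~\ref{embed12} by adjoining compatible angular component maps after suitable saturation and by exploiting the purity hypothesis. The case $\Gamma=\{0\}$ is trivial, so assume $\Gamma\ne\{0\}$, and let $\af=(f, f_{\val}, f_{\res})\colon \ca{E}\to \ca{E}'$ be the given good map.

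First I would pass to an elementary extension of a many-sorted structure with $\ca{K},\ca{K}',\ca{E},\ca{E}',\af$ as ingredients to arrange that the valued difference subfield $(E, v(E^\times), \pi(\ca{O}_E);\dots)$ of $\ca{K}$ is $\aleph_1$-saturated, and further, as at the start of the proof of Theorem~\ref{embed11}, that $\ca{K}$ and $\ca{K}'$ are $\kappa$-saturated with $\kappa$ an uncountable cardinal exceeding $|\bk_{\ca{E}}|$ and $|\Gamma_{\ca{E}}|$. Next, I would apply the extension procedures (2) and (3) from the proof of Theorem~\ref{embed11} (these carry over to the $\mathcal{L}^*$ setting essentially unchanged, as they only extend $f_{\val}$ and $f_{\res}$ to partial elementary maps in the enriched languages $\mathcal{L}_{\v}^*$ and $\mathcal{L}_{\re}^*$ respectively) to arrange that $\bk_{\ca{E}}=\pi(\ca{O}_E)$ and that $\ca{E}$ satisfies Axiom~2, all without altering $v(E^\times)$. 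In particular, the purity of $v(E^\times)$ in $\Gamma$ is preserved through these reductions.

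Identifying $\ca{E}$ with $\ca{E}'$ along $\af$, I would use the purity hypothesis together with Lemmas~\ref{xs1} and \ref{xs2} (applied to the $\aleph_1$-saturated $\ca{E}$) to obtain cross-sections
$$s_E\colon v(E^\times)\to \Fix(E)^\times,\quad s\colon \Gamma\to \Fix(K)^\times,\quad s'\colon \Gamma'\to \Fix(K')^\times$$
with $s$ and $s'$ both extending $s_E$. By Lemma~\ref{acm1} (using Axiom~2 for $\ca{E}$ and $\sigma$-henselianity of $\ca{K}, \ca{K}'$), the induced angular component maps on $\Fix(E),\Fix(K),\Fix(K')$ extend uniquely to angular component maps $\ac_{\ca{E}},\ac,\ac'$ on $\ca{E},\ca{K},\ca{K}'$, and by construction they are mutually compatible. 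Regarded now as $*$-valued difference fields in the sense of the previous subsection, $\ca{K}$ and $\ca{K}'$ contain $\ca{E}$ as a common good substructure with the identity as a good map, so Theorem~\ref{embed12} yields $\ca{K}\equiv_{\ca{E}}\ca{K}'$ in the $\ac$-enriched language, hence also in $\mathcal{L}^*$, which gives the desired partial elementarity of the original $\af$.

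The main technical point I expect is verifying that Lemma~\ref{xs2}'s purity-based splitting does extend $s_E$ to a cross-section on all of $\ca{K}$, and that the extension procedures (2) and (3) really do preserve $v(E^\times)$ together with all the new $\mathcal{L}_{\v}^*$- and $\mathcal{L}_{\re}^*$-structure through the saturated elementary extensions they invoke; once these bookkeeping steps are in place, the reduction to Theorem~\ref{embed12} is formal.
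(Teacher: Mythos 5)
Your proposal is correct and follows essentially the same route as the paper, which itself proves Theorem~\ref{embed12a} by observing that "the same arguments as we used in proving Theorem~\ref{embed11a}" apply: reduce to the ac-enriched Theorem~\ref{embed12} after saturating, normalizing $\ca{E}$ via extension procedures (2) and (3), and producing compatible cross-sections (hence angular component maps) via Lemmas~\ref{xs1}, \ref{xs2}, and \ref{acm1} using the purity of $v(E^\times)$ in $\Gamma$. The bookkeeping points you flag at the end are exactly the ones the paper implicitly relies on, and they go through as you expect.
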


\medskip\noindent
{\bf Coarsening.} To reduce the mixed characteristic case to the equal 
characteristic zero case we use coarsening. In this subsection
$\ca{K}=(K, \Gamma, \bk;\dots)$ is a valued difference field. 
Let $\Delta$ be a convex subgroup of
$\Gamma$, let $\dot{\Gamma}:= \Gamma/\Delta$ be the ordered quotient group, 
and let $\dot{v}:K^\times\to \dot{\Gamma}$ be the composition
$K^\times \to \Gamma \to \dot{\Gamma}$ of $v$ with the
canonical map $\Gamma\to\dot{\Gamma}$, so $\dot{v}$ is again a valuation.
Let $\dot{\ca{O}}$ be the valuation ring of $\dot{v}$, and $\dot{\fr{m}}$ its
maximal ideal, so
\begin{eqnarray*}
\dot{\ca{O}}&=&\{x\in K:\ v(x)\geq\delta,\ \mbox{for some}\ \delta\in\Delta\}\ \supseteq\  \ca{O}:= \ca{O}_v,\\
\dot{\fr{m}}&=&\{x\in K:\ v(x)>\Delta\}\ \subseteq\ \fr{m}.
\end{eqnarray*}
Let $\dot{\bk}=\dot{\ca{O}}/\dot{\fr{m}}$ be the residue field for $\dot{v}$
and let $\dot{\pi}: \dot{\ca{O}} \to \dot{\bk}$ be the canonical map. 
This gives a valued difference field 
$\dot{\ca{K}}:=(K,\dot{\Gamma}, \dot{\bk};\ \dot{v}, \dot{\pi})$
satisfying Axiom 1. Some other axioms are also preserved:

\begin{lemma}\label{coarse1} If $\ca{K}$ satisfies Axiom $2$, so
does $\dot{\ca{K}}$. 
If $\ca{K}$ satisfies Axiom $2$ and is $\sigma$-henselian, then $\dot{\ca{K}}$
is $\sigma$-henselian.
\end{lemma}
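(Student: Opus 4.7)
The plan is to handle the two assertions separately. Axiom~$2$ for $\dot{\ca{K}}$ is essentially immediate: the difference operator of $\dot{\ca{K}}$ is the same $\sigma$, so $\operatorname{Fix}(K)$ is unchanged; given $\dot\gamma \in \dot\Gamma$, I would lift to some $\gamma\in \Gamma$ under the canonical map $\Gamma\to\dot\Gamma$ and apply Axiom~$2$ for $\ca{K}$ to obtain $a\in \operatorname{Fix}(K)$ with $v(a)=\gamma$, whence $\dot v(a)=\dot\gamma$.

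For the second assertion my plan is to reduce to Lemma~\ref{hensel-conf}. The case $\Gamma=\{0\}$ is trivial, since then $\dot\Gamma=\{0\}$ and $\dot{\ca{K}}$ is automatically $\sigma$-henselian by the remark following Definition~\ref{shensel1}; so assume $\Gamma\neq\{0\}$, and note that Remark~\ref{scan} then yields Axiom~$4$ for $\ca{K}$. Given a $\sigma$-polynomial $G(x)$ over $\dot{\ca{O}}$ and $a\in\dot{\ca{O}}$ with $G$ $\sigma$-henselian at $a$ in $\dot{\ca{K}}$, I would translate the defining conditions into statements about $v$: $\dot v(G(a))>0$ becomes $v(G(a))>\Delta$, and $\min_{|\i|=1}\dot v(G_{(\i)}(a))=0$ gives $\gamma_0:=\min_{|\i|=1}v(G_{(\i)}(a))\in\Delta$. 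Setting $\gamma:=v(G(a))-\gamma_0$, this $\gamma$ lies above $\Delta$ by convexity, and the next task is to verify that $(G,a)$ is in $\sigma$-hensel configuration relative to $v$ with $\gamma(G,a)=\gamma$.

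The inequality to check is $v(G(a)) < v(G_{(\j)}(a)) + |\j|\gamma$ for $|\j|>1$, equivalently $\gamma_0 < v(G_{(\j)}(a)) + (|\j|-1)\gamma$. Because $G_{(\j)}$ has coefficients in $\dot{\ca{O}}$ and $a\in\dot{\ca{O}}$, we have $\dot v(G_{(\j)}(a))\geq 0$, so $v(G_{(\j)}(a))$ lies in $\Delta$ or above $\Delta$; combined with $(|\j|-1)\gamma>\Delta$ and the convexity of $\Delta$, the right-hand side exceeds every element of $\Delta$, in particular $\gamma_0$. Lemma~\ref{hensel-conf} then provides $b\in K$ with $G(b)=0$ and $v(a-b)=\gamma$. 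Since $\gamma>\Delta$ we have $a-b\in\dot{\fr{m}}$, so $b\in\dot{\ca{O}}$ and $\dot v(a-b)=\dot\gamma=\dot v(G(a))$, which is exactly what $\sigma$-henselianity of $\dot{\ca{K}}$ demands.

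The only mildly subtle point is the convexity book-keeping that turns the coarsened $\dot v$-conditions into the strict inequalities needed for $\sigma$-hensel configuration in $\ca{K}$; no step is truly hard, and the argument is mechanical once one recognises Lemma~\ref{hensel-conf} as the right tool.
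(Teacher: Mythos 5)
Your proposal is correct and follows essentially the same route as the paper: translate the coarsened $\sigma$-henselianity conditions into the statement that $(G,a)$ is in $\sigma$-hensel configuration with respect to $v$ (using convexity of $\Delta$ for the inequalities with $|\j|>1$), then apply Lemma~\ref{hensel-conf} together with Remark~\ref{scan} to produce the zero $b$ with $\dot{v}(a-b)=\dot{v}(G(a))$. You have merely written out the convexity book-keeping that the paper dismisses as ``easy to check,'' and your explicit treatment of the trivial case $\Gamma=\{0\}$ is a harmless addition.
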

\begin{proof} The claim about Axiom 2 is obvious. Assume
$\ca{K}$ satisfies Axiom 2 and is $\sigma$-henselian. 
Let $G(x)$ over $\dot{\ca{O}}$ (of order $\le n$) be $\sigma$-henselian at 
$a\in \dot{\ca{O}}$, with respect to $\dot{\ca{K}}$.
It is easy to check that then
$G,a$ is in $\sigma$-hensel configuration with respect to 
$\ca{K}$. Lemma~\ref{hensel-conf} and 
Remark~\ref{scan} then yield
$b\in K$ such that $v(a-b)= 
v\big(G(a)\big)- \min_{|\i|=1}v\big(G_{(\i)}(a)\big)$, 
and thus $\dot{v}(a-b)=\dot{v}\big(G(a)\big)$, as desired.
\end{proof}

\noindent
Let $\dot{\sigma}$ be the automorphism of the field
$\dot{\bk}$ induced by the difference operator $\sigma$ of $\dot{\ca{K}}$. 
The field $\dot{\bk}$ 
carries the valuation $v_\Delta: \dot{\bk}^\times \to \Delta$ given by
$v_\Delta(x+\dot{\fr{m}})=v(x)$ for $x$ a unit of $\dot{\ca{O}}$. 
The valuation ring of $v_\Delta$ is $\dot{\pi}(\ca{O})$, and we have the 
surjective ring morphism $\pi_{\Delta}\ :\ \dot{\pi}(\ca{O}) \to \bk$ given by
$\pi_{\Delta}\big(\dot{\pi}(a)\big)= \pi(a)$ for all $a\in \ca{O}$. Note that
$$\big((\dot{\bk},\dot{\sigma}), \Delta, \bk;\ v_\Delta,\pi_\Delta\big)$$
is a valued difference field satisfying Axiom 1 with $\dot{\sigma}$
inducing on the residue field $\bk$ the same automorphism $\bar{\sigma}$ as
the difference operator $\sigma$ of $\ca{K}$ does.
The following is now immediate:

\begin{lemma}\label{coarse2} If $\ca{K}$ satisfies Axiom $3$,
so does $\dot{\ca{K}}$.
\end{lemma}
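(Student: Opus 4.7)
The plan is to lift witnesses from the residue field $\bk$ to the coarser residue field $\dot{\bk}$ via the canonical surjection $\pi_{\Delta}\colon \dot{\pi}(\ca{O}) \to \bk$. The key structural fact (stated just before Lemma~\ref{coarse1}) is that $\dot{\sigma}$ restricts to an automorphism of $\dot{\pi}(\ca{O})$ and induces $\bar{\sigma}$ on $\bk$ via $\pi_{\Delta}$, i.e., $\pi_{\Delta}(\dot{\sigma}(y)) = \bar{\sigma}(\pi_{\Delta}(y))$ for all $y \in \dot{\pi}(\ca{O})$.

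For the first clause of Axiom 3: given $d>0$, Axiom 3 for $\bk$ yields $\alpha \in \bk$ with $\bar{\sigma}^d(\alpha)\ne \alpha$. Choose any $y\in \dot{\pi}(\ca{O})\subseteq \dot{\bk}$ with $\pi_{\Delta}(y)=\alpha$. Then $\pi_{\Delta}(\dot{\sigma}^d(y)) = \bar{\sigma}^d(\alpha) \ne \alpha = \pi_{\Delta}(y)$, so $\dot{\sigma}^d(y)\ne y$, as required.

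For the second clause, one first checks that $\cha(\dot{\bk})=p>0$ forces $\cha(\bk)=p$: indeed $\cha(\dot{\bk})=p>0$ means $p\in \dot{\fr{m}}$, i.e.\ $v(p)>\Delta$, which in particular gives $p\in \fr{m}$ and hence $\cha(\bk)=p$. Then for any integers $d\ne 0$ and $e>0$, Axiom 3 for $\bk$ furnishes $\alpha\in \bk$ with $\bar{\sigma}^d(\alpha)\ne \alpha^{p^e}$. Lifting $\alpha$ to $y\in \dot{\pi}(\ca{O})$ via $\pi_{\Delta}$ and applying $\pi_{\Delta}$ to the identity $\dot{\sigma}^d(y)$ versus $y^{p^e}$ gives $\pi_{\Delta}(\dot{\sigma}^d(y))=\bar{\sigma}^d(\alpha)\ne \alpha^{p^e}=\pi_{\Delta}(y)^{p^e}=\pi_{\Delta}(y^{p^e})$, so $\dot{\sigma}^d(y)\ne y^{p^e}$.

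There is no substantive obstacle here—this is purely a matter of lifting residues through a quotient map that is compatible with the automorphism. The only minor subtlety is the characteristic comparison between $\bk$ and $\dot{\bk}$, which is handled by the observation that $\dot{\fr{m}}\subseteq \fr{m}$.
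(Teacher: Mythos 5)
Your proof is correct and is exactly the argument the paper has in mind: the paper gives no proof, declaring the lemma ``immediate'' from the observation that $\big((\dot{\bk},\dot{\sigma}), \Delta, \bk;\ v_\Delta,\pi_\Delta\big)$ is a valued difference field in which $\dot{\sigma}$ induces $\bar{\sigma}$ on $\bk$, which is precisely the lifting compatibility you use. Your extra check that $\cha(\dot{\bk})=p>0$ forces $\cha(\bk)=p$ via $\dot{\fr{m}}\subseteq\fr{m}$ is a correct and worthwhile detail to make explicit.
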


\noindent
Let $\dot{\bk}(*)$ be the expansion 
$(\dot{\bk},\dot{\sigma}, \dot{\pi}(\ca{O}))$ of the 
difference field $(\dot{\bk},\dot{\sigma})$, and let 
$\dot{\ca{K}}(*)$ be the corresponding
expansion $(K, \dot{\Gamma}, \dot{\bk}(*);\ \dot{v}, \dot{\pi})$ of
$\dot{\ca{K}}$. Note that $\ca{O}$ is definable in the structure 
$\dot{\ca{K}}(*)$ by a formula that does not depend on $\ca{K}$:
$$\ca{O}=\{a\in \dot{\ca{O}}:\  \dot{\pi}(a) \in \dot{\pi}(\ca{O})\}.$$
In this way we reconstruct $\ca{K}$ from $\dot{\ca{K}}(*)$.
The advantage of working with $\dot{\ca{K}}(*)$ is that it has
equicharacteristic $0$ if $\ca{K}$ has mixed characteristic and 
$v(p)\in \Delta$.

\bigskip\noindent
Let now $\ca{K}$ be unramified with $\cha{K}=0,\ \cha{\bk}=p>0$. Then
$\Z\cdot v(p)$ is a convex subgroup of $\Gamma$. We set 
$\Delta:=\Z \cdot v(p)$ and 
note that then $\cha{\dot{\bk}}=0$. With these assumptions we have: 

\begin{lemma}\label{coarse3} If $\ca{K}$ is workable, so is $\dot{\ca{K}}$.
\end{lemma}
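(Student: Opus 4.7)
The plan is to split on the two clauses in the definition of \emph{workable}. Observe first that, because $\Delta = \Z\cdot v(p)$ and $\cha(\bk)=p>0$ is preserved modulo $v(p)$, the coarsened residue field $\dot{\bk}$ has characteristic $0$, as remarked just before the lemma. In particular $\dot{\ca{K}}$ can never be a Witt case, so in either case I must show that $\dot{\ca{K}}$ satisfies Axioms~$2$ and $3$. Axiom~$2$ is immediate from Lemma~\ref{coarse1}, so the whole content is Axiom~$3$.

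\emph{The non-Witt case.} If $\ca{K}$ satisfies Axioms~$2$ and $3$, Lemma~\ref{coarse2} directly gives Axiom~$3$ for $\dot{\ca{K}}$, and we are done.

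\emph{The Witt case.} Suppose instead that $\ca{K}$ is a Witt case with infinite $\bk$, satisfying Axiom~$2$. Because $\cha(\dot{\bk})=0$, only the first clause of Axiom~$3$ requires verification: for each integer $d>0$ some $y\in\dot{\bk}$ satisfies $\dot{\sigma}^d(y)\ne y$. Recall from the paragraph before Lemma~\ref{coarse2} that $((\dot{\bk},\dot{\sigma}),\Delta,\bk;v_\Delta,\pi_\Delta)$ is a valued difference field in which $\dot{\sigma}$ induces on the residue field $\bk$ the same automorphism $\bar{\sigma}$ as the original difference operator does, namely the Frobenius $y\mapsto y^p$. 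Given $d>0$, the equation $x^{p^d}=x$ has at most $p^d$ solutions in $\bk$, and since $\bk$ is infinite we can choose $\alpha\in\bk$ with $\alpha^{p^d}\ne\alpha$. Lift $\alpha$ to $a\in\ca{O}$ with $\bar{a}=\alpha$ and set $y:=\dot{\pi}(a)\in\dot{\bk}$. Then $\pi_\Delta(y)=\alpha$, and if we had $\dot{\sigma}^d(y)=y$, applying $\pi_\Delta$ would yield $\bar{\sigma}^d(\alpha)=\alpha$, i.e.\ $\alpha^{p^d}=\alpha$, a contradiction. Hence $\dot{\sigma}^d(y)\ne y$, giving Axiom~$3$ for $\dot{\ca{K}}$, which therefore is workable in the non-Witt sense.

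There is no serious obstacle here: the crux is just the fact that coarsening by $\Z\cdot v(p)$ flattens the Frobenius $\bar{\sigma}$ into a characteristic-zero automorphism $\dot{\sigma}$ whose action on the (now intermediate) residue field $\bk$ remains the Frobenius, and a power of the Frobenius on an infinite field of positive characteristic always moves some element. The only spot worth double-checking is that the tower $(\dot{\bk},\dot{\sigma})\twoheadrightarrow(\bk,\bar{\sigma})$ commutes with iteration of $\sigma$, but this is literally the definition of $\pi_\Delta$ together with $\sigma(\ca{O})=\ca{O}$.
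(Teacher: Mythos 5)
Your proof is correct and follows essentially the same route as the paper's: Axiom 2 via Lemma \ref{coarse1}, Axiom 3 via Lemma \ref{coarse2} in the non-Witt case, and in the Witt case the observation that no power of the Frobenius is the identity on an infinite field, so $\bar{\sigma}^d\ne\operatorname{id}$ forces $\dot{\sigma}^d\ne\operatorname{id}$. You merely spell out the lifting step $\pi_\Delta\circ\dot{\sigma}=\bar{\sigma}\circ\pi_\Delta$ that the paper leaves implicit.
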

\begin{proof} Suppose $\ca{K}$ is workable. Then either $\ca{K}$ satisfies Axioms 2 and 3, or it satisfies Axiom 2 and is a Witt case with infinite $\bk$.
In the first case, $\dot{\ca{K}}$
also satisfies Axioms 2 and 3 by Lemmas~\ref{coarse1} and ~\ref{coarse2}, and is
thus workable. It remains to consider 
the case that $\ca{K}$ satisfies Axiom 2 and is a Witt case with infinite 
$\bk$.
Then $\dot{\ca{K}}$
satisfies Axiom 2 by Lemma~\ref{coarse1}, and because $\bk$ is infinite and
$\bar{\sigma}$ is the frobenius map, we have $\bar{\sigma}^d\ne \text{id}$ 
for all $d>0$, and thus
$\dot{\sigma}^d\ne \text{id}$ for all $d>0$. So
$\dot{\ca{K}}$
satisfies Axiom 3 as well.   
\end{proof}

\medskip\noindent
Keeping the assumptions preceding Lemma~\ref{coarse3}, assume also that
$\ca{K}$ is $\aleph_1$-saturated and $\bk$ is perfect. Then the saturation 
assumption guarantees that $\dot{\pi}(\ca{O})$ 
is a complete discrete valuation ring of $\dot{\bk}$. Since $\bk$ is perfect,
this gives a unique ring isomorphism
$\iota: \operatorname{W}[\bk]\ \cong\  \dot{\pi}(\ca{O})$ such that 
$\pi_{\Delta}\circ \iota :\operatorname{W}[\bk]\to \bk$ is the projection map
$(a_0, a_1, a_2,\dots) \mapsto a_0$. Denote the extension of $\iota$ to a 
field isomorphism $\operatorname{W}(\bk)\ \cong\ \dot{\bk}$ also by $\iota$.
If $\ca{K}$ is a Witt case, this gives an isomorphism 
$(\iota,\dots)$ of the Witt difference field 
$\operatorname{W}(\bk)$ onto
$(\dot{\bk}, \Delta, \bk;\ v_\Delta,\pi_\Delta)$.

\bigskip\noindent
{\bf Two lemmas.} The proof of the next lemma uses mainly the 
functoriality of $\operatorname{W}$.

\begin{lemma}\label{lemm1} Let $\bk_0$ be a perfect field with 
$\cha(\bk_0)=p>0$, let $\bk$ and $\bk'$ be 
perfect extension fields of $\bk_0$, and let $\sigma$ and $\sigma'$ be 
automorphisms of $\bk$ and $\bk'$, respectively. Let $\kappa$ be an 
uncountable cardinal such that the difference fields 
$(\bk,\sigma)$ and $(\bk', \sigma')$
are $\kappa$-saturated, $|\bk_0| < \kappa$, and 
$(\bk, \sigma) \equiv_{\bk_0} (\bk', \sigma')$. Then, as difference rings, 
$$ \big(\operatorname{W}[\bk], \operatorname{W}[\sigma]\big)\ 
\equiv_{\operatorname{W}[\bk_0]}\ 
\big(\operatorname{W}[\bk'], \operatorname{W}[\sigma']\big).$$
\end{lemma}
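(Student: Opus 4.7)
The strategy is to reduce the required equivalence of the Witt difference rings to the hypothesized equivalence of the residue difference fields by exploiting the functoriality of $\operatorname{W}$. Concretely, I set up a back-and-forth system of partial difference-ring isomorphisms between $(\operatorname{W}[\bk],\operatorname{W}[\sigma])$ and $(\operatorname{W}[\bk'],\operatorname{W}[\sigma'])$ fixing $\operatorname{W}[\bk_0]$ pointwise, and then invoke the standard induction on formula complexity: any non-empty family of partial isomorphisms with the back-and-forth extension property yields elementary equivalence.

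Define the family $\mathcal{I}$ to consist of the maps $\operatorname{W}[\phi]\colon \operatorname{W}[F]\to\operatorname{W}[F']$, where $F\subseteq\bk$ and $F'\subseteq\bk'$ are perfect $\sigma$-closed subfields of cardinality $<\kappa$ containing $\bk_0$, and $\phi\colon F\to F'$ is a partial elementary map over $\bk_0$ between the difference fields $(\bk,\sigma)$ and $(\bk',\sigma')$. Functoriality of $\operatorname{W}$ ensures that each $\operatorname{W}[\phi]$ is a difference-ring isomorphism between the sub-difference-rings $\operatorname{W}[F]\subseteq \operatorname{W}[\bk]$ and $\operatorname{W}[F']\subseteq \operatorname{W}[\bk']$, and that it fixes $\operatorname{W}[\bk_0]$ pointwise. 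The identity map on the perfect $\sigma$-closure of $\bk_0$ in $\bk$ lies in $\mathcal{I}$, so the family is non-empty.

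For the forth property, fix $\operatorname{W}[\phi]\in\mathcal{I}$ and $w=(a_0,a_1,\ldots)\in\operatorname{W}[\bk]$. Enlarge $F$ to the perfect $\sigma$-closed subfield $F_1$ of $\bk$ generated by $F\cup\{a_j:j\in\mathbb{N}\}$; since $\kappa$ is uncountable and $|F|<\kappa$, we still have $|F_1|<\kappa$. Using $\kappa$-saturation of $(\bk',\sigma')$ together with $(\bk,\sigma)\equiv_{\bk_0}(\bk',\sigma')$, the map $\phi$ extends to a partial elementary map $\phi_1\colon F_1\to F_1'\subseteq\bk'$, and $F_1'$ is automatically perfect and $\sigma'$-closed since these are first-order properties preserved under partial elementary maps. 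Then $\operatorname{W}[\phi_1]\in\mathcal{I}$ extends $\operatorname{W}[\phi]$ and has $w$ in its domain. The back property is symmetric, using $\kappa$-saturation of $(\bk,\sigma)$. A routine induction on formula complexity now yields $(\operatorname{W}[\bk],\operatorname{W}[\sigma])\equiv_{\operatorname{W}[\bk_0]}(\operatorname{W}[\bk'],\operatorname{W}[\sigma'])$.

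The only non-routine ingredient is the observation that a single Witt vector, though carrying $\aleph_0$ coefficients, can be absorbed into a sub-difference-field of residue-level cardinality $<\kappa$---this is precisely where uncountable saturation of $\bk$ and $\bk'$ is used---after which functoriality of $\operatorname{W}$ converts residue-level partial elementariness into Witt-level partial difference-ring isomorphism. I do not anticipate a serious obstacle beyond correctly setting up the back-and-forth family and the cardinality bookkeeping.
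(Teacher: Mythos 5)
Your proof is correct and follows essentially the same route as the paper: both apply the functor $\operatorname{W}$ to a back-and-forth system of partial elementary difference-field isomorphisms over $\bk_0$ between $(\bk,\sigma)$ and $(\bk',\sigma')$, using uncountable saturation to absorb the countably many coordinates of a Witt vector. The only nitpick is that the base map witnessing nonemptiness is not literally the identity but rather the isomorphism between the definable closures of $\bk_0$ in $\bk$ and in $\bk'$ furnished by $(\bk,\sigma)\equiv_{\bk_0}(\bk',\sigma')$.
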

\begin{proof} We just apply the functor
$\operatorname{W}$ to a suitable back-and-forth system between 
$(\bk, \sigma)$ and $(\bk', \sigma')$. In detail, let 
$(\bk_1, \sigma_1)$ range over the difference subfields of
$(\bk,\sigma)$ such that $\bk_0 \subseteq \bk_1$ and  
$|\bk_1| < \kappa$, and let $(\bk_2, \sigma_2)$ range over the 
difference subfields of
$(\bk',\sigma')$ such that $\bk_0 \subseteq \bk_2$ and  
$|\bk_2| < \kappa$. Let $\Phi$ be the set of
all difference field isomorphisms 
$\phi: (\bk_1, \sigma_1) \to (\bk_2, \sigma_2)$ that are the identity on
$\bk_0$ and are partial elementary maps between $(\bk,\sigma)$ and 
$(\bk', \sigma')$.
Note that some $\phi\in \Phi$ maps the definable 
closure of $\bk_0$ in $(\bk, \sigma)$ onto the
definable closure of $\bk_0$ in $(\bk', \sigma')$, so $\Phi\ne \emptyset$ 
and $\Phi$ is a back-and-forth system between
$(\bk, \sigma)$ and  $(\bk', \sigma')$.
The functorial properties of $\operatorname{W}$ and $\kappa$-saturation 
yield a back-and-forth
system $\operatorname{W}[\Phi]$ between  $\big(\operatorname{W}[\bk], \operatorname{W}[\sigma]\big)$ and $\big(\operatorname{W}[\bk'], \operatorname{W}[\sigma']\big)$ consisting of the 
$$\operatorname{W}[\phi]\ :\ 
\big(\operatorname{W}[\bk_1], \operatorname{W}[\sigma_1]\big) \to 
\big(\operatorname{W}[\bk_2], \operatorname{W}[\sigma_2]\big)$$
with $\phi: (\bk_1, \sigma_1) \to (\bk_2, \sigma_2)$ an element of $\Phi$. 
\end{proof}

\noindent
A similar use of functoriality gives:

\begin{lemma}\label{lemm2} Let $\Gamma_0$ be an ordered abelian group 
with smallest positive
element $1$, let $\Gamma$ and $\Gamma'$ be ordered abelian 
extension groups of $\Gamma_0$ with the same smallest positive element $1$.
Let $\kappa$ be an uncountable cardinal such that  
$\Gamma$ and $\Gamma'$
are $\kappa$-saturated, $|\Gamma_0| < \kappa$, and 
$\Gamma \equiv_{\Gamma_0} \Gamma'$. Let $\Delta$ be the common convex 
subgroup $\Z\cdot 1$ of $\Gamma_0, \Gamma$ and $\Gamma'$. Then the ordered
quotient groups 
$\dot{\Gamma}:=\Gamma/\Delta $ and  $\dot{\Gamma}':=\Gamma'/\Delta$ are 
elementarily equivalent over their common ordered subgroup 
$\dot{\Gamma}_0:=\Gamma_0/\Delta$.
\end{lemma}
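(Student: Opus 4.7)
The plan is to mirror the proof of Lemma~\ref{lemm1}, replacing the Witt functor by the quotient functor $H\mapsto H/\Delta$. Specifically, I would construct a back-and-forth system $\dot\Phi$ between $\dot\Gamma$ and $\dot\Gamma'$ over $\dot\Gamma_0$ by applying this functor to a back-and-forth system $\Phi$ between $\Gamma$ and $\Gamma'$ over $\Gamma_0$ that the hypotheses already provide.

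First, let $\Phi$ be the collection of all ordered group isomorphisms $\phi\colon \Gamma_1\to\Gamma_2$ with $\Gamma_0\subseteq \Gamma_1\subseteq \Gamma$, $\Gamma_0\subseteq \Gamma_2\subseteq \Gamma'$, $|\Gamma_1|,|\Gamma_2|<\kappa$, such that $\phi$ fixes $\Gamma_0$ pointwise and is a partial elementary map between the ordered abelian groups $\Gamma$ and $\Gamma'$. The hypothesis $\Gamma\equiv_{\Gamma_0}\Gamma'$ ensures $\operatorname{id}_{\Gamma_0}\in\Phi$, and $\kappa$-saturation of $\Gamma$ and $\Gamma'$ together with $|\Gamma_0|<\kappa$ guarantees that $\Phi$ is a back-and-forth system.

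Next, every $\phi\in\Phi$ contains $\Delta=\Z\cdot 1\subseteq\Gamma_0$ in its domain and fixes $\Delta$ pointwise. Since $\Delta$ is a convex subgroup of $\Gamma_1$ and of $\Gamma_2$, $\phi$ induces a well-defined ordered group isomorphism $\dot\phi\colon \Gamma_1/\Delta\to\Gamma_2/\Delta$ extending $\operatorname{id}_{\dot\Gamma_0}$. Set $\dot\Phi:=\{\dot\phi:\phi\in\Phi\}$. To verify that $\dot\Phi$ is a back-and-forth system between $\dot\Gamma$ and $\dot\Gamma'$, take $\dot\phi\in\dot\Phi$ induced by $\phi\colon \Gamma_1\to\Gamma_2$ and let $\dot\gamma\in\dot\Gamma$; pick any lift $\gamma\in\Gamma$ and, using the back-and-forth property of $\Phi$, extend $\phi$ to some $\phi'\in\Phi$ with $\gamma$ in its domain. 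Then $\dot{\phi'}\in\dot\Phi$ extends $\dot\phi$ and contains $\dot\gamma$ in its domain; the ``forth'' direction is symmetric. Since $\operatorname{id}_{\dot\Gamma_0}\in\dot\Phi$, this identity is a partial elementary map between $\dot\Gamma$ and $\dot\Gamma'$, which is exactly $\dot\Gamma\equiv_{\dot\Gamma_0}\dot\Gamma'$.

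The only real point of care is that the lifting step goes through: we need to know that pulling $\dot\gamma$ back to some $\gamma\in\Gamma$ and extending $\phi$ inside $\Phi$ produces something whose quotient still extends $\dot\phi$. This works because $\Delta\subseteq\Gamma_0$ is fixed by every member of $\Phi$, so the quotient operation commutes with the inclusions $\phi\subseteq\phi'$ used to build $\Phi$. No saturation is needed on $\dot\Gamma$ or $\dot\Gamma'$ themselves; the $\kappa$-saturation of the ambient $\Gamma$ and $\Gamma'$ suffices to satisfy each extension request coming from the quotients.
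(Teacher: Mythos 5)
Your proof is correct and is exactly the argument the paper intends: the lemma is stated with only the remark ``a similar use of functoriality gives,'' meaning one applies the quotient-by-$\Delta$ functor to a back-and-forth system of small partial elementary maps between $\Gamma$ and $\Gamma'$ over $\Gamma_0$, precisely as you do. Your verification that $\Delta\subseteq\Gamma_0$ is convex, fixed pointwise, and hence that the induced maps $\dot\phi$ form a back-and-forth system between $\dot\Gamma$ and $\dot\Gamma'$ containing $\operatorname{id}_{\dot\Gamma_0}$, is the intended content.
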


\bigskip\noindent
{\bf Equivalence in mixed characteristic.} In this final subsection we 
fix a prime number $p$, and
$\ca{K}=(K, \Gamma, \bk;\ v, \pi)$ is a $\sigma$-henselian valued 
difference field such that $\cha(K)=0$, $\bk$ is perfect with $\cha(\bk)=p$, 
and $v(p)$ is the smallest positive element of $\Gamma$. Moreover, assume
either that $\bk$ is infinite and $\bar{\sigma}(x)=x^p$ for all $x\in \bk$
(the Witt case), or that $\bk$ satisfies Axiom 2. In particular, $\ca{K}$ is
workable and $\ca{K}$ is not equipped here with an angular component map. 

We make the corresponding
assumptions about  $\ca{K}'=(K', \Gamma', \bk';\ v', \pi')$. Also, assume that
$\ca{E}=(E,\Gamma_{\ca{E}}, \bk_{\ca{E}})$ and 
$\ca{E}'=(E',\Gamma_{\ca{E'}}, \bk_{\ca{E'}})$ 
are good substructures of 
$\ca{K}$ and $\ca{K}'$, respectively, in the $\ac$-free sense specified 
at the end of Section~\ref{et}, where we defined the corresponding 
$\ac$-free notion of a 
{\em good map\/} $\ca{E} \to \ca{E}'$.
Theorem~\ref{embed11a} goes through in the present setting:

\begin{theorem}\label{embed13} Suppose that $v(E^{\times})$ is pure in 
$\Gamma$ and $\af: \ca{E} \to \ca{E}'$ is a good map. Then $\af$ 
is a partial elementary map between $\ca{K}$ and $\ca{K}'$.
\end{theorem}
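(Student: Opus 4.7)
The plan is to reduce Theorem~\ref{embed13} to the equicharacteristic-zero Theorem~\ref{embed12a} by coarsening, as sketched at the end of Subsection~\ref{mcc}. Let $\Delta := \mathbb{Z}\cdot v(p)$, a convex subgroup of $\Gamma$ (identified with $\mathbb{Z}\cdot v'(p) \subseteq \Gamma'$ via $f_{\val}$). The coarsenings $\dot{\ca{K}}$ and $\dot{\ca{K}}'$ are then equicharacteristic-zero valued difference fields, $\sigma$-henselian by Lemma~\ref{coarse1} and workable by Lemma~\ref{coarse3}. First I would pass to $\kappa$-saturated elementary extensions of $\ca{K}$ and $\ca{K}'$ with $\kappa$ much larger than $|\ca{E}|$ and $|\ca{E}'|$; saturation guarantees in particular that $\dot{\pi}(\ca{O})$ is a complete discrete valuation subring of $\dot{\bk}$ with residue field $\bk$ (and similarly on the primed side).

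Next I would expand the language: let $\ca{L}^{*}$ be obtained from $\ca{L}(\sigma)$ by adding a unary predicate $U$ of sort $\re$ interpreted in $\dot{\ca{K}}$ as $\dot{\pi}(\ca{O})$ (analogously in $\dot{\ca{K}}'$), together with a constant on the value sort for $\dot v(p)$. Denoting the resulting $\ca{L}^{*}$-structures $\dot{\ca{K}}(*)$ and $\dot{\ca{K}}'(*)$, the original $\ca{K}$ and $\ca{K}'$ are definable reducts, since $\ca{O} = \{a \in \dot{\ca{O}}:\ \dot{\pi}(a) \in U\}$ and $v$ is recovered from $\dot v$ plus the value of $p$. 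The given good map $\af$ induces a triple $\dot\af$ between natural good substructures $\dot{\ca{E}}$ and $\dot{\ca{E}}'$ (with value groups $\Gamma_{\ca{E}}/\Delta$ and residue parts the residue field of $E$ under $\dot v$, carrying the trace of $U$). A short check shows that $\dot v(E^{\times}) = v(E^{\times})/\Delta$ is pure in $\dot{\Gamma}$ (using the purity of $v(E^{\times})$ in $\Gamma$ together with $v(p) \in v(E^\times)$), so the purity hypothesis of Theorem~\ref{embed12a} is met.

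The heart of the argument is establishing that the components of $\dot\af$ are partial elementary in the enriched sublanguages $\ca{L}_{\v}^{*}$ and $\ca{L}_{\re}^{*}$. For the value sort, Lemma~\ref{lemm2} promotes the elementarity of $f_{\val}$ as a partial map between $\Gamma$ and $\Gamma'$ to elementarity of $\dot f_{\val}$ as a partial map between $\dot{\Gamma}$ and $\dot{\Gamma}'$ over $\dot{\Gamma}_{\ca{E}}$. The hard part will be the residue sort: we need the coarsened residue difference field $\dot{\bk}$, together with its marked subring $\dot{\pi}(\ca{O})$, to be elementarily controlled by $(\bk, \bar{\sigma})$. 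In the Witt case, saturation provides a canonical identification of $\dot{\pi}(\ca{O})$ with $\operatorname{W}[\bk]$ intertwining $\dot\sigma$ with the Witt Frobenius $\operatorname{W}[\bar\sigma]$, and likewise for $\ca{K}'$; then Lemma~\ref{lemm1}, applied to the elementarity of $f_{\res}: (\bk_{\ca{E}}, \bar\sigma) \hookrightarrow (\bk, \bar\sigma)$ and the mirror map into $(\bk', \bar\sigma')$, yields $\operatorname{W}[\bk] \equiv_{\operatorname{W}[\bk_{\ca{E}}]} \operatorname{W}[\bk']$ as difference rings, which transfers to the required elementarity of $\dot f_{\res}$ in $\ca{L}_{\re}^{*}$. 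In the non-Witt alternative (where $\ca{K}$ satisfies Axiom~2 and $\bar\sigma$ is not a power of Frobenius, so Axiom~3 holds), an analogous back-and-forth through a suitable lift of $\bar\sigma$ to $\dot\pi(\ca{O})$ handles the residue sort.

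With the hypotheses of Theorem~\ref{embed12a} in place, that theorem gives that $\dot\af$ is a partial elementary map between $\dot{\ca{K}}(*)$ and $\dot{\ca{K}}'(*)$. Because $\ca{K}$ (respectively $\ca{K}'$) is a definable reduct of $\dot{\ca{K}}(*)$ (respectively $\dot{\ca{K}}'(*)$) by fixed $\ca{L}^{*}$-formulas not depending on the particular structure, this descends to $\af$ being a partial elementary map between $\ca{K}$ and $\ca{K}'$, completing the proof.
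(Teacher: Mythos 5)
Your proposal follows essentially the same route as the paper: coarsen at $\Delta=\mathbb{Z}\cdot v(p)$, invoke Lemmas~\ref{coarse1} and~\ref{coarse3} to keep $\sigma$-henselianity and workability, pass purity of $v(E^\times)$ down to $\dot v(E^\times)$ in $\dot\Gamma$, handle the value sort with Lemma~\ref{lemm2} and the residue sort with Lemma~\ref{lemm1}, apply Theorem~\ref{embed12a} to $\dot{\ca{K}}(*)$ and $\dot{\ca{K}}'(*)$, and descend using the definability of $\ca{O}$ inside $\dot{\ca{K}}(*)$. The one place where you leave a real loose end is the residue sort in the non-Witt alternative, which you defer to ``an analogous back-and-forth through a suitable lift of $\bar\sigma$ to $\dot\pi(\ca{O})$.'' No separate argument is needed there: Lemma~\ref{lemm1} is stated for an \emph{arbitrary} automorphism $\sigma$ of the perfect field $\bk$ and its functorial lift $\operatorname{W}[\sigma]$, and since $\dot\pi(\ca{O})$ is (by saturation) a complete unramified discrete valuation ring with perfect residue field $\bk$, the canonical identification $\operatorname{W}[\bk]\cong\dot\pi(\ca{O})$ automatically intertwines $\operatorname{W}[\bar\sigma]$ with $\dot\sigma\restriction\dot\pi(\ca{O})$ by uniqueness of lifts of residue-field homomorphisms between strict $p$-rings --- whether or not $\bar\sigma$ is the Frobenius. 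With that observation your two cases collapse into one, exactly as in the paper, and the proof is complete. (The auxiliary constant for $\dot v(p)$ is harmless but superfluous, since $\dot v(p)=0$ and $\ca{K}$ is already interpretable from $\dot{\ca{K}}(*)$ via the predicate for $\dot\pi(\ca{O})$ alone.)
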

\begin{proof} We first arrange that 
$\ca{K}$ and $\ca{K}'$ are $\kappa$-saturated, where 
$\kappa$ is an uncountable cardinal such that 
$|\bk_{\ca{E}}|$, $|\Gamma_{\ca{E}}| < \kappa$. To simplify notation we identify
$\ca{E}$ and $\ca{E}'$ via $\af$, so $\af$ becomes the identity on $\ca{E}$.
We have to show that then $\ca{K}\equiv_{\ca{E}}\ca{K}'$. With $v(p)=1$ as the smallest positive element of $\Gamma_0:= \Gamma_{\ca{E}}$
and of $\Gamma$ and $\Gamma'$ and using the notations of Lemma~\ref{lemm2} 
we have
$\dot{\Gamma} \equiv_{\dot{\Gamma}_0} \dot{\Gamma}'$ by that same lemma.
From the purity of $v(E^{\times})$ in $\Gamma$ it follows that 
$\dot{v}(E^{\times})$ is pure in $\dot{\Gamma}$. 
Since $\ca{K}$ and $\ca{K}'$ are $\aleph_1$-saturated, it is harmless to 
identify $\dot{\bk}$ and $\dot{\bk}'$ with
the fields $\operatorname{W}(\bk)$ and $\operatorname{W}(\bk')$, respectively.
Then the respective valuation rings $\dot{\pi}(\ca{O})$ and 
$\dot{\pi}'(\ca{O}')$ of $\dot{\bk}$ and $\dot{\bk}'$ 
are $\operatorname{W}[\bk]$ and $\operatorname{W}[\bk']$,
and we have the common subfield $\dot{\bk}_{\ca{E}}:=\operatorname{W}(\bk_{\ca{E}})$ of $\dot{\bk}$ 
and $\dot{\bk}'$. It now follows from 
Lemma~\ref{lemm1} that $\dot{\bk}(*)\ \equiv_{\dot{\bk}_{\ca{E}}}\ \dot{\bk}'(*)$.  
Hence the assumptions of Theorem~\ref{embed12a} are satisfied with
$\dot{\ca{K}}(*)$, and  $\dot{\ca{K}}'(*)$ in the role of $\ca{K}$ and 
$\ca{K}'$,
and $\dot{\ca{E}}(*):=(E, \dot{\Gamma}_0,\dot{\bk}_{\ca{E}})$ in the role of both $\ca{E}$ and $\ca{E}'$.
and with
the identity on $\dot{\ca{E}}(*)$ as a good map. This theorem therefore gives 
$$\dot{\ca{K}}(*)\  \equiv_{\dot{\ca{E}}(*)} \dot{\ca{K}}'(*) .$$ 
This yields $\ca{K}\equiv_{\ca{E}}\ca{K}'$ by what we observed just after
Lemma~\ref{coarse2}. 
\end{proof}

\begin{corollary}\label{comp06} 
$\ca{K} \equiv \ca{K}'$ if and only if $\bk \equiv \bk'$ as 
difference fields  and  $\Gamma \equiv \Gamma'$ as ordered abelian groups.
\end{corollary}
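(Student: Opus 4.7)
The forward implication is trivial. For the converse, the plan is to construct good substructures of $\ca{K}$ and $\ca{K}'$ together with a good map between them satisfying the purity hypothesis of Theorem~\ref{embed13}, and then invoke that theorem. This is the mixed-characteristic analogue of the route used for Corollary~\ref{comp00}, with the twist that one must choose a good substructure whose value group is pure in the ambient value group.

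I would take $E := \mathbb{Q} \subseteq K$, the prime subfield, regarded as a difference subfield since $\sigma$ fixes $\mathbb{Q}$ pointwise. Then $v(\mathbb{Q}^\times) = \mathbb{Z}\cdot v(p) = \mathbb{Z}\cdot 1$ and $\pi(\mathbb{Z}_{(p)}) = \mathbb{F}_p \subseteq \bk$, which is fixed by $\bar{\sigma}$. So $\ca{E} := (\mathbb{Q}, \mathbb{Z}\cdot 1, \mathbb{F}_p)$ is a good substructure of $\ca{K}$, and analogously $\ca{E}' := (\mathbb{Q}, \mathbb{Z}\cdot 1, \mathbb{F}_p)$ is a good substructure of $\ca{K}'$. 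I would use the identity triple $\af := (\mathrm{id}_\mathbb{Q}, \mathrm{id}_{\mathbb{Z}\cdot 1}, \mathrm{id}_{\mathbb{F}_p})$ as candidate good map $\ca{E}\to \ca{E}'$. The only nontrivial conditions to verify are (i) and (ii), namely that the identity on $\mathbb{Z}\cdot 1$ is partial-elementary between $\Gamma$ and $\Gamma'$ and that the identity on $\mathbb{F}_p$ is partial-elementary between $\bk$ and $\bk'$. These follow from the assumed elementary equivalences combined with the observations that $1$ is $\emptyset$-definable as the smallest positive element of an ordered abelian group in which such an element exists, and that $\mathbb{F}_p$ is the $\emptyset$-definable set of zeros of $x^p - x$ in a field of characteristic $p$; hence every tuple from $\mathbb{Z}\cdot 1$ (resp.\ $\mathbb{F}_p$) has the same type in $\Gamma$ as in $\Gamma'$ (resp.\ in $\bk$ as in $\bk'$).

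It remains to verify that $v(E^\times) = \mathbb{Z}\cdot 1$ is pure in $\Gamma$, which will follow from the fact that $1$ is the smallest positive element: this makes $\mathbb{Z}\cdot 1$ convex in $\Gamma$. If $\gamma \in \Gamma$ satisfies $n\gamma = m\cdot 1 \in \mathbb{Z}\cdot 1$ for some $n > 0$, then $|\gamma| \leq |n\gamma| = |m|\cdot 1$ (the first inequality is a general fact in torsion-free ordered abelian groups), so $\gamma \in \mathbb{Z}\cdot 1$ by convexity; writing $\gamma = k\cdot 1$ gives $nk = m$, confirming $n\Gamma \cap \mathbb{Z}\cdot 1 \subseteq n(\mathbb{Z}\cdot 1)$. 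Theorem~\ref{embed13} then yields that $\af$ is partial-elementary between $\ca{K}$ and $\ca{K}'$, whence $\ca{K} \equiv \ca{K}'$. I anticipate no substantive obstacle here: essentially all the work in mixed characteristic has been concentrated in Theorem~\ref{embed13}, and the only mildly delicate point is the purity check, which is formal once the convexity of $\mathbb{Z}\cdot 1$ is noted.
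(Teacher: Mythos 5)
Your proposal is correct and follows exactly the paper's route: the paper likewise takes the good substructures $(\mathbb{Q},\Z\cdot 1,\mathbb{F}_p)$ of $\ca{K}$ and $\ca{K}'$ with the identity as an ``obviously good'' map and applies Theorem~\ref{embed13}. The extra details you supply (partial elementarity via $\emptyset$-definability of the smallest positive element and of $\mathbb{F}_p$, and purity of $\Z\cdot 1$ via its convexity) are exactly the verifications the paper leaves implicit, and they are all correct.
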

\begin{proof} The ``only if'' direction is obvious. Suppose $\bk \equiv \bk'$ as difference fields, and $\Gamma \equiv \Gamma'$ as ordered groups. Then 
we have good substructures 
$\ca{E}:=(\mathbb{Q},\Z, \mathbb{F}_p)$ of $\ca{K}$, and $\ca{E}':=(\mathbb{Q},\Z,\mathbb{F}_p)$ of $\ca{K}'$, and 
an obviously good map $\ca{E} \to \ca{E}'$. Now apply Theorem~\ref{embed13}.
\end{proof}

\noindent
In particular, any $\sigma$-henselian Witt case valued difference 
field satisfying Axiom 2, with infinite residue field $\bk$ and value group $\Gamma\equiv \mathbb{Z}$ as ordered 
abelian groups, is elementarily equivalent to the Witt difference field
$\operatorname{W}(\bk)$. The next result follows from Theorem~\ref{embed13} in the same way as Corollary~\ref{comp01} from Theorem~\ref{embed11}.

\begin{corollary}\label{comp07} Let 
$\ca{E}=(E, \Gamma_E, \bk_E;\dots)$ be a $\sigma$-henselian valued difference subfield of $\ca{K}$ satisfying Axiom $2$ such that $\bk_E\preceq \bk$ as difference fields, and $\Gamma_E \preceq \Gamma$ as ordered abelian groups. Then $\ca{E} \preceq \ca{K}$.
\end{corollary}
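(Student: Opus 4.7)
\medskip\noindent
The plan is to mimic the proof of Corollary~\ref{comp01}, with Theorem~\ref{embed13} taking the role of Theorem~\ref{embed11}. First I would take an elementary extension $\ca{K}'$ of $\ca{E}$ in the current language of valued difference fields (no angular component map in this subsection). Since $\ca{E}$ is $\sigma$-henselian, satisfies Axiom~2, has $\cha(E)=0$ with $v(p)$ the smallest positive element of $\Gamma_E$, perfect residue difference field, and falls into the Witt case or satisfies Axiom~2 on $\bk_E$, each of these properties either is first-order or is automatically preserved under elementary extension. Hence $\ca{K}'$ meets the running hypotheses of the subsection, and $\ca{E}$ is a valued difference subfield of both $\ca{K}$ and $\ca{K}'$.

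Next I would verify that $(E,\Gamma_E,\bk_E)$ is a good substructure of both $\ca{K}$ and $\ca{K}'$, and that the identity map on this triple is a good map from $\ca{E}$ (inside $\ca{K}$) to $\ca{E}$ (inside $\ca{K}'$). The only non-obvious clauses are the elementariness of the identity on the residue and value sorts. On the $\ca{K}$-side these are exactly the assumed $\bk_E\preceq \bk$ and $\Gamma_E\preceq \Gamma$; on the $\ca{K}'$-side they follow from $\ca{E}\preceq \ca{K}'$, which in particular gives $\bk_E\preceq \bk'$ and $\Gamma_E\preceq \Gamma'$.

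To invoke Theorem~\ref{embed13} I also need $v(E^\times)$ to be pure in $\Gamma$. Because $\ca{E}$ is a valued difference subfield with value group $\Gamma_E$, we have $v(E^\times)=\Gamma_E$, and the elementary inclusion $\Gamma_E\preceq \Gamma$ implies purity of $\Gamma_E$ in $\Gamma$. Theorem~\ref{embed13} then yields $\ca{K}\equiv_{\ca{E}}\ca{K}'$, and combined with $\ca{E}\preceq \ca{K}'$, this gives $\ca{E}\preceq \ca{K}$.

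The argument has no real obstacle; the only care needed is bookkeeping the transfer of the hypotheses of Section~\ref{mcc} to the elementary extension $\ca{K}'$ and the routine check that the identity good map is well defined and elementary on both value group and residue field sorts. In that sense the result is an elementary-submodel upgrade of Corollary~\ref{comp06}, obtained by the same passage to an elementary extension as in the proof of Corollary~\ref{comp01}.
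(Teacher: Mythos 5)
Your proof is correct and is exactly the argument the paper intends: it states that Corollary~\ref{comp07} follows from Theorem~\ref{embed13} in the same way Corollary~\ref{comp01} follows from Theorem~\ref{embed11}, i.e.\ by passing to an elementary extension $\ca{K}'$ of $\ca{E}$ and applying the equivalence theorem to the identity good map on $(E,\Gamma_E,\bk_E)$. Your extra checks (transfer of the Section~9 hypotheses to $\ca{K}'$, elementariness of the identity on both auxiliary sorts, and purity of $v(E^\times)=\Gamma_E$ in $\Gamma$ from $\Gamma_E\preceq\Gamma$) are precisely the routine verifications the paper leaves implicit.
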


\noindent
Theorem~\ref{embed13} does not seem to give a nice relative quantifier elimination such as Corollary~\ref{qe} but it does yield the analogue of Corollary~\ref{comp02}:

\begin{corollary}\label{comp08} Each subset of $\bk^m\times \Gamma^n$ that is definable in $\ca{K}$ is a finite union of rectangles $X\times Y$ with $X\subseteq \bk^m$
definable in the difference field $\bk$ and $Y\subseteq \Gamma^n$ definable in 
the
ordered abelian group $\Gamma$.
\end{corollary}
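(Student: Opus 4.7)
The plan is to mimic the orthogonality argument behind Corollary~\ref{comp02}, using Theorem~\ref{embed13} in place of the relative quantifier elimination of Corollary~\ref{qe} (which is not available in mixed characteristic).

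Let $D\subseteq \bk^m\times\Gamma^n$ be definable in $\ca{K}$. I would pass first to a sufficiently saturated elementary extension of $\ca{K}$ and then choose a small elementary submodel $\ca{F}=(F,\Gamma_F,\bk_F;\dots)\preceq\ca{K}$ containing all parameters defining $D$. Then $\ca{F}$ is itself $\sigma$-henselian and satisfies the mixed characteristic hypotheses of Section~\ref{mcc}, and crucially $v(F^\times)=\Gamma_F$ is pure in $\Gamma$ because $\Gamma_F\preceq\Gamma$.

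The heart of the argument is the following separation claim: for any $(r,\gamma),(r',\gamma')\in\bk^m\times\Gamma^n$ with $\tp^{\bk}(r/\bk_F)=\tp^{\bk}(r'/\bk_F)$ in the difference field $\bk$ and $\tp^{\Gamma}(\gamma/\Gamma_F)=\tp^{\Gamma}(\gamma'/\Gamma_F)$ in the ordered abelian group $\Gamma$, one has $(r,\gamma)\in D\Leftrightarrow(r',\gamma')\in D$. To prove this I would form the good substructures (in the $\ac$-free sense of Section~\ref{mcc})
\[ \ca{E}:=(F,\,\Gamma_F\langle\gamma\rangle,\,\bk_F\langle r\rangle),\qquad\ca{E}':=(F,\,\Gamma_F\langle\gamma'\rangle,\,\bk_F\langle r'\rangle) \]
of $\ca{K}$, where $\Gamma_F\langle\gamma\rangle$ is the ordered subgroup generated by $\Gamma_F$ and the coordinates of $\gamma$, and $\bk_F\langle r\rangle$ is the difference subfield generated by $\bk_F$ and the coordinates of $r$. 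The identity on $F$, together with the isomorphisms $f_{\val}\colon\Gamma_F\langle\gamma\rangle\to\Gamma_F\langle\gamma'\rangle$ over $\Gamma_F$ sending $\gamma\mapsto\gamma'$ and $f_{\res}\colon\bk_F\langle r\rangle\to\bk_F\langle r'\rangle$ over $\bk_F$ sending $r\mapsto r'$, should assemble into a good map $\af\colon\ca{E}\to\ca{E}'$; both $f_{\val}$ and $f_{\res}$ are partial elementary in $\Gamma$ and $\bk$ respectively by the type hypothesis combined with $\Gamma_F\preceq\Gamma$ and $\bk_F\preceq\bk$. Theorem~\ref{embed13} then applies (with $v(F^\times)$ pure in $\Gamma$) and yields that $\af$ is a partial elementary map between $\ca{K}$ and itself, whence the claim.

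Finally, to extract the ``finite union of rectangles'' conclusion, I would run the standard compactness argument: for each $(r,\gamma)\in D$ the claim together with saturation produces a single $\bk_F$-formula $\psi(y)$ in the language of difference fields and a single $\Gamma_F$-formula $\theta(z)$ in the language of ordered abelian groups such that $\bk\models\psi(r)$, $\Gamma\models\theta(\gamma)$, and $\psi(\bk)\times\theta(\Gamma)\subseteq D$; definability of $D$ then yields a finite subcover by such rectangles. The main obstacle is arranging the good substructures so that the hypotheses of Theorem~\ref{embed13} are verified — in particular that $f_{\val}$ and $f_{\res}$ are partial elementary in their respective structures — after which the purity condition is automatic and the compactness step is entirely routine.
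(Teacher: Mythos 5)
Your proposal is correct and follows essentially the same route as the paper: reduce by saturation and compactness to showing that pairs with the same residue-type and value-type over a small elementary submodel $\ca{F}$ are interchangeable, assemble the induced maps into a good map over $\ca{F}$ (whose value group is automatically pure in $\Gamma$), and apply Theorem~\ref{embed13}. The only cosmetic difference is that the paper takes definable closures in $\bk$ where you take generated difference subfields; both satisfy the ``partial elementary'' requirement in the definition of a good map.
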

\begin{proof} By standard arguments we can reduce to the following situation: $\ca{K}$ is $\aleph_1$-saturated, $\ca{E}=(E, \Gamma_E, \bk_E;\dots)\preceq \ca{K}$ is countable, $r, r'\in \bk^m$ have the same type over $\bk_E$, and $\gamma, \gamma'\in \Gamma^n$ have the same type over $\Gamma_E$:
\begin{align*} \tp(r|\bk_E)\ &=\ \tp(r'|\bk_E)  \qquad(\text{in the difference field }\bk),\\
   \tp(\gamma|\Gamma_E)\ &=\ \tp(\gamma'|\Gamma_E) \qquad(\text{in the ordered abelian group }\Gamma).
\end{align*}
It suffices to show that then $(r,\gamma)$ and $(r',\gamma')$ have the same type over
$\ca{E}$ in $\ca{K}$. Let $\bk_1$ and $\bk_1'$ be the definable closures of $\bk_E(r)$
and $\bk_E(r')$ in the difference field $\bk$, and let $\Gamma_1$ and $\Gamma_1'$
be the ordered subgroups of $\Gamma$ generated over $\Gamma_E$ by $\gamma$ and
$\gamma'$. Then $(E, \Gamma_1, \bk_1)$ and $(E, \Gamma_1', \bk_1')$ are good substructures of $\ca{K}$, and the assumption on types yields a good map 
$(E, \Gamma_1, \bk_1)\to (E, \Gamma_1', \bk')$ that is the identity on $(E, \Gamma_E, \bk_E)$, sends $\gamma$ to $\gamma'$ and $r$ to $r'$. Note also that $v(E^\times)=\Gamma_E$ is pure in $\Gamma$. It remains to apply Theorem~\ref{embed13}. 
\end{proof}

\end{section}

\bibliographystyle{plain}

\end{document}